\newtheorem{theorem}{Theorem}[section]
\newtheorem{lemma}[theorem]{Lemma}
\newtheorem{definition}[theorem]{Definition}
\newtheorem{prop}[theorem]{Proposition}
\newtheorem{remark}[theorem]{Remark}
\newtheorem{convention}{Convention}
\newtheorem{keyprop}{Key Proposition}
\newcommand{\asum}{\sideset{}{^{\ast}}\sum}
\newcommand{\primesum}{\sideset{}{^{'}}\sum}
\newcommand{\psum}{\sideset{}{^{\#}}\sum}
\newcommand{\cond}[1]{\textnormal{cond}{#1}}
\newcommand{\myitem}[1]{%
\item[#1]\protected@edef\@currentlabel{#1}%
}
\renewcommand\subsection{\@startsection{subsection}{2}%
	\z@{.5\linespacing\@plus.7\linespacing}{.5\linespacing}%
	{\normalfont\bfseries}}
\renewcommand\part{\@startsection{part}{2}%
	\z@{0.5\linespacing\@plus2\linespacing}{\linespacing}%
	{\normalfont\large\scshape\bfseries\centering}}
\title{The exceptional set in Goldbach's problem with almost twin primes}
\author{Lasse Grimmelt}
\address{Mathematical Institute, University of Oxford, Oxford OX2 6GG, UK}
\email{grimmelt@maths.ox.ac.uk}
\author{Joni Ter\"{a}v\"{a}inen}
\address{Department of Mathematics and Statistics, University of Turku, 20014 Turku, Finland}
\email{joni.p.teravainen@gmail.com}
\begin{document}
	\begin{abstract}
		We consider the exceptional set in the binary Goldbach problem for sums of two almost twin primes. Our main result is a power-saving bound for the exceptional set in the problem of representing $m=p_1+p_2$ where $p_1+2$ has at most $2$ prime divisors and $p_2+2$ has at most $3$ prime divisors. There are three main ingredients in the proof: a new transference principle like approach for sieves,  a combination of the level of distribution estimates of Bombieri--Friedlander--Iwaniec and Maynard with ideas of Drappeau to produce power savings, and a generalisation of the circle method arguments of Montgomery and Vaughan that incorporates sieve weights.
	\end{abstract}
	
\maketitle

\setcounter{tocdepth}{1}
\tableofcontents
\part{Introduction and proof methods}

\section{Introduction}

In this paper, we study the exceptional set in the binary Goldbach problem with almost twin primes. There are different ways to define what an almost twin prime should be. We shall consider here those primes $p$ for which $p+2$ has few prime factors. 
We define 
\begin{align*}
\mathbb{P}_k\coloneqq \{m\in \mathbb{N} : m \text{ has at most } k \text{ prime factors} \},
\end{align*}
and abbreviate $\mathbb{P}_1$ as $\mathbb{P}$. Thanks to a celebrated result of Chen~\cite{chen}, we know that there are infinitely many primes $p$ such that $p+2\in \mathbb{P}_2$.

Based on standard heuristics of Hardy--Littlewood type, we expect that every large $m\equiv 4\pmod 6$ can be written as $m=p_1+p_2$ with $p_i,p_i+2\in \mathbb{P}$; needless to say, this is far out of reach as it would imply as special cases both the twin prime conjecture and the Goldbach conjecture (for large numbers congruent to $4\pmod 6$). However, it is natural to ask what can be said for fixed $k_1,k_2\geq 2$ about representations of the form 
\begin{align}\label{eqn0}
m=p_1+p_2 \quad \textnormal{ with }\quad p_i\in \mathbb{P},\,\, p_i+2 \in \mathbb{P}_{k_i}.
\end{align} 
We define the size of the exceptional set related to this problem by
\begin{align*}
E(N,k_1,k_2)\coloneqq |\{m\leq N: m\equiv 4\pmod 6,\,\, m\neq p_1+p_2\,\, \forall p_i\in \mathbb{P}\cap(\mathbb{P}_{k_i}-2) \}|;
\end{align*}
here the restriction to $m\equiv 4 \pmod 6$ is imposed since almost all elements of $\mathbb{P}\cap (\mathbb{P}_k-2)$ are $\equiv 5 \pmod 6$.

Our main result is a power-saving estimate for $E(N,2,3)$.
\begin{theorem} \label{MT1}
There is a constant $\delta>0$ such that
\begin{align*}
E(N,2,3)\ll N^{1-\delta}.
\end{align*}
\end{theorem}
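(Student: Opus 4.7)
The overall strategy is the circle method, but with $\Lambda(n)$ replaced by sieve weights detecting the almost-twin-prime conditions. First I would construct non-negative sequences $\rho_1(n)$ and $\rho_2(n)$ supported on $n\asymp N$ so that $\rho_1$ is a positive-density lower-bound sieve for primes $p$ with $p+2\in \mathbb{P}_2$ (a Chen-type weight) and $\rho_2$ is the analogous, easier Richert-type weight for primes $p$ with $p+2\in\mathbb{P}_3$. It then suffices to prove that for all but $O(N^{1-\delta})$ integers $m\le N$ with $m\equiv 4\pmod 6$, the convolution $r(m)=\sum_{n_1+n_2=m}\rho_1(n_1)\rho_2(n_2)$ is strictly positive, since the expected lower bound for $r(m)$ will be $\gg \mathfrak{S}(m)N/(\log N)^2$.

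Applying the circle method, write $r(m)=\int_0^1 S_1(\alpha)S_2(\alpha)e(-m\alpha)\,d\alpha$ with $S_i(\alpha)=\sum_n \rho_i(n)e(n\alpha)$, and partition $[0,1]$ into major arcs around rationals $a/q$ of small denominator and the complementary minor arcs. On the major arcs the expected asymptotic for $S_i(\alpha)$ would follow from the distribution of $\rho_i$ in arithmetic progressions to moduli beyond $N^{1/2}$, which is precisely the regime where the Bombieri--Friedlander--Iwaniec technology and Maynard's well-factorable refinement apply to a convolution of a sieve with primes. Following Drappeau, the level of distribution on average over $q$ can then be upgraded to an estimate with a power-saving error through a contour deformation using zero-density input; this is what promotes the saving from $N/(\log N)^A$ to $N^{1-\delta}$. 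Combining these inputs yields a major-arc contribution of the expected size outside an exceptional set of cardinality $O(N^{1-\delta})$, and the singular series $\mathfrak{S}(m)$ is bounded below for $m\equiv 4\pmod 6$ by the standard local analysis.

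The minor-arc contribution would be controlled in $L^2$ via the Montgomery--Vaughan method: one establishes a pointwise bound $\sup_{\alpha\in\mathfrak{m}}|S_i(\alpha)|\ll N^{1-\eta}$ together with a Parseval-style mean value $\int_\mathfrak{m} |S_1(\alpha)|^2 \,d\alpha$ averaged against the spectrum of $m\le N$. Here the novelty announced in the abstract enters: the Montgomery--Vaughan machinery must be extended from $\Lambda$ to sieve weights. The required bilinear (type II) decompositions can be read off the sieve construction itself, which supplies precisely the Vaughan-type splitting of $\rho_i$ that feeds into the Vinogradov-style minor-arc bound and the associated mean value.

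The principal obstacle is to reconcile the Chen weight $\rho_1$, which sits exactly at the parity barrier and is traditionally produced by a switching trick that is awkward to manipulate on the Fourier side, with the simultaneous demands of the BFI/Drappeau distribution input \emph{and} of the bilinear minor-arc input. I expect the ``transference-like'' reformulation for sieves flagged in the abstract to be introduced precisely to navigate this: one majorises $\rho_1$ by a pseudorandom enveloping sieve whose Fourier transform is well-understood, transfers the lower-bound Chen weight onto a model sequence compatible with the circle method, and runs the major/minor arc analysis on the model. Constructing a Chen-type lower bound of positive density with a well-factorable factorisation compatible with both Drappeau's power saving and the bilinear minor-arc decomposition looks to be the technical heart of the argument.
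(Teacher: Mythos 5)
Your toolbox matches the paper's (circle method, BFI/Maynard level-of-distribution with Drappeau-style power saving, a Montgomery--Vaughan major-arc analysis extended to sieve weights, and a transference-type reduction), but the plan as written breaks at three specific points. First, you cannot have a sequence $\rho_1$ that is simultaneously nonnegative, of positive density, and a pointwise minorant for Chen primes: any lower-bound sieve weight $\Lambda(n)\omega^-(n+2)$ takes negative values off the target set, and a genuinely nonnegative minorant of positive density would already defeat parity. Likewise the convolution of \emph{two} lower-bound weights cannot simply be bounded below termwise. The paper's entire architecture exists to get around this: one runs a chain in which, before each substitution of a minorant, the companion factor is first replaced (via the Key Propositions) by a \emph{nonnegative} model built from an upper-bound pre-sieve, so that the inequality $f*g\geq f*\nu$ is legitimate at every step. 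Second, your minor-arc treatment needs $\sup_{\alpha\in\mathfrak m}|S_1(\alpha)|\ll N^{1-\eta}$ for the Chen-type weight, i.e.\ for $\Lambda(n)\omega(n+2)$ with $\omega$ of level beyond $N^{1/2}$. No such bound is known: Matom\"aki's twisted Bombieri--Vinogradov input caps the sieve level at $N^{1/2-\varepsilon}$, and the paper explicitly cannot break the $1/2$ barrier on the minor arcs. It circumvents this asymmetrically: the Fouvry--Grupp sieve (level up to $N^{4/7-\varepsilon}$, no switching) is controlled only on the major arcs, and the \emph{other} factor in the convolution is restricted to $\Lambda$ times a low-level pre-sieve precisely so that it supplies the minor-arc decay. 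Your symmetric set-up, with both $S_1$ and $S_2$ expected to obey minor-arc bounds, would fail.

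Third, and most seriously for a power-saving exceptional set, the possible Siegel zero is not a nuisance absorbed into error terms: with moduli up to $N^{\delta}$ the exceptional character produces a secondary main term of the same order as the principal one. If the Chen weight is built by the classical switching trick (as your ``Chen-type weight'' suggests), the paper shows the combined main term can become \emph{negative} for the bad moduli $m$ divisible by the exceptional modulus -- this is exactly why the paper proves a power saving for $E(N,2,3)$ rather than $E(N,2,2)$, avoids switching altogether via the Fouvry--Grupp construction, and builds the exceptional-character contribution explicitly into the main term $\mathcal M(m)$, sieving separately at primes dividing the exceptional modulus to keep $\mathcal M(m)>0$. Saying ``the singular series is bounded below for $m\equiv 4\pmod 6$'' does not address this. (A smaller point: Drappeau's power saving does not come from contour deformation and zero-density input, but from keeping all characters of conductor up to $P$ in the main term and exploiting the large sieve in the dispersion method.) Your closing paragraph correctly identifies the tension between the Chen weight, the factorability needed for the dispersion estimates, and the minor arcs, but the enveloping-sieve majorisation you sketch does not by itself resolve the sign problem at the exceptional zero or supply the missing minor-arc bound; the paper's resolution is the specific chain of nonnegative pre-sieve models together with the asymmetric use of the three Key Propositions.
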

Both $\delta$ and the implied constant in this theorem are effective and could in principle be computed. 

This generalises the celebrated result of Montgomery and Vaughan~\cite{mv} that all but a power-saving number of even integers up to $N$ are the sum of two primes, which implies in our notation\footnote{We use the notation $\mathbb{P}_{\infty}\coloneqq \mathbb{N}$.} 
\begin{align*}
E(N,\infty, \infty)\ll N^{1-\delta}.    
\end{align*}
The best known value of $\delta$ for this problem, after a series of improvements by several authors, including Chen and Pan, Chen and Liu, Li, and Lu~\cite{lu}, is due to a pre-print of Pintz~\cite{pintz2} with $\delta=0.28$. 

If we want both primes in~\eqref{eqn0} to be Chen primes, we are able to obtain the following weaker estimate for the exceptional set.
\begin{theorem} \label{MT2}
For any $A> 0$, we have
\begin{align*}
E(N,2,2)\ll_A N(\log N)^{-A}.
\end{align*}
Here the implied constant is ineffective.
\end{theorem}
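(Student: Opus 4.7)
My plan is to combine the sieve-based transference approach used for Theorem \ref{MT1} with the variance method of Montgomery and Vaughan, substituting the standard Bombieri--Vinogradov level of distribution for the BFI-type bounds used in the power-saving case. Concretely, I would construct non-negative sieve weights $\nu_n$, supported on integers $n$ with $n\in\mathbb{P}$ and $n+2\in\mathbb{P}_2$ (via Chen's switching construction), normalised so that $\sum_{n\leq N}\nu_n\gg N/(\log N)^2$, and set $R(m):=\sum_{n_1+n_2=m}\nu_{n_1}\nu_{n_2}$. Since $R(m)>0$ implies $m$ has a representation of the required form, the theorem reduces to showing $R(m)>0$ for all but $O_A(N(\log N)^{-A})$ of the admissible $m\leq N$. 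By Chebyshev, it suffices to establish the variance estimate
\begin{align*}
\sum_{\substack{m\leq N\\ m\equiv 4\pmod 6}}\bigl(R(m)-M(m)\bigr)^2 \ll_A N^{3}(\log N)^{-B}
\end{align*}
for $B=B(A)$ arbitrarily large, where $M(m)\asymp \mathfrak{S}(m)\,m(\log m)^{-4}$ is the Hardy--Littlewood-type main term from the weights $\nu$ together with the appropriate singular series.

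The variance is evaluated by the circle method applied to $S_\nu(\alpha):=\sum_n\nu_n e(n\alpha)$. On the major arcs, the contribution matches $M(m)$ up to an acceptable error after a dispersion-style analysis that invokes Siegel--Walfisz for the sieve weights in arithmetic progressions; this is precisely the source of the ineffective implied constant. On the minor arcs $\mathfrak{m}$, I would adapt the Montgomery--Vaughan bound
\begin{align*}
\int_{\mathfrak{m}}|S_\nu(\alpha)|^{4}\,d\alpha \leq \Bigl(\sup_{\alpha\in\mathfrak{m}}|S_\nu(\alpha)|\Bigr)^{2}\int_0^{1}|S_\nu(\alpha)|^{2}\,d\alpha,
\end{align*}
with $\|S_\nu\|_2^2=\sum\nu_n^2$ controlled by the fundamental lemma of the sieve, and a pointwise estimate $\sup_{\mathfrak{m}}|S_\nu(\alpha)|\ll_{C} N(\log N)^{-C}$ derived by decomposing $\nu_n$ into its Jurkat--Richert-type linear components, each of which is a truncated convolution amenable to a Vaughan identity combined with Bombieri--Vinogradov for the resulting bilinear forms.

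The main obstacle is that, with $\mathbb{P}_2$ on \emph{both} sides of $m=p_1+p_2$, we sit at the parity barrier for the sieve on each factor, and consequently the mean square $\sum\nu_n^2$ is only a logarithmic factor smaller than $(\sum\nu_n)^2/N$. An MV-style variance bound can therefore extract at most $(\log N)^{-A}$-type savings through Siegel--Walfisz, rather than the power savings attainable with $\mathbb{P}_3$ in Theorem \ref{MT1} via BFI-type distribution. The bulk of the technical work is in arranging the Chen-type construction of $\nu$ so that (i) the resulting bilinear minor arc sums fit into the Bombieri--Vinogradov framework, and (ii) the major arc analysis cleanly outputs a main term matching the predicted $M(m)$, with the singular series $\mathfrak{S}(m)$ bounded below for $m\equiv 4\pmod 6$ so that $M(m)\gg m(\log m)^{-4}$ in the expected range.
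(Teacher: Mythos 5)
Your plan hinges on a weight $\nu$ that is simultaneously (i) nonnegative and supported on Chen primes, so that $R(m)>0$ forces a representation, and (ii) equipped with a major-arc asymptotic $M(m)\asymp \mathfrak{S}(m)m(\log m)^{-4}$ coming from Siegel--Walfisz for $\nu$ in arithmetic progressions. No such weight is available, and this is not a technicality but the parity problem: Chen's switching construction produces a \emph{minorant} of the shape $\Lambda(n)\omega^{-}(n+2)-\omega^{+}(n)\Lambda_{E_3}(n+2)$, which is not pointwise nonnegative, while the genuine Chen-prime weight $\Lambda_2(n)=\Lambda(n)1_{\mathbb{P}_2}(n+2)\rho(n+2,N^{1/15})$ has no known asymptotic even for $\sum_{n\le N}\Lambda_2(n)$, let alone in progressions, so there is no $M(m)$ to centre a variance around. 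If you take $\nu$ to be the switched minorant, then $R(m)=\nu*\nu(m)>0$ gives no representation (and one cannot even bound $\Lambda_2*\Lambda_2(m)\ge\nu*\nu(m)$, since $f*g\ge f*\nu$ from $g\ge\nu$ requires $f\ge 0$); if you take $\nu=\Lambda_2$, the major-arc analysis you describe does not exist. Your closing remark about the parity barrier addresses only the size of $\sum\nu_n^2$, not this more basic obstruction.

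The paper gets around exactly this point by never demanding both properties of the same object. Chen's switched minorant (Lemma~\ref{le_chensieve}) is inserted on \emph{one} factor only; the transference results (Proposition~\ref{prop1-generalized} together with Lemma~\ref{le_bv_chen} and Lemma~\ref{le_fourier}, i.e.\ full-circle exponential sum control for $\Lambda(n)\omega^{+}(n+2)$ and $\omega^{+}(n)\Lambda_{E_3^*}(n+2)$ against the pre-sieve model) replace it, for all $m$ outside a set of size $O(N(\log N)^{-A})$, by the nonnegative model $c\,\omega^{+}(n)\omega^{+}(n+2)$ with $c\gg V(P)^{-1}V(P,N)>0$, the positivity of $c$ being precisely Chen's numerical gain $\mathcal{V}(\omega_{\mathrm{M}}^-)-c_{E_3^*}\mathcal{V}(\omega_{\mathrm{M}}^+)\gg V(P,N)$. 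Only after this does the nonnegativity of the model permit inserting the minorant on the second factor, and the final convolution $\omega^{+}{\omega^{+}}'*\omega^{+}{\omega^{+}}'(m)\gg V(P)^4\mathfrak{S}(m)m$ is evaluated directly by the fundamental lemma. The Siegel--Walfisz input (hence the ineffective constant) enters through the treatment of $\Lambda$ and $\Lambda_{E_3^*}$ on the major arcs, as you anticipated, but without the one-sided minorant plus nonnegative-model architecture your variance framework cannot be set in motion, so the proposal as written has a genuine gap rather than a repairable technical deficiency.
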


Theorems~\ref{MT1} and~\ref{MT2} improve upon the following estimates. For any $A>0$,  we have
\begin{align}\begin{split}\label{eqn1}
E(N,5,7)&\ll_A N(\log N)^{-A},\\
E(N,3,\infty)&\ll_A N(\log N)^{-A},\\
E(N,2,7)&\ll_A N(\log N)^{-A};
\end{split}
\end{align}
the first result is due to Tolev~\cite{tolev}, the second one is due to Meng~\cite{meng}, and the third one (which improves on the other two) is due to Matom\"aki~\cite{matomaki}. All the implied constants appearing in~\eqref{eqn1} are ineffective. 

The ternary equation $m=p_1+p_2+p_3$  with $p_i$ being Chen primes was solved by Matom\"aki and Shao in~\cite{matomaki-shao} for all large $m$. Even though it is not explicitly stated there, the methods of~\cite{matomaki-shao} can be adapted to handle the binary case with only minor modifications (see~\cite[Proposition 2.1]{tera-goldbach} of the second author for a binary version of the method in~\cite{matomaki-shao}), and a back-of-the-envelope calculation seems to yield an estimate of the form
\begin{align*}
E(N,2,2)\ll N(\log\log\log N)^{-c}.
\end{align*} 

\subsection{An application to level of distribution of the M\"obius function}

Let $\mu$ denote the M\"obius function. As a side product of our proof of Theorem~\ref{MT1}, we obtain the following result about the distribution of $\mu$ in arithmetic progressions to very large moduli, when weighted by a suitably factorable function, see Definition~\ref{Def_triplyWF}.

\begin{theorem}[Level of distribution $3/5$ for the M\"obius function with triply well-factorable weights]\label{MT3} 
Let $k\geq 1$, $a\in \mathbb{Z}\setminus \{0\}$, $A\geq 1$ and $\varepsilon>0$ be fixed, with $\varepsilon$ sufficiently small. Let $N\geq 2$ and $P\leq N^{\varepsilon}$. Let $|\lambda_d|\leq \tau_k(d)$ be any triply well-factorable sequence. 
Then, there exists an effective constant $C_\varepsilon$ and an ineffective constant $C_A$ such that 
	\begin{align*}
	&\Bigg|\sum_{\substack{d\leq N^{3/5-\varepsilon}\\d^{-}>P\\(d,a)=1}}\lambda_d\Big(\sum_{\substack{n\leq N\\n\equiv a(d)}}\mu(n)-\frac{1}{\varphi(d)}\sum_{n\leq N}\mu(n)\Big)\Bigg| 
	\leq C_\varepsilon\min\{ C_A N(\log N)^{-A}, NP^{-1/200}\},
\end{align*}
where $d^-$ denotes the smallest prime divisor\footnote{Here we set $1^{-}\coloneqq \infty$.} of $d$.
\end{theorem}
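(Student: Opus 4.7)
The plan is to adapt the Bombieri--Friedlander--Iwaniec dispersion framework to the M\"obius function, exploiting Maynard's extension to triply well-factorable weights to reach level of distribution $3/5$ and Drappeau's refinement to produce the effective $P$-power saving.

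First I would apply Heath-Brown's identity of length $k=3$ to express $\mu(n)\mathbf{1}_{n\leq N}$ as a bounded linear combination of Dirichlet convolutions $\alpha^{(1)}*\cdots*\alpha^{(2k)}$, each factor being either smooth on an interval or the M\"obius function restricted to an interval of length at most $N^{1/3}$. After dyadic decomposition, the theorem reduces to controlling, for each admissible tuple of factor sizes $N_1,\ldots,N_{2k}$ with $\prod_i N_i\asymp N$, a sum of the form
\begin{align*}
T(f)=\sum_{\substack{d\leq N^{3/5-\varepsilon}\\ d^->P,\,(d,a)=1}} \lambda_d\bigg(\sum_{\substack{n\sim N\\ n\equiv a\,(d)}} f(n)-\frac{1}{\varphi(d)}\sum_{\substack{n\sim N\\ (n,d)=1}} f(n)\bigg),
\end{align*}
with $f=\alpha^{(1)}*\cdots *\alpha^{(2k)}$. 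Merging the $\alpha^{(i)}$ into two blocks, $T(f)$ is either a Type~I sum (one block has length below $N^{2/5-O(\varepsilon)}$) or a bilinear Type~II sum with both blocks of length near $N^{1/2}$; the Type~I case is dispatched by completing the inner sum modulo $d$, using a two-factor splitting of $\lambda_d$, and the Weil bound for Kloosterman sums.

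The substantive case is Type~II. Here I would invoke the triple factorisation $d=d_1d_2d_3$ granted by the hypothesis, with sizes $D_1D_2D_3=N^{3/5-\varepsilon}$ to be chosen freely. Linnik's dispersion method expands the square and reduces matters to off-diagonal incomplete Kloosterman sums modulo $d_1d_2d_3$; estimating these by a Weil bound on one factor and the Deshouillers--Iwaniec spectral large sieve on a combination of the other two, and optimising the $D_i$ against the dispersion error, reproduces Maynard's level $3/5$ and gives the ineffective bound $C_AN(\log N)^{-A}$ once a standard zero-free region (via Siegel's theorem) is inserted.

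To promote this to the effective bound $NP^{-1/200}$ I would follow Drappeau's strategy: the hypothesis $d^->P$ allows additional averaging over ``rough'' arithmetic progressions in the dispersion step, and combining this with the Kim--Sarnak bound towards Ramanujan in place of the trivial spectral estimate in Kuznetsov's formula produces a quantitative improvement, yielding a saving of a power of $P$. The explicit exponent $1/200$ arises from balancing this gain against the Heath-Brown combinatorial loss and the dispersion error. The principal obstacle will be implementing this quantitative refinement uniformly across all three factor sizes $D_1,D_2,D_3$ while retaining the level $3/5$: Drappeau's original argument works with only doubly well-factorable weights, and the extra degree of freedom in the triply factorable case requires more intricate bookkeeping of every error term through the dispersion argument.
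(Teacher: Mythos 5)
Your skeleton (Heath--Brown identity, Type I/II splitting, dispersion with the triple factorisation $d=d_1d_2d_3$, Weil plus Deshouillers--Iwaniec) is indeed how the level $3/5$ is reached, and it is essentially how the paper proceeds: it proves variants of Maynard's Propositions 5.1 and 5.2 and runs them through Heath-Brown's identity with a finer-than-dyadic dissection. The genuine gap is in your mechanism for the effective bound $NP^{-1/200}$. You attribute it to using the Kim--Sarnak bound in Kuznetsov's formula and to ``additional averaging over rough progressions'' permitted by $d^->P$; neither of these can produce a power of $P$ saving. The obstruction to an effective power saving is not the quality of the spectral estimates but the possible existence of characters of small conductor (in particular an exceptional character of conductor $\leq P$), against which no improvement towards Ramanujan helps: if you subtract only the principal-character main term and at any point invoke Siegel--Walfisz (as your ``standard zero-free region via Siegel's theorem'' step does), the whole estimate is capped at an ineffective $N(\log N)^{-A}$ and cannot be upgraded to $NP^{-1/200}$.

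The missing idea, which is the heart of the paper's argument (following Drappeau), is to change the main term before dispersing: one works with
\begin{align*}
\mathfrak{u}_P(n;q)=1_{n\equiv 1\,(q)}-\frac{1}{\varphi(q)}\sum_{\substack{\psi\,(q)\\ \mathrm{cond}(\psi)\leq P}}\psi(n),
\end{align*}
i.e.\ one absorbs \emph{all} characters of conductor at most $P$ into the expected main term, so that only characters of conductor $>P$ remain and the multiplicative large sieve then gives a saving of order $N/P$ with no zero-free-region input; this modified main term must then be tracked through every Type I/II estimate and through the dispersion (including re-extracting it after Poisson summation via Gauss-sum bounds for low-conductor characters), which is where the bookkeeping you worry about actually lives. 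Finally, the hypothesis $d^->P$ is not used for extra averaging: it guarantees that the only character mod $d$ of conductor $\leq P$ is the principal one, so the modified main term coincides (up to a negligible coprimality adjustment of size $O(NP^{-1+o(1)})$) with the stated term $\varphi(d)^{-1}\sum_{n\leq N}\mu(n)$, giving the effective $NP^{-1/200}$; the ineffective $C_AN(\log N)^{-A}$ alternative is then obtained separately, by applying Siegel--Walfisz to the finitely many conductors $\leq P$ only in the regime $P\leq(\log N)^{O_A(1)}$, where it is needed.
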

Here the range $3/5$ and the notion of triply well-factorability are closely related to Maynard's~\cite[Theorem 1.1]{MaynardII}. Theorem~\ref{MT3} extends~\cite[Theorem 1.1]{MaynardII} by replacing $\Lambda$ by $\mu$, which is a similar but technically slightly more challenging problem as the M\"obius function is not supported on rough numbers only, and we are additionally able to produce a saving that goes beyond the Siegel--Walfisz savings.

\subsection{Acknowledgements}
The first author received funding from the European Research Council (ERC) under the 
European Union's Horizon 2020 research and innovation programme, grant 
agreement no. 851318. The second author was supported by a Titchmarsh Fellowship and Academy of Finland grant no. 340098. 

The authors thank Kaisa Matom\"aki and James Maynard for helpful comments. 


\section{Proof methods and limitations}

\subsection{The use of nonnegative models}

The proof of Theorem~\ref{MT1} is based on a nonnegative model approach, together with three Key Propositions, stated precisely in Section~\ref{subsec:keyprop}. The Key Propositions all deal with the problem of evaluating the binary additive convolution
\begin{align*}
	f*g(m)\coloneqq \sum_{n_1+n_2=m}f(n_1)g(n_2)
\end{align*}
for all $m\leq N$ outside a power-saving exceptional set. Here one or both of $f,g$ are of the form $\Lambda(n)\omega(n+2)$, with $\Lambda$ being the von Mangoldt function and $\omega$ some sieve weight; thus, these propositions concern binary additive problems for shifted primes weighted by sieve weights. These propositions will be stated in a fairly general sieve setup for possible applications to other additive sieve problems. 

We (informally) say that a function $f_{\textnormal{model}}$ is a \emph{model function} for $f:\mathbb{N}\to \mathbb{C}$, if  we have
\begin{align}\label{eq_fg}
	f*g(m)= f_{\textnormal{model}}*g(m)\bigl(1+o(1)\bigr)   
\end{align}
for all $g$ in a suitable space of ``test functions''  and all $m\leq N$ outside a set of size $O(N^{1-\delta})$.  For certain test functions, Key Propositions~\ref{prop1a} and~\ref{prop1b} tell us that the function
\begin{align}\label{eq_f1}
	f(n)=\Lambda(n)\omega(n+2)
\end{align}
can be modelled by a simpler function in the sense of~\eqref{eq_fg}. The simpler function is given by replacing the sieve $\omega$ by its pre-sieve component -- this concept is defined precisely later, but it roughly means that the sieve level becomes $N^{\delta}$ and only small primes (roughly up to $N^{c\delta}$) are sifted for. As these two Key Propositions do not immediately give us an asymptotic formula, but instead transfer the additive problem to simpler functions, they can be seen as inspired by Green's transference principle~\cite{green}.

Roughly speaking, if $f$ is as in~\eqref{eq_f1}, Key Proposition~\ref{prop1a} gives us~\eqref{eq_fg} with the just mentioned simpler model function in the case where $\omega$ is a sieve of level $N^{1/2-\varepsilon}$ and $g$ is a fairly arbitrary function; Key Proposition~\ref{prop1b} gives a similar statement extended to the case where $\omega$ is a suitable sieve of level somewhat larger than $N^{1/2}$ and $g$ is a pre-sieve of low level on shifted primes. The final ingredient, Key Proposition~\ref{prop2}, gives an asymptotic formula for the left-hand side of~\eqref{eq_fg} when both $f$ and $g$ are of the form~\eqref{eq_f1} with $\omega$ being a pre-sieve. The three Key Propositions act together to give Theorem~\ref{MT1}, as we shall see now informally and later rigorously in Section~\ref{sec:mainthm}.

Let
\begin{align*}
	\Lambda_k(n)\coloneqq \Lambda(n)1_{\mathbb{P}_k}(n+2)\rho(n+2,N^{\alpha_k}),    
\end{align*}
where $\alpha_k>0$ are some small constants and $\rho(\cdot,z)$ is the indicator of integers having no prime factors $\leq z$.
Then, to prove Theorem~\ref{MT1} it suffices to prove that
\begin{align*}
	\Lambda_3*\Lambda_2(m)\gg N^{0.99}    
\end{align*}
for all but $\ll N^{1-\delta}$ values of $m\leq N$, $m\equiv 4\pmod 6$. Let $\nu_k$ be a minorant function for $\Lambda_k$, and assume more specifically that $\nu_k(n)$ is of the form $\Lambda(n)\omega_k(n+2)$, where $\omega_k$ is some sieve weight satisfying
\begin{align*}
	1_{\mathbb{P}_k}(n)\rho(n,N^{\alpha_k})\geq \omega_k(n).   
\end{align*}
Central for our approach is that we use sieve weights that are composed of a pre-sieve and main-sieve. One can think of the pre-sieve as handling small prime factors with high accuracy and the main-sieve handling large prime factors with reduced accuracy. It will come that the main sieve component interacts only weakly with the additive problem as we will now sketch. 

Let $\omega^{\textnormal{pre-sieve},\pm}_k$ stand for the pre-sieve part of $\omega_k$ and 
\begin{align*}
	\nu^{\textnormal{pre-sieve},\pm}_k(n)=\Lambda(n)\omega^{\textnormal{pre-sieve},\pm}_k(n+2).
\end{align*}
 Here the $\pm$ sign indicates whether the pre-sieve is of upper bound or lower bound form.

With this notation, our proof strategy for Theorem~\ref{MT1} can be in simplified form outlined in the following chain of inequalities and approximations (valid outside a power-saving set of $m\leq N$):
\begin{align*}
	\Lambda_3*\Lambda_2(m)&\stackrel{\text{sieve}}{\geq} \nu_3*\Lambda_2(m)\\
	&\stackrel{\text{KP1}}{\approx} \nu_3^{\textnormal{pre-sieve}, -}*\Lambda_2(m)\\
	&\stackrel{\text{KP3}}{\approx} \nu_3^{\textnormal{pre-sieve}, +}*\Lambda_2(m)\\
	&\stackrel{\text{sieve}}{\geq} \nu_3^{\textnormal{pre-sieve},+}*\nu_2(m)\\
	&\stackrel{\text{KP2}}{\approx} \nu_3^{\textnormal{pre-sieve},+}*\nu_2^{\textnormal{pre-sieve},-}(m)\\
	&\stackrel{\text{KP3}}{\approx} \textnormal{[expected main term]}.
\end{align*}
Here, in the first step we applied the minorant function for $\Lambda_3$. In the second step, we used Key Proposition~\ref{prop1a} to replace $\nu_3$ by its pre-sieve component, which is much easier to understand. In the third step, we applied Key Proposition~\ref{prop2} that in this context acts as a fundamental lemma to replace the lower bound sieve $\nu_3^{\textnormal{pre-sieve},-}$ with the corresponding upper bound sieve $\nu_3^{\textnormal{pre-sieve},+}$; in the pre-sieve range,  these two functions are very close to each other in the $\ell^1$ norm. In the fourth step, we applied a minorant to $\Lambda_2$, this being possible thanks to the nonnegativity of the upper bound sieve $\nu_3^{\textnormal{pre-sieve},+}$. In the fifth step, we replaced $\nu_2$ (which turns out to be a sieve of level $N^{1/2+\eta}$ for some small $\eta>0$) by its pre-sieve part using Key Proposition 2. Finally, the resulting additive convolution is a version of $\Lambda*\Lambda(m)$ with additional pre-sieves. The case of $\Lambda*\Lambda(m)$  outside a power-saving exceptional set was handled by Montgomery and Vaughan in~\cite{mv}, and the sifted version can be evaluated with Key Proposition~\ref{prop2} (with the resulting asymptotic formula including the contribution of a possible Siegel zero). 

Note that if $g\geq \nu$ for some (not necessarily nonnegative) function $\nu$, then the inequality
\begin{align*}
	f*g(m)\geq f*\nu(m)    
\end{align*}
can be guaranteed to hold pointwise  only if $f$ is nonnegative. Therefore, in the fourth step above it was important that we were able to obtain a \emph{nonnegative} model function (otherwise, we would have had to apply the vector sieve, which would have lost a vital constant factor). Our exploitation of nonnegative models in additive problems is motivated by the earlier work~\cite{grimmelt-nonneg} of the first author, see also~\cite{matomaki-shao} where Bohr sets are used to produce nonnegative model functions. The Bohr set approach gives, as mentioned before, worse error terms, but is more flexible; it is not immediately clear how our strategy would adapt to the case of Maynard--Zhang type almost twin primes also considered in~\cite{matomaki-shao}.

The proof of Theorem~\ref{MT2} is comparatively simple, the only tool we use is a slightly generalised form of Key Proposition~\ref{prop1a} (see Proposition~\ref{prop1-generalized}). 

\subsection{Discussion of the Key Propositions}

The proof of each of Key Propositions~\ref{prop1a},~\ref{prop1b} and~\ref{prop2} starts with an application of the circle method, which translates the problem of understanding $f*g(m)$  into the realm of understanding the Fourier transforms of $f$ and $g$. Key Propositions~\ref{prop1a} and~\ref{prop1b} use pointwise Fourier information; Key Proposition~\ref{prop1a} uses pointwise information on $\widehat{f}$ on the whole of $[0,1]$, whereas Key Proposition ~\ref{prop1b} uses pointwise information on $\widehat{f}$ only on the major arcs (see Definition~\ref{def_major}) and a bound for $\widehat{g}$ in the complementing minor arcs. Key Proposition~\ref{prop2} uses a more precise treatment of the contribution of the major arcs, in the spirit of Montgomery and Vaughan's work~\cite{mv}.

We thus need different types of Fourier information for $f(n)=\Lambda(n)\omega(n+2)$ with the sieve $\omega$ having as large a level as possible. On the minor arcs, we can only reach the level $N^{1/2-\varepsilon}$ by the work of Matom\"aki~\cite{matomaki} on Bombieri--Vinogradov type bounds for exponential sums. Key Proposition~\ref{prop1a} follows from this result and a version of the Bombieri--Vinogradov theorem which gives power savings. Such a power-saving estimate can indeed be achieved with the large sieve if one uses a main term that takes into account all the characters of conductor $\leq N^{\delta}$, instead of only the principal character.

While Chen's approach provides a minorant for $\Lambda_2(n)$ using sieve switching and sieves of level $N^{1/2-\varepsilon}$, unfortunately we cannot use this minorant; see Subsection~\ref{subsec: limitations} for an explanation of this limitation. Thus, our approach is to do without sieve switching. Without sieve switching, one can still construct a minorant for numbers with at most two prime divisors, provided that one has level of distribution slightly larger than $1/2$, where the best known value is
$$0.511\ldots$$
by work of Greaves~\cite{Greaves1986}. However, we cannot use his type of sieves either, as we can break the $N^{1/2}$ barrier for primes only if the weights enjoy some nice factorability properties. Fourvry and Grupp~\cite{fouvry-grupp} found a working compromise and used Laborde's sieve~\cite{laborde-1979} together with level of distribution estimates (based on the dispersion method) of Fouvry~\cite{fouvry} and Bombieri--Friedlander--Iwaniec~\cite{bfi} to get a minorant for the Chen primes without switching. We extend their approach to cover the whole major arcs and give power savings. This then gives us Key Proposition~\ref{prop1b}. 

Fundamentally, a power saving for applications of the dispersion method is possible because one can apply the large sieve in the same way as in the power-saving Bombieri--Vinogradov estimate mentioned above; there is previous work on some dispersion method estimates with power savings by Drappeau~\cite{drappeau}. There are several additional complications for us when adapting his approach to the work of Fourvry and Grupp~\cite{fouvry-grupp} and the underlying dispersion estimates. In particular, we need to set up a version of the dispersion method that produces power savings and works in quite some generality. This rules out certain results of Fouvry~\cite{fouvry} used in~\cite{fouvry-grupp}. We rely on an extension of Maynard's recent work~\cite{MaynardII} to patch the gaps in our arithmetic information.

The proof of Key Proposition~\ref{prop2} follows the overall strategy of Montgomery and Vaughan's work~\cite{mv}, but the presence of sieve weights complicates matters. Their proof uses that for $(an,q)=1$ one has
\begin{align}\label{eq_introexpchar}
	e\Bigl(\frac{an}{q}\Bigr)=\frac{1}{\varphi(q)}\sum_{\chi(q)}\chi(an)\sum_{b(q)^{*}}\overline{\chi}(b)e\Bigl(\frac{b}{q}\Bigr)
\end{align}
and so they can translate from major arc integrals to character sums. Montgomery and Vaughan then reduce to primitive characters and keep careful track of the weight of each fixed pair of primitive characters. These weights can be called \emph{pseudo-singular series}, where the pair of primitive (trivial) characters to the modulus $1$ gives the classical singular series. They conclude the proof by bounding the pseudo-singular series by the classical one and applying Gallagher's prime number theorem~\cite{gal}. To approach Key Proposition~\ref{prop2}, we translate both the divisibility condition of the sieve weights as well as the additive phase from the circle method (similarly as in~\eqref{eq_introexpchar}) to multiplicative characters. We are then led to more complicated pseudo-singular series that contain sieve weights. The required results for those can be seen as a combination of Montgomery and Vaughan's pseudo-singular series bounds with a fundamental lemma of sieve theory. Here the difficulty lies in not incurring any additional losses. This is necessary, since in the power-saving exceptional set range the savings from Gallagher's prime number theorem and the fundamental lemma are very weak, only of the type $o(\textup{expected main term})$. Additionally, in order to ensure this type of saving even when the exceptional zero exists -- in which case the expected main term can be much smaller -- we sieve separately for primes dividing the exceptional modulus.

\subsection{Limitations}
\label{subsec: limitations}

With our current knowledge, the simplest way to construct a lower bound for Chen primes is by the use of sieve switching, and indeed we apply it in the proof of Theorem~\ref{MT2}. However, its application in a binary additive problem has issues when the exceptional character enters the picture. To get power saving, one has to consider distribution of primes in arithmetic progressions with moduli being a power of the summation length, so that the possible existence of a real zero of a real so-called exceptional character plays a role. We now give a rough sketch of why that is the case. With sieve switching, one constructs a lower bound of the form
\begin{align} \label{eq_limitations1}
	\Lambda_2(n)\geq \Lambda(n)\omega^-(n+2)-c_{\text{switching}} \omega^+(n)\Lambda_{E_3}(n+2)\coloneqq \Lambda_2^{-}(n),
\end{align}
where $\omega^\pm$ are upper and lower bound sieves and $\Lambda_{E_3}(n)$ is a suitably weighted indicator of integers having precisely three prime factors. For simplicity, we assume that the sieves are normalised in such a way that the existence of infinitely many Chen primes then follows from $1-c_{\text{switching}}>0$. 

If we then consider $\Lambda_2*\Lambda_2(m),$ by Key Proposition $\ref{prop1a}$ we can remove the main sieve components from $\omega^{\pm}$, and by Key Proposition $\ref{prop2}$ the lower bound sieve can be replaced with an upper bound one. If the exceptional character $\widetilde{\chi}\pmod{\tilde{r}}$ with bad zero $\widetilde{\beta}$  exists, then, ignoring the contribution of all other characters, we expect that in the additive setup we have
\begin{align*}
	\Lambda(n)\approx \Lambda_{E_3}(n)\approx \omega^+(n)(1-\widetilde{\chi}(n)n^{\widetilde{\beta}-1})\log n.
\end{align*}
These approximations can be heuristically justified by assuming that GRH holds outside the one exceptional zero $\widetilde{\beta}$. In that case, one can calculate 
\begin{align}
	\sum_{n\leq X}f(n)\omega^{\pm}(n+2)e\Bigl(\frac{an}{q}\Bigr)
\end{align}
with $q\leq X^{\delta}$ and for $f= \Lambda$ or $f=\Lambda_{E_3}$ with a high degree of precision. The result is that the main term is the same as the expected main term for the case $f(n)=\omega^+(n)(1-\widetilde{\chi}(n)n^{\widetilde{\beta}-1})$. In Part \ref{Part:IV}, we make this argument rigorous without assuming GRH.

We thus get the expected approximation in an additive setup
\begin{align*}
	\Lambda_2^{-}(n)\approx \omega^+(n)\omega^+(n+2)\bigl((1-c_{\text{switching}})-(\widetilde{\chi}(n)-c_{\text{switching}}\widetilde{\chi}(n+2))n^{\widetilde{\beta}-1})\bigr)\log n.
\end{align*}

In the binary additive problem $\Lambda_2^{-}*\Lambda_2(m)$, the function $\Lambda_2(n)$ may behave like 
\begin{align*}
	\omega^+(n)\omega^+(n+2)(1-\widetilde{\chi}(n)n^{\widetilde{\beta}-1})\log n
\end{align*}
(if we do not apply switching; doing otherwise only exacerbates the problem). We then get a certain Hardy--Littlewood type main term for the additive problem. Its shape is given by the following heuristics. The terms without characters in both summands give rise to the usual main term with singular series $\mathfrak{S}(m)$. Mixed terms where one summand contributes a character and the other does not are small. The terms with $\tilde{\chi}(n)$ give a secondary main term of the form $\mathfrak{S}(m)m^{\tilde{\beta}}$. Finally, the combination of $\widetilde{\chi}(n)$ with the $\widetilde{\chi}(n+2)$ term gives us a similar term with shifted singular series, $\widetilde{\mathfrak{S}}'(m+2)m^{\tilde{\beta}}$. 
So we expect a main term of the form
\begin{align}\label{MTSwitch}
	(1-c_{\text{switching}})\mathfrak{S}(m)m+\widetilde{\mathfrak{S}}(m)m^{\widetilde{\beta}}-c_{\text{switching}}\widetilde{\mathfrak{S}}'(m+2)m^{\widetilde{\beta}}.
\end{align}

As in \cite{mv}, if $m$ is divisible by the exceptional modulus and $\widetilde{\chi}(-1)=-1$, we have  $\widetilde{\mathfrak{S}}(m)=-\mathfrak{S}(m)$.  In that case
\begin{align*}
	\mathfrak{S}(m)m+\widetilde{\mathfrak{S}}(m)m^{\widetilde{\beta}}\approx \mathfrak{S}(m)(1-\widetilde{\beta})\log N.
\end{align*}
For those bad $m$ we have that $\widetilde{\mathfrak{S}}'(m+2)$ is small (as $\widetilde{r}\mid m$ implies that $(\widetilde{r},m+2)$ is small). We consequently get a negative main term
\begin{align*}
\eqref{MTSwitch}\approx \mathfrak{S}(m)\bigl((1-\widetilde{\beta})\log N-c_{\text{switching}}\bigr)<0,
\end{align*}
for any exceptional zero worth its name.

The issue just presented is the reason behind our application of level of distribution $>1/2$ results that are achieved with the dispersion method and  ultimately rely on the spectral theory of automorphic forms. With this increased level of distribution we no longer require sieve switching and avoid the above-mentioned issue. If we restricted our attention to $E(N,3,3)$, or if the exceptional zero does not exist, we could do without this heavy machinery.

We remark that the techniques we employ are insufficient to produce a power saving for sums of two Chen primes, even under the assumption that the exceptional zero does not exist. Our three Key Propositions allow us to remove the main sieve component of a sieve and exchange upper and lower bound pre-sieves. This is not enough to find a nonnegative model after an application of sieve switching as in~\eqref{eq_limitations1}. To obtain a power saving for $E(N,2,2)$ one consequently would need to on the one hand improve the nonnegative model technique we use and on the other hand find a way to deal with an exceptional zero, possibly in a completely different manner.

Alternatively, we could avoid the issue of sieve switching completely if a minor arc bound for $\Lambda(n)\omega(n+2)$ was proved for a sufficiently large level sieve (somewhat bigger than $N^{1/2}$), as we are able to handle the Fourier transform of this function on the major arcs. In another direction, a wide zero-free strip for the relevant $L$-functions would immediately  solve all our problems. However, breaking the $1/2$ barrier on the minor arcs in any form is a challenge, and the difficulty of proving a zero-free strip does not need any explanation.

\section{Notation}

We now introduce some notation that is used throughout the paper. We also define terms related to sieves and the circle method that we need to state our Key Propositions.

\subsection{Asymptotic notation}

We use the standard Landau and Vinogradov notation $O(\cdot)$, $o(\cdot)$, $\ll$, $\gg$. By $A\sim B$ we mean that $A<B\leq 2A$.

\begin{convention}
		By $c$ (respectively $C$) we denote small (respectively large) positive constants that can vary from line to line. Thus, for instance, an estimate $X\ll Y^{-c}$ means that there is some constant $\alpha>0$ such that $X\ll Y^{-\alpha}$.
	
	In contrast, $c_0$ and $c_1$ are fixed small constants that do not vary from line to line. It will come out of our calculations that the choices 
	\begin{align*}
		c_0&=1/1000 \\
		c_1&=c_0/100
	\end{align*}
are admissible.

\end{convention}

\subsection{Variables}

Throughout, we assume $N\geq P\geq 2$ with $N$ large. The variable $N$ stands for the range in Theorems~\ref{MT1} and~\ref{MT2} and the related range of the summands. The variable $P$ relates to sieves; primes up to $P$ will be handled by the pre-sieves. We later choose $P$ to be a small power of $N$.  To be consistent with other applications of the dispersion method, the usage of $N$ in Section~\ref{sec:beyond12} is different.

The variable $p$ always stands for primes, and the variables $m, n$ for natural numbers. We sometimes abbreviate $[D]=\{1,\ldots, D\}$.

\subsection{Major and minor arcs}

In order to apply the circle method, we now fix our choice of the major and minor arcs.

\begin{definition}[Major and minor arcs]\label{def_major}
	Let $Q=N/P^{c_0}$. Define the \emph{major arcs} 
	\begin{align*}
		\mathfrak{M}(P)\coloneqq [0,1)\cap\bigcup_{1\leq q\leq P^{c_0}}\bigcup_{(b,q)=1}\left[\frac{b}{q}-\frac{1}{Q},\frac{b}{q}+\frac{1}{Q}\right] 
	\end{align*}
	and the \emph{minor arcs}
	\begin{align*}
		\mathfrak{m}(P)\coloneqq [0,1)\setminus \mathfrak{M}(P).
	\end{align*}
\end{definition}

Note that we are taking the major arcs to be slightly wider than usual (width $1/Q$ around $a/q$ instead of $1/(qQ)$) as this turns out to be technically a more convenient choice.

\subsection{$L$-functions}
We reserve the letters $\chi$ and $\psi$ to denote Dirichlet characters and denote by $L(s,\chi)$ the associated $L$-function.   We denote by $\chi^{(q)}_{0}$ the principal character $\pmod q$. We will also need the notion of exceptional zeros of $L$-functions.

\begin{definition}[Exceptional zero]\label{def_SZ}
	By an \emph{exceptional zero} of level $R\geq 2$, we mean a real zero $\tilde{\beta}\in (1-10^{-3}/\log R,1)$ of  some $L$-function $L(s,\tilde{\chi})$ with $\tilde{\chi}$ being primitive and having modulus $\leq R$. The character $\tilde{\chi}$ is then called an \emph{exceptional character}, and its modulus $\tilde{r}$ is called the \emph{exceptional modulus}. 
\end{definition}

By the Landau--Page theorem (in the form of~\cite{pintz}), if an exceptional zero $\tilde{\beta}$ of level $R\geq C$ exists, it must be unique, and $\tilde{\chi}$ must be real and unique.

Related to the exceptional zero is Siegel's theorem in which the implied constant is not effectively computable.

\begin{convention}
	Unless otherwise stated, the implied constants in our use of the $\ll$ and $O(\cdot)$ notation are effective. 
\end{convention}

\subsection{Arithmetic functions}

The symbols $\Lambda, \mu, \varphi$ stand for the von Mangoldt, M\"obius and Euler totient functions. By $\tau_k$ we denote the $k$-fold divisor function, and we let $\tau\coloneqq \tau_2$.

For an arithmetic function $a$, we define its $\ell^2$ norm as 
\begin{align*}
	\|a\|_2\coloneqq \left(\sum_{n}|a(n)|^2\right)^{1/2}.
\end{align*} We use the standard notation
\begin{align*}
	e_q(a)\coloneqq e\left(\frac{a}{q}\right).    
\end{align*}
For a real number $P\geq 2$ and integer $n\geq 1$, we factorise $n=n_{\leq P}\cdot n_{>P}$, where the \emph{smooth part} $n_{\leq P}$ and the \emph{rough part} $n_{>P}$ are given by
\begin{align*}
	n_{\leq P}\coloneqq \prod_{p\leq P}p^{v_p(n)},\quad n_{> P}\coloneqq \prod_{p> P}p^{v_p(n)},    
\end{align*}
where $v_p(n)$ is the largest integer $k$ such that $p^k\mid n$. For an integer $n\geq 2$, we also define $n^+$ (respectively $n^{-}$) as the largest (respectively smallest) prime factor of $n$ (with the convention $1^{+}=1$, $1^{-}=\infty$).

We often abbreviate $a\pmod q$ as $a(q)$. Sums over $a(q)$  are taken over a system of representatives of residues classes; similarly, sums over $\chi(q)$ are taken over characters to the modulus $q$. We denote by
\begin{align*}
	\sum_{a(q)^{*}},\quad \sum_{\chi(q)^{*}}    
\end{align*}
sums over primitive residue classes and primitive characters, respectively. 

For two integers $n_1, n_2$, we denote by $(n_1,n_2)$ and $[n_1,n_2]$ the greatest common divisor, respectively the least common multiple, of $|n_1|$ and $|n_2|$. By $\overline{n}$ we denote the inverse of $n\pmod q$ when $q$ is clear from context.

Given a character $\chi\pmod q$ and a factorisation $q=q_1q_2$ with $q_1,q_2\geq 1$ coprime, we can uniquely factorise $\chi=\chi^{(q_1)}\chi^{(q_2)}$, where $\chi^{(q_i)}\pmod{q_i}$ are characters. In particular, we have the complete factorisation of $\chi\pmod q$ as
\begin{align}\label{e23}
	\chi=\prod_{p \mid q}\chi^{(p^{\alpha(q)})},   \end{align}
where $\chi^{(p^{\alpha(p)})}$ is a character modulo $p^{\alpha(p)}$. We denote furthermore by $\chi^{(p^{\alpha^*(p))}}$ the $p$-component of the primitive character inducing $\chi$. By $\tau(\chi)$ we denote the usual Gau{\ss} sum $\sum_{a(q)}\chi(a)e(a/q)$.

We denote by $*$ and $\star$ the additive and multiplicative convolution operators, respectively. That is, for any two arithmetic functions $\alpha$, $\beta$, we write
\begin{align*}
	\alpha*\beta(m)&=\sum_{a+b=m}\alpha(a)\beta(b)\\
	\alpha\star\beta(m)&=\sum_{ab=m}\alpha(a)\beta(b).
\end{align*}

\subsection{Sieves}

The letter $\mathcal{P}$ is reserved for squarefree integers.  We further use the following notation for the case that it is the product of all primes in a certain range: 
\begin{align*}
	\mathcal{P}(w,z)\coloneqq \prod_{w< p\leq z}p,\quad \mathcal{P}(z)\coloneqq \mathcal{P}(1,z).    
\end{align*}
We also need the related rough number indicators
\begin{align*}
	\rho(n,z)\coloneqq 1_{(n,\mathcal{P}(z))=1},\quad \rho(n,w,z)\coloneqq 1_{(n,\mathcal{P}(w,z))=1}.
\end{align*}

\begin{definition}[Sieve] We say that $\omega$ is a \emph{sieve of range $\mathcal{P}$, level $D$ and order $k$}, if 
	\begin{align*}
		\omega(n)=\sum_{d\mid n}\lambda_d
	\end{align*}
	for some coefficient sequence $|\lambda_d|\leq \tau_k(d)$ supported only on $\{ d\leq D:\,\, d|\mathcal{P}\}$. If the range is $\mathcal{P}(1,z)$ for some $z$, we sometimes say that the range is $z$.
\end{definition}

To describe the interaction of a sieve $\omega$ with a sequence that has a local density function $1/\varphi(d)$, we define
\begin{align}\label{def_mathcV}
	\mathcal{V}(\omega)\coloneqq \sum_d \frac{\lambda_d}{\varphi(d)}.
\end{align}
One expects that the sieve weights $\lambda$ approximate the M\"obius function, so that $\mathcal{V}(\omega)$ is comparable to
\begin{align}\label{def_VP}
	V(\mathcal{P})\coloneqq \sum_{d|\mathcal{P}}\frac{\mu(d)}{\varphi(d)}=\prod_{p|\mathcal{P}}\left(1-\frac{1}{p-1}\right).
\end{align}
If $\mathcal{P}=\mathcal{P}(z_1,z_2)$, we  simply denote the quantity above by $V(z_1,z_2)$.

Several of our results will depend on the fact that the appearing sieve weights can be suitably factorised. 
\begin{definition}[Well-factorable sequences] \label{Def_WF} We say that a sequence $\lambda:[D]\to \mathbb{C}$ is \emph{well-factorable of level $D$ and order $k$} if $|\lambda_d|\leq \tau_k(d)$ and, for any $R,S\geq 1$ with $D=RS$, there exist sequences $|\alpha(d)|, |\beta(d)|\leq \tau_k(d)$ supported on $[1,R]$ and $[1,S]$, respectively, such that $\lambda=\alpha\star\beta$.
\end{definition}
As is well known, the important linear sieve weights are essentially well-factorable.

The notion of \emph{triply well-factorable} weights was introduced by Maynard~\cite{MaynardII} as a stronger condition than well-factorability.\footnote{Strictly speaking, Maynard asks for the sequences to be $1$-bounded. This is purely technical and we chose to keep in line with what we need in our application in the case of Well-factorable sequences.}

\begin{definition}[Triply well-factorable sequences] \label{Def_triplyWF} We say that a sequence $\lambda:[D]\to \mathbb{C}$ is \emph{triply well-factorable of level $D$ and order $k$} if $|\lambda_d|\leq \tau_k(d)$ and, for any $R,S,T\geq 1$ with $D=RST$, there exist sequences $|\alpha(d)|, |\beta(d)|,|\gamma(d)|\leq \tau_k(d)$ supported on $[1,R]$, $[1,S]$, $[1,T]$, respectively, such that $\lambda=\alpha\star\beta\star \gamma$.
\end{definition}

\section{Key Propositions}\label{subsec:keyprop}

In this section, we state our three Key Propositions for proving Theorem~\ref{MT1}, which give information about binary additive  convolutions of a certain type outside a power-saving exceptional set.

Before stating the Key Propositions, we need to define the types of sieves that appear in them. When working with a problem involving sieves of large level, it is often very convenient to assume that the sieves at hand factor as the product of a \emph{pre-sieve} and a \emph{main sieve}. Thus, we wish to work with sieves $\omega$ of level $D$ and range $z$ that factor as $\omega=\omega_1\omega_2$ with $\omega_1,\omega_2$ sieves of ranges $\mathcal{P}(P)$ and $\mathcal{P}(P,z)$. Then, the pre-sieve $\omega_1$ is amenable to the fundamental lemma of sieve theory, and $\omega_2$ has been separated from the influence of the small prime factors. In particular, since  $P$ is larger than the major arc cut-off $P^{c_0}$, one may hope that $\omega_2$ behaves much like a constant function in the setup of the circle method. 

While the aforementioned linear sieve is suitable for sieving for rough numbers and enjoys good factorisation properties, the situation is more delicate if one sieves for numbers with few prime factors. In that case, one can no longer expect well-factorable behaviour. We now define the required level and partial well-factorisation properties that we are able to handle on the major as well as minor arcs.

\begin{definition}[Admissible main sieve]\label{def_admis}
Let $N\geq 2$. We call an arithmetic function $\omega:\mathbb{N}\to \mathbb{R}$ an \emph{admissible main sieve} with parameters $P, \varepsilon, k$ if 
\begin{itemize}
    \item $\omega(n)=\sum_{\substack{d\mid n}}\lambda(d)$ is a sieve of level $N^{1/2-\varepsilon}$, range $\mathcal{P}$ and order $k$ where $\mathcal{P}^{-}>P$. 
    \item  $\lambda=\sum_{j\leq C\log N}\alpha_j\star\lambda'_j$, where for each $j$ there exists $t_j\geq 0$ such that $|\alpha_j|\leq 1_{[Y,2Y]}(j)$ for $Y=N^{t_j}$, $0\leq t_j\leq 1/3-\varepsilon$ and $\lambda'_j$ is well-factorable of level $N^{1/2-t_j-\varepsilon}$ and order $k$. 
\end{itemize}
\end{definition}

\begin{remark}
The reason for limiting the level of the sieve to slightly less than $N^{1/2}$ in this definition is the Bombieri--Vinogradov theorem, or more precisely a version of it with additive character twists and well-factorable weights by Matom\"aki~\cite{matomaki}. See also~\cite[Hypothesis 6.3]{matomaki-shao}. 
\end{remark}

Our first Key Proposition allows us to ``transfer'' a convolution of the form~\eqref{eq_fg} involving admissible sieves into a much simpler convolution involving only their pre-sieve parts. Here the test function $g$ can be fairly arbitrary, as long as the loss of taking its $\ell^2$ norm can be compensated for.

\begin{keyprop}[Transferring a convolution of sieves to its pre-sieve part I]\label{prop1a}
 Let $k\geq 1$ and $\varepsilon \in (0,10^{-4})$ be fixed. Let $N\geq 3$ and $(\log N)^{C}\leq P\leq N^{\varepsilon}$. Let
\begin{align*}
f(n)\coloneqq \Lambda(n)\omega_1(n+2)\omega_2(n+2),
\end{align*}
where $\omega_1$ is a sieve of range $P$, level $D_0\leq N^{5\varepsilon}$ and order $k$, and $\omega_2$ is an admissible main sieve with parameters $P,\varepsilon,k$. Let $g:[1,N]\to \mathbb{C}$ be any function. Then, for all $m\in [1,N]$ apart from $\ll N/P^{c_1}$ exceptions, we have
\begin{align*}
f*g(m)=\mathcal{V}(\omega_2)f_{\textnormal{pre-sieve}}*g(m)+O(\|g\|_2N^{1/2} P^{-c_1}),
\end{align*}
where 
\begin{align*}
f_{\textnormal{pre-sieve}}(n)\coloneqq \Lambda(n)\omega_1(n+2).
\end{align*}
\end{keyprop}

Note that, unlike for the case of detecting numbers with at most three prime divisors, there is no admissible main sieve to lower bound numbers with at most two prime divisors. Since the level of admissible main sieves is restricted to $N^{1/2-\varepsilon}$, the parity problem prevents admissible main sieves from detecting products of at most two primes without the use of sieve switching; see~\cite[page 175]{greaves} for a detailed explanation.

Fouvry and Grupp~\cite{fouvry-grupp} successfully constructed a lower bound for Chen primes without the use of sieve switching, by showing that sieves of the following form can be handled on shifted primes.

\begin{definition}[Fouvry--Grupp sieve] \label{Def_FGsieve}
Let  $N\geq 2$ and 
\begin{align*}
g_{\varepsilon}(t)\coloneqq \begin{cases}4/7 & \text{ if } 0\leq t \leq 2 / 7 -\varepsilon\\
11/20 & \text{ if } 2/7-\varepsilon < t \leq 1/3 -\varepsilon \\
1/2 & \text{ if } 1/3-\varepsilon < t \leq 1/2 -\varepsilon.
\end{cases}
\end{align*}
We call an arithmetic function $\omega:\mathbb{N}\to \mathbb{R}$ a \emph{Fouvry--Grupp sieve} with parameters $P,\varepsilon,k$, if 
\begin{itemize}
    \item $\omega(n)=\sum_{\substack{d\mid n}}\lambda(d)$ is a sieve of level $N^{4/7-\varepsilon}$, range $\mathcal{P}$ and order $k$ with $\mathcal{P}^->P$. 
    \item  $\lambda=\sum_{j\leq C \log N}\alpha_j\star\lambda'_j$ where for each $j$ there exists $t_j\geq 0$ such that  $\alpha_j= 1_{[Y,2Y]\cap \mathbb{P}}(j)$ with $Y=N^{t_j}$ and $\lambda'_j$ is well-factorable of level $N^{g_\varepsilon(t_j)-t_j-\varepsilon}$ and order $k$.
\end{itemize}

\end{definition}

\begin{remark}
Note that the class of Fouvry--Grupp sieves includes sequences that may have a level of distribution as large as $N^{4/7-\varepsilon}$. The exponent $4/7$ here corresponds to the level of distribution in the work of Bombieri--Friedlander--Iwaniec~\cite{bfi}. Observe that if $t_j\leq 2/7-\varepsilon$, then $\omega$ is well-factorable, so the $4/7$ level of distribution result in~\cite{bfi} can be used to control the behaviour of $\Lambda(n)\omega(n+2)$. If instead $2/7-\varepsilon \leq t_j\leq 1/3-\varepsilon$, Fouvry and Grupp succeed by using level of distribution results from~\cite{bfi} and~\cite{fouvry}, and also exploiting that $\alpha_j$ is assumed to be a prime indicator function and so can be opened with combinatorial decompositions.
\end{remark}

The second Key Proposition extends Key Proposition~\ref{prop1a} by replacing an admissible main sieve by a Fouvry--Grupp sieve. Since we are lacking a minor arc bound for the primes twisted by Fouvry--Grupp sieves, the price paid for this extension is a restricted choice of test functions.

\begin{keyprop}[Transferring a convolution of sieves to its pre-sieve part II]\label{prop1b}
 Let $k\geq 1$ and $\varepsilon\in (0,10^{-9})$ be fixed. Let $N\geq 3$ and $(\log N)^{C}\leq P\leq N^{\varepsilon}$. Let
\begin{align*}
f(n)\coloneqq \Lambda(n)\omega_1(n+2)\omega_2(n+2),
\end{align*}
where $\omega_1$ is a sieve of range $P$, level $D_0\leq N^{\varepsilon}$ and order $k$ and $\omega_2$ is a Fouvry--Grupp sieve with parameters $P, \varepsilon, k$. Let \begin{align*}
g(n)\coloneqq \Lambda(n)\omega_3(n+2),
\end{align*}
where $\omega_3$ is a sieve of level $D_0\leq N^{\varepsilon}$ and order $k$. Then, for all $m\in [1,N]$ apart from $\ll N/P^{c_1}$ exceptions, we have
\begin{align*}
f*g(m)=\mathcal{V}(\omega_2)f_{\textnormal{pre-sieve}}*g(m)+O((N+\|g\|_2N^{1/2}) P^{-c_1}),
\end{align*}
where 
\begin{align*}
f_{\textnormal{pre-sieve}}(n)\coloneqq \Lambda(n)\omega_1(n+2).
\end{align*}
\end{keyprop}

Key Propositions~\ref{prop1a} and~\ref{prop1b} allow us to remove main sieve components in binary problems, thus reducing the task of estimating the binary additive convolutions of the primes twisted by sieves to the case where the sieves are mere pre-sieves.
The final main ingredient for the proof of Theorem~\ref{MT1} is an extraction of the main term of the binary additive convolution (for most $n$) in the case where both the involved functions only contain a pre-sieve component. We now define the relevant sifting ranges.

\begin{definition}\label{def_PtildPdag} Let $\widetilde{r}$ denote the exceptional modulus to level $P^{c_0}$, and set
\begin{align*}
\widetilde{\mathcal{P}}\coloneqq \prod_{\substack{p|\widetilde{r}\\ p>2}}p,    
\end{align*}
with the interpretation that the product is $1$ if $\tilde{r}$ does not exist. Define furthermore 
\begin{align*}
\mathcal{P}^\dagger\coloneqq  \prod_{\substack{p\nmid \widetilde{r}\\ 2<p\leq P}}p.
\end{align*}
\end{definition}

Although the dimension of the sieving for almost primes of the form $p+2$ is one, our pre-sieves need to be able to handle higher dimensions. A similar phenomenon occurs in the study of sieve weights in short intervals, see for example~\cite[Chapter 6.10]{cribro} and~\cite{matoalmost}. The combinatorial $\beta$ sieve (see~\cite[Chapter 6.4]{cribro} for its precise definition) is our choice of pre-sieve. We will see that taking $\beta=750$ suffices for our purpose, confer $\eqref{eq_betacond1},~\eqref{eq_betacond2}$. Treating the sieves more carefully would make it possible to reduce the value of $\beta$ considerably, but it seems inherent to our approach that the natural choice for problems of dimension $1$, i.e. $\beta=2$, is not sufficient.

\begin{definition}[Admissible pre-sieve]\label{def_admps}
Let $N\geq 2$. We call $\omega$ an \emph{admissible pre-sieve with parameters $P,D_0$} if $\omega(n)=\omega_0(n) 1_{(n,\widetilde{\mathcal{P}})=1}$, where $\omega_0$ is an upper or lower bound beta sieve with $\beta=750$ (see~\cite[Chapter 6.4]{cribro}), of range $\mathcal{P}^\dagger$ and level $D_0\geq P^{1000}$.
\end{definition}

\begin{remark}
The upper and lower bound sieves just defined have the property that
\begin{align*}
\omega^{-}(n)\leq \rho(n,P)\leq \omega^{+}(n).
\end{align*}
Further, since $D_0\geq P^{1000}$, in both upper and lower bound case they fulfil a fundamental lemma to the effect that (recalling the definitions~\eqref{def_mathcV} and~\eqref{def_VP})
\begin{align}\label{vomega}
\mathcal{V}(\omega)=V(P)\bigl(1+100\theta e^{-\frac{\log D_0}{\log P}})\bigr),\quad \textnormal{ for some }|\theta|\leq 1.
\end{align}
See~\cite[Lemma 6.8]{cribro}. 
Note that in the definition of $\omega$ the sieve is required to be exact on the divisors of the exceptional modulus. This will be important in Key Proposition~\ref{prop2} for handling the contribution of the exceptional character. 
\end{remark}

Our third Key Proposition states that we can insert two admissible pre-sieves into the method of Montgomery--Vaughan
\cite{mv} while only incurring an additional error term of fundamental lemma type. As in~\cite{mv}, the main term and error terms depend on the existence and location of a possible exceptional zero. 

\begin{keyprop}[Sums of two sifted primes]\label{prop2}
 There exist functions $\mathcal{M}(m)$, $\mathcal{E}(m)$ such that the following holds. For any choice of $\nu_1, \nu_2$ of the form
\begin{align*}
\nu_i(n)\coloneqq \Lambda(n)\omega_i(n+2),
\end{align*}
where $\omega_i$ are admissible pre-sieves with parameters $P\leq N$, $D_0\leq N^{1/1000}$, we have
\begin{align}\label{eq_prop2_0}
\nu_1*\nu_2(m)=mV(P)^2 \mathfrak{S}(m)\Bigl( \mathcal{M}(m)\bigl(1+O(e^{- c\frac{\log D_0}{\log P}})\bigr)+O\bigl(e^{100\sqrt{\log N}}P^{-c_1}+e^{- c\frac{\log N}{\log P}}\mathcal{E}(m)\bigr)\Bigr)
\end{align}
for all but $\ll N/P^{c_1}$ values of $m\leq N$, where the singular series is given by
\begin{align*}
 \mathfrak{S}(m)\coloneqq 1_{2\mid m}\cdot  2 \prod_{\substack{p\mid m(m+4)\\ p\neq 2}}\Bigl(1+\frac{p-4}{(p-2)^2} \Bigr)\prod_{\substack{p\mid m+2\\ p\neq 2}}\Bigl(1+\frac{2}{p-2} \Bigr)\prod_{\substack{p \nmid  m(m+2)(m+4)\\ p\neq 2}}\Bigl(1-\frac{4}{(p-2)^2} \Bigr).   
\end{align*}
Furthermore, the main term  $\mathcal{M}(m)$ and the error $\mathcal{E}(m)$ satisfy the following:
\begin{enumerate}[(i)]
\item If there are no exceptional zeros of level $P^{c_0}$, then
\begin{align}\label{eq_prop2_1}
\mathcal{M}(m)=\mathcal{E}(m)=1.
\end{align}

\item If there is an exceptional zero $\widetilde{\beta}$ of level $P^{c_0}$ to the modulus $\widetilde{r}$, then 
\begin{align}\label{eq_prop2_2}
  \mathcal{M}(m)\geq  1-m^{\widetilde{\beta}-1}\prod_{\substack{p\mid \widetilde{r}\\ p\nmid m}}\frac{21}{25}-O(\widetilde{r}^{-0.99}),\quad \mathcal{E}(m)= (1-\widetilde{\beta})\log P.
\end{align}
\end{enumerate}
\end{keyprop}

\begin{remark} We observe that $\mathcal{M}(m)>0$ for all $m\geq 2$ if $P$ (and hence $\tilde{r}$) are large enough. Indeed, 
\begin{align*}
1-m^{\tilde{\beta}-1}=1-\exp((\tilde{\beta}-1)\log m)\geq \frac{1}{10} \min\{(1-\tilde{\beta})\log m,1\} \geq c/(\widetilde{r}^{1/2}(\log \widetilde{r})^2),   
\end{align*}
for some $c>0$, where we used the elementary inequality $1-e^{-x}\geq \frac{1}{10}\min\{x,1\}$ and the classical (effective) bound for exceptional zeros. This last bound is $>B\tilde{r}^{-0.9}$ for any $B$ if $\tilde{r}$ is large enough in terms of $B$.
\end{remark}

\part{Sieves and transference}

\section{Proof of Theorem~\ref{MT1} assuming Key Propositions}\label{sec:mainthm}

In this section, we deduce Theorem~\ref{MT1} from Key Propositions~\ref{prop1a}, \ref{prop1b}, \ref{prop2}. To do so, we first construct sieve minorants for the Chen primes and the primes $p$ with $p+2\in \mathbb{P}_3$ that have the shape required in Definitions~\ref{def_admis} and~\ref{Def_FGsieve}.

\subsection{Setting up the sieves}

The goal of this subsection is to construct an admissible main sieve (cf. Definition~\ref{def_admis}) and a Fouvry--Grupp sieve (cf. Definition~\ref{Def_FGsieve}) that are lower bounds for integers with at most $3$, respectively $2$, prime divisors. A key ingredient in the construction is the application of Iwaniec's linear sieve with well-factorable weights.

\begin{definition}
We say that a multiplicative function $g:\mathbb{N}\to [0,1]$ is a \emph{local density function of dimension $\kappa$} if we have
\begin{align*}
\kappa \log \log z-C \leq  \sum_{p\leq z}\frac{g(p)}{p}\leq \kappa \log \log z+C   
\end{align*}
for all $z\geq 3$. 
\end{definition}

\begin{lemma}[Well-factorable linear sieve]\label{lem_wflinearsieve} 

Let $\varepsilon>0$ and $D>z>P>D^{\varepsilon^2/(1+\varepsilon^9)}>2$. Let  $s=\frac{\log D}{\log z}$. There are sieves $\omega_{\textup{LIN}}^\pm(n)=\omega_{\textup{LIN}}^\pm(n;D,z,P,\varepsilon)=\sum_{d\mid n} \lambda^\pm_d$ with range $\mathcal{P}(P,z)$ and level $D$ and order $1$ with the following properties.
\begin{enumerate}
\item We have $$\omega_{\textup{LIN}}^-(n)\leq \rho(n,P,z)\leq \omega_{\textup{LIN}}^+(n).$$ 
\item The weights are ``$J(\varepsilon)$ well-factorable'' in the following sense. There exists some $J(\varepsilon)$ depending only on $\varepsilon$ such that  \begin{align*}
\lambda_d^\pm=\sum_{1\leq j \leq J(\varepsilon)}\lambda^{\pm,(j)}_d,
\end{align*}
where $\lambda^{\pm,(j)}_d$ are well-factorable weights of level $D$ and order $1$.
\item For any local density function $g$ of dimension $1$, we have
\begin{align*}
\sum_{d}\lambda^{-}_d g(d)&\geq V(P,z)\{f(s)+O\bigl(\varepsilon+(\log D)^{-1/6}\bigr) \}\quad  \textnormal{ if } s\geq 2+\varepsilon,\\
\sum_{d}\lambda^+_d g(d)&\leq V(P,z)\{F(s)+O\bigl(\varepsilon+(\log D)^{-1/6}\bigr) \}\quad \textnormal{ if } s\geq 1+\varepsilon,
\end{align*}
where $f,F$ stand for the functions of the linear sieve (see~\cite[Chapter 12.1]{cribro}).
\end{enumerate}
\end{lemma}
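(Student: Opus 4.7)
The plan is to construct these sieves by adapting Iwaniec's well-factorable form of the linear sieve to the range $\mathcal{P}(P,z)$. The Rosser--Iwaniec weights will deliver the upper/lower bound~(1) and the main-term estimate~(3) directly, and a dyadic decomposition along the sizes of the prime factors will produce the well-factorable structure~(2) at the cost of an additional $O(\varepsilon)$ term in the error.

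I would begin with the Rosser--Iwaniec weights $\lambda_d^{\pm}$ of level $D$ and range $\mathcal{P}(P,z)$: namely $\pm\mu(d)$ restricted to squarefree $d=p_1 \cdots p_r$ with $z \ge p_1 > \cdots > p_r > P$ satisfying the Rosser inequalities $p_1 \cdots p_{2k-1}p_{2k}^{3} \le D$. The classical linear sieve analysis (see Chapter~11 of \cite{cribro}) directly yields~(1), and against any dimension-$1$ local density function $g$ produces the bounds in~(3) with the $O((\log D)^{-1/6})$ error alone. To obtain~(2) I would then perform a dyadic decomposition on the scales $\alpha(p)=\log p / \log D$: partition $[\varepsilon^{2}/(1+\varepsilon^{9}),1]$ into intervals of multiplicative width $(1+\eta)$ for a small $\eta=\eta(\varepsilon)$, and use that the hypothesis $P>D^{\varepsilon^{2}/(1+\varepsilon^{9})}$ caps the number of prime factors of any support element at $r\le (1+\varepsilon^{9})/\varepsilon^{2}$, so that the number of ordered scale profiles is bounded by a constant $J(\varepsilon)$ depending only on $\varepsilon$. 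Letting $\lambda_d^{\pm,(j)}$ denote the restriction of $\lambda_d^{\pm}$ to the $j$-th profile, I would verify that each such piece is well-factorable at level $D$: for any factorisation $D=RS$, cutting the ordered prime factors of $d$ at the index where the cumulative log-size first exceeds $\log R$ gives a splitting $d=ab$ with $a\leq R$ and $b\leq S$, provided $\eta$ is chosen small enough (say $\eta \sim \varepsilon/r_{\max}$) that cumulative overshoots never exceed $D^{\varepsilon}$, which is then absorbed by shrinking the target levels slightly.

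The main obstacle I foresee is reconciling the scale-based cut with the Rosser combinatorial conditions, since those couple odd- and even-indexed primes in a way that a given scale cut may disrupt. Profiles for which the cut is incompatible require a slight widening of the scale intervals and the discarding of a thin boundary set of primes. By Mertens for dimension-$1$ densities, the contribution of primes landing in these thin boundary slabs to $\sum_{d}\lambda_d^{\pm}g(d)$ is $O(\varepsilon V(P,z))$, which accounts for the $O(\varepsilon)$ addition to the error in~(3). This is in essence Iwaniec's 1980 well-factorable linear sieve (Chapter~12 of \cite{cribro}) with the sifting range shifted from $\mathcal{P}(z)$ to $\mathcal{P}(P,z)$ and the number of pieces $J(\varepsilon)$ quantified explicitly.
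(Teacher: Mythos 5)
Your plan points at the right classical result—the paper itself disposes of this lemma by citing the argument of Iwaniec's well-factorable linear sieve as presented in~\cite[Proposition 12.18]{cribro} (simpler here because there is no pre-sieve)—but the execution you sketch has a genuine gap: you cannot obtain property (2) by taking the \emph{standard} Rosser--Iwaniec weights and decomposing them afterwards by scale profile. First, the splitting mechanism you propose (cut the ordered primes at the first index where the cumulative size exceeds $\log R$) does not work: it guarantees $a\leq R$ but not $b\leq S$, since $a$ may fall far short of $R$ when the next prime is large; the genuine splitting lemma is a different, interleaving partition that uses the Rosser inequalities in an essential way. Second, and more seriously, for the lower-bound weights the standard Rosser support can contain elements admitting \emph{no} splitting at all: e.g.\ $d=p_1p_2$ with $p_1\approx D^{0.69}$, $p_2\approx D^{0.05}$ satisfies $p_1p_2^{3}\leq D$, yet for $R=D^{0.65}$, $S=D^{0.35}$ no divisor pair $(a,b)$ with $ab=d$, $a\leq R$, $b\leq S$ exists, so any piece of any decomposition containing such a $d$ fails to be well-factorable of level $D$. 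Restricting to profile boxes does not cure this, and your proposed remedy of ``discarding a thin boundary set of primes'' either breaks the minorant property in (1) (for $\omega^-$ you may not shrink the sifting range) or leaves the boundary boxes unfactored.

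The correct route, which is what Iwaniec's construction and~\cite[Ch.~12.7]{cribro} actually do, is to build the discretization into the \emph{construction} of the weights: one imposes the Rosser-type truncation conditions on the discretized scales (slightly stricter than the continuous ones). This keeps (1), because any consistent truncation of the Buchstab iteration yields valid upper/lower bounds; it yields (2) exactly, because on each admissible profile box the weight is a product over boxes and the discretized conditions guarantee the splitting for every factorisation $D=RS$; and the comparison with the untruncated weights costs only the $O(\varepsilon)$ (plus the usual $(\log D)^{-1/6}$) in (3). One small point in your favour: the worry about ``overshoots forcing a slightly smaller level'' is harmless, since a sequence well-factorable of level $D'\leq D$ is automatically well-factorable of level $D$; but this does not repair the two issues above, which concern the choice of weights themselves rather than the level bookkeeping.
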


\begin{proof}
This follows from the arguments in the proof of~\cite[Proposition 12.18]{cribro}. In fact, since we do not have a pre-sieve in the statement, the proof is somewhat simpler. 
\end{proof}

We now construct an admissible main sieve to lower bound $\mathbb{P}_3$ numbers. This is done with the help of a simple weighted sieve.
\begin{lemma} \label{lem_admms}
For any sufficiently small $\varepsilon>0$ and $N^\varepsilon \leq P\leq N^{1/10}$, there exists an admissible main sieve $\omega_{\operatorname{M}}^-(n)$ with parameters $P,\varepsilon,C$ such that for $n\leq N$ we have
\begin{itemize}
\item $\omega_{\operatorname{M}}^-(n)\rho(n,P)\leq \rho(n,N^{1/10})1_{\mathbb{P}_3}(n)$,
\item $\mathcal{V}(\omega_{\operatorname{M}}^-)\gg V(P,N^{1/10})$. 
\end{itemize}
\end{lemma}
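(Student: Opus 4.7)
The plan is to build $\omega_M^-$ as a Kuhn--Richert type weighted sieve from the well-factorable linear sieves of Lemma~\ref{lem_wflinearsieve}. With $D := N^{1/2-\varepsilon}$ and $z_0 := N^{1/10}$, the basic shape is
$$\omega_M^-(n) := \omega_{\textup{LIN}}^-(n; D, z_0, P, \varepsilon) - \sum_{p \in I} w(p)\mathbf{1}_{p \mid n}\, \omega_{\textup{LIN}}^+(n/p; D/p, z_0, P, \varepsilon) + (\text{iterated Buchstab corrections}),$$
where $I \subset (z_0, N^{1/3-\varepsilon}]$ and $w \colon I \to [0,\infty)$ is a Richert-type weight. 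The leading term sifts small prime factors in $(P, z_0]$, while the correction exploits that any $n \leq N$ with $\Omega(n) \geq 4$ must have a prime divisor in $(z_0, N^{1/4}]$, enabling cancellation on such integers.

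The pointwise bound $\omega_M^-(n)\rho(n,P) \leq \rho(n, z_0)\mathbf{1}_{\mathbb{P}_3}(n)$ is verified by a case analysis. If $n$ has a prime factor in $(P, z_0]$, then $\omega_{\textup{LIN}}^-(n) \leq \rho(n, P, z_0) = 0$ by Lemma~\ref{lem_wflinearsieve}(1), while each correction summand is nonnegative since $\omega_{\textup{LIN}}^+ \geq \rho \geq 0$. If $n$ has all primes $> z_0$ but $\Omega(n) \geq 4$, then the smallest prime factor lies in $(z_0, N^{1/4}] \subset I$, and combining $w \geq 1$ on that subrange with $\omega_{\textup{LIN}}^+(n/p) \geq 1$ cancels the leading term (which is $\leq 1$). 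The remaining case $n \in \mathbb{P}_3$ with all primes $> z_0$ has RHS $=1$ and the LHS is bounded by $1$ directly.

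For the lower bound $\mathcal{V}(\omega_M^-) \gg V(P, z_0)$, combining Lemma~\ref{lem_wflinearsieve}(3) with the identity $\mathcal{V}(\mathbf{1}_{p \mid \cdot}\omega') = \mathcal{V}(\omega')/(p-1)$ and Mertens' theorem gives
$$\frac{\mathcal{V}(\omega_M^-)}{V(P, z_0)} = f(5) - \int_I w(N^t) F\!\left(\tfrac{1/2-t}{1/10}\right)\tfrac{dt}{t} + (\text{higher order}) + O\!\left(\varepsilon + (\log D)^{-1/6}\right),$$
and $w$ together with the iterated corrections are calibrated so that the right-hand side is bounded away from $0$. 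The admissibility structure of Definition~\ref{def_admis} then follows: the leading term contributes $t_1 = 0$ paired with the well-factorable weights of $\omega_{\textup{LIN}}^-$ of level $N^{1/2-\varepsilon}$ coming from Lemma~\ref{lem_wflinearsieve}(2); and dyadically decomposing each correction over $p \sim N^{t_j}$ with $1/10 \leq t_j \leq 1/3-\varepsilon$ produces $\alpha_j$ supported in $[N^{t_j}, 2N^{t_j}]$ together with well-factorable $\lambda'_j$ of level $D/N^{t_j} \leq N^{1/2 - t_j - \varepsilon}$, absorbed into the allowed $O(\log N)$ summands.

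The main obstacle is the calibration: a direct computation with the Iwaniec sieve functions $f, F$ shows that the naive choice $I = (z_0, N^{1/4}]$ with $w \equiv 1$ gives an integral essentially equal to $f(5) \approx 1$, so positivity is on a knife's edge. Making the bound strictly positive requires either restricting $I$ and adding iterated Buchstab corrections for integers with four prime factors all above some intermediate scale, or following the Richert tradition of numerically optimizing a smoother weight $w$. Either way, this is a standard but delicate exercise in the theory of $\mathbb{P}_r$ weighted sieves, made workable because we are comfortably within the regime $3 \cdot (1/10) < 1$ that Kuhn-type sieves for $\mathbb{P}_3$ need.
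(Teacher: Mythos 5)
Your construction has the same skeleton as the paper's (a weighted sieve built from the well-factorable linear sieves of Lemma~\ref{lem_wflinearsieve}, with the correction terms decomposed dyadically over $p\sim N^{t_j}$, $t_j\le 1/3-\varepsilon$, against upper-bound sieves of level $N^{1/2-t_j-\varepsilon}$ to meet Definition~\ref{def_admis}), but the decisive step --- exhibiting a weight for which $\mathcal{V}(\omega_{\mathrm{M}}^-)\gg V(P,N^{1/10})$ actually holds --- is left open. You correctly observe that $w\equiv 1$ on $(N^{1/10},N^{1/4}]$ is on a knife's edge, but then defer to ``iterated Buchstab corrections'' or a numerically optimised Richert weight without carrying either out; as written, the lemma is not proved, and the pointwise inequality for whatever iterated correction you would add is also not verified (its sign and admissibility structure would need checking).

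The gap closes with a much simpler choice than the machinery you invoke, and this is what the paper does: take weight $\tfrac12$ on the \emph{longer} range $z\le p<y$ with $z=N^{1/10}$, $y=N^{1/3-\varepsilon}$. The pointwise bound then follows from the elementary fact that if $n\le N$ is $z$-rough and $n\notin\mathbb{P}_3$, then, since $y^3z>N$, at least two of its prime factors lie in $[z,y)$, so $1-\tfrac12\sum_{z\le p<y}1_{p\mid n}\le 0$; no Buchstab iteration is needed, and $y=N^{1/3-\varepsilon}$ sits exactly at the boundary $t_j\le 1/3-\varepsilon$ allowed for admissible main sieves. For the density, Lemma~\ref{lem_wflinearsieve}(3) applied to each dyadic block gives
\begin{align*}
\mathcal{V}(\omega_{\mathrm{M}}^-)\ \ge\ V(P,z)\Bigl(f(5)-\tfrac12\int_{1}^{10/3}\frac{F(5-t)}{t}\,dt+O(\varepsilon)\Bigr),
\end{align*}
and the integral is about $1.48$, so the bracket is roughly $0.25$: a comfortable margin, not a knife's edge. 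In other words, the factor $\tfrac12$ (earned by the ``two prime factors in $[z,y)$'' observation) is exactly what rescues the calibration that blocks your version; with it, the rest of your argument (pointwise case analysis, $\mathcal{V}(1_{p\mid\cdot}\omega')=\mathcal{V}(\omega')/(p-1)$, dyadic admissibility) goes through as in the paper.
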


\begin{proof}
Put $z=N^{1/10}$. Suppose that $y$ satisfies 
\begin{align}
y^3z> N,\quad yz<N^{1/2-\varepsilon}. \label{eq_admms1}
\end{align}
Then, for $n\leq N$, we have
\begin{align*}
1_{\mathbb{P}_3}(n)\rho(n,z)&\geq \Big(1-\frac{1}{2}\sum_{z\leq p<y}1_{p|n}\Big)\rho(n,z)\\
&=\rho(n,P)\Big(\rho(n,P,z)-\frac{1}{2}\sum_{z\leq p<y}1_{p|n}\rho(n,P,z)\Big). \nonumber
\end{align*}

We split the summation over $p$ dyadically and use lower and upper bound sieves as given by Lemma~\ref{lem_wflinearsieve} on the two occurrences of $\rho(n,P,z)$ to define
\begin{align*}
\omega_{\textnormal{M}}^-(n)&\coloneqq \omega_{\textup{LIN}}^{-}(n,N^{1/2-\varepsilon},\mathcal{P}(P,z),\varepsilon)\\
&-\frac{1}{2}\sum_{\substack{K\\\exists j\geq 0:\,\, K=2^j z}}\sum_{K\leq p<\min\{y,2K\}}1_{p|n}\omega_{\textup{LIN}}^+(n,N^{1/2-\frac{\log K}{\log N}-\varepsilon},\mathcal{P}(P,z),\varepsilon).
\end{align*}
The corresponding sieve weights are given by
\begin{align*}
\lambda(d)\coloneqq \lambda^{-}_d-\frac{1}{2}\sum_{\substack{K\\\exists j\geq 0:\,\, K=2^j z}} \sum_{K\leq p<\min\{y,2K\}} \sum_{d=pd'}\lambda^+_{d'}(K),
\end{align*}\
where $\lambda_d^{-}$ and $\lambda^+_{d'}(K)$  are the sieve coefficients corresponding to  $\omega_{\textup{LIN}}^{-}(n,N^{1/2-\varepsilon},\mathcal{P}(P,z),\varepsilon)$ and $\omega_{\textup{LIN}}^+(n,N^{1/2-\frac{\log K}{\log N}-\varepsilon},\mathcal{P}(P,z),\varepsilon)$, respectively. 

To calculate $\mathcal{V}(\omega_{\textnormal{M}}^-)$, we apply the third part of Lemma~\ref{lem_wflinearsieve} separately on each of the sieves. This gives us
\begin{align*}
\mathcal{V}(\omega_{\textnormal{M}}^-)\geq V(P,z)\left(\bigl(f(s)+O(\varepsilon)\bigr)-\frac{1}{2}\sum_{\substack{K\\\exists j\geq 0:\,\, K=2^j z}} \sum_{K\leq p<\min\{y,2K\}} \frac{1}{\varphi(p)}(F(s_K)+O(\varepsilon))\right),
\end{align*}
where $s\coloneqq \frac{\log N}{2\log z}$, $s_K\coloneqq s-\frac{\log K}{\log z}$. In our range of summation, $\frac{\log p}{\log z}=\frac{\log K}{\log z}+O(\frac{1}{\log z})$ and $F(s_K)\gg 1$, so we get
\begin{align*}
\sum_{\substack{K\\\exists j\geq 0:\,\, K=2^j z}} \sum_{K\leq p<\min\{y,2K\}} \frac{1}{\varphi(p)}(F(s_K)+O(\varepsilon))=\sum_{z\leq p < y} \frac{F(s-\frac{\log p}{\log z})(1+O(\varepsilon))}{p}.
\end{align*}
By comparing the sum to an integral, we conclude that
\begin{align*}
\mathcal{V}(\omega_{\textnormal{M}}^-)\geq V(P,z)\left(f(s)-\frac{1}{2}\int_{1}^{\frac{\log y}{\log z}} \frac{F(s-t)}{t}dt+O(\varepsilon)\right).
\end{align*}
Finally, set $y=N^{1/3-\varepsilon}$ which is compatible with~\eqref{eq_admms1} and also makes $\omega_{\textnormal{M}}^{-}$  an admissible main sieve. By a simple numerical calculation (see for example~\cite[Appendix A]{matomaki-shao} for details)
\begin{align*}
\mathcal{V}(\omega_{\textnormal{M}}^-)&\gg  V(P,z)
\end{align*}
for any $\varepsilon>0$ sufficiently small.
\end{proof}

Next we import the construction from~\cite{fouvry-grupp} to construct a Fouvry--Grupp sieve that minorises numbers with at most two prime divisors.

\begin{lemma}\label{lem_fogrs}
For any $P\leq N^{1/15}$ and $0< \varepsilon \leq c$, there exists a Fouvry--Grupp a sieve $\omega_{\operatorname{FG}}^-$ with parameters $P, \varepsilon,C$ such that, for all $n\leq N$,
\begin{itemize}
\item $\omega_{\operatorname{FG}}^-(n)\rho(n,P)\leq \rho(n,N^{1/15})1_{ \mathbb{P}_2}(n) $,
\item $\mathcal{V}(\omega_{\operatorname{FG}}^-)\gg  V(P,N^{1/15})$. 
\end{itemize}

\end{lemma}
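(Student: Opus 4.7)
The plan is to mimic the construction of Lemma~\ref{lem_admms} but now exploit the enlarged level of distribution $N^{4/7-\varepsilon}$ afforded by the Fouvry--Grupp framework (Definition~\ref{Def_FGsieve}); this removes the need for sieve switching. Set $z=N^{1/15}$ and take $y$ to be a small constant multiple of $N^{7/15}$.

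First I would establish the elementary weighted-sieve inequality
\begin{align*}
1_{\mathbb{P}_2}(n)\rho(n,z)\;\geq\;\rho(n,z)-\tfrac{1}{2}\sum_{z<p<y}1_{p\mid n}\rho(n,z),\quad n\leq N.
\end{align*}
This is justified by noting that any $n\leq N$ with $\rho(n,z)=1$ and at least three prime factors $p_1\leq p_2\leq p_3$ satisfies $p_1>z$ and $p_1p_2^2\leq n\leq N$, hence $p_2\leq(N/p_1)^{1/2}<y$; the right-hand sum is then $\geq 2$ and the entire right-hand side is $\leq 0$, matching the left-hand side.

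Second, in parallel with the proof of Lemma~\ref{lem_admms}, I would replace each $\rho(n,z)$ on the right by a linear-sieve weight from Lemma~\ref{lem_wflinearsieve}. The first occurrence becomes $\omega_{\textup{LIN}}^{-}(n;N^{4/7-\varepsilon},\mathcal{P}(P,z),\varepsilon)$, a fully well-factorable sieve of level $N^{4/7-\varepsilon}$. For each dyadic range $p\in[K,2K)$ with $K=2^{j}z$ and $K\leq y$, the corresponding summand has $\rho(n,z)$ replaced by $\omega_{\textup{LIN}}^{+}(n;D_K,\mathcal{P}(P,z_K),\varepsilon)$, where $D_K$ is chosen maximally subject to $K\cdot D_K\leq N^{g_\varepsilon(\log K/\log N)-\varepsilon}$ and $z_K\leq z$ is chosen so that $\log D_K/\log z_K$ stays bounded away from $1$, keeping the upper-bound linear sieve non-degenerate. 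The prime indicator $1_{p\sim K}$ plays the role of $\alpha_j$ in Definition~\ref{Def_FGsieve}, which is precisely the structure required once $\log K/\log N\geq 2/7-\varepsilon$; below this threshold the whole weight remains well-factorable. Summing over the $O(\log N)$ dyadic ranges yields $\omega_{\operatorname{FG}}^{-}$, of the form demanded by Definition~\ref{Def_FGsieve}; the pointwise inequality $\omega_{\operatorname{FG}}^{-}(n)\rho(n,P)\leq\rho(n,N^{1/15})1_{\mathbb{P}_2}(n)$ then follows from combining the weighted-sieve inequality with the lower/upper-bound properties of $\omega_{\textup{LIN}}^{\pm}$.

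Finally, I would verify $\mathcal{V}(\omega_{\operatorname{FG}}^{-})\gg V(P,z)$. Applying part~(3) of Lemma~\ref{lem_wflinearsieve} to each piece and converting the dyadic sum over $K$ to an integral in $t=\log K/\log N$, the resulting lower bound takes the shape
\begin{align*}
\mathcal{V}(\omega_{\operatorname{FG}}^{-})\geq V(P,z)\Bigl(f(s_0)-\tfrac{1}{2}\int_{1/15}^{7/15}\frac{F(s(t))}{t}\,dt+O(\varepsilon)\Bigr),
\end{align*}
where $s_0=(4/7-\varepsilon)/(1/15)$ and $s(t)$ is piecewise defined to match the three regimes of $g_\varepsilon$. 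The positivity of this expression for small $\varepsilon$ is a numerical computation of the type performed by Fouvry and Grupp~\cite{fouvry-grupp}. I expect the main obstacle to be precisely this step: as $K$ approaches $N^{7/15}$, the Fouvry--Grupp constraint forces $D_K$ down to $N^{1/30}$, so the auxiliary range $z_K$ must be taken well below $z$ to keep the upper-bound linear sieve non-degenerate, complicating the piecewise integration across the three regimes of $g_\varepsilon$.
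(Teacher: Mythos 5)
The decisive step of your proposal fails: with the constant Kuhn weights $1-\tfrac12\sum_{z<p<y}1_{p\mid n}$ and the Fouvry--Grupp level function $g_\varepsilon$, the main term $\mathcal{V}(\omega_{\operatorname{FG}}^-)$ comes out \emph{negative}, and not by a margin that careful bookkeeping can recover. With $z=N^{1/15}$ the lower-bound piece contributes at most $V(P,z)\bigl(f(60/7)+O(\varepsilon)\bigr)\approx V(P,z)$, while each subtracted piece attached to a prime $p=N^{t}$ costs at least $V(P,z)\,F\bigl(15(g_\varepsilon(t)-t)\bigr)$ per unit of $dp/p$; your device of shrinking the auxiliary range to $z_K\approx D_K$ when $15(g_\varepsilon(t)-t)<1$ gives exactly the same factor, since $V(P,z_K)\approx V(P,z)\log z/\log z_K$ and $F(1)=2e^{\gamma}$, so effectively $F(s)=2e^{\gamma}/s$ throughout $s\le 3$. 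Hence the subtracted term is at least
\begin{align*}
\tfrac12\Bigl(\int_{1/15}^{2/7}1\,\frac{dt}{t}+\int_{2/7}^{1/3}1\,\frac{dt}{t}+\int_{1/3}^{7/15}\frac{2e^{\gamma}}{15(\tfrac12-t)}\,\frac{dt}{t}\Bigr)
=\tfrac12\ln\tfrac{30}{7}+\tfrac12\ln\tfrac{7}{6}+\tfrac{e^{\gamma}}{15}\cdot 2\ln 7
\approx 0.73+0.08+0.46\approx 1.27,
\end{align*}
in units of $V(P,z)$, which exceeds $1$. (Even pretending the full level $N^{4/7-\varepsilon}$ were available for every $t\le 7/15$, the same computation gives about $1.09>1$.) So the obstacle you flagged as ``a numerical computation of the type performed by Fouvry and Grupp'' is precisely where the argument breaks: constant weights are too lossy to detect $\mathbb{P}_2$ at these levels, which is why a level slightly above $1/2$ only suffices with genuinely more efficient weighted sieves. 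Two smaller points: you need $y\ge N^{7/15}$ (e.g.\ $2N^{7/15}$), not a small constant multiple below it, for the three-prime-factor case with $p_1$ near $z$; and the pointwise inequality needs prime factors counted without multiplicity (or a separate treatment of $n$ divisible by $p^2$, $p>z$), the same caveat as in Lemma~\ref{lem_admms}.

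The paper does not attempt a construction along the lines of Lemma~\ref{lem_admms} at all: its proof of this lemma is a direct citation of the construction and numerical work in Section~IV of Fouvry--Grupp \cite{fouvry-grupp}, which is built on Laborde's weighted sieve \cite{laborde-1979} (this is the ``working compromise'' referred to in the introduction), and the only modification is restricting the sifting range to $\mathcal{P}(P,N^{1/15})$, which is harmless for both the pointwise bound and the lower bound $\mathcal{V}(\omega_{\operatorname{FG}}^-)\gg V(P,N^{1/15})$. If you want a self-contained proof you would have to reproduce a weighting at least as efficient as Laborde's (Richert-type logarithmic weights already change the computation substantially, but the profile $g_\varepsilon$ with its $11/20$ and $1/2$ regimes is exactly why Fouvry--Grupp needed Laborde's sieve rather than the simpler devices), and verify its compatibility with the factorisation structure demanded by Definition~\ref{Def_FGsieve}; the simple composition of Lemma~\ref{lem_wflinearsieve} with weights $\tfrac12$ cannot be salvaged.
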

\begin{proof}
The statement is a consequence of the construction and calculations in~\cite[Section IV]{fouvry-grupp}. The only necessary modification is a restriction of the range to $\mathcal{P}(P,N^{1/15})$, but this is unproblematic.
\end{proof}

We will also require a simple upper bound sieve. 
\begin{lemma}\label{lem_Mupperbound}
For any $P\leq N^{1/10}$ and $\varepsilon\in (0,1/10)$ there exists an admissible main sieve $\omega^{+}_{\textnormal{M}}$ with parameters $P,\varepsilon,C$ such that, for all $n\leq N$,
\begin{itemize}
\item $\rho(n,P,N^{1/10})\leq \omega_{\operatorname{M}}^+(n)$,
\item $\mathcal{V}(\omega^+_{\operatorname{M}})\leq (F(4)+O(\varepsilon)) V(P,N^{1/10})$.
\end{itemize}
\end{lemma}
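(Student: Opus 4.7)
The plan is to take $\omega^+_{\textnormal{M}}(n)\coloneqq \omega^+_{\textnormal{LIN}}(n;N^{1/2-\varepsilon},N^{1/10},P,\varepsilon)$, the upper bound well-factorable linear sieve from Lemma~\ref{lem_wflinearsieve} with level $D=N^{1/2-\varepsilon}$ and sifting range $\mathcal{P}(P,N^{1/10})$. The required pointwise upper bound $\rho(n,P,N^{1/10})\leq \omega^+_{\textnormal{M}}(n)$ is then exactly part~(1) of Lemma~\ref{lem_wflinearsieve}.

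For admissibility in the sense of Definition~\ref{def_admis}, the sieve has range $\mathcal{P}(P,N^{1/10})$ (smallest prime $>P$) and level $N^{1/2-\varepsilon}$; by part~(2) of Lemma~\ref{lem_wflinearsieve}, the coefficients decompose as $\lambda^+=\sum_{j\leq J(\varepsilon)}\lambda^{+,(j)}$ with each $\lambda^{+,(j)}$ well-factorable of level $N^{1/2-\varepsilon}$ and order $1$. Setting $\alpha_j\coloneqq 1_{\{1\}}$ (so that $t_j=0\in[0,1/3-\varepsilon]$ and $Y=1$) and $\lambda'_j\coloneqq \lambda^{+,(j)}$ produces the decomposition $\lambda^{+,(j)}=\alpha_j\star \lambda'_j$ demanded by Definition~\ref{def_admis}; the bound $J(\varepsilon)\leq C\log N$ is trivial for $N$ large.

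For the $\mathcal{V}$-bound, the multiplicative function $g(d)=1/\varphi(d)$ with $g(p)=1/(p-1)$ qualifies as a local density function of dimension $1$ by Mertens' theorem, so part~(3) of Lemma~\ref{lem_wflinearsieve} gives
\[
\mathcal{V}(\omega^+_{\textnormal{M}})\leq V(P,N^{1/10})\bigl\{F(s)+O\bigl(\varepsilon+(\log N)^{-1/6}\bigr)\bigr\}
\]
with $s=(1/2-\varepsilon)/(1/10)=5-10\varepsilon$. Since the linear sieve function $F$ is continuous and monotonically decreasing on $[1,\infty)$, one has $F(5-10\varepsilon)\leq F(4)$ for $\varepsilon\leq 1/10$, which yields the claimed bound $\mathcal{V}(\omega^+_{\textnormal{M}})\leq (F(4)+O(\varepsilon))V(P,N^{1/10})$.

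The argument is thus essentially a one-step application of the well-factorable linear sieve, and I do not anticipate any substantive obstacle. The only minor technicality is to check that the implicit lower bound $P>D^{\varepsilon^2/(1+\varepsilon^9)}$ needed to invoke Lemma~\ref{lem_wflinearsieve} is compatible with the range of $P$ in the intended applications, which it is since $P$ will be a small positive power of $N$ when the lemma is used.
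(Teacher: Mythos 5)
Your proposal is correct and is essentially the paper's own proof: the paper likewise sets $\omega^{+}_{\textnormal{M}}(n)=\omega_{\textup{LIN}}^+(n;N^{1/2-\varepsilon},N^{1/10},P,\varepsilon)$ and reads off all three required properties directly from Lemma~\ref{lem_wflinearsieve}. Your additional remarks (taking $\alpha_j$ supported at $1$ with $t_j=0$ to verify admissibility, and using monotonicity of $F$ to pass from $F(5-10\varepsilon)$ to $F(4)$) just spell out the ``follows immediately'' step and are accurate.
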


\begin{proof}
We set $\omega_{\textnormal{M}}^+(n)=\omega_{\textup{LIN}}^+(n,N^{1/2-\varepsilon},N^{1/10},P,\varepsilon)$. The statement follows immediately from Lemma~\ref{lem_wflinearsieve}.
\end{proof}

\subsection{The main proof}

We require one last ingredient before we begin the proof, namely a type of vector sieve inequality. However, we do not apply it on the vector $(n_1,n_2)$ given by the two variables $n_1,n_2$ in the equation $n_1+n_2=m$, but instead on $(a_i,b_i)$, where $a_ib_i=n_i$ and $a_i$ contains (with multiplicity) all prime divisors of $n_i$ up to $P$. This idea of constructing a lower bound sieve by composition goes back to Selberg~\cite{selberg}. See also~\cite{greaves}, where this approach appears on multiple occasions throughout the book.

\begin{lemma}\label{lem_lowerboundsievcomp}
Let $A,B\geq 0$ and $A^\pm, B^\pm$ such that 
\begin{align*}
AB^{-}&\leq AB\\
\max\{B^-,0\}&\leq B^+ \\
A^-\leq A &\leq A^+.
\end{align*}
Then
\begin{align*}
A^+B^- +(A^- -A^+)B^+  \leq AB.
\end{align*}
\end{lemma}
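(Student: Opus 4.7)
The plan is to prove the inequality $A^+B^- + (A^- - A^+)B^+ \leq AB$ directly, by rewriting the left-hand side and performing a short sequence of monotone replacements that use the three hypotheses one at a time. The starting point is the observation that the left-hand side regroups naturally as
\begin{align*}
A^+B^- + (A^- - A^+)B^+ = A^+(B^- - B^+) + A^- B^+,
\end{align*}
which cleanly separates the roles of $A^+$ and $A^-$.

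From the assumption $\max\{B^-,0\} \leq B^+$ I would first extract the two separate facts $B^- - B^+ \leq 0$ and $B^+ \geq 0$; these are the two sign conditions on the $B$-side that drive the argument. Next I would combine $A \leq A^+$ with $B^- - B^+ \leq 0$ (so that multiplying the inequality reverses direction) to get
\begin{align*}
A^+(B^- - B^+) \leq A(B^- - B^+),
\end{align*}
yielding
\begin{align*}
A^+(B^- - B^+) + A^- B^+ \leq A(B^- - B^+) + A^- B^+ = AB^- + (A^- - A)B^+.
\end{align*}
Then I would use $A^- \leq A$ together with $B^+ \geq 0$ to conclude $(A^- - A)B^+ \leq 0$, so the right-hand side is bounded by $AB^-$. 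Finally, the remaining hypothesis $AB^- \leq AB$ closes the argument.

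The main obstacle is entirely conceptual rather than computational: we are given no sign information on $A^\pm$ or on $B^-$ individually, only on certain differences. This means one cannot bound the left-hand side by naive non-negativity, and each replacement must carefully pair an inequality between two quantities with a multiplier whose sign is known. Once the decomposition $A^+(B^- - B^+) + A^- B^+$ is identified, the chain of replacements is forced, and no further cleverness or calculation is required.
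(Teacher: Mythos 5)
Your proof is correct and is essentially the paper's own argument read in reverse: the paper starts from $AB\geq AB^-=A^+B^-+(A-A^+)B^-$ and performs the same two monotone replacements (using $A\leq A^+$ with $B^-\leq B^+$, then $A^-\leq A$ with $B^+\geq 0$), while you start from the left-hand side and run the identical chain upward to $AB^-\leq AB$. No substantive difference.
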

\begin{proof}
We have
\begin{align*}
AB&\geq AB^-=A^+B^- +(A-A^+)B^-.
\end{align*}
Since $A-A^+\leq 0$, we can bound this from below by
\begin{align*}
\geq A^+B^- +(A-A^+)B^+.
\end{align*}
As $B^+\geq 0$, this is
\begin{align*}
\geq A^+B^- +(A^- -A^+)B^+, 
\end{align*}
as required.
\end{proof}

We have now gathered all the necessary tools to prove our main theorem, assuming the Key Propositions.

\begin{proof}[Proof of Theorem~\ref{MT1}, assuming Key Propositions~\ref{prop1a},~\ref{prop1b}, and~\ref{prop2}]

Let $\varepsilon>0$ be fixed and sufficiently small (the limiting factor being the constructions of the previous subsection).
Let 
\begin{align}\label{eq:D0}
D_0=N^{\varepsilon^2}, \quad P=D_0^{1/A},    
\end{align}
where the constant $A>1000$ is chosen sufficiently large (but with $A$ small enough in terms of $1/\varepsilon$).

We define the functions
\begin{align*}
\Lambda_{2}(n)&\coloneqq \Lambda(n)1_{\mathbb{P}_2}(n+2)\rho(n+2,N^{1/15}),\\
\Lambda_{3}(n)&\coloneqq \Lambda(n)1_{\mathbb{P}_3}(n+2)\rho(n+2,N^{1/10}).
\end{align*}
To prove Theorem~\ref{MT1} it suffices to show the existence of a $\delta>0$ such that
\begin{align}\label{eq_MTproofgoal}
\Lambda_{3}*\Lambda_2(m)\gg N^{0.99}\,\, \text{ for }\,\, m\leq N, m=4(6),\,\, \text{ with }\,\, \ll N^{1-\delta}\,\, \text{ exceptions, }
\end{align}
since the number of representations of $m\leq N$ in the form $m=p_1^i+p_2^j$ with $p_1,p_2$ primes and $\max\{i,j\}\geq 2$ is certainly $\ll N^{1/2}$.  
We use Lemma~\ref{lem_lowerboundsievcomp} with 
\begin{align*}
B&=1_{\mathbb{P}_3}(n+2)\rho(n+2,P,N^{1/10}) \\
A&=\rho(n+2,P) \\
B^-&=\omega_{\textnormal{M}}^-(n+2) &\text{ as in Lemma }~\ref{lem_admms} \text{ with parameters } P,\varepsilon,C \\
B^+&=\omega_{\textnormal{M}}^+(n+2) &\text{ as in Lemma }~\ref{lem_Mupperbound} \text{ with parameters } P,\varepsilon,C \\
A^-&=\omega^-(n+2)  &\text{ as in Definition }~\ref{def_admps} \text{ with parameters } P,D_0 \\
A^+&=\omega^+(n+2)&\text{ as in Definition }~\ref{def_admps} \text{ with parameters } P,D_0
\end{align*}
to get
\begin{align}
\Lambda_{3}(n)&\geq \Lambda(n)\omega^+(n+2)\omega_{\textnormal{M}}^-(n+2)+\Lambda(n)\left(\omega^-(n+2) -\omega^+(n+2) \right)\omega_{\textnormal{M}}^+(n+2) \nonumber \\
&\coloneqq g_1^{\textnormal{M}}(n)+g_2^{\textnormal{M}}(n), \label{eq_g1g2}
\end{align}
say. Since $\Lambda_2(n)\geq 0$ we have 
\begin{align}\label{firstineq}
\Lambda_{3}*\Lambda_2(m)\geq g_1^{\textnormal{M}}*\Lambda_2(m)+g_2^{\textnormal{M}}*\Lambda_2(m).
\end{align}
By construction $\omega^-(n+2)\leq \omega^+(n+2)$, and so $g_2^{\textnormal{M}}(n)\leq 0$. Consequently, we can use the majorant
\begin{align*}
\Lambda_2(n)&\leq \Lambda(n)\omega^+(n+2)\omega_{\textnormal{M}}^+(n+2)\coloneqq g_3^{\textnormal{M}}(n),
\end{align*}
say, to bound
\begin{align*}
|g_2^{\textnormal{M}}*\Lambda_2(m)|\leq |g_2^{\textnormal{M}}*g_3^{\textnormal{M}}(m)|.
\end{align*}
We set 
\begin{align*}
g_1^{\textnormal{P}}(n)&\coloneqq \Lambda(n)\omega^+(n+2)\\
g_2^{\textnormal{P}}(n)&\coloneqq \Lambda(n)\left(\omega^-(n+2) -\omega^+(n+2) \right).
\end{align*}
Note that $\|g_i^{\textnormal{M}}\|_2,\|g_i^{\textnormal{P}}\|_2\ll N^{1/2}(\log N)^{O(1)}$ for both $i$.  Let us for this proof write $=_P$, $\geq_P$, $\leq_P$ to denote that the respective statement holds for all $m\leq N$ with $\ll N/P^{c_1}$ exceptions.

We have by Key Proposition~\ref{prop1a} that
\begin{align}\label{eq_g1'L2}
g_1^{\textnormal{M}}*\Lambda_2(m)=_P\mathcal{V}(\omega_{\textnormal{M}}^-)g_1^{\textnormal{P}}*\Lambda_2(m)+O\left(NP^{-c_1}\right),
\end{align}
and, now applying Key Proposition~\ref{prop1a} twice,
\begin{align}\label{eq_g2g3}
g_2^{\textnormal{M}}*g_3^{\textnormal{M}}(m)=_P \mathcal{V}(\omega_{\textnormal{M}}^+)^2 g_2^{\textnormal{P}}*g_1^{\textnormal{P}}(m)+O\left(NP^{-c_1}\right).
\end{align}
Here the repeated occurrence of $\mathcal{V}(\omega_{\textnormal{M}}^+)$ comes from the fact that the pre-sieve components of $g_1^{\textnormal{M}}$ and $g_3^{\textnormal{M}}$ are the same. From~\eqref{firstineq},~\eqref{eq_g1'L2},~\eqref{eq_g2g3} we conclude that 
\begin{align}\label{secondineq}
\Lambda_3*\Lambda_2(m)\geq_P \mathcal{V}(\omega_{\textnormal{M}}^-)g_1^{\textnormal{P}}*\Lambda_2(m)-|\mathcal{V}(\omega_{\textnormal{M}}^{+})^2g_2^{\textnormal{P}}*g_1^{\textnormal{P}}(m)|+O(NP^{-c_1}) .   
\end{align}

We first deal with the second term on the right of~\eqref{secondineq}. We can apply Key Proposition~\ref{prop2} once to each of the $\omega^{\pm}$ components of $g_2^{\textnormal{P}}$. The main term is the same in both cases and so it cancels out. We get
\begin{align} \label{eq_g2g1'bound}\begin{split}
|\mathcal{V}(\omega_{\textnormal{M}}^+)^2 g_2^{\textnormal{P}}*g_1^{\textnormal{P}}(m)|&=_P O\Bigl(m\mathcal{V}(\omega_{\textnormal{M}}^+)^2 V(P)^2 \mathfrak{S}(m)\bigl(\mathcal{M}(m)e^{- c\frac{\log D_0}{\log P}}+e^{100\sqrt{\log N}}P^{-c_1}\\
&\qquad \qquad+e^{- c\frac{\log N}{\log P}}\mathcal{E}(m)\Bigr).
\end{split}
\end{align}

We are left with lower bounding the first term on the right of~\eqref{secondineq}.  We apply sieves similarly as before, but we can replace the admissible main sieve with a Fouvry--Grupp sieve, as $g_1^{\textnormal{P}}$ admits a minor arc bound. 
We have $g_1^{\textnormal{P}}(n)\geq 0$ as $\omega^{+}$ is an upper bound sieve. An application of Lemma~\ref{lem_lowerboundsievcomp}, similar to the one in~\eqref{eq_g1g2}, gives now
\begin{align}\label{eq_g1'L22}
g_1^{\textnormal{P}}*\Lambda_2(m)&\geq  g_1^{\textnormal{P}}*(g_1^{\textnormal{FG}}+g_2^{\textnormal{M}})(m),
\end{align}
where $g_2^{\textnormal{M}}$ is as before and $g_1^{\textnormal{FG}}$ is as $g_1^{\textnormal{M}}$, but with $\omega_{\textnormal{FG}}^-$ (as in Lemma~\ref{lem_fogrs})in place of $\omega_{\textnormal{M}}^-$.

By Key Proposition~\ref{prop1a} and Key Proposition~\ref{prop2} (applied similarly as in~\eqref{eq_g2g1'bound}) we have 
\begin{align}\label{g1'g2FG}\begin{split}
 g_1^{\textnormal{P}}*g_2^{\textnormal{M}}(m)&=_P \mathcal{V}(\omega_{\textnormal{M}}^-) g_1^{\textnormal{P}}*g_2^{\textnormal{P}}(m)+O(NP^{-c_1})\\
&=_P O\Bigl(m  \mathcal{V}(\omega_{\textnormal{M}}^-) V(P)^2 \mathfrak{S}(m)\bigl(\mathcal{M}(m)e^{-c\frac{\log D_0}{\log P}}+e^{100\sqrt{\log N}}P^{-c_1}\\
&\qquad\quad +e^{- c\frac{\log N}{\log P}}\mathcal{E}(m)\bigr)+NP^{-c_1}\Bigr).
\end{split}
\end{align}
Since $g_1^{\textnormal{P}}$ only contains one sieve of suitable range and level and the main sieve component of $g_2^{\textnormal{FG}}$ is a Fouvry--Grupp sieve, we deduce from Key Proposition~\ref{prop1b} that
\begin{align}\label{g1'g1FG}
 g_1^{\textnormal{P}}*g_1^{\textnormal{FG}}(m)=_P\mathcal{V}(\omega_{\textnormal{FG}}^-)g_1^{\textnormal{P}}*g_1^{\textnormal{P}}(m)+ O(NP^{-c}).
\end{align}
Finally, the sieve part of $g_1^{\textnormal{P}}$ is only an admissible pre-sieve, so Key Proposition~\ref{prop2} gives
\begin{align}\label{g1'g1'}
g_1^{\textnormal{P}}*g_1^{\textnormal{P}}(m)=_P mV(P)^2 \mathfrak{S}(m)\Bigl( \mathcal{M}(m)\bigl(1+O(e^{- c\frac{\log D_0}{\log P}})\bigr)+O(e^{100\sqrt{\log N}}P^{-c_1}+e^{- c\frac{\log N}{\log P}}\mathcal{E}(m))\Bigr).
\end{align}

We have
\begin{align*}
\mathcal{V}(\omega^-_{\textnormal{FG}})\asymp\mathcal{V}(\omega^-_{\textnormal{M}})\asymp \mathcal{V}(\omega^+_{\textnormal{M}})\asymp V(P,N)
\end{align*}
and so from~\eqref{secondineq},~\eqref{eq_g2g1'bound},~\eqref{eq_g1'L22},~\eqref{g1'g2FG},~\eqref{g1'g1FG}, and~\eqref{g1'g1'} we conclude that outside of a set of size $\ll N/P^{c_1}$ we have
\begin{align}\label{eq_L2L3lower}\begin{split}
&\Lambda_3*\Lambda_2(m)\\
&\gg m V(N)^2 \mathfrak{S}(m)\Bigl( \mathcal{M}(m)\bigl(1-Ce^{- c\frac{\log D_0}{\log P}}\bigr)-Ce^{100\sqrt{\log N}}P^{-c_1}-Ce^{- c\frac{\log N}{\log P}}\mathcal{E}(m)\Bigr)+O(NP^{-c_1}).
\end{split}
\end{align}

Suppose now that for a suitable choice of $\tilde{c},\tilde{C}$ (which we now fix) we have
\begin{align}\label{eq_Mgoal}
\mathcal{M}(m)(1-\tilde{C}e^{-\tilde{c} \frac{\log D_0}{\log P}})-\tilde{C}(e^{100\sqrt{\log N}}P^{-c_1}+e^{-\tilde{c}\frac{\log N}{\log P}}\mathcal{E}(m))\geq P^{-c_1/2}.
\end{align}
Then, using $\mathfrak{S}(m)\gg 1$, we see from~\eqref{eq_L2L3lower} that for $m\leq N, m\equiv 4 \pmod 6$ with $\ll N/P^{c_1/10}$ exceptions we have
\begin{align*}
\Lambda_3*\Lambda_2(m)\gg \frac{N}{(\log N)^2} P^{-c_1},   
\end{align*}
and so~\eqref{eq_MTproofgoal} holds with $\delta=\varepsilon^2c_1/(10A)$. We are then left with verifying~\eqref{eq_Mgoal}.

If $A$ is chosen large enough in~\eqref{eq:D0}, we have
\begin{align}\label{conditions}
 \tilde{C}e^{-\tilde{c}A}\leq 1/100,
\end{align}
with $\tilde{c}, \tilde{C}$ being the constants in~\eqref{eq_Mgoal}.
Then we have
\begin{align}\label{bound1}
1-\tilde{C} e^{- \tilde{c}\frac{\log D_0}{\log P}}\geq 99/100.
\end{align}
Furthermore,
\begin{align}
	\label{eq_bound2}
	\tilde{C}e^{100\sqrt{\log N}}P^{-c_1}\leq 1/1000
\end{align}
for all sufficiently large $N$.

We now split the rest of the argument into two cases.

\textbf{Case 1.} If the exceptional zero does not exist, then by~\eqref{bound1} we have
\begin{align*}\mathcal{M}(m)(1-\tilde{C} e^{- \tilde{c}\frac{\log D_0}{\log P}})\geq 99/100
\end{align*}
and
\begin{align*}
\tilde{C}(e^{100\sqrt{\log N}}P^{-c_1}+e^{- \tilde{c}\frac{\log N}{\log P}}\mathcal{E}(m))\leq \tilde{C}(e^{100\sqrt{\log N}}P^{-c_1}+e^{- \tilde{c}\frac{\log D_0}{\log P}})\leq \frac{1}{90}
\end{align*}
 by~\eqref{conditions} and~\eqref{eq_bound2}. Therefore~\eqref{eq_Mgoal} holds in this case in a much stronger from.

\textbf{Case 2.} If the exceptional modulus $\widetilde{r}$ with exceptional zero $\widetilde{\beta}$ exists, then by~\eqref{eq_prop2_2} we have
\begin{align*}
  \mathcal{M}(m)(1-\tilde{C} e^{- \tilde{c}\frac{\log D_0}{\log P}})&\geq \frac{99}{100}\Big(1-m^{\widetilde{\beta}-1}\prod_{\substack{p\mid \widetilde{r}\\ p\nmid m}}\frac{21}{25}-C\tilde{r}^{-0.99}\Big),\\
  \mathcal{E}(m)&= (1-\widetilde{\beta})\log P.
\end{align*}
We follow the arguments in~\cite[Section 8]{mv} and first discard those $m\leq N$ for which $(m,\widetilde{r})>P^{c_1/10}$.  The number of discarded $m$ is at most
\begin{align*}
\sum_{\substack{d\mid \widetilde{r}\\ d>P^{c_1/10}} } \sum_{\substack{m\leq N \\d \mid m}}1 \ll   NP^{-c_1/10}\tau(\widetilde{r}) \ll N/P^{c_1/5},
\end{align*}
where we used that $\widetilde{r}\leq P^{c_0}$. If there exists a prime such that $p\mid \widetilde{r}$, $p\nmid m$, then
\begin{align*}
\mathcal{M}(m)\geq 1-\frac{21}{25}-C\tilde{r}^{-0.99}\geq \frac{1}{7},
\end{align*}
say, provided that $N$ (and hence $\tilde{r}$) is large. Then we obtain~\eqref{eq_Mgoal} as in the case of no exceptional zeros (again, in a much stronger form). Since we are only considering $m$ with $(m,\widetilde{r})\leq P^{c_1/10}$, and since $\tilde{r}/2^j$ is squarefree for some $j\leq 3$, the other case that for all $p\mid \widetilde{r}$ we have $p\mid m$ can only happen if 
\begin{align}\label{bound2}
\widetilde{r}\leq 8 P^{c_1/10}.
\end{align}
By the inequality $1-e^{-x}\geq \min\{1,x\}/10$, for $P\leq m\leq N$ we get 
\begin{align*}
\mathcal{M}(m)&\geq\frac{1}{10} (1-\widetilde{\beta})\log P-C\tilde{r}^{-0.99}.
\end{align*}
By~\eqref{conditions}, we have
\begin{align*}
\tilde{C}e^{-\tilde{c}\frac{\log N}{\log P}}\mathcal{E}(m)\leq  \tilde{C}e^{-\tilde{c}A} (1-\widetilde{\beta})\log P\leq \frac{1}{100} (1-\tilde{\beta})\log P.
\end{align*}

Putting everything together, by using~\eqref{bound1},~\eqref{bound2}, and the classical (effective) bound for the distance of  exceptional zeros from $1$, we conclude that
\begin{align*}
 &\mathcal{M}(m)(1-\tilde{C}e^{-\tilde{c}\frac{\log D_0}{\log P}})-\tilde{C}(e^{100\sqrt{\log N}}P^{-c_1}+e^{-\tilde{c}\frac{\log N}{\log P}}\mathcal{E}(m))\\
 &\geq \frac{1-\tilde{\beta}}{20}\log P-C(\tilde{r}^{-0.99}-P^{-c_1/2})\\
 &\geq \frac{\alpha}{\tilde{r}^{1/2}(\log \tilde{r})^2}-CP^{-c_1/2}\\
 &\geq \frac{\alpha}{100}P^{-c_1/10}\end{align*}
for some (effective) $\alpha>0$, provided again that $N$ (and hence $P$) is large enough. Thus, we get~\eqref{eq_Mgoal} outside an exceptional set of size $O(N/P^{c_1/5})=O(N^{1-\frac{\varepsilon^2 c_1}{5 A}})$. This concludes the proof.
\end{proof}

\part{Level of distribution estimates with power savings}

\section{Bombieri--Vinogradov range -- Proof of Key Proposition~\ref{prop1a}}

\subsection{Reduction to exponential sums}\label{sec: prop1a}

Our task in this section is to prove Key Proposition~\ref{prop1a}. We shall in fact prove it in a slightly more general form of Proposition~\ref{prop1-generalized} below, as this generalisation will be  needed for Theorem~\ref{MT2}. We begin by reducing the problem to estimating exponential sums. This reduction is also needed for the proof of Key Propositions~\ref{prop1b} and~\ref{prop2}.

\begin{lemma}\label{le_fourier} Let $\eta>0$ and $N\geq 1$. Let $f,g:[1,N]\to \mathbb{C}$ be functions. Suppose that we have
\begin{align}\label{eq1b}
\sup_{\alpha \in \mathbb{R}}\left|\sum_{n\leq N}f(n)e(\alpha n)\right|\leq \eta N.    
\end{align}
Then, for all but $\ll \eta^{2/3} N$ integers $m\in [1,N]$, we have
\begin{align*}
|f*g(m)|\leq \eta^{2/3} N^{1/2}\|g\|_2.    
\end{align*}
\end{lemma}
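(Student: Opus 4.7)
The plan is a clean $L^{\infty}$-times-$L^{2}$ Fourier argument followed by Chebyshev's inequality. First I would write $\widehat{f}(\alpha) \coloneqq \sum_{n\leq N} f(n) e(\alpha n)$ and similarly for $\widehat{g}$, and note that $f*g$ is supported on $[2,2N]$. By Parseval's identity applied to $f*g$,
\begin{align*}
\sum_{m} |f*g(m)|^2 \;=\; \int_0^1 |\widehat{f}(\alpha)|^2 |\widehat{g}(\alpha)|^2\, d\alpha.
\end{align*}

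Next, I would pull out the supremum of $|\widehat{f}|$ and use Parseval once more on $g$:
\begin{align*}
\int_0^1 |\widehat{f}(\alpha)|^2 |\widehat{g}(\alpha)|^2 \, d\alpha
\;\leq\; \Bigl(\sup_{\alpha \in \mathbb{R}} |\widehat{f}(\alpha)|\Bigr)^{2} \int_0^1 |\widehat{g}(\alpha)|^2 \, d\alpha
\;\leq\; (\eta N)^2 \|g\|_2^2,
\end{align*}
where the last step uses the hypothesis \eqref{eq1b}.

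Finally, I would apply Chebyshev's inequality with threshold $T \coloneqq \eta^{2/3} N^{1/2} \|g\|_2$: the number of $m \leq 2N$ for which $|f*g(m)| > T$ is at most
\begin{align*}
\frac{1}{T^2}\sum_{m} |f*g(m)|^2 \;\leq\; \frac{\eta^2 N^2 \|g\|_2^2}{\eta^{4/3} N \|g\|_2^2} \;=\; \eta^{2/3} N,
\end{align*}
which (restricting to $m\in [1,N]$) gives the claimed exceptional set bound and establishes the stated inequality for all other $m$.

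There is no serious obstacle here: the statement is essentially the standard ``$\ell^{\infty}$ Fourier bound implies pointwise control of additive convolutions outside a small exceptional set'' lemma, and the exponent $2/3$ is exactly what one gets by balancing the Parseval $L^2$ bound against a Chebyshev threshold. The only minor care needed is to check that the convolution is supported on an interval of length $O(N)$ so that the exceptional set is indeed of size $O(\eta^{2/3} N)$ after truncation to $m \in [1,N]$.
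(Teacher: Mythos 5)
Your proof is correct. It rests on the same two ingredients as the paper's argument -- the pointwise hypothesis on $\widehat{f}$ and Parseval applied to $g$ -- but you package them as a second-moment bound plus Chebyshev: you compute $\sum_m |f*g(m)|^2=\int_0^1|\widehat{f}\,\widehat{g}|^2\,d\alpha\leq (\eta N)^2\|g\|_2^2$ and then threshold. The paper instead dualizes: it sums $c_m\,f*g(m)$ over the exceptional set $\mathcal{S}$ with unimodular weights $c_m$, writes this as $\int_0^1 F(\alpha)G(\alpha)S(-\alpha)\,d\alpha$, pulls out $\sup|F|\leq \eta N$, and applies Cauchy--Schwarz and Parseval to $G$ and $S$, which yields $|\mathcal{S}|\ll \eta^{2/3}N$. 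The two routes are essentially equivalent (the paper's Cauchy--Schwarz step is the linearized form of your second moment), and your version is if anything slightly cleaner, giving the exceptional-set bound $\eta^{2/3}N$ with no implied constant; the paper's dual formulation has the minor advantage of adapting directly to the major/minor-arc split used in Lemma~\ref{lem_6.2}, where one only has $\sup$ control of $\widehat{f}$ on part of the circle. Your remark about the support of $f*g$ in $[2,2N]$ correctly disposes of the only truncation issue.
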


\begin{proof}
Let $\mathcal{S}\subset [1,N]$ be the set of $m\leq N$ for which
\begin{align*}
|f*g(m)|\geq  \eta^{2/3} N^{1/2}\|g\|_2
\end{align*}
Pick unimodular complex numbers $c_m$ such that
\begin{align*}
 c_m(f*g(m))\geq  \eta^{2/3} N^{1/2}\|g\|_2 
\end{align*}
for $m\in \mathcal{S}$. 

Then, summing over $m\leq N$ and applying the orthogonality of characters, we obtain
\begin{align}\begin{split}\label{eq2}
 \eta^{2/3} N^{1/2}\|g\|_2|\mathcal{S}|&\leq\sum_{n_1,n_2\leq N}f(n_1)g(n_2)c_{n_1+n_2}1_{\mathcal{S}}(n_1+n_2)\\
&= \int_{0}^{1} F(\alpha)G(\alpha)S(-\alpha) \, d\alpha, 
\end{split}
\end{align}
where
\begin{align*}
F(\alpha)&\coloneqq \sum_{n\leq N}f(n)e(n\alpha),\\
G(\alpha)&\coloneqq \sum_{n\leq N}g(n)e(n\alpha),\\
S(\alpha)&\coloneqq \sum_{n\leq N}c_n1_{\mathcal{S}}(n)e(n\alpha).
\end{align*}

Now, by the assumption~\eqref{eq1b}, we can apply Cauchy--Schwarz and Parseval to~\eqref{eq2} to conclude that
\begin{align*}
 \eta^{2/3} N^{1/2}\|g\|_2|\mathcal{S}|&\ll \eta N\left(\int_{0}^1 |G(\alpha)|^2\, d\alpha\right)^{1/2} \left(\int_{0}^1 |S(\alpha)|^2\, d\alpha\right)^{1/2}\\
&\ll \eta N \|g\|_2|\mathcal{S}|^{1/2},
\end{align*}
which implies
\begin{align*}
|\mathcal{S}|\ll  \eta^{2/3} N,
\end{align*}
as desired.
\end{proof}

Recall our notions of major and minor arcs from Definition~\ref{def_major}. We also have a slight variant of the previous lemma where we assume only \emph{major arc} control on $f$, but require additionally \emph{minor arc} control on $g$; this will be needed in the proof of Key Proposition~\ref{prop1b} later on.

\begin{lemma}\label{lem_6.2}
Let $\eta>0$ and $N\geq 1$. Let $f,g:[1,N]\to \mathbb{C}$ be functions. Suppose that we have
\begin{align}\label{e40}
\sup_{\alpha \in \mathfrak{M}}\left|\sum_{n\leq N}f(n)e(\alpha n)\right|\leq \eta N    
\end{align}
and
\begin{align}\label{e41}
 \sup_{\alpha \in \mathfrak{m}}\left|\sum_{n\leq N}g(n)e(\alpha n)\right|\leq \eta N.    
\end{align}
Then, for all but $\ll  \eta^{2/3} N$ integers $m\in [1,N]$, we have
\begin{align*}
|f*g(m)|\leq  \eta^{2/3} N^{1/2}(\|f\|_2+\|g\|_2).  
\end{align*}
\end{lemma}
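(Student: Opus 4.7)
The plan is to mimic the proof of Lemma~\ref{le_fourier} almost verbatim, the only difference being that the circle $[0,1)$ must be split into major and minor arcs so that each piece of the hypothesis is used where it applies.

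First I would let $\mathcal{S}\subset [1,N]$ be the set of ``exceptional'' $m$, i.e.\ those for which $|f*g(m)|\geq \eta^{2/3}N^{1/2}(\|f\|_2+\|g\|_2)$, and choose unimodular $c_m$ so that $c_m(f*g(m))\geq \eta^{2/3}N^{1/2}(\|f\|_2+\|g\|_2)$ on $\mathcal{S}$. Summing and applying orthogonality, I obtain
\begin{align*}
\eta^{2/3}N^{1/2}(\|f\|_2+\|g\|_2)|\mathcal{S}|\leq \int_0^1 F(\alpha)G(\alpha)S(-\alpha)\,d\alpha,
\end{align*}
with $F,G,S$ defined exactly as in the proof of Lemma~\ref{le_fourier}.

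Next I would split the integral as $\int_{\mathfrak{M}}+\int_{\mathfrak{m}}$. On $\mathfrak{M}$, I use the pointwise bound $|F(\alpha)|\leq \eta N$ from \eqref{e40}, pull it out, and apply Cauchy--Schwarz together with Parseval to estimate
\begin{align*}
\Bigl|\int_{\mathfrak{M}} F G S\,d\alpha\Bigr|\leq \eta N\Bigl(\int_0^1|G|^2\Bigr)^{1/2}\Bigl(\int_0^1|S|^2\Bigr)^{1/2}\ll \eta N\,\|g\|_2\,|\mathcal{S}|^{1/2}.
\end{align*}
Symmetrically, on $\mathfrak{m}$ I use $|G(\alpha)|\leq \eta N$ from \eqref{e41} to bound
\begin{align*}
\Bigl|\int_{\mathfrak{m}} F G S\,d\alpha\Bigr|\ll \eta N\,\|f\|_2\,|\mathcal{S}|^{1/2}.
\end{align*}

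Adding the two estimates, the left-hand side is $\ll \eta N(\|f\|_2+\|g\|_2)|\mathcal{S}|^{1/2}$, so comparing with the earlier lower bound yields
\begin{align*}
\eta^{2/3}N^{1/2}|\mathcal{S}|^{1/2}\ll \eta N,
\end{align*}
which gives $|\mathcal{S}|\ll \eta^{2/3}N$, as claimed. There is really no obstacle here: the only thing to notice is that splitting by major/minor arcs allows the pointwise hypothesis on $f$ to take care of the $\mathfrak{M}$-part while the pointwise hypothesis on $g$ handles the $\mathfrak{m}$-part; Parseval absorbs the other factor in $L^2$, which explains the $\|f\|_2+\|g\|_2$ on the right-hand side of the conclusion (as opposed to only $\|g\|_2$ in Lemma~\ref{le_fourier}).
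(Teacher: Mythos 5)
Your proof is correct and is essentially the paper's argument: the paper proves Lemma~\ref{lem_6.2} by running the proof of Lemma~\ref{le_fourier} verbatim and splitting the integral as $\int_{\mathfrak{M}}+\int_{\mathfrak{m}}$, using \eqref{e40} on the major arcs and \eqref{e41} on the minor arcs exactly as you do. Nothing further is needed.
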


\begin{proof}
The proof is the same as that of Lemma~\ref{le_fourier}, except that we split the integral arising in that proof as
\begin{align*}
 \int_{0}^{1}F(\alpha)G(\alpha)S(-\alpha)\, d\alpha=\int_{\mathfrak{M}}F(\alpha)G(\alpha)S(-\alpha)\, d\alpha+\int_{\mathfrak{m}}F(\alpha)G(\alpha)S(-\alpha)\, d\alpha   
\end{align*}
and estimate the first term using~\eqref{e40} and the second using~\eqref{e41}.
\end{proof}

To handle the major arc exponential sums, we shall apply the following lemma to reduce matters to multiplicative characters.

\begin{lemma}\label{le_character}
Let $\eta>0$, $N\geq 1$. Let $f:[1,N]\to \mathbb{C}$ be a function. Suppose that
\begin{align}\label{charbound}
\max_{1\leq \ell\leq P^{c_0}}\max_{y\leq N/\ell}\left|\sum_{n\leq y}f(\ell n)\chi(n)\right|\ll \eta \exp(-\sqrt{\log P})P^{-3c_0/2} N   
\end{align}
uniformly for all Dirichlet characters $\chi$ of modulus $\leq P^{c_0}$. Then we have
\begin{align}\label{e7b}
\sup_{\alpha \in \mathfrak{M}}\left|\sum_{n\leq N}f(n)e(\alpha n)\right|\ll \eta N.    
\end{align}
\end{lemma}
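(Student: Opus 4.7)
The plan is to perform the standard reduction of a major arc exponential sum to character sums of the form appearing in~\eqref{charbound}, paying careful attention to the $\ell$-decomposition coming from $\gcd(n,q)$ and to the losses from Gauss sums.

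First, for $\alpha\in\mathfrak{M}(P)$ write $\alpha=b/q+\beta$ with $1\leq q\leq P^{c_0}$, $(b,q)=1$, and $|\beta|\leq 1/Q=P^{c_0}/N$. Splitting the sum $\sum_{n\leq N}f(n)e(n\alpha)$ according to $\ell=\gcd(n,q)$ and writing $n=\ell m$ with $(m,q/\ell)=1$ gives
\begin{align*}
\sum_{n\leq N}f(n)e(n\alpha)=\sum_{\ell\mid q}\sum_{\substack{m\leq N/\ell\\(m,q/\ell)=1}}f(\ell m)\,e_{q/\ell}(bm)\,e(\ell m\beta).
\end{align*}
Since $(b,q/\ell)=1$, the standard Gauss sum identity $e_{q/\ell}(bm)1_{(m,q/\ell)=1}=\frac{1}{\varphi(q/\ell)}\sum_{\chi\,(q/\ell)}\bar\chi(b)\tau(\chi)\chi(m)$ lets us rewrite the inner sum as a linear combination of character-twisted sums of $f(\ell\cdot)$, with the weight $\bar\chi(b)\tau(\chi)/\varphi(q/\ell)$ and an extra factor $e(\ell m\beta)$.

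Next I would remove the oscillating factor $e(\ell m\beta)$ by partial summation. Because $|\beta|\leq P^{c_0}/N$ and $\ell m\leq N$, this costs only a factor $O(P^{c_0})$, reducing the problem to
\begin{align*}
\max_{y\leq N/\ell}\Big|\sum_{m\leq y}f(\ell m)\chi(m)\Big|.
\end{align*}
Hypothesis~\eqref{charbound} bounds this uniformly in $\chi$ (of modulus $q/\ell\leq P^{c_0}$) and $\ell\leq P^{c_0}$ by $\eta\exp(-\sqrt{\log P})P^{-3c_0/2}N$. Combining all ingredients and using the trivial Gauss sum bound $|\tau(\chi)|\leq\sqrt{q/\ell}$ yields
\begin{align*}
\Big|\sum_{n\leq N}f(n)e(n\alpha)\Big|\ll \sum_{\ell\mid q}\sqrt{q/\ell}\cdot P^{c_0}\cdot \eta\exp(-\sqrt{\log P})P^{-3c_0/2}N.
\end{align*}

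To conclude, I would bound $\sum_{\ell\mid q}\sqrt{q/\ell}\leq\tau(q)\sqrt{q}\leq P^{c_0/2+o(1)}$, so the overall estimate becomes $\ll \eta\exp(-\sqrt{\log P})P^{c_0/2+o(1)}\cdot P^{-3c_0/2}\cdot P^{3c_0/2}\cdot N\ll\eta N$, where the $\exp(-\sqrt{\log P})$ factor easily absorbs the $P^{o(1)}=\exp(o(\log P))$ divisor loss. Taking the supremum over $\alpha\in\mathfrak{M}$ yields~\eqref{e7b}. The main technical point is simply the bookkeeping to check that the three powers of $P^{c_0}$ lost along the way (from $q\leq P^{c_0}$ via the Gauss sum, from the $\sum_{\ell\mid q}\sqrt{q/\ell}$ sum, and from partial summation against $|\beta|\leq 1/Q$) together account for exactly the $P^{-3c_0/2}$ in the hypothesis, with a safety factor of $\exp(-\sqrt{\log P})$ to kill the $\tau(q)$ divisor loss; there is no deeper obstacle.
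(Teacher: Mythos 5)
Your approach is the same as the paper's: write $\alpha=b/q+\beta$ with $q\le P^{c_0}$, $(b,q)=1$, $|\beta|\le P^{c_0}/N$, split the sum according to $\ell=(n,q)$, expand the additive character $e_{q/\ell}(bm)$ into multiplicative characters via Gauss sums, strip the factor $e(\ell m\beta)$ by partial summation at a cost $O(P^{c_0})$, and then invoke \eqref{charbound} uniformly in $\ell$ and in the characters of modulus $q/\ell\le P^{c_0}$. All of this is sound. (A cosmetic slip: as written, your expansion $\frac{1}{\varphi(q/\ell)}\sum_{\chi}\bar\chi(b)\tau(\chi)\chi(m)$ actually equals $e_{q/\ell}(b\overline{m})$; the correct form is $\frac{1}{\varphi(q/\ell)}\sum_{\chi}\chi(b)\tau(\bar\chi)\chi(m)$. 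Since you take absolute values and the family of characters is closed under conjugation, this is immaterial.)

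The step that does not survive scrutiny is the final absorption. Your powers of $P$ cancel exactly: $P^{c_0/2}$ from the Gauss sums and $P^{c_0}$ from partial summation against the $P^{-3c_0/2}$ in \eqref{charbound} leave no power of $P$ to spare, so the leftover divisor-type loss must be dominated by the only remaining safety factor $\exp(-\sqrt{\log P})$. You bound $\sum_{\ell\mid q}\sqrt{q/\ell}\le\tau(q)\sqrt{q}$ and claim that the $P^{o(1)}$ factor $\tau(q)$ is "easily absorbed" by $\exp(-\sqrt{\log P})$. That is false in the worst case: for $q\le P^{c_0}$ a primorial of size near $P^{c_0}$ one has $\tau(q)=\exp\bigl((\log 2+o(1))\log q/\log\log q\bigr)$, which for large $P$ is of size $\exp\bigl(c\,\log P/\log\log P\bigr)\gg\exp(\sqrt{\log P})$, so your argument only yields $\ll\eta\,\tau(q)\exp(-\sqrt{\log P})N$, which is not $\ll\eta N$ uniformly in $q\le P^{c_0}$. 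The fix is exactly the paper's finer treatment of the $\ell$-sum: $\sum_{\ell\mid q}(q/\ell)^{1/2}=q^{1/2}\sum_{d\mid q}d^{-1/2}\le q^{1/2}\prod_{p\mid q}\bigl(1-p^{-1/2}\bigr)^{-1}\ll q^{1/2}\exp\bigl(O\bigl(\sum_{p\mid q}p^{-1/2}\bigr)\bigr)\ll q^{1/2}\exp\bigl(O(\sqrt{\log q})\bigr)$, and since $q\le P^{c_0}$ this extra factor is $\exp\bigl(O(\sqrt{c_0\log P})\bigr)$, which \emph{is} covered by the $\exp(-\sqrt{\log P})$ in \eqref{charbound}. (Separately, your last display literally multiplies out to $P^{c_0/2+o(1)}$ rather than $P^{o(1)}$; I read this as a typo, the intended count being total losses $P^{3c_0/2}$ against the hypothesis's $P^{-3c_0/2}$.)
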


\begin{proof}
By Definition~\ref{def_major}, we can write $\alpha \in \mathfrak{M}$ as $\alpha=a/q+\beta$ with $1\leq a\leq q\leq P^{c_0}$, $(a,q)=1$ and $|\beta|\leq P^{c_0}/N$. By the fundamental theorem of calculus, we have
\begin{align}\label{e8}
e(\beta n)=1+2\pi i \beta\int_{0}^{N}e(\beta y)1_{y\leq n}\, dy.    
\end{align}
Writing $e(\alpha n)=e(an/q)e(\beta n)$ and substituting this and~\eqref{e8} into~\eqref{e7b}, we see that it suffices to prove
\begin{align*}
\max_{y\leq N}\left|\sum_{n\leq y}f(n)e\left(\frac{an}{q}\right)\right|\ll \eta P^{-c_0}N   
\end{align*}
uniformly for $1\leq a\leq q\leq P^{c_0}$ with $(a,q)=1$. Writing $\ell=(n,q)$, we need
\begin{align}\label{e7c}
\max_{y\leq N}\left|\sum_{\ell\mid q}\sum_{n'\leq y/\ell}f(\ell n')1_{(n',q/\ell)=1}e\left(\frac{an'}{q/\ell}\right)\right|\ll \eta P^{-c_0}N.    
\end{align}

We use the Fourier expansion
\begin{align*}
e\left(\frac{an}{q/\ell}\right)=\frac{1}{\varphi(q/\ell)}\sum_{\chi\pmod{q/\ell}}\tau(\bar{\chi})\chi(an)  
\end{align*}
for $(n,q/\ell)=1$, where $\tau(\chi)$ is the Gau{\ss} sum. Applying the classical Gau{\ss} sum bound $|\tau(\chi)|\leq (q/\ell)^{1/2}$ for $\chi\neq \chi_{0}^{(q/\ell)}$ and $|\tau(\chi_0^{(q/\ell)})|\leq 1$ and the triangle inequality,~\eqref{e7c} is 
\begin{align*}
\ll  \sum_{\ell\mid q}\frac{\ell^{1/2}(\log \log q)}{q^{1/2}}\max_{\ell\mid q}\max_{y\leq N/\ell}\left|\sum_{n\leq y}f(\ell n)\chi(n)\right| \end{align*}
for some character $\chi$ of modulus dividing $q$. We observe that
\begin{align*}
\sum_{\ell\mid q}\frac{\ell^{1/2}}{q^{1/2}}=\sum_{u\mid q}\frac{1}{u^{1/2}}&=\prod_{p\mid q}\left(1+p^{-1/2}+p^{-1}+p^{-3/2}+\cdots\right)\\
&\ll \exp\left(O\left(\sum_{p\mid q}p^{-1/2}\right)\right)\\
&\ll \exp(\sqrt{\log q}).
\end{align*}
Appealing to~\eqref{charbound} completes the proof.
\end{proof}

In the rest of this section, we shall prove a proposition (Proposition~\ref{prop1-generalized} below) that contains Key Proposition~\ref{prop1a}, but involves a slight generalisation to weights other than the von Mangoldt function, as that will be needed in the proof of Theorem~\ref{MT2}.  

\begin{definition}[Weighted indicator of $E_3$ numbers corresponding to Chen's sieve]
Denote (similarly as in~\cite[eq. (6.2)]{matomaki-shao})
\begin{align*}
B_1&\coloneqq \{n=p_1p_2p_3:\,\, N^{1/10}\leq p_1< N^{1/3-\varepsilon}<p_2\leq (N/p_1)^{1/2}, p_3\geq N^{1/10}\}\\
 B_2&\coloneqq \{n=p_1p_2p_3:\,\, N^{1/3-\varepsilon}\leq p_1\leq p_2 \leq (N/p_1)^{1/2}, p_3\geq N^{1/10}\}
\end{align*}
and define normalised indicator functions for these sets as
\begin{align*}
\Lambda_{B_i}(n)\coloneqq \begin{cases}
\log n,\quad n=p_1p_2p_3\in B_i\\
0,\quad \quad \quad \textnormal{otherwise},\end{cases}
\end{align*}
and write
\begin{align*}
	\Lambda_{E_3^*}(n)=\frac{1}{2}\Lambda_{B_1}(n)+\Lambda_{B_2}(n)
\end{align*}
\end{definition}

The reason for considering $\Lambda_{E_3^*}$ will become clear after Lemma~\ref{le_chensieve} (a version of Chen's sieve inequality). Expanding out the definition of $\Lambda_{E_3^{*}}(n)$ and applying the Siegel--Walfisz theorem, for any character $\chi$ of modulus $\leq (\log N)^{A}$ we have
\begin{align}\label{sw}
  \sum_{n\leq N}\Lambda_{E_3^{*}}(n)\chi(n)=c_{E_3^*}N1_{\chi=\chi_0}+O_{A}(N/(\log N)^{A}),
\end{align}
where the implied constant is ineffective and $c_{E_3^*}=\frac{1}{2} c_{B_1}+c_{B_2}$ with
\begin{align*}
  c_{B_1}\coloneqq \int_{\substack{1/10\leq t_1\leq 1/3-\varepsilon\leq  t_2\leq (1-t_1)/2\\1-t_1-t_2\geq 1/10}}\frac{d t_1\, dt_2}{t_1t_2(1-t_1-t_2)}\\
  c_{B_2}\coloneqq \int_{\substack{1/3-\varepsilon\leq t_1\leq  t_2\leq (1-t_1)/2\\ 1-t_1-t_2\geq 1/10}}\frac{d t_1\, dt_2}{t_1t_2(1-t_1-t_2)}. 
\end{align*}   

In the rest of this section, let
\begin{align*}
\Lambda^{*}\in \{\Lambda,\Lambda_{E_3^*}\}.    
\end{align*}

\begin{prop}\label{prop1-generalized}
 Let $k\geq 1$, $a\neq 0$ and $\varepsilon \in (0,1/1000)$ be fixed. Let $N\geq 3$ and $(\log N)^{C}\leq P\leq N^{\varepsilon/10}$. Let
\begin{align*}
f(n)\coloneqq \Lambda^*(n)\omega_1(n+a)\omega_2(n+a),
\end{align*}
where $\omega_1$ is a sieve of range $P$, level $D_0\leq N^{\varepsilon/2}$ and order $k$ and $\omega_2$ is an admissible main sieve with parameters $P,\varepsilon,k$. Let $g:[1,N]\to \mathbb{C}$ be any function. Then, for all $m\in [1,N]$ apart from $\ll N/P^{c_1}$ exceptions, we have
\begin{align*}
f*g(m)=\mathcal{V}(\omega_2)f_{\textnormal{pre-sieve}}*g(m)+O(\|g\|_2N^{1/2} P^{-c_1}),
\end{align*}
where 
\begin{align*}
f_{\textnormal{pre-sieve}}(n)\coloneqq \Lambda^*(n)\omega_1(n+a).
\end{align*}
\end{prop}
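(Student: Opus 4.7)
The plan is to apply Lemma~\ref{le_fourier} to the difference
\[
h(n) := f(n) - \mathcal{V}(\omega_2)\, f_{\textnormal{pre-sieve}}(n),
\]
which reduces the proposition to establishing a uniform Fourier bound
\[
\sup_{\alpha \in [0,1)} \Big|\sum_{n\leq N}h(n)e(\alpha n)\Big| \ll N P^{-c}
\]
for some sufficiently large absolute constant $c>0$ (any $c > 3c_1/2$ suffices, in view of Lemma~\ref{le_fourier} with $\eta = P^{-c}$, giving an exceptional set of size $\ll \eta^{2/3} N \ll N P^{-c_1}$ and the desired error $\ll \eta^{2/3} N^{1/2} \|g\|_2$). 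I will bound this exponential sum separately on the minor arcs $\mathfrak{m}(P)$ and the major arcs $\mathfrak{M}(P)$ of Definition~\ref{def_major}.

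On the minor arcs, the function $n \mapsto \omega_1(n+a)\omega_2(n+a)$ is, after regrouping, a sieve of range $\mathcal{P}$ and level at most $D_0 \cdot N^{1/2-\varepsilon} \leq N^{1/2-\varepsilon/2}$ whose weights inherit the decomposition $\lambda = \sum_{j\leq C\log N}\alpha_j \star \lambda'_j$ of the admissible main sieve $\omega_2$, the factor from $\omega_1$ being absorbed into $\alpha_j$. The exponential sum $\widehat{f}(\alpha)$ then falls precisely within the scope of Matom\"aki's power-saving minor arc bound for $\Lambda$ twisted by well-factorable sieve weights of level $N^{1/2-\varepsilon}$; see~\cite{matomaki} and~\cite[Hypothesis 6.3]{matomaki-shao}. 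This yields $\sup_{\alpha\in\mathfrak{m}(P)}|\widehat{f}(\alpha)|\ll N\exp(-c\sqrt{\log N})$. For $\Lambda^* = \Lambda_{E_3^*}$ one first opens the weight as a sum over $n=p_1p_2p_3 \in B_i$ and applies standard combinatorial decompositions to reduce to bilinear sums of the same type, to which the same input applies. The `main term' piece $\mathcal{V}(\omega_2)\widehat{f_{\textnormal{pre-sieve}}}(\alpha)$ is bounded identically and more easily, since the level of $\omega_1$ is only $D_0\leq N^{\varepsilon/2}$.

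On the major arcs, I apply Lemma~\ref{le_character} to reduce to showing, for every $\ell \leq P^{c_0}$ and every Dirichlet character $\chi$ of modulus $\leq P^{c_0}$,
\[
\max_{y \leq N/\ell}\Big|\sum_{n \leq y} h(\ell n)\chi(n)\Big| \ll N P^{-3c_0/2-c}\,e^{-\sqrt{\log P}}.
\]
Expanding $\omega_2(\ell n + a) = \sum_d \lambda_2(d)\,1_{d\mid \ell n + a}$ and pulling out the expected density $1/\varphi(d)$ in each arithmetic progression (up to the harmless contribution from $(d,\ell a)>1$), this becomes a Bombieri--Vinogradov type discrepancy with a character twist for $\Lambda^*(\ell n)\,\omega_1(\ell n+a)\,\chi(n)$ over moduli $d \leq N^{1/2-\varepsilon}$ in the support of $\lambda_2$. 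A power saving is achieved by the standard large sieve argument, but with the main term in each arithmetic progression enlarged to include the contribution from \emph{all} characters of conductor up to $P^{c_0}$ rather than only the principal character: cancelling the fixed twist $\chi$ against this enlarged main term leaves only characters of conductor $>P^{c_0}$, which the large sieve (combined with the well-factorable decomposition $\lambda_2 = \sum_j\alpha_j\star\lambda'_j$ and Type~I/II splittings) bounds with the required power saving in $P$.

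The principal technical obstacle lies in this major arc step, and specifically in executing the character-twisted, power-saving Bombieri--Vinogradov estimate when $\Lambda^* = \Lambda_{E_3^*}$: one must combinatorially decompose $\Lambda_{E_3^*}$ into Dirichlet convolutions whose variables are restricted to the prescribed ranges defining $B_1$ and $B_2$, and then verify that the resulting Type~I and Type~II bilinear sums admit the large-sieve estimates needed for power savings after the character twist. Once this is in hand, the minor- and major-arc bounds combine via Lemmas~\ref{le_fourier} and~\ref{le_character} to yield $\sup_\alpha|\widehat{h}(\alpha)|\ll NP^{-c}$, and hence the exceptional-set estimate claimed in Proposition~\ref{prop1-generalized}.
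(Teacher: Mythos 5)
Your proposal follows essentially the same route as the paper's proof: reduce via Lemma~\ref{le_fourier} to a uniform bound $\sup_{\alpha}|\widehat{h}(\alpha)|\ll NP^{-c}$ for $h=f-\mathcal{V}(\omega_2)f_{\textnormal{pre-sieve}}$, handle the minor arcs by Matom\"aki's twisted Bombieri--Vinogradov estimate for well-factorable weights of level $N^{1/2-\varepsilon}$ (absorbing $\omega_1$ into the $\alpha_j$-part of the admissible main sieve), and handle the major arcs via Lemma~\ref{le_character} together with a power-saving Bombieri--Vinogradov theorem whose main term includes all characters of conductor $\leq P^{c_0}$, the remaining conductor-$>P$ characters being controlled by the large sieve; this is precisely the paper's argument, cf.~\eqref{e2_minor},~\eqref{e2_major}, Lemma~\ref{le_matomaki} and Lemma~\ref{le_BV_rough}. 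Two small corrections. First, the minor-arc bound you assert, $N\exp(-c\sqrt{\log N})$, cannot hold in the stated range of $P$: when $P$ is only a power of $\log N$, at $\alpha=a/q$ with $q$ just above $P^{c_0}$ the exponential sum is genuinely of size $N$ divided by a power of $\log N$, so only a saving of a power of $P$ is available --- which is what Matom\"aki's input actually yields ($\ll NP^{-c_0/20}$ in Lemma~\ref{le_matomaki}) and is all that is needed. Second, the step you flag as the principal obstacle, namely the combinatorial decomposition of $\Lambda_{E_3^*}$ inside the twisted power-saving Bombieri--Vinogradov estimate, is in fact immediate: every integer in the support of $\Lambda_{E_3^*}$ has a prime factor in $[N^{1/4},N^{1/2}]$, so the relevant sums are already of type II and no Vaughan or Heath-Brown identity is required there (see the proof of Lemma~\ref{le_BV_rough}).
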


Note that for $\Lambda^{*}=\Lambda$ and $a=2$, this implies Key Proposition~\ref{prop1a} (after adjusting $\varepsilon$).

 From Lemma~\ref{le_fourier}, we see that  Proposition~\ref{prop1-generalized} follows if we show that
\begin{align}\label{e2}
\sup_{\alpha\in \mathbb{R}}\left|\sum_{n\leq N}(f(n)-f_{\textnormal{pre-sieve}}(n))e(\alpha n)\right|\ll NP^{-3c_1/2}.
 \end{align}
 We shall prove~\eqref{e2} by considering the case of minor and major arc $\alpha$ separately. Recall the splitting
\begin{align*}
[0,1)=\mathfrak{M}\cup \mathfrak{m},    
\end{align*}
where the major and minor arcs are given by Definition~\ref{def_major}.

By Lemma~\ref{le_character}, it suffices to prove
 \begin{align}\label{e2_minor}
\sup_{\alpha\in \mathfrak{m}}\left|\sum_{n\leq N}f(n)e(\alpha n)\right|\ll NP^{-3c_1/2},\quad \sup_{\alpha\in \mathfrak{m}}\left|\sum_{n\leq N}f_{\textnormal{pre-sieve}}(n)e(\alpha n)\right|\ll NP^{-3c_1/2},
 \end{align}
 and
 \begin{align}\label{e2_major}
 \max_{\substack{\chi\pmod q\\q\leq P^{c_0}}}\max_{1\leq \ell\leq P^{c_0}}\max_{y\leq N}\left|\sum_{n\leq y}(f(\ell n)-f_{\textnormal{pre-sieve}}(\ell n))\chi(n)\right|\ll NP^{-3c_0/2-2c_1}.    
 \end{align}
 
\subsection{Bombieri--Vinogradov with power saving}

The large sieve allows us to estimate efficiently the correlation of a sequence with characters of large conductor on average.

	\begin{lemma}\label{lem_BDHrough}
	Let $Q\geq P\geq 2$. Let $\alpha_n$ be any sequence on $(M,M+N]$ with $M,N\geq 1$. Then 
	\begin{align*}
	\sum_{q\leq Q} \frac{1}{\varphi(q)}\sum_{\substack{\psi \pmod q \\ \cond(\psi)>P}} \Bigl|\sum_{M<n\leq M+N}\alpha_n \psi(n) \Bigr|^2 \ll (Q+N/P) (\log Q)\sum_{M<n\leq M+N}|\alpha_n|^2. 
	\end{align*}

\end{lemma}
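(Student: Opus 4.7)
The plan is to reduce the left-hand side to the classical multiplicative large sieve for primitive characters. Each $\psi\pmod q$ of conductor $r>P$ is induced by a unique primitive character $\chi\pmod r$ with $r\mid q$; setting $h=q/r$, I would parameterise the sum by the triples $(r,\chi,h)$ with $P<r\leq Q$ and $h\leq Q/r$. Since $\chi(n)=0$ whenever $(n,r)>1$, one has $\psi(n)=\chi(n)1_{(n,h)=1}$, so introducing the truncated sequence $\beta^{(h)}_n\coloneqq \alpha_n 1_{(n,h)=1}$, which trivially satisfies $\|\beta^{(h)}\|_2\leq\|\alpha\|_2$, the inner character sum becomes $\sum_n\beta^{(h)}_n\chi(n)$.

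Next, using the elementary pointwise bound $\varphi(rh)\geq \varphi(r)\varphi(h)$ (from multiplicativity of $\varphi$ and $\varphi(p^k)\geq \varphi(p^i)\varphi(p^{k-i})$) and swapping summations so that $h$ is on the outside, I would arrive at
\begin{align*}
\text{LHS}\leq \sum_{h\leq Q/P}\frac{1}{\varphi(h)}\sum_{P<r\leq Q/h}\frac{1}{\varphi(r)}\sum_{\chi(r)^{*}}\Bigl|\sum_{M<n\leq M+N}\beta^{(h)}_n\chi(n)\Bigr|^2.
\end{align*}
For each fixed $h$, the inner double sum is treated by the standard multiplicative large sieve $\sum_{r\leq R}(r/\varphi(r))\sum_{\chi(r)^{*}}|S(\chi)|^2\ll (R^2+N)\|\beta^{(h)}\|_2^2$. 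Splitting the range $r\in(P,Q/h]$ dyadically into $r\sim R$ and using $1/\varphi(r)\leq (2/R)\cdot(r/\varphi(r))$ on each dyadic piece converts the $r/\varphi(r)$-weighted large sieve bound into one of size $(R+N/R)\|\alpha\|_2^2$, and the geometric summation over dyadic $R\in(P,Q/h]$ then gives
\begin{align*}
\sum_{P<r\leq Q/h}\frac{1}{\varphi(r)}\sum_{\chi(r)^{*}}\Bigl|\sum_n\beta^{(h)}_n\chi(n)\Bigr|^2\ll (Q/h + N/P)\|\alpha\|_2^2,
\end{align*}
with no logarithmic loss.

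Summing this estimate over $h$ weighted by $1/\varphi(h)$ is the last step. The $Q/h$ contribution gives $Q\sum_h 1/(h\varphi(h))=O(Q)$ by absolute convergence, while the $N/P$ contribution gives $(N/P)\sum_{h\leq Q/P}1/\varphi(h)\ll (N/P)\log Q$ by Landau's classical estimate. Combining yields the claimed bound $\ll(Q+N/P)(\log Q)\|\alpha\|_2^2$.

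The most delicate point is the primitive-character reduction at the very start: one has to verify that the identity $\psi(n)=\chi(n)1_{(n,h)=1}$ holds regardless of whether $(r,h)=1$ (which works precisely because $\chi(n)$ already vanishes on non-coprime inputs), and ensure that the primitive large sieve can be applied uniformly in $h$ with the auxiliary sequence $\beta^{(h)}$. Both are routine once one observes that $\|\beta^{(h)}\|_2\leq\|\alpha\|_2$ for every $h$; after that, the savings $N/P$ in the final bound comes from the conductor restriction $r>P$ (which restricts the dyadic sum over $R$ to start at $P$), and the single $\log Q$ factor comes exactly from $\sum_{h\leq Q/P}1/\varphi(h)$.
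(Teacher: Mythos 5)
Your proof is correct and follows essentially the same route as the paper: factor $q=rh$ with $\chi\pmod r$ primitive of conductor $>P$, write $\psi(n)=\chi(n)1_{(n,h)=1}$, bound $1/\varphi(q)$ by $1/(\varphi(r)\varphi(h))$, and apply the multiplicative large sieve over the primitive characters, with the $\log Q$ coming from the sum of $1/\varphi(h)$. The only difference is that you spell out the dyadic decomposition in $r$ that converts the $r/\varphi(r)$-weighted large sieve into the $(Q/h+N/P)$ bound, a step the paper leaves implicit in its citation of Iwaniec--Kowalski (7.31).
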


\begin{proof}
	
	Writing $q=rs$ with $\psi=\psi_1\psi_2$, where $\psi_1\pmod r$ is primitive and $\psi_2\pmod s$ is principal, we can bound the sum in question with the large sieve inequality for multiplicative characters (see~\cite[(7.31)]{iw-kow}) by
	\begin{align*}
	\sum_{s\leq Q}\frac{1}{\varphi(s)}\sum_{P\leq r \leq Q}\frac{1}{\varphi(r)} \sum_{\psi_1 (r)^*} \Bigl|\sum_{n\leq N}\alpha_n1_{(n,s)=1} \psi_1(n) \Bigr|^2&\ll (Q+N/P) (\log Q)\sum_{M<n\leq M+N}|\alpha_n|^2 
	\end{align*}
	as stated.
\end{proof}

The Bombieri--Vinogradov theorem states that the approximation
\begin{align}\label{eq_BVapprox}
\sum_{\substack{n\leq N \\ n\equiv a(d)}}\Lambda(n)\approx \frac{1}{\varphi(d)}\sum_{\substack{n\leq N \\ }}\Lambda(n)
\end{align}
is valid on average for $d$ up to almost $N^{1/2}$ with a saving over the trivial bound of type $(\log N)^{-A}$. This saving comes from the application of the Siegel--Walfisz theorem for small conductor characters and the large sieve for large conductor characters. The term on the right-hand side of~\eqref{eq_BVapprox} is the contribution of the principal character modulo $d$, i.e. the unique character $\pmod{d}$ that has conductor $1$. If we instead include the contribution of all the characters with conductor up to a value of $P=(\log N)^{\Omega(1)}$, we can hope to get a better error term than $(\log N)^A$ with the help of Lemma~\ref{lem_BDHrough}. 

This idea was applied by Drappeau, and following the notation of~\cite[eq. (5.1)]{drappeau} we write
\begin{align}\label{eq_uPdef}
\mathfrak{u}_P(n;q)&\coloneqq  \frac{1}{\varphi(q)}\sum_{\substack{\psi(q)\\ \text{cond}(\psi)> P}}\psi(n)\\
&=	1_{n\equiv 1(q)}-\frac{1}{\varphi(q)}\sum_{\substack{\psi(q)\\ \text{cond}(\psi)\leq P}}\psi(n), \nonumber
\end{align}
With this we can state the following type I and II estimate (see also~\cite[Lemma 5.2]{drappeau}).
\begin{lemma}[Simple type I/II estimate]\label{le_I,IIsimple}
	Let $C\geq 1$ be fixed. Let $N_1, N_2,y \geq 1$ with $N_1N_2\asymp y$ and let $2\leq P\leq Q\leq y^{1/2}$. Then for any character $\chi$  with modulus at most $P$ we have 
	\begin{align}\label{eq_Isimple}
	\sum_{q\leq Q}\sum_{n_1\leq N_1} \tau(n_1)^C \max_{\substack{(a,q)=1\\ y_0\geq 1}}\left|\sum_{y_0\leq  n_2\leq y/n_1}\chi(n_2)\mathfrak{u}_{P}(n_1n_2\overline{a},q) \right|\ll_C N_1 Q^{3/2}P^{1/2}(\log y)^{O_C(1)}. 
	\end{align}
	Let further $\alpha_n$, $\beta_n$ be coefficient sequences of order $C$, supported on $[1,N_1]$ and $[1,N_2]$, respectively. Then it holds that
	\begin{align}\label{eq_IIsimple} 
	&\sum_{q\leq Q}\max_{(a,q)=1}\left|\sum_{n_1,n_2}\alpha_{n_1}\beta_{n_2} \mathfrak{u}_{P}(n_1n_2\overline{a};q)\right|
	\ll_C y^{1/2}(Q+y^{1/2}/P+N_1^{1/2}+N_2^{1/2})(\log y)^{O_C(1)}.
	\end{align}
\end{lemma}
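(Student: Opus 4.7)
Both parts of the lemma reduce to character sum estimates via the expansion
\begin{align*}
\mathfrak{u}_P(n_1n_2\overline{a};q)=\frac{1}{\varphi(q)}\sum_{\substack{\psi\pmod q\\ \textnormal{cond}(\psi)>P}}\overline\psi(a)\psi(n_1)\psi(n_2),
\end{align*}
which separates the $n_1$, $n_2$ and $a$ dependencies completely. The plan is to treat the Type~I case by Polya--Vinogradov applied to each character individually, and the Type~II case by Cauchy--Schwarz followed by Lemma~\ref{lem_BDHrough}.

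For the Type~I estimate \eqref{eq_Isimple}, after expansion the $n_2$-sum becomes $\sum_{y_0\leq n_2\leq y/n_1}(\chi\psi)(n_2)$, a sum of a single Dirichlet character. Since $\chi$ has modulus at most $P$ and $\psi$ has modulus $q\leq Q$, the character $\chi\psi$ has conductor at most $PQ$, so the Polya--Vinogradov inequality bounds this sum by $(PQ)^{1/2}\log(PQ)$, uniformly in $\psi$, $a$, $y_0$ and $n_1$. Bounding $\sum_{\psi\pmod q}$ by the triangle inequality absorbs the $1/\varphi(q)$ factor, and then the outer summations $\sum_{n_1\leq N_1}\tau(n_1)^C\ll_C N_1(\log y)^{O_C(1)}$ and $\sum_{q\leq Q}1=Q$ yield the claimed $N_1 Q^{3/2}P^{1/2}(\log y)^{O_C(1)}$.

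For the Type~II estimate \eqref{eq_IIsimple}, after the expansion the inner double sum factorises as
\begin{align*}
\Big|\sum_{n_1,n_2}\alpha_{n_1}\beta_{n_2}\mathfrak{u}_P(n_1n_2\overline a;q)\Big|\leq \frac{1}{\varphi(q)}\sum_{\substack{\psi\pmod q\\ \textnormal{cond}(\psi)>P}}|\hat\alpha(\psi)\hat\beta(\psi)|,
\end{align*}
where $\hat\alpha(\psi):=\sum_n\alpha_n\psi(n)$ and similarly for $\beta$; the maximum over $a$ is automatically absorbed. I would then apply Cauchy--Schwarz to the double sum over $(q,\psi)$ so as to decouple $\hat\alpha$ from $\hat\beta$, and estimate each resulting second moment via Lemma~\ref{lem_BDHrough} together with $\|\alpha\|_2^2,\|\beta\|_2^2\ll N_i(\log y)^{O_C(1)}$, obtaining the symmetric bound $((Q+N_1/P)(Q+N_2/P)y)^{1/2}(\log y)^{O_C(1)}$. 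A rearrangement via $\sqrt{A+B+C+D}\leq\sqrt A+\sqrt B+\sqrt C+\sqrt D$ and suitably weighted AM--GM on the cross term $\sqrt{Qy(N_1+N_2)/P}$ should then produce the claimed additive form $y^{1/2}(Q+y^{1/2}/P+N_1^{1/2}+N_2^{1/2})(\log y)^{O_C(1)}$.

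The main obstacle is the final algebraic rearrangement in the Type~II step: one has to choose the splittings in AM--GM carefully in order to produce the advertised additive shape rather than a less useful multiplicative mixture. The Type~I case is entirely routine once the character expansion is in place, and the Type~II argument relies in an essential way on the large sieve input of Lemma~\ref{lem_BDHrough}, which is the only place where the saving coming from the condition $\textnormal{cond}(\psi)>P$ is exploited.
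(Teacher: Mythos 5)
Your Type I argument is essentially the paper's: expand $\mathfrak{u}_P$ into characters, reduce to a sum of the single character $\chi\psi$, apply P\'olya--Vinogradov, and sum trivially over $\psi$, $n_1$, $q$. One point you must make explicit (and which contradicts your closing remark that the condition $\textnormal{cond}(\psi)>P$ is exploited only in the Type II case): P\'olya--Vinogradov requires $\chi\psi$ to be non-principal, and this is precisely where $\textnormal{cond}(\psi)>P\geq(\textnormal{modulus of }\chi)$ enters --- if $\chi\psi$ could be principal, the inner sum would be of size $y/n_1$ and the bound would fail. With that observation inserted, the Type I part is complete.

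The Type II part has a genuine gap. Applying Cauchy--Schwarz globally over all pairs $(q,\psi)$ and then Lemma~\ref{lem_BDHrough} to each second moment gives, as you say, $\bigl((Q+N_1/P)(Q+N_2/P)y\bigr)^{1/2}(\log y)^{O_C(1)}$, but this is \emph{not} dominated by the right-hand side of \eqref{eq_IIsimple}: take for instance $N_1=y^{0.9}$, $N_2=y^{0.1}$, $P=y^{0.01}$, $Q=y^{0.4}$, which is admissible; then your bound has size about $y^{1.145}$ while the claimed bound has size about $y^{0.99}$. So the cross term $\sqrt{Qy(N_1+N_2)/P}$ genuinely exceeds the target, and no AM--GM rearrangement can repair it --- the loss is already incurred when Lemma~\ref{lem_BDHrough} is invoked after a global Cauchy--Schwarz, since that treats every character as if it had conductor as small as $P$ while simultaneously counting all $Q$ moduli. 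The standard (Bombieri--Vinogradov Type II) route keeps track of the conductor: write each $\psi\pmod q$ as induced by a primitive $\psi_1\pmod r$ with $r>P$, use $1/\varphi(q)\leq 1/(\varphi(r)\varphi(q/r))$, split dyadically $r\sim R$ with $P\leq R\leq Q$, apply Cauchy--Schwarz \emph{within each conductor block}, and use the multiplicative large sieve for primitive characters in the form
\begin{align*}
\sum_{r\sim R}\frac{1}{\varphi(r)}\sum_{\psi_1(r)^*}\Bigl|\sum_n \alpha_n 1_{(n,s)=1}\psi_1(n)\Bigr|^2\ll \Bigl(R+\frac{N_1}{R}\Bigr)\|\alpha\|_2^2(\log y)^{O_C(1)},
\end{align*}
and similarly for $\beta$. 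Each block then contributes $y^{1/2}\bigl(R+N_1^{1/2}+N_2^{1/2}+y^{1/2}/R\bigr)(\log y)^{O_C(1)}$, and summing over the $O(\log y)$ dyadic values of $R\in[P,Q]$ (and over $s$, which costs $\sum_{s\leq Q}1/\varphi(s)\ll\log Q$) gives exactly \eqref{eq_IIsimple}. In short, the Cauchy--Schwarz must be performed at the level of dyadic conductor ranges inside the large sieve argument, not after Lemma~\ref{lem_BDHrough} has already aggregated all conductors into the worst-case factor $Q+N/P$.
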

\begin{proof}
	We have
	\begin{align*}
	\sum_{\substack{y_0\leq n_2\leq y/n_1}}\chi(n_2)\mathfrak{u}_{P}(n_1n_2\overline{a},q)=\frac{1}{\varphi(q)}\sum_{\substack{\psi(q)\\ \text{cond}(\psi)>P}} \psi(n_1\overline{a}) \sum_{y_0\leq n_2\leq y/n_1}\chi\psi(n_2)
	\end{align*}
	Since $\chi$ has modulus at most $P$, the character $\chi \psi $ (of modulus $\leq PQ$) is never principal. Applying the P\'olya--Vinogradov inequality and trivially bounding the number of $\psi(q),  \text{cond}(\psi)>P$ by $\varphi(q)$ proves~\eqref{eq_Isimple}.
	
	The bound~\eqref{eq_IIsimple} follows in a straightforward manner from the Cauchy--Schwarz inequality and Lemma~\ref{lem_BDHrough}.
\end{proof}

As a consequence, we get the following version of the Bombieri--Vinogradov theorem with an additional character twist and improved error term.

\begin{lemma}[Bombieri--Vinogradov with large savings]\label{le_BV_rough}
	Let $\chi$ be any character of modulus at most $P$, with $1\leq P\leq Q\leq N^{1/2}/P$. Then for any $y\leq N$ we have 
	\begin{align*}
	\sum_{q\leq Q}\max_{\substack{a(q)^* \\}} \left|\sum_{n\leq y} \Lambda^*(n)\chi(n)\mathfrak{u}_{P}(n\overline{a};q) \right|\ll N P^{-1/4} (\log N)^{O(1)}.
	\end{align*}
\end{lemma}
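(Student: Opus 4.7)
The plan is to decompose $\Lambda^*$ into bilinear pieces to which Lemma~\ref{le_I,IIsimple} applies, and arrange matters so that the type I estimate~\eqref{eq_Isimple} or the type II estimate~\eqref{eq_IIsimple} delivers the bound $\ll NP^{-1/4}(\log N)^{O(1)}$ on each piece. With $Q \leq N^{1/2}/P$, the type I estimate is sufficient whenever the short variable satisfies $N_1 \leq N^{1/4}P^{3/4}$, because then
\[
N_1 Q^{3/2}P^{1/2} \leq N^{1/4}P^{3/4}\cdot (N^{1/2}/P)^{3/2}\cdot P^{1/2} = NP^{-1/4}.
\]
The type II estimate is sufficient whenever both variables lie in $[P^{1/2}, NP^{-1/2}]$, because then
\[
y^{1/2}(Q + y^{1/2}/P + N_1^{1/2} + N_2^{1/2}) \ll N^{1/2}\bigl(N^{1/2}/P + N^{1/2}P^{-1/4}\bigr) \ll NP^{-1/4}.
\]

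For $\Lambda^* = \Lambda$, I would apply Heath--Brown's identity of level $K = 2$, which expresses $\Lambda(n)1_{n \leq N}$ as a signed sum of convolutions $\mu \star \log$ and $\mu \star \mu \star 1 \star \log$ in which the M\"obius variables are restricted to $[1, N^{1/2}]$. After dyadic decomposition, each piece has variables in dyadic ranges $M_1,\ldots,M_k,L_1,\ldots,L_k$ (with $k \in \{1,2\}$), subject to $M_i \leq N^{1/2}$ and total product $\asymp N$. If some $L_j \geq N^{3/4}P^{-3/4}$, the remaining variables have product $\leq N^{1/4}P^{3/4}$, yielding a type I configuration; the $\log l_j$ weight is stripped off by partial summation, enabled by the supremum over $y_0$ in~\eqref{eq_Isimple}. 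Otherwise every variable is bounded by $N^{3/4}P^{-3/4}$, and a greedy grouping---add variables to a set $S$ one by one until the partial product first exceeds $P^{1/2}$---produces $N_1 := \prod_{i \in S}M_i \in [P^{1/2},\, P^{1/2}\cdot N^{3/4}P^{-3/4}] \subset [P^{1/2}, NP^{-1/2}]$; the complementary product $N_2 = N/N_1$ then automatically lies in $[P^{1/2}, NP^{-1/2}]$. This is a type II configuration for~\eqref{eq_IIsimple}, with coefficient sequences bounded by $\tau_k(n)(\log N)^{O(1)}$.

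For $\Lambda^* = \Lambda_{E_3^*}$ no further combinatorial identity is needed: $\Lambda_{E_3^*}$ is supported on $n = p_1 p_2 p_3$ with each $p_i \in [N^{1/10}, N^{9/10}]$. Grouping $n_1 = p_1$ and $n_2 = p_2 p_3$ (or any other split of the three primes into one and two) produces a bilinear form with both variables in $[N^{1/10}, N^{9/10}] \subset [P^{1/2}, NP^{-1/2}]$, where the inclusion uses $P \leq N^{\varepsilon/10}$. Applying~\eqref{eq_IIsimple} directly yields the desired bound, with the $\log n$ prefactor absorbed as a multiplicative $O(\log N)$.

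The main obstacle is combinatorial rather than analytic: one must verify that every dyadic piece emerging from Heath--Brown's identity falls into one of the two favourable regimes. This reduces to the single inequality $P^{1/2}\cdot N^{3/4}P^{-3/4} \leq NP^{-1/2}$, i.e.\ $P \leq N$, which is harmless. The $P^{-1/4}$ power saving itself arises from Lemma~\ref{lem_BDHrough}, whose gain of $N/P$ over the trivial bound for the $L^2$ mean square of characters of conductor $>P$ feeds into~\eqref{eq_IIsimple} via Cauchy--Schwarz; summing over the $(\log N)^{O(1)}$ many dyadic configurations completes the argument.
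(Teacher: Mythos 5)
Your overall route is the same as the paper's: reduce the sum to the type I/II estimates of Lemma~\ref{le_I,IIsimple} via a combinatorial identity, with the numerology driven by $Q\le N^{1/2}/P$. The paper uses Vaughan's identity with thresholds $P^{3/2}$ and $y/P^{3/2}$, while you use Heath--Brown with the threshold $N^{1/4}P^{3/4}$ and a greedy grouping; that difference is immaterial, and your exponent counts in both regimes check out.

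There is, however, one concrete gap, and it is exactly the step where the paper's proof does something beyond quoting Lemma~\ref{le_I,IIsimple}. The type II estimate \eqref{eq_IIsimple} is stated for a genuinely bilinear sum $\sum_{n_1,n_2}\alpha_{n_1}\beta_{n_2}\,\mathfrak{u}_P(n_1n_2\overline{a};q)$ in which the two variables run independently over their supports. The pieces produced by Heath--Brown still carry the hyperbola condition $n_1n_2\le y$ (you handled it in the type I regime via the cut-off $n_2\le y/n_1$ built into \eqref{eq_Isimple}, but say nothing about it in the type II regime). After a purely dyadic decomposition this condition remains active in the boxes whose product range straddles $y$, and those boxes cannot be discarded: by \eqref{eq_trivbound} the trivial bound for such a box is of order $P$ times the number of integers it contains, i.e.\ of size about $Py$ with no saving at all. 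So one must separate the variables before invoking \eqref{eq_IIsimple}. The paper does this by a finer-than-dyadic splitting of $n_1,n_2$ into multiplicative intervals of ratio $1+P^{-1/4}$, applying \eqref{eq_IIsimple} to the resulting $\ll P^{1/2}(\log N)^2$ short sums and treating the thin boundary region near $n_1n_2=y$ separately; alternatively, a truncated Perron/Mellin separation with the half-integer cut-off $y+\tfrac12$ removes the condition at the cost of a single power of $\log N$ and is compatible with your greedy grouping. Some such device has to be inserted, and its cost has to be checked against the target $NP^{-1/4}$; as written, your type II step is not justified.

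A secondary point: for $\Lambda_{E_3^*}$ your split $n_1=p_1$, $n_2=p_2p_3$ requires $N^{9/10}\le NP^{-1/2}$, i.e.\ $P\le N^{1/5}$, so you are leaning on the application's hypothesis $P\le N^{\varepsilon/10}$ rather than the lemma's stated range $1\le P\le Q\le N^{1/2}/P$, which allows $P$ as large as $N^{1/4}$. This suffices for how the lemma is used in the paper, but to prove the statement as written it is better to group along the middle prime, which always lies in $[N^{1/3-\varepsilon},N^{9/20}]\subset[N^{1/4},N^{1/2}]$, against $p_1p_3$; this is precisely the paper's observation that every $n$ in the support of $\Lambda_{E_3^*}$ has a prime factor in $[N^{1/4},N^{1/2}]$, and it works throughout the stated range of $P$ (subject, again, to the same separation-of-variables caveat).
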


\begin{proof}
	We can assume that $y\geq N/P^{3}$, since otherwise a trivial triangle inequality estimate suffices. Applying Vaughan's identity in the case $\Lambda^*=\Lambda$ (or in the case $\Lambda^*=\Lambda_{E_3^*}$ the fact that each $n$ in the support of $\Lambda_{E_3^*}$ has a prime factor in $[N^{1/4},N^{1/2}]$), it suffices to prove that 
	\begin{align}\label{e38}\begin{split}
	\sum_{\substack{d\leq Q\\(d,a)=1}}\max_{\substack{a(q)^* \\}}\left|\sum_{\substack{n_1n_2\leq y\\n_1\sim N_1, }}a_{n_1}\chi(n_1)b_{n_2}\chi(n_2)\mathfrak{u}_{P}(n_1n_2\overline{a};q)\right|\ll N P^{-1/4} (\log N)^{O(1)},
	\end{split}
	\end{align}
	whenever either of the following holds:
	\begin{enumerate}
		\item $N_1\leq P^{3/2}$, $|a_n|\leq \tau(n)$ and $b_n\equiv 1$ or $b_n\equiv \log n$ (Type I case);
		\item $N_1\in [P^{3/2},y/P^{3/2}]$ and $|a_n|,|b_n|\leq \tau(n)\log n$ (Type II case).
	\end{enumerate}
	
	In the type I case, we may pull the sum over $n_1$ outside with the triangle inequality and apply Lemma~\ref{le_I,IIsimple}, obtaining for~\eqref{e38} a bound of
	\begin{align*}
	  \ll P^{3/2}Q^{3/2}P^{1/2}(\log y)^{O(1)}\ll P^{1/2}N^{3/4}(\log y)^{O(1)}\ll NP^{-1/4}(\log y)^{O(1)}.
	\end{align*}
	
	In the type II case, we first split the $n_1$ and $n_2$ variables in~\eqref{e38} into short intervals of logarithmic length $P^{-1/4}$, reducing matters to bounding
	\begin{align}\label{e38b}\begin{split}
	&\sum_{\substack{N_1',N_2'\in [P^{3/2},y/P^{3/2}]\\N_1'N_2'\leq y(1+P^{-1/4})^2\\N_i'=(1+P^{-1/4})^{j_i}\,\, \textnormal{for some } j_i\in \mathbb{N}}}\sum_{\substack{d\leq Q\\}}\max_{a(d)^*}\Bigg|\sum_{\substack{n_1n_2\leq N\\n_1\sim N_1\\N_1'\leq n_1\leq N_1'(1+P^{-1/4})\\N'\leq n\leq N'(1+P^{-1/4})}}a_{n_1}\chi(n_1)b_{n_2}\chi(n_2)\mathfrak{u}_{P}(n_1n_2\overline{a};q) \Bigg|.
	\end{split}
	\end{align}
	The contribution of $y/(1+P^{-1/4})^2\leq N_1'N_2'\leq N(1+P^{-1/4})^2$ is trivially admissible by the triangle inequality. This allows us to delete the cross condition $n_1n_2\leq y$ and the estimate~\eqref{e38b} follows from applying~\eqref{eq_IIsimple} of the previous lemma to the $\ll P^{1/2}(\log P)^2$ many short sums. 
\end{proof}

\subsection{Minor arc contribution}\label{sub: minor}

Our task in this subsection is to prove~\eqref{e2_minor}. Recall that in the statement of Proposition~\ref{prop1-generalized} we have $f(n)=\Lambda^{*}(n)\omega_1(n+a)\omega_2(n+a)$, where $\Lambda^{*}\in \{\Lambda, \Lambda_{E_3^*}\}$. 
Write $\omega_i(n)=\sum_{d\mid n}\lambda_{i}(d)$ with $|\lambda_i(d)|\leq \tau_k(d)$. Then the two exponential sums in~\eqref{e2_minor} that we need to bound become
\begin{align*}
&\sum_{d_1\leq D_0}\lambda_1(d_1)\sum_{d_2\leq N^{1/2-\varepsilon}}\lambda_2(d_2)\sum_{\substack{n\leq N\\n\equiv -a\pmod{ d_1d_2}}}\Lambda^{*}(n)e(n\alpha),\\
&\mathcal{V}(\omega_2)\sum_{d_1\leq D_0}\lambda_1(d)\sum_{\substack{n\leq N\\n\equiv -a\pmod{ d_1}}}\Lambda^{*}(n)e(n\alpha). \end{align*}
By the assumption on $\omega_2$, we may write
\begin{align*}
\lambda_1\star \lambda_2=\sum_{j\leq C\log N}(\lambda_1\star \alpha_j)\star \lambda_j'\coloneqq \sum_{j\leq C\log N}\gamma_j\star \lambda_j',    
\end{align*}
where $|\gamma_j(d)|\leq \tau_{k+1}(d)$, $|\lambda_j'(d)|\leq \tau_k(d)$, $\gamma_j$ is supported on $[N^{t_j},N^{t_j+\varepsilon/2}]$ for some $0\leq t_j\leq 1/3-\varepsilon$, and $\lambda_j'$ is well-factorable of level $N^{1/2-t_j-\varepsilon}$. Therefore, to conclude the minor arc analysis and show $\eqref{e2_minor}$, it suffices to prove the following result (recall $c_1=c_0/100$).  

\begin{lemma}[Bombieri--Vinogradov with minor arc twist and partially well-factorable weights]\label{le_matomaki} Let $\varepsilon \in (0,1/100)$ and $k\geq 1$ and $a\in \mathbb{Z}\setminus\{0\}$ be fixed. Let $(\log N)^{C}\leq P\leq N^{\varepsilon/10}$ with $C$ large enough in terms of $k$. Let $\xi_1,\xi_2$ satisfy
$|\xi_i(d)|\leq \tau_k(d)$ and suppose that $\xi_1$ is supported on $[1,D_1]$ for some $D_1\leq N^{1/3-\varepsilon/2}$  and that $\xi_2$ is well-factorable of level $N^{1/2-\varepsilon}/D_1$. Then, we have 
\begin{align}\label{e4}
\sup_{\alpha\in \mathfrak{m}}\left|\sum_{d_1}\xi_1(d_1)\sum_{\substack{d_2\\(d_1d_2,a)=1}}\xi_2(d_2)\sum_{\substack{n\leq N\\n\equiv a\pmod{d_1d_2}}}\Lambda^{*}(n)e(n\alpha)\right|\ll NP^{-c_0/20}.    
\end{align}
\end{lemma}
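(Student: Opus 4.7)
The plan is to deduce~\eqref{e4} from Matom\"aki's~\cite{matomaki} power-saving minor arc Bombieri--Vinogradov theorem for well-factorable weights (compare the analogous~\cite[Hypothesis 6.3]{matomaki-shao}), after reorganising the double sum into a single sum over moduli up to $N^{1/2-\varepsilon}$ with a (partially) well-factorable coefficient sequence.

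To this end, set $\lambda\coloneqq \xi_1\star \xi_2$, so that the left-hand side of~\eqref{e4} equals
\begin{align*}
\left|\sum_{\substack{d\leq N^{1/2-\varepsilon}\\(d,a)=1}}\lambda(d)\sum_{\substack{n\leq N\\ n\equiv a\pmod d}}\Lambda^*(n)e(n\alpha)\right|,
\end{align*}
with $|\lambda(d)|\leq \tau_{2k}(d)\log N$. The sequence $\lambda$ is not itself well-factorable, since $\xi_1$ need only be supported on $[1,D_1]$; however, after splitting $\xi_1$ into the $O(\log N)$ dyadic pieces supported on $[D_1',2D_1']$ with $D_1'\leq D_1$, the resulting convolution admits the following \emph{partial} factorisation. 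For any $R\geq 2D_1'$ and $S\geq 1$ with $RS\asymp N^{1/2-\varepsilon}$, invoking the well-factorability of $\xi_2$ at scales $R/(2D_1')$ and $S$ yields a decomposition $\lambda=A\star B$ with $A$ supported on $[1,R]$, $B$ on $[1,S]$ and both coefficients $\tau_{k+1}$-bounded. Since $D_1\leq N^{1/3-\varepsilon/2}$, one has $R\gg N^{1/6}$ throughout the relevant range, which is comfortably within the scales at which Matom\"aki's argument applies Cauchy--Schwarz and the large sieve.

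Next, for $\alpha\in\mathfrak{m}$, combining Dirichlet's approximation theorem with the definition of $\mathfrak{m}$ yields a representation $\alpha=b/q+\beta$ with $P^{c_0}<q\leq Q$, $(b,q)=1$ and $|\beta|\leq 1/(qQ)$. Matom\"aki's minor arc theorem, in its quantitative form, then produces a bound of shape $\ll Nq^{-\delta_0}(\log N)^C$ for some absolute $\delta_0>0$, uniformly for well-factorable $\lambda$ of level $N^{1/2-\varepsilon}$. Since $q>P^{c_0}$ with $c_0=1/1000$ (and since $P\geq (\log N)^C$ with $C$ large), this is comfortably $\ll NP^{-c_0/20}$, as required. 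For $\Lambda^*=\Lambda_{E_3^*}$ no Vaughan decomposition is needed: by definition $\Lambda_{E_3^*}$ already has an explicit trilinear structure over prime triples with prescribed sizes, which provides the bilinear form exploited by Matom\"aki's argument directly.

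I expect the main obstacle to be verifying that Matom\"aki's proof accommodates this weakened notion of well-factorability. Inspection of her argument, however, suggests that the only use of full well-factorability is to select, at each invocation of Cauchy--Schwarz or dispersion, a factorisation at a scale close to $N^{1/2}$; thus the constraint $R\geq 2D_1'$ is satisfied with room to spare, given the hypothesis $D_1\leq N^{1/3-\varepsilon/2}$. This modification should therefore be cosmetic, and the rest of the proof should proceed verbatim.
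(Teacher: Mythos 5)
Your overall route coincides with the paper's: both reduce \eqref{e4} to Matom\"aki's power-saving minor arc Bombieri--Vinogradov estimate, and your preliminary steps (dyadic splitting of $\xi_1$, disposing of the logarithm, and noting that on $\mathfrak{m}$ the Dirichlet approximation forces $q>P^{c_0}$) are in the right spirit; the paper additionally reduces to $1$-bounded coefficients via Cauchy--Schwarz and divisor moment bounds before citing \cite[Lemmas 8.3, 8.4, 8.6]{matomaki-shao}, which is a harmless difference.

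The genuine gap is exactly the step you flag and then dismiss as ``cosmetic''. Your combined weight $\lambda=\xi_1\star\xi_2$ is only \emph{partially} well-factorable: it admits a factorisation $\lambda=A\star B$ with $A$ on $[1,R]$, $B$ on $[1,S]$ only when $R\geq 2D_1'$, i.e.\ the factor containing $\xi_1$ cannot be made smaller than the dyadic size of $\xi_1$. You justify that this suffices by asserting that Matom\"aki's argument only ever invokes factorisations ``at a scale close to $N^{1/2}$''. That is not how well-factorability is used in such proofs: after the combinatorial (Vaughan/Heath-Brown) decomposition of $\Lambda^{*}$, the modulus must be factored as $d=rs$ with $r$ at a scale dictated by the sizes of the bilinear pieces $m\sim M$, $n\asymp N/M$, and as $M$ ranges over its admissible interval the required scale for $r$ ranges over essentially all sizes up to $N^{1/2-\varepsilon}$, not only sizes near $N^{1/2}$. (Your own numerology is inconsistent here: ``$R\gg N^{1/6}$'' does not ensure $R\geq 2D_1'$ when $D_1'$ is close to $N^{1/3-\varepsilon/2}$.) Showing that one inflexible factor of size up to $N^{1/3-\varepsilon}$ can nonetheless be accommodated -- by using it as one of the bilinear variables and carrying out the corresponding case analysis on the ranges -- is precisely the nontrivial content of \cite[Lemmas 8.3, 8.4, 8.6]{matomaki-shao}, which are stated for exactly this partially factorable structure and are the source of the $1/3$ threshold in Definition~\ref{def_admis}; your proposal leaves this verification undone, so as written it does not establish \eqref{e4}. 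A smaller point: a bound of the uniform shape $Nq^{-\delta_0}(\log N)^{C}$ for all $P^{c_0}<q\leq Q$ is not what these estimates yield (the saving in $q$ degrades as $q$ approaches $N$, and one must also use $q\leq N/P^{c_0}$), though this part is routine to repair.
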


\begin{proof} By the Cauchy--Schwarz inequality, standard moment estimates for the divisor functions, and the fact that $P\geq (\log N)^{C}$ for $C$ large enough, it suffices to prove~\eqref{e4} in the case $|\xi_i(d)|\leq 1$, with the stronger bound $NP^{-c_0/9}$ on the right-hand side.   Note also that the logarithmic weight in the definition of $\Lambda^{*}$ can be disposed of with partial summation, replacing the summation over $n\leq N$ in~\eqref{e4} with summation over $n\leq N'$ for some $N'\leq N$. We may assume that $N'\geq N^{1-\varepsilon/10}$, as otherwise the claim follows from the Brun--Titchmarsh inequality.

After these reductions, the result follows from work of Matom\"aki~\cite{matomaki} and is proved in detail in~\cite{matomaki-shao}. See~\cite[ Lemmas 8.3, 8.4, 8.6]{matomaki-shao}. The considerations there give a saving of $P^{-c_0/8} (\log N)^C\ll P^{-c_0/9}$. Observe that the weight that we call $\xi_1$ is in~\cite{matomaki-shao} assumed to be supported on primes only (as we could also assume), but this is not required in the proof.  
\end{proof}

\subsection{Major arc contribution}\label{subsec:major}

We shall now complete the proof of Proposition~\ref{prop1-generalized} (and so also that of Key Proposition~\ref{prop1a}) by showing~\eqref{e2_major}. Let $\lambda(d)\coloneqq \lambda_1 \star (\lambda_2-\mathcal{V}(\omega_2)I)$, where $I(d)\coloneqq 1_{d=1}$. Note that for $2\leq \ell\leq P^{c_0}$ we have $\Lambda_{E_3}^{*}(\ell n)=0$, and $\Lambda(\ell n)=0$ unless $n$ is of the form $p^j$ with $j\geq 2$ and $p\mid \ell$. Therefore, our task is to prove 
\begin{align*}
\left|\sum_{(d,a)=1}\lambda(d)\sum_{\substack{n\leq y\\n\equiv -a\pmod{d}}}\Lambda^*(n)\chi(n)\right|\ll N P^{-3c_0/2-2c_1}
\end{align*}
uniformly for $y\leq N$ and characters $\chi$ of modulus $\leq P^{c_0}$. 

Since the two sieve weights $\lambda_1$ and $\lambda_2$ are supported only on integers consisting of primes $\leq P$ and $>P$, respectively, the sum that we are considering can be rewritten as
\begin{align*}
\sum_{(d,a)=1}\lambda_1\star \lambda_2(d)\sum_{\substack{n\leq y\\}}\Lambda^*(n)\chi(n)\left(1_{n\equiv -a(d)}-\frac{1_{n\equiv -a(d_{\leq P})}}{\varphi(d_{>P})}\right).
\end{align*}

To make Lemma~\ref{le_BV_rough} applicable, we rewrite for any $d$ with $(d,a)=1$ (denoting by $\chi_{0}^{(d_{>P})}$ the principal character $\pmod{d_{>P}}$ and by $\bar{a}$ the inverse of $a$ modulo $d$)
\begin{align*}\nonumber 
&1_{n\equiv -a\pmod{d}}-\frac{1_{n\equiv -a \pmod{d_{\leq P}}}}{\varphi(d_{>P})}\\
&=\nonumber\frac{1}{\varphi(d)}\sum_{\psi_1(d)}\psi_1(\overline{-a}n)-\frac{1}{\varphi(d)}\sum_{\psi_2(d_{\leq P})}\psi_2(\overline{-a}n)\chi_{0}^{(d_{>P})}(\overline{-a}n)-\frac{1_{(n,d_{>P})\neq 1}1_{n\equiv -a \pmod{d_{\leq P}}}}{\varphi(d_{>P})}\\
&=\nonumber \frac{1}{\varphi(d)}\sum_{\substack{\psi(d)\\ \text{cond}(\psi)>P}}\psi(\overline{-a}n)+O\Big(\frac{1_{(n,d_{>P})\neq 1}}{\varphi(d_{>P})}\Big)\\
&=\nonumber \mathfrak{u}_P(\overline{-a}n;d)+O\Big(\frac{1_{(n,d_{>P})\neq 1}}{\varphi(d_{>P})}\Big).
\end{align*}

The contribution of terms with $(n,d_{>P})\neq 1$ can be estimated trivially with the Brun--Titchmarsh inequality (noting that $(n,d_{>P})\neq 1$ implies the existence of a prime $p>P$ such that $p\mid n$, $p\mid d$). Therefore, after an application of the triangle inequality, we have reduced matters to showing
\begin{align*}
\left|\sum_{(d,a)=1}\lambda_1\star\lambda_2(d)\sum_{\substack{n\leq y\\}}\Lambda^*(n)\chi(n)\mathfrak{u}_P(\overline{-a}n;d)\right| \ll N P^{-3c_0-2c_1}.
\end{align*}

We can remove the weights $\lambda_1\star \lambda_2(d)$ with the Cauchy--Schwarz inequality; we thus reduce to proving 
\begin{align}\label{eq_prop1final_2}
\sum_{\substack{d\leq N^{1/2-\varepsilon}\\(d,a)=1}}\left|\sum_{\substack{n\leq y\\}}\Lambda^*(n)\chi(n)\mathfrak{u}_P(\overline{-a}n;d)\right| \ll N P^{-4(c_0+c_1)},
\end{align}
say. As the $d$ summation goes up to $ N^{1/2-\varepsilon}$, ~\eqref{eq_prop1final_2} follows from Lemma~\ref{le_BV_rough}. This completes the proof of~\eqref{e2_major} and therefore also that of Proposition~\ref{prop1-generalized} and Key Proposition~\ref{prop1a}.

\section{The case of two Chen primes -- Proof of Theorem~\ref{MT2}}

We shall next prove Theorem~\ref{MT3}. In addition to Proposition~\ref{prop1-generalized}, we will need the following lemma that allows us to replace $\Lambda$ and $\Lambda_{E_3^*}$ by pre-sieves in the range of saving of Theorem~\ref{MT3}. 

\begin{lemma}\label{le_bv_chen} Let $\varepsilon>0$ be fixed, and let $B\geq A\geq 1$ be large but fixed. Let $N\geq 2$ and let $(\log N)^{A^2}\leq P\leq (\log N)^{B}$.  Let $\omega^+$ be the upper bound admissible pre-sieve with parameters $P, N^{\varepsilon}$ as in Definition~\ref{def_admps}. Then we have  
	\begin{align}\label{eqqn2}
		\sup_{\alpha \in \mathbb{R}}\left|\sum_{n\leq N}(\Lambda(n)-V(P)^{-1}\omega^+(n)) \omega^+(n+2)e(\alpha n)\right|\ll N/(\log N)^{A}    
	\end{align}
	and
	\begin{align}\label{eqqn3}
		\sup_{\alpha \in \mathbb{R}}\left|\sum_{2<n\leq N}(\Lambda_{E_3^*}(n)-c_{E_3^*}V(P)^{-1}\omega^+(n)) \omega^+(n-2)e(\alpha n)\right|\ll N/(\log N)^{A} . 
	\end{align}
Here the implied constants are ineffective.
\end{lemma}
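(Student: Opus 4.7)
The plan is to follow the template of the proof of Proposition~\ref{prop1-generalized}: split $[0,1)$ into the major arcs $\mathfrak{M}(P)$ and minor arcs $\mathfrak{m}(P)$ from Definition~\ref{def_major}, and bound each of the two terms in the difference $\Lambda^{*}(n)\omega^{+}(n+2) - V(P)^{-1}\omega^{+}(n)\omega^{+}(n+2)$ separately on each arc.

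\textbf{Minor arcs.} For $\alpha \in \mathfrak{m}(P)$, the piece $\sum_n \Lambda^{*}(n)\omega^{+}(n+2)e(\alpha n)$ is directly amenable to Lemma~\ref{le_matomaki}: expanding $\omega^{+}(n+2) = \sum_d \tilde\lambda^{+}(d)1_{d\mid n+2}$ with $\tilde\lambda^{+}$ of level $D_0 \leq N^{\varepsilon}$, take $\xi_1 = \tilde\lambda^{+}$ (trivially well-factorable, since its level is $\leq N^{1/3-\varepsilon/2}$) and $\xi_2 = 1_{d=1}$; the resulting saving of $P^{-c_0/20}$ is $\ll (\log N)^{-A}$ since $P\geq (\log N)^{A^2}$, where one may assume $A$ is sufficiently large without loss of generality (smaller $A$ follow from the larger case). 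For the piece $V(P)^{-1}\sum_n \omega^{+}(n)\omega^{+}(n+2)e(\alpha n)$, expand both sieves to reduce to a coprime-pair double sum $V(P)^{-1}\sum_{(d_1,d_2)=1}\tilde\lambda^{+}(d_1)\tilde\lambda^{+}(d_2)\sum_{d_1\mid n,\,d_2\mid n+2,\,n\leq N}e(\alpha n)$; each inner arithmetic-progression exponential sum has modulus $d_1 d_2\leq N^{2\varepsilon}$, and since $\alpha$ has denominator $>P^{c_0}$ in its Dirichlet approximation, a standard estimate for $\sum_d\min(N/d,\|\alpha d\|^{-1})$ (together with $V(P)^{-1}\ll \log P$) yields the required decay.

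\textbf{Major arcs.} For $\alpha\in\mathfrak{M}(P)$, apply Lemma~\ref{le_character} to reduce matters to bounding character sums of the form $\sum_{n\leq y/\ell}(\Lambda^{*}(\ell n)-V(P)^{-1}\omega^{+}(\ell n))\omega^{+}(\ell n+2)\chi(n)$ for $\chi$ of modulus $\leq P^{c_0}$ and $1\leq \ell\leq P^{c_0}$. Contributions from $\ell\geq 2$ are negligible since $\Lambda(\ell n)$ is supported on prime powers of degree $\geq 2$, $\Lambda_{E_3^{*}}(\ell n)=0$, and $\omega^{+}(\ell n)$ vanishes unless $\ell$ is coprime to $\mathcal{P}^{\dagger}\widetilde{\mathcal{P}}$. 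For $\ell=1$ and non-principal $\chi$, the ineffective Siegel--Walfisz theorem (applied uniformly over the sieve expansion of $\omega^{+}(n+2)$) handles the $\Lambda^{*}$ piece, and character orthogonality combined with the fundamental lemma for $\omega^{+}(n)\omega^{+}(n+2)$ handles the pure sieve piece, each to the required accuracy. For $\ell=1$ and principal $\chi$, both terms produce main terms, and the normalization $V(P)^{-1}$ is chosen precisely so that they cancel to precision $O(N/(\log N)^{A})$.

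\textbf{Main obstacle.} The most delicate step is verifying this principal-character main-term cancellation. It will require a careful Euler-product identity combining the fundamental-lemma relation $\mathcal{V}(\omega^{+})\approx V(P)$ (yielding $\sum_n\Lambda^{*}(n)\omega^{+}(n+2)\approx c_{\Lambda^{*}}NV(P)$) with the ``twin-rough'' density computation $\sum_n\omega^{+}(n)\omega^{+}(n+2)\approx N\prod_{2<p\leq P,\,p\nmid \widetilde r}(1-2/p)$, together with the analogous cancellation on each nontrivial major-arc frequency $a/q$. A secondary care point is managing the potential exceptional character $\widetilde\chi$, whose contribution is suppressed by the built-in indicator $1_{(n,\widetilde{\mathcal{P}})=1}$ present in the definition of $\omega^{+}$ (Definition~\ref{def_admps}), ensuring that Siegel's ineffective theorem controls all non-principal characters uniformly.
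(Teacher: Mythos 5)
Your skeleton is the same as the paper's: minor arcs handled by Lemma~\ref{le_matomaki} for the $\Lambda^{*}$ piece (your elementary $\sum_q \min(N/q,\|\alpha q\|^{-1})$ treatment of the pure sieve piece is a workable substitute for the paper's appeal to a modification of Matom\"aki--Shao, and your ``take $A$ large WLOG'' reduction is legitimate), and major arcs reduced via Lemma~\ref{le_character} to characters of modulus $\leq P^{c_0}$, split according to $\ell$, non-principal and principal $\chi$. The gaps are in the two major-arc steps.

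First, the reduction to $\ell=1$. Your claim that ``$\omega^{+}(\ell n)$ vanishes unless $\ell$ is coprime to $\mathcal{P}^{\dagger}\widetilde{\mathcal{P}}$'' is false: an upper bound beta sieve does not vanish at integers divisible by primes of its range (the upper-bound property only forces $\omega_0\geq 0$ there); identical vanishing comes only from the exact factor $1_{(\cdot,\widetilde{\mathcal{P}})=1}$ in Definition~\ref{def_admps}. For $\ell$ having an odd prime factor $p\leq P$ with $p\nmid\widetilde{r}$ one must argue quantitatively, which is what the paper does: H\"older, Shiu's bound for the divisor factors, and the fundamental lemma applied to $\sum_{n\leq N}\omega^{+}(\ell n)$, whose local density vanishes at such a $p$, giving a bound of shape $Ne^{-(\log N)/(\log\log N)^{2}}$. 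Moreover, for $\ell>1$ composed only of primes outside the sieve range (e.g.\ $\ell=2^{j}$, which does occur since $\ell$ ranges over all of $[1,P^{c_0}]$ in Lemma~\ref{le_character}), none of your three stated reasons controls $V(P)^{-1}\sum_{n\leq y}\omega^{+}(\ell n)\omega^{+}(\ell n\pm 2)\chi(n)$; in absolute value this sum is of order $NV(P)$, far above the target, so it cannot be waved away as ``negligible'' and needs its own argument (character cancellation when $\chi$ is non-principal, and a main-term comparison when it is principal).

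Second, the principal-character cancellation, which you rightly call the heart of the matter, is only asserted, and the two densities you yourself quote do not cancel under the stated normalisation: $\sum_{n}\Lambda^{*}(n)\omega^{+}(n+2)\approx c_{\Lambda^{*}}N\mathcal{V}(\omega^{+})\approx c_{\Lambda^{*}}NV(P)$ by \eqref{vomega}, with $V(P)=\prod_{2<p\leq P}\bigl(1-\tfrac{1}{p-1}\bigr)$, while your ``twin-rough'' density gives $V(P)^{-1}\sum_{n}\omega^{+}(n)\omega^{+}(n+2)\approx V(P)^{-1}N\prod\bigl(1-\tfrac{2}{p}\bigr)$; these two quantities differ by the convergent Euler product $\prod_{2<p\leq P}\bigl(1-\tfrac{1}{(p-1)^{2}}\bigr)\approx 0.66$ (up to inversion), not by $1+O((\log N)^{-A})$. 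So ``the normalisation $V(P)^{-1}$ is chosen precisely so that they cancel'' is not a proof; it is exactly the computation that has to be carried out and matched, and it is where the paper works, evaluating both sums via \eqref{sw}, the prime number theorem and the fundamental lemma and asserting in particular $\sum_{n\leq y}\omega^{+}(n)\omega^{+}(n+a)\chi_0(n)=(1+O((\log N)^{-A}))V(P)^{2}y$. You must reconcile your $\prod(1-2/p)$ density with that $V(P)^{2}$ evaluation, and also match the local behaviour at the $2$-adic frequencies $a/2^{j}$, where $\Lambda^{*}$ is supported on odd $n$ but $\omega^{+}(n)\omega^{+}(n+2)$ is not; as sketched, your own displayed constants contradict the claimed cancellation. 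A smaller caution: for non-principal $\chi$, plain Siegel--Walfisz cannot be applied ``uniformly over the sieve expansion of $\omega^{+}(n+2)$'', since the sieve moduli go up to $N^{\varepsilon}$; one needs an averaged Bombieri--Vinogradov-type input with a character twist, such as Lemma~\ref{le_BV_rough}, as in the paper's earlier major-arc analysis.
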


\begin{proof}
	Let $\Lambda^{*}\in \{\Lambda,\Lambda_{E_3^{*}}\}$, and let $c^{*}=1$ if $\Lambda^{*}=\Lambda$ and $c^{*}=c_{E_3^{*}}$ if $\Lambda^{*}=\Lambda_{E_3}$.
	
	In the case $\alpha\in \mathfrak{m}$, for $a\in \{-2,2\}$ we have
	\begin{align*}
	\sup_{\alpha \in \mathfrak{m}}\left|\sum_{2<n\leq N}\Lambda^{*}(n) \omega^+(n+a)e(\alpha n)\right|\ll N/(\log N)^{A}    
	\end{align*}
	by Lemma~\ref{le_matomaki}, since $\omega^{+}(n+a)=\sum_{d\mid n+a, d\leq N^{\varepsilon}}\lambda_d$ with $|\lambda_d|\ll 1$. Similarly, by slight modification of~\cite[Lemma 8.3]{matomaki-shao}, we have
	\begin{align*}
	\sup_{\alpha \in \mathfrak{m}}\left|\sum_{2<n\leq N}\omega^{+}(n) \omega^+(n+a)e(\alpha n)\right|\ll N/(\log N)^{A}    
	\end{align*}
	
	In the case $\alpha\in \mathfrak{M}$, by Lemma~\ref{le_character}, it suffices to prove
	\begin{align*}
	\left|\sum_{2<n\leq y}(\Lambda^{*}(\ell n)-V(P)^{-1}\omega^+(\ell n)) \omega^+(\ell n+a)\chi(n)\right|\ll N/(\log N)^{A}
	\end{align*}
	uniformly for $1\leq \ell\leq P^{c_0}$, $y\leq N/\ell$, $a\in \{-2,2\}$ and characters $\chi$ of modulus $\leq P^{c_0}$.
	
	We will reduce to the case $\ell=1$. Observe that for $1<\ell\leq P^{c_0}$, we trivially have
	\begin{align*}
	   \sum_{2<n\leq N}\Lambda^{*}(\ell n)\ll N^{1/2}(\log N), 
	\end{align*}
	and by H\"older's inequality and the estimate $\omega^{+}(n)\leq \tau(n)$ we have
	\begin{align*}
	 \sum_{2<n\leq N}\omega^{+}(\ell n)\omega^{+}(\ell n+a)\leq \left(\sum_{n\leq N}\omega^{+}(\ell n)\right)^{1/2}\left(\sum_{n\leq N}\tau(\ell n)^4\right)^{1/4}\left(\sum_{2<n\leq N}\tau(\ell n+a)^4\right)^{1/4}.   
	\end{align*}
	The second and third sum on the right are $\ll N(\log N)^{O(1)}$ by Shiu's bound~\cite[Theorem 1]{shiu}. The first sum in turn is by the fundamental lemma
	\begin{align*}
	   \sum_{d\leq D}\lambda_d\frac{\ell N}{[d,\ell]}+O(D)=N\prod_{p\leq P}\left(1-\frac{\ell}{[\ell,p]}\right)+O(Ne^{-(\log D)/(\log P)}+D)\ll Ne^{-(\log N)/(\log \log N)^2},  
	\end{align*}
	say. Hence, we may assume that $\ell=1$.
	
	Now if $\chi$ is non-principal of modulus $q\leq P^{c_0}$, the Siegel--Walfisz theorem and~\eqref{sw} give
	\begin{align*}
	  \left|\sum_{2<n\leq y}\Lambda^{*}(n)\chi(n)\right|\ll N/(\log N)^{A}.  
	\end{align*}
	Moreover, we have
	\begin{align}\label{eq:omega+}
	 \left|\sum_{2<n\leq y}\omega^{+}(n)\omega^{+}(n+a)\chi(n)\right|\leq \sum_{\substack{d_1,d_2\leq N^{\varepsilon}\\(d_1,q)=1}}\left|\sum_{\substack{2<n\leq y\\n\equiv 0\pmod{d_1}\\n\equiv -a\pmod{d_2}}}\chi(n)\right|,   
	\end{align}
	and the inner sum is $\ll qd_1d_2$, since for any $u\geq 1, v$ coprime to $q$ we have
	\begin{align*}
	 \left|\sum_{n\leq y}\chi(un+v)\right|=\left|\frac{1}{\varphi(u)}\sum_{\psi(u)}\overline{\psi}(-v)\sum_{n\leq uy+v}\chi\psi(n)\right|\leq qu.   
	\end{align*} 
	Therefore,~\eqref{eq:omega+} is small enough.
	
	Finally, assume that $\chi=\chi_0$ is principal.  By the prime number theorem,~\eqref{sw} and the fundamental lemma, we have
	\begin{align*}
	 \sum_{2<n\leq y}\Lambda^{*}(n)\omega^{+}(n+a)\chi_0(n)=\sum_{d\leq N^{\varepsilon}}\lambda_d c^{*}\frac{y}{\varphi(d)}+O(N/(\log N)^{A})=c^{*}V(P)y+O(N/(\log N)^{A}).  
	\end{align*}
	On the other hand, the fundamental lemma also gives
	\begin{align*}
	\sum_{2<n\leq y} \omega^{+}(n)\omega^{+}(n+a)\chi_0(n)=(1+O((\log N)^{-A}))V(P)^2y,   
	\end{align*}
	and combining these we obtain the claim.
\end{proof}

We shall lastly need a well-known sieve inequality that is a simplified version of Chen's sieve.

\begin{lemma}[Chen's sieve]\label{le_chensieve} Let $\Lambda_2(n)=\Lambda(n)1_{\mathbb{P}_2}(n+2)\rho(n+2,N^{1/15})$ as before. Let $\varepsilon>0$ be a small enough fixed number. Let $P\leq e^{\sqrt{\log N}}$ and $D_0=N^{\varepsilon^2}$. Then there exist admissible main sieves $\omega_{\textnormal{M}}^{+}$, $\omega_{\textnormal{M}}^-$ with parameters $P,\varepsilon,2$, and an admissible pre-sieve $\omega^{+}$ with parameters $P, D_0$ such that

\begin{itemize}
    \item $\Lambda_2(n)\geq \Lambda(n)\omega^+(n+2)\omega_{\textnormal{M}}^-(n+2)-\omega^+(n)\omega_{\textnormal{M}}^+(n)\Lambda_{E_3^*}(n+2)+O(\mathcal{E}(n))$\\ with $\sum_{n\leq N} |\mathcal{E}(n)|\ll N e^{-\sqrt{\log N}}$, 
    \item $\mathcal{V}(\omega_{\textnormal{M}}^{-})-c_{E_3^*}\mathcal{V}(\omega_{\textnormal{M}}^{+})\gg V(P,N)$. 
\end{itemize}
\end{lemma}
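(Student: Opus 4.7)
The plan is to realize Chen's classical sieve-switching argument for almost twin primes inside the pre-sieve/main-sieve framework of this paper. I take $\omega_{\textnormal{M}}^{-}$ from Lemma~\ref{lem_admms}, $\omega_{\textnormal{M}}^{+}$ from Lemma~\ref{lem_Mupperbound}, and let $\omega^{+}$ be the upper bound admissible pre-sieve of Definition~\ref{def_admps} (paired with its lower bound counterpart $\omega^{-}$ for the composition step).

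First, combining $\rho(n+2,N^{1/15})\geq\rho(n+2,N^{1/10})$ with the identity
\begin{align*}
1_{\mathbb{P}_2}(n+2)\rho(n+2,N^{1/10}) \;=\; 1_{\mathbb{P}_3}(n+2)\rho(n+2,N^{1/10}) - 1_{\Omega(n+2)=3}\rho(n+2,N^{1/10}),
\end{align*}
which is exact because $N^{1/10}$-rough integers up to $N$ have at most $10$ prime factors, I reduce to
\begin{align*}
\Lambda_2(n) \;\geq\; \Lambda(n)\,\rho(n+2,N^{1/10})1_{\mathbb{P}_3}(n+2) \;-\; \Lambda(n)\,1_{\Omega(n+2)=3,\,N^{1/10}\textnormal{-rough}}(n+2).
\end{align*}
For the positive term I apply Lemma~\ref{lem_lowerboundsievcomp} with $A=\rho(n+2,P)$, $A^{\pm}=\omega^{\pm}(n+2)$, $B=\rho(n+2,N^{1/10})1_{\mathbb{P}_3}(n+2)$, and $B^{\pm}=\omega_{\textnormal{M}}^{\pm}(n+2)$. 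The hypothesis $AB^{-}\leq AB$ is precisely the defining property of $\omega_{\textnormal{M}}^{-}$ supplied by Lemma~\ref{lem_admms}, while $B^{+}\geq B^{-}$ and $A^{-}\leq A\leq A^{+}$ are standard sieve properties. This yields the leading term $\Lambda(n)\omega^{+}(n+2)\omega_{\textnormal{M}}^{-}(n+2)$ plus a residual of the form $\Lambda(n)(\omega^{-}-\omega^{+})(n+2)\omega_{\textnormal{M}}^{+}(n+2)$; its sum over $n\leq N$ is controlled via the fundamental-lemma estimate $\mathcal{V}(\omega^{+})-\mathcal{V}(\omega^{-})\ll V(P)\exp(-\log D_0/\log P)$ combined with Proposition~\ref{prop1-generalized}, and is therefore absorbed into $\mathcal{E}$.

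The negative switching term is handled by Chen's classical argument. For $n+2\leq N$ with $\Omega(n+2)=3$ and $N^{1/10}$-rough, write $n+2=q_1q_2q_3$ with $q_1\leq q_2\leq q_3$. Either all three $q_i$ lie in $[N^{1/10},N^{1/3-\varepsilon})$, in which case $n+2\leq N^{1-3\varepsilon}$ and the resulting contribution sums trivially to $O(N^{1-3\varepsilon})=o(Ne^{-\sqrt{\log N}})$ and can be absorbed into $\mathcal{E}$; or $n+2\in B_1\cup B_2$. In the latter case, I upper bound $\Lambda(n)$ by the sieve $\omega^{+}(n)\omega_{\textnormal{M}}^{+}(n)\log(n+2)$ applied to the variable $n=q_1q_2q_3-2$ rather than to $n+2$; this is the essence of Chen's switching, and is valid because $\omega^{+}(n)\omega_{\textnormal{M}}^{+}(n)\geq 1$ on primes $n$ exceeding the sieve level. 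The factor $\tfrac12$ inside $\Lambda_{E_3^{*}}=\tfrac12\Lambda_{B_1}+\Lambda_{B_2}$ emerges from the observation that each unordered $n+2\in B_1$ admits exactly two ordered assignments of the two large primes, so averaging the switching bound over these orderings halves the $B_1$ contribution, whereas the unique ordering in $B_2$ carries no such factor.

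Finally, the inequality $\mathcal{V}(\omega_{\textnormal{M}}^{-})-c_{E_3^{*}}\mathcal{V}(\omega_{\textnormal{M}}^{+})\gg V(P,N)$ follows by substituting the explicit fundamental-lemma evaluations of $\mathcal{V}(\omega_{\textnormal{M}}^{\pm})$ from Lemmas~\ref{lem_admms} and~\ref{lem_Mupperbound}, which reduces the claim to a numerical inequality involving the standard linear-sieve functions $f$ and $F$ integrated against Chen's weights. Its positivity for small $\varepsilon>0$ is the classical Chen verification. The main obstacle is the switching step: one must carefully track multiplicities of $B_1$-factorizations to produce exactly the factor $\tfrac12$, and verify that the change of sieve perspective from $n+2$ to $n$ does not introduce further losses beyond what fits into the $\mathcal{E}(n)$ budget of $\ll Ne^{-\sqrt{\log N}}$.
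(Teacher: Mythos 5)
Your switching step contains a genuine gap, and it is the heart of the lemma. After the reduction $1_{\mathbb{P}_2}(n+2)\rho\geq 1_{\mathbb{P}_3}(n+2)\rho-1_{\Omega(n+2)=3}\rho$, you need, up to an $\ell^1$-small error, the pointwise bound $\Lambda(n)1_{\Omega(n+2)=3,\ \textnormal{rough}}\leq \omega^+(n)\omega_{\textnormal{M}}^+(n)\Lambda_{E_3^*}(n+2)$. But at a prime $n$ exceeding the sieve levels one has $\omega^+(n)\omega_{\textnormal{M}}^+(n)=1$ \emph{exactly} (only the divisor $d=1$ contributes to each sieve), so for $n+2\in B_1$ the right-hand side is $\tfrac12\log(n+2)$ while the left-hand side is $\log n$: the inequality fails by a factor $2$. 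Your attempt to recover the $\tfrac12$ by ``averaging over the two ordered assignments'' does not work: $\Lambda_{B_1}(n+2)$ is $\log(n+2)$ times the indicator of the set $B_1$, not a count of representations, and a $B_1$ element need not admit two admissible orderings anyway. In addition, your dichotomy is incomplete: a rough $n+2=q_1q_2q_3$ with exactly two factors in $[N^{1/10},N^{1/3-\varepsilon})$ and $q_1q_3^2>N$ lies in neither $B_1$ nor $B_2$ nor your ``all three small'' case. Neither defect can be pushed into $\mathcal{E}(n)$: the primes $n$ with $n+2$ of these shapes can (and conjecturally do) number $\gg N/(\log N)^2$, so the discrepancy, of size $\asymp\log N$ per element, may be as large as $N/\log N$, far above the allowed $Ne^{-\sqrt{\log N}}$.

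The $\tfrac12$ in $\Lambda_{E_3^*}=\tfrac12\Lambda_{B_1}+\Lambda_{B_2}$ is not something to be recovered \emph{after} switching; it is produced \emph{before} switching by Chen's weighted inequality, roughly $1_{\mathbb{P}_2}(m)\rho(m,z)\geq\bigl(1-\tfrac12\sum_{z\leq p<y,\,p\mid m}1\bigr)\rho(m,z)-(\textnormal{normalised } E_3^* \textnormal{ term})$: for $m\in B_1$ the term $-\tfrac12\sum$ already supplies half of the required subtraction, and for $m$ with two prime factors in $[z,y)$ it supplies all of it — which is exactly why only $\tfrac12\Lambda_{B_1}+\Lambda_{B_2}$ has to be switched, and why your missing case disappears. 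This is the route the paper takes: it imports the full weighted construction of Matom\"aki--Shao (their Appendix A), whose main-sieve component has the same shape as in Lemma~\ref{lem_admms} but is applied to Chen's weight rather than to $1_{\mathbb{P}_3}$, then inserts the pre-sieve exactly as you do via Lemma~\ref{lem_lowerboundsievcomp}, and bounds the $(\omega^--\omega^+)\omega_{\textnormal{M}}^{\pm}$ residue using the pointwise bound $|\omega_{\textnormal{M}}^{\pm}(n)|\ll e^{O((\log N)/\log P)}$ together with the fundamental lemma — this is where the hypothesis $P\leq e^{\sqrt{\log N}}$ enters, and it is an unconditional $\ell^1$ estimate, whereas your appeal to Proposition~\ref{prop1-generalized} is misplaced since that proposition concerns binary convolutions outside an exceptional set of $m$, not a single unrestricted sum over $n$. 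Your treatment of the second bullet is fine in spirit (it is the classical numerical verification, as in Matom\"aki--Shao), but the first bullet, as you derive it, is false.
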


\begin{proof} 
	Without the pre-sieves, this follows directly from the construction in~\cite[Appendix A]{matomaki-shao}. By restricting the sieves constructed  there to primes larger than $P$ we can incorporate an upper bound pre-sieve into the second summand in a straightforward manner.
	
	In the first summand this is done similarly as in the proof of Theorem~\ref{MT1} by the vector sieve inequality, Lemma~\ref{lem_lowerboundsievcomp}, with $A^{-}, A^+$ being lower and upper bound pre-sieves with parameters $P, D_0$.  In contrast to the proof of Theorem~\ref{MT1}, we can now  bound the contribution of the  $A^- - A^+$ term in a straightforward manner and include it in the sieve error term. Indeed, the fundamental lemma and the estimate $|\omega_{\textnormal{M}}^{-}(n)|\ll e^{O((\log N)/(\log P))}$ give us
	\begin{align*}
&\bigl| \sum_{n\leq N} \Lambda(n)(\omega^+(n+2)-\omega^-(n+2))\omega_{\textnormal{M}}^-(n+2)\bigr|\\
&\ll (\log N)e^{O(\frac{\log N}{\log P})}\sum_{n\leq N} (\omega^+(n+2)-\omega^-(n+2))\\
&\ll (\log N)e^{O(\frac{\log N}{\log P})}\exp\Big(-\frac{1}{10}\frac{\log D_0}{\log P}\log \frac{\log D_0}{\log P}\Big)N\\
&\ll Ne^{-\sqrt{\log N}}.
\end{align*}
	
	This, and the related term with $\Lambda_{E_3^*}$, can be absorbed in the function $\mathcal{E}(n)$. 
\end{proof}

\begin{remark}
	We would not need to choose $\beta=750$ in the definition of admissible pre-sieves for this proposition to hold, as the relatively small choice of $P$ makes the fundamental lemma saving much stronger. We keep $\beta=750$ only to not increase the number of employed sieves even further.
\end{remark}

We are now ready to prove Theorem~\ref{MT2}.
\begin{proof}[Proof of Theorem~\ref{MT2}]
We allow constants in this proof to be ineffective.

Set $P=(\log N)^{A^2}$. For brevity, for any function $f$ denote $f'(n)\coloneqq f(n+2)$. Recall that we want to obtain an exceptional set of size $O(N(\log N)^{-A})$, with the constant being ineffective. By Lemma~\ref{le_chensieve}, we can now estimate
\begin{align*}
	\Lambda_2*\Lambda_2(m)\geq (\Lambda {\omega^{+}}'{\omega_{\textnormal{M}}^-}' -\omega^+\omega^+_{\textnormal{M}}\Lambda_{E_3^{*}}^{'})*\Lambda_2(m)+O\left(\frac{N}{(\log N)^{A}}\right).
\end{align*}

By Proposition~\ref{prop1-generalized}, we can reduce to pre-sieves with parameters $P,N^{\varepsilon}$, and we have outside a sufficiently small exceptional set
\begin{align*}
	(\Lambda {\omega^{+}}'{\omega_{\textnormal{M}}^-}' -\omega^+\omega^+_{\textnormal{M}}\Lambda_{E_3^{*}}^{'})*\Lambda_2(m)= (\Lambda {\omega^{+}}'\mathcal{V}(\omega_{\textnormal{M}}^-)-\omega^+\Lambda_{E_3^{*}}^{'}\mathcal{V}(\omega^+_{\textnormal{M}}))*\Lambda_2(m)+O\left(\frac{N}{(\log N)^{A}}\right).
\end{align*}
We can further simplify by applying Lemmas~\ref{le_bv_chen} and~\ref{le_fourier} to get outside the exceptional set that 
\begin{align*}
	&(\Lambda{\omega^{+}}'\mathcal{V}(\omega_{\textnormal{M}}^-)-\omega^+\Lambda_{E_3^{*}}^{'}\mathcal{V}(\omega^+_{\textnormal{M}}))*\Lambda_2(m)
	\\
	&=V(P)^{-1}(\mathcal{V}(\omega_{\textnormal{M}}^-)-c_{E_3^*}\mathcal{V}(\omega_{\textnormal{M}}^+))\omega^+{\omega^+}'*\Lambda_2(m)+O\left(\frac{N}{(\log N)^{A}}\right)
\end{align*}
Here $\omega^+(n)\omega^+(n+2)$ consists of two upper bound sieves and so is nonnegative. So we can lower bound the remaining $\Lambda_2$ in the same way and get outside of a sufficiently small exceptional set
\begin{align}\label{eq:omega_lower}
&V(P)^{-1}(\mathcal{V}(\omega_{\textnormal{M}}^-)-c_{E_3^*}\mathcal{V}(\omega_{\textnormal{M}}^+))\omega^+{\omega^+}'*\Lambda_2(m)\\
&\geq V(P)^{-2} (\mathcal{V}(\omega_{\textnormal{M}}^-)-c_{E_3^*}\mathcal{V}(\omega_{\textnormal{M}}^+))^2 \omega^+{\omega^+}'*\omega^+{\omega^+}'(m)+O\left(\frac{N}{(\log N)^{A}}\right).\nonumber
\end{align}

By the second statement of Lemma~\ref{le_chensieve} and Mertens's theorem, we have $\mathcal{V}(\omega_{\textnormal{M}}^-)-c_{E_3^*}\mathcal{V}(\omega_{\textnormal{M}}^+)\gg V(P,N)\asymp (\log P)/(\log N)$. Moreover, also by Mertens's theorem we have $V(P)^{-2}\asymp (\log P)^2$. 

By a simple calculation (that is a simpler case of our considerations in Section~\ref{sec:majarcMT}) and the fundamental lemma, one sees for all $m\leq N$ that
\begin{align*}
	 \omega^+{\omega^+}'*\omega^+{\omega^+}'(m)\gg V(P)^4\mathfrak{S}(m)m+O\left(\frac{N}{(\log N)^{A}}\right).
\end{align*}
As $\mathfrak{S}(m)\gg 1$ for $m\equiv 4\pmod 6,$ combining this with~\eqref{eq:omega_lower}  completes the proof.
\end{proof}

\section{Beyond the \texorpdfstring{$1/2$}{1/2} barrier -- Proof of Key Proposition~\ref{prop1b}}
\label{sec:beyond12}
	
	\subsection{Setup}
	In this section we prove Key Proposition~\ref{prop1b} which states that we can replace the main sieve component of a Fouvry--Grupp sieve by a constant on the major arcs. To do this, we extend the results of Bombieri--Friedlander--Iwaniec~\cite{bfi} and Maynard~\cite{MaynardII} to produce quantitatively stronger savings by including the contribution of small conductor characters in the main term, in the spirit of Lemma~\ref{le_BV_rough}. The proof strategy for these results is to use the dispersion method to translate the problem into sums of Kloosterman sums. Those then can be handled in an efficient manner by appealing to the work of Deshouillers--Iwaniec~\cite{di} on the spectral theory of automorphic forms. A similar power-saving result was obtained by Drappeau in~\cite{drappeau} in one special case of the dispersion method. We generalise his strategy to handle most of the arithmetic information in~\cite{bfi} and~\cite{MaynardII}. While our application to a Fouvry--Grupp sieve as in Definition~\ref{Def_FGsieve} is somewhat special and motivated by Theorem~\ref{MT1}, we in particular also obtain all cases that are required for the important $4/7$ and $3/5$ results of~\cite{bfi} and~\cite{MaynardII}. Thus, as a side product we also obtain Theorem~\ref{MT3}.
	
	We make extensive use of the $\mathfrak{u}_P$ notation from~\eqref{eq_uPdef} and note the trivial bound 	(see also~\cite[eq. (5.2)]{drappeau})
	\begin{align}\label{eq_trivbound}
	|\mathfrak{u}_P(n;q)|\ll 1_{n\equiv 1(q)}+\frac{P\tau(q)}{q},
	\end{align}
	which comes from the fact that $q$ has $\leq \tau(q)$ divisors $\leq P$ and to each such modulus there are $\leq P$ characters.
	
	We start by reducing Key Proposition~\ref{prop1b} to the following result that involves the distribution of primes to moduli beyond the $1/2$ barrier with certain factorability properties and a character twist.
	
	\begin{prop}[Power-saving level of distribution estimates beyond the $1/2$ barrier]\label{thm_bfi2} Let $a\in \mathbb{Z}\setminus \{0\}$ and $C\geq 1$ be fixed. Let $\epsilon>0$ be small and $\epsilon'>0$ sufficiently small in terms of $\epsilon$. Let $\xi_1,\xi_2$ satisfy
		\begin{itemize}
			
			\item $\xi_1$ is well-factorable of level $S_1$ and order $C$, $\xi_2$ is supported in $[1,S_2]$, and $|\xi_i(d)|\leq \tau(d)^C$ for $i\in \{1,2\}$;
			
			\item One of the following holds:
			
			(i) $S_2\leq S_1$, $S_1S_2\leq N^{4/7-\epsilon}$.
			
			(ii) $\xi_2(d)=\Lambda(d)$,  $S_1S_2\leq N^{11/20-\epsilon}$, $S_2\leq N^{1/3-\epsilon}$. 
		\end{itemize}
		Then for $1\leq P\leq N^{\epsilon'}$ and uniformly for primitive characters $\chi$ of modulus $\leq P$, we have 
		\begin{align*}
		\sum_{d_1\geq 1}\sum_{\substack{d_2\geq 1\\(d_1d_2,a)=1}}\xi_1(d_1)\xi_2(d_2)\sum_{\substack{n\leq N\\}}\Lambda(n)\chi(n)\mathfrak{u}_P(\overline{a}n;d_1d_2)\ll N(\log N)^{O_C(1)}P^{-1/200} .
		\end{align*}
	\end{prop}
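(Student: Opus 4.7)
The plan is to establish Proposition~\ref{thm_bfi2} by combining the dispersion method of Linnik in the form used by Bombieri--Friedlander--Iwaniec~\cite{bfi} and Fouvry~\cite{fouvry} with the power-saving arguments of Drappeau~\cite{drappeau}, carefully tracking the character twist and $\mathfrak{u}_P$-weight throughout. As a first step, I would apply a combinatorial identity (Heath--Brown or Vaughan) to decompose $\Lambda(n)$ into a sum of type I and type II bilinear forms. This reduces the problem to bounding expressions of the shape
\begin{align*}
\Sigma(M,N_0)\coloneqq \sum_{d_1,d_2}\xi_1(d_1)\xi_2(d_2)\sum_{\substack{mn\leq N\\ m\sim M,\,n\sim N_0\\(mn,a)=1}}\alpha_m\beta_n\chi(mn)\mathfrak{u}_P(\overline{a}mn;d_1d_2)
\end{align*}
with $MN_0\asymp N$, where $\alpha_m,\beta_n$ are suitable coefficient sequences bounded by divisor functions, and in case (ii) with a further combinatorial decomposition of the factor $\Lambda(d_2)$ in the spirit of Fouvry~\cite{fouvry} to match~\cite{fouvry-grupp}.

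For the type I pieces (one of $M,N_0$ is small, say $M\leq N^{1/4}$), the plan is to pull $m$ outside, combine $\chi$ with $\mathfrak{u}_P(\cdot;d_1d_2)$ into a single character of modulus $\leq d_1d_2 P$, and use Polya--Vinogradov on the smooth inner sum. The non-trivial cases occur when the character becomes principal; there the saving comes from the Drappeau-style observation that $\mathfrak{u}_P$ removes characters of conductor $\leq P$, so combined with the large sieve (Lemma~\ref{lem_BDHrough}) this yields a power saving in $P$. For the type II forms, I would first use the well-factorability of $\xi_1$ to split $d_1=qr$ where the length of one factor is chosen optimally against $M,N_0$. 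Then Cauchy--Schwarz in one of the smooth variables separates out $\beta_n\chi(n)$, and opening the square produces a shifted sum
\begin{align*}
\sum_{n_1,n_2}\beta_{n_1}\overline{\beta_{n_2}}\chi(n_1\overline{n_2})\sum_{q,r,d_2}\xi_1(qr)\overline{\xi_1(qr')}\xi_2(d_2)\overline{\xi_2(d_2')}\sum_{m}\mathfrak{u}_P(\overline{a}mn_1;qrd_2)\overline{\mathfrak{u}_P(\overline{a}mn_2;qr'd_2')}.
\end{align*}
The diagonal contribution $n_1=n_2$ gives the expected main term, which we absorb into the $\mathfrak{u}_P$ normalisation, while the off-diagonal, after Poisson summation in $m$, reduces to sums of Kloosterman fractions.

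The next step is to estimate these Kloosterman sums using the Deshouillers--Iwaniec spectral bounds~\cite{di}, as in~\cite{bfi, MaynardII, drappeau}. The decisive point for obtaining a saving of $P^{-1/200}$ rather than $(\log N)^{-A}$ is that $\mathfrak{u}_P$ restricts the sum to characters of conductor exceeding $P$, so that the Kloosterman-fraction analysis can be coupled with a large-sieve inequality for characters of conductor $>P$ on one of the outer variables; this is exactly the mechanism by which Drappeau~\cite{drappeau} upgraded the $(\log N)^{-A}$ saving of~\cite{bfi} in the case he treated, and I would adapt it to each of the dispersion-method inputs needed for case (i) and case (ii). The ranges $S_1S_2\leq N^{4/7-\epsilon}$ and $S_1S_2\leq N^{11/20-\epsilon}$, $S_2\leq N^{1/3-\epsilon}$ are dictated precisely by when the Weil-type and DI-type bounds still beat the trivial estimate; these checks follow~\cite{bfi, fouvry, MaynardII} mutatis mutandis.

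The main obstacle will be bookkeeping: propagating the $P^{-1/200}$ saving through the Cauchy--Schwarz step of the dispersion method without losing it. Cauchy--Schwarz naively squares error terms and would turn a $P^{-c}$ saving into $P^{-c/2}$, but more seriously, it is easy to lose the saving altogether if the $\mathfrak{u}_P$ weight is estimated trivially (by \eqref{eq_trivbound}) before the large sieve is applied. Following Drappeau, I would therefore refrain from expanding $\mathfrak{u}_P$ into characters too early: the expansion should be retained all the way into the off-diagonal Kloosterman analysis, so that the large-conductor restriction can be exploited on the long variable via Lemma~\ref{lem_BDHrough}-type estimates. In case (ii), a further technical complication is that the combinatorial decomposition of $\Lambda(d_2)$ produces type I and type II pieces of varying lengths, and one must verify that each piece falls into an applicable Deshouillers--Iwaniec regime while still permitting the character-conductor restriction to yield a uniform power saving; this uniformity is the crux of why the constant $1/200$ (rather than a purely abstract $c>0$) can be specified.
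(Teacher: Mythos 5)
Your overall strategy --- Heath-Brown decomposition, dispersion with the $\mathfrak{u}_P$ weight kept unexpanded through Cauchy--Schwarz, Deshouillers--Iwaniec/Drappeau bounds for the resulting Kloosterman sums, and the large sieve for conductors $>P$ --- is the same as the paper's, which reduces Proposition~\ref{thm_bfi2} to a family of bilinear estimates (Proposition~\ref{theorem_BFI_rough}) proved by exactly this machinery. However, there is a genuine gap in your plan for case (ii): you propose to follow Fouvry~\cite{fouvry} and Fouvry--Grupp~\cite{fouvry-grupp} ``mutatis mutandis'', but two of the seven bilinear inputs used there, (S.2) and (S.5), are precisely the ones that do not survive the upgrade to a power saving. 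Fouvry obtains the needed coprimality by assuming $\beta_n$ is supported on integers with few prime factors, and this roughness device is incompatible with the power-saving dispersion set-up (in particular with the reduction to squarefree $n$ and the handling of the gcd parameter $\nu$ without the hypothesis $(A_4)$ of~\cite{bfi}); moreover the alternative proof of (S.2) in~\cite[Section 10]{bfi} contains an error (see Remark~\ref{re_bfi1}). The paper's workaround is not cosmetic: it proves a strictly stronger case (T.2) by adapting Maynard's~\cite[Lemma 8.1]{MaynardII} to the $\mathfrak{u}_P$ setting, and it modifies the Fouvry--Grupp interval decomposition (merging their $J_2$ and $J_3$ and re-verifying all exponent inequalities) so that (S.2) and (S.5) are never invoked. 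Without such a substitution your case (ii) does not close.

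A secondary issue is your treatment of the ``type I'' ranges. For moduli $d_1d_2$ as large as $N^{4/7}$ or $N^{11/20}$, completing the smooth inner sum and applying P\'olya--Vinogradov only wins when the complementary variable is shorter than roughly $N/(S_1S_2)^{3/2}$, which falls far short of covering the ranges left over by Heath-Brown's identity. One also needs the analogues of~\cite[Sections 12 and 13]{bfi} --- the cases (T.4) and (T.5) of Proposition~\ref{theorem_BFI_rough} --- in which the character $\chi$ must be kept explicitly as $\alpha_m=\chi(m)1_{m\in I}$ rather than absorbed into the coefficients, and in which, because the $\mathfrak{u}_P$-dispersion forces congruence conditions on the moduli, one needs Drappeau's bound for sums of Kloosterman sums in arithmetic progressions~\cite[Theorem 2.1]{drappeau} in place of the original Deshouillers--Iwaniec input. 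Your instinct to delay expanding $\mathfrak{u}_P$ until after the off-diagonal analysis is the right one, but the two points above are where the argument genuinely requires input beyond ``follow~\cite{bfi},~\cite{fouvry} and~\cite{MaynardII} with Drappeau's twist''.
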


\begin{proof}[Proof of Key Proposition~\ref{prop1b} assuming Proposition~\ref{thm_bfi2}]
By Lemma~\ref{lem_6.2}, Lemma~\ref{le_character}, and Lemma~\ref{le_matomaki} (which is used to bound the Fourier transform of $g$), it suffices to show that
\begin{align*}
 \max_{\substack{\chi\pmod q\\q\leq P^{c_0}}}\max_{1\leq \ell\leq P^{c_0}}\max_{y\leq N}\left|\sum_{n\leq y}(f(\ell n)-f_{\textnormal{pre-sieve}}(\ell n))\chi(n)\right|\ll NP^{-3c_0/2-2c_1}.        
\end{align*}

Now, arguing as in Subsection~\ref{subsec:major}, it suffices to show, uniformly for characters $\chi$ of modulus $\leq P$ and $N/P<y\leq N$, that
	\begin{align}\label{eq_prop2final_2}
	\left|\sum_{d}\lambda_1\star\lambda_2(d)\sum_{\substack{n\leq y\\}}\Lambda(n)\chi(n)\mathfrak{u}_P(\overline{-a}n;d)\right| \ll N P^{-3c_0/2-2c_1},
	\end{align}
	where $\lambda_1, \lambda_2$ are the pre-sieve and main sieve weights, respectively. The Fouvry--Grupp main sieve $\lambda_2$ is of the shape required for an application of Proposition~\ref{thm_bfi2}, after replacing the prime indicator function by the von Mangoldt function, using summation by parts. The pre-sieve can be absorbed into $\xi_1$. Furthermore, after introducing an admissible error, we can assume that $\chi$ is primitive.
\end{proof}
	
	Our definition of a Fouvry--Grupp sieve (see Definition~\ref{Def_FGsieve}) and the related level of distribution estimate of Proposition~\ref{thm_bfi2} are motivated by the result of Fouvry--Grupp~\cite{fouvry-grupp} that shows that the primes obey suitable level of distribution estimates to be combined with such sieves. As in their~\cite[Lemma 4]{fouvry-grupp} we deduce Proposition~\ref{thm_bfi2} from a general estimate on the distribution of bilinear forms in arithmetic progressions. To be more in line with usual notation, we change the notation for the remainder of this section and denote by $X$ the summation range that is called $N$ (or the related $y$) in all other sections of the paper. We define the general bilinear sum we are considering as
	\begin{align}\label{eq_Ddef}
	\mathcal{D}\coloneqq  \sum_{\substack{ q\sim Q}}\sum_{\substack{r\sim R\\(qr,a)=1}}\gamma_q\delta_r\sum_{\substack{m\sim M\\n\sim N}}\alpha_m\beta_n \mathfrak{u}_P(mn\overline{a};qr).
	\end{align}
	
	Throughout this section we often use the following assumptions and notation.
	\begin{convention}\label{def_BFIrough_assumptions} Let $\epsilon>0$ be sufficiently small in absolute terms and $\epsilon'>0$ be sufficiently small in terms of $\epsilon$ (Given $\epsilon$, the largest admissible choice of $\epsilon'$ is not the same in every lemma). Fix $a\in \mathbb{Z}\setminus \{0\}$ and $C\geq 1$. Let $X\asymp MN$ with 
		\begin{align}\label{eq_BFI_Rough_1}
		M,N\geq X^\epsilon,
		\end{align}
		and let $Q,R$ be given with
		\begin{align}
		QR\leq X^{1-\epsilon}
		\end{align}
		Let $\alpha_m, \beta_n, \gamma_q, \delta_r$ be coefficient sequences of order $C$, supported respectively on $m\sim M, n\sim N, q\sim Q, r\sim R$. Let $f_0$ be a nonnegative smooth function supported on $[1/2,5/2]$ which is identically equal to $1$ on $[1,2]$.
	\end{convention}
	
	With this we can state the following result about the distribution of bilinear forms in arithmetic progression to large moduli, considerably extending the permissible level over Lemma~\ref{le_I,IIsimple}.

	\begin{prop}[Distribution of bilinear sums]\label{theorem_BFI_rough} Assume Convention~\ref{def_BFIrough_assumptions} and let 
	\begin{align*}
	 \mathcal{D}= \sum_{\substack{q\sim Q}}\sum_{\substack{r\sim R\\(qr,a)=1}}\gamma_q\delta_r\sum_{\substack{m\sim M\\n\sim N}}\alpha_m\beta_n \mathfrak{u}_P(mn\overline{a};qr)   
	\end{align*}
	as in~\eqref{eq_Ddef}. Assume further that one of the following holds:
		\begin{enumerate}
			\myitem{(T.1)} $QR\leq X^{1/2-\epsilon}$. \label{(T.1)}
		   \myitem{(T.2)} $\gamma=\gamma^{'}\star \gamma^{''}$, with $\gamma^{'},\gamma^{''}$ supported respectively on $[Q_1,2Q_1]$ and $[Q_2,2Q_2]$ (so $Q\asymp Q_1 Q_2$) with\\ $X^{\epsilon}R\leq N\leq X^{-\epsilon}\min\{X^{1/2}Q_1^{-1/2}Q_2^{-1},X^2Q_1^{-5}Q_2^{-2}R^{-1},XQ_1^{-2}Q_2^{-3/2}R^{-1/2}\}.$\label{(T.2)}
			\myitem{(T.3)} $X^{\epsilon}R\leq N\leq X^{-\epsilon}\min\{X^{1/2}Q^{-1}R^{1/2},X^{2/5}Q^{-2/5},X^{1/2}Q^{-3/4}\}$.\label{(T.3)}
			\myitem{(T.4)} $\alpha_m=\chi(m)1_{m\in I}$ for some primitive character $\chi$ of modulus $\leq P$ and some interval $I\subset [M,2M]$ and $X^{1-\epsilon}\geq M\geq X^{\epsilon}\max\{Q,X^{-1}QR^4,Q^{1/2}R,X^{-2}Q^3R^4\}$.\label{(T.4)}
			\myitem{(T.5)} $\gamma_q=1_{q\in I}$ for some interval $I\subset [Q,2Q]$ and $X^{\epsilon}R \leq  N \leq  X^{1/3-\epsilon}R^{-1/3}$.\label{(T.5)}    
		\end{enumerate}

		Then for any $P\leq X^{\epsilon'}$, we have 
		\begin{align*}
		\mathcal{D}\ll X(\log X)^{O_C(1)}P^{-1/7}
		\end{align*}
	\end{prop}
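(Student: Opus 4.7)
The proof will proceed by combining the dispersion method of Bombieri--Friedlander--Iwaniec~\cite{bfi}, as refined by Fouvry--Grupp~\cite{fouvry-grupp} and Maynard~\cite{MaynardII}, with Drappeau's~\cite{drappeau} technique for extracting a genuine power-saving in the auxiliary parameter $P$. The conceptual point, already visible in~\eqref{eq_uPdef} and in Lemma~\ref{le_BV_rough}, is that $\mathfrak{u}_P(n;q)$ encodes precisely the contribution of characters $\pmod q$ whose conductor exceeds $P$; hence, in every place where one would ordinarily invoke Siegel--Walfisz to dispose of a ``main term'' contribution, one may instead apply the large sieve inequality restricted to conductor $>P$, gaining a power of $P$ rather than a single logarithm.

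Case \ref{(T.1)} is the most direct. Since $QR\leq X^{1/2-\epsilon}$, the well-factorability of $\gamma$ allows a type~II style application of Cauchy--Schwarz in the $n$-variable, after which Lemma~\ref{lem_BDHrough} with the conductor restriction $\mathrm{cond}(\psi)>P$ gives a saving of the form $(QR/X^{1/2}+1/P^{1/2})$, which is comfortably stronger than $P^{-1/7}$ for $P\leq X^{\epsilon'}$.

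For \ref{(T.2)}--\ref{(T.5)}, one follows the full dispersion method. Applying Cauchy--Schwarz in the ``smooth'' variable (the $n$-variable in \ref{(T.2)}, \ref{(T.3)}, \ref{(T.5)}, and the $m$-variable in \ref{(T.4)}) and expanding the square, one is led to evaluate, for a pair of moduli $q_1r_1$, $q_2r_2$, the sum $\sum_n f_0(n/N)\mathfrak{u}_P(mn\bar a;q_1r_1)\overline{\mathfrak{u}_P(mn\bar a;q_2r_2)}$. Completing the $n$-summation via Poisson (modulo the least common multiple of the two composite moduli) splits this into a zero-frequency contribution---the ``diagonal'' main term, which either vanishes or is absorbed by a small-conductor character correction, cleaned up by the large sieve with conductor restriction---and off-diagonal terms involving incomplete Kloosterman sums weighted by $f_0$. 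Those Kloosterman sums are then bounded by the Deshouillers--Iwaniec~\cite{di} estimates, which via the Kuznetsov trace formula reduce them to spectral sums over automorphic forms. The exponent balances appearing in the displayed ranges of \ref{(T.2)}--\ref{(T.5)} are precisely those at which the Deshouillers--Iwaniec bounds beat the trivial estimate; they are identical to the balances used in~\cite{bfi,fouvry-grupp,MaynardII}, and only the main-term analysis needs to be adapted to the $\mathfrak{u}_P$ setting.

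The main obstacle is sustaining the power saving in $P$ through this argument. Each application of Cauchy--Schwarz ordinarily dilutes a saving $P^{-\alpha}$ into $P^{-\alpha/2}$, and the dispersion method uses Cauchy--Schwarz at least once, sometimes twice. To avoid losing all of $P$, one follows Drappeau: \emph{before} applying Cauchy--Schwarz, one first splits $\mathfrak{u}_P$ as ``trivial weight bounded by~\eqref{eq_trivbound}'' plus ``character sum over conductors in $(P,qr]$'', isolates the small-conductor piece into the main term, and executes the dispersion steps only on the large-conductor part, where the total mass is already of size $P^{-1}$ times the trivial one. Threading this decomposition uniformly through the spectral step---in particular, ensuring that the bounds in Deshouillers--Iwaniec are applied in a way that preserves (rather than averages away) the conductor restriction---is what forces the relatively modest exponent $P^{-1/7}$ in the final bound, though any fixed positive saving would suffice for our applications.
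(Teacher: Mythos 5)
Your overall architecture for cases \ref{(T.1)}, \ref{(T.2)}, \ref{(T.3)} and \ref{(T.5)} --- Cauchy--Schwarz dispersion, Poisson completion in the smooth variable (which is $m$, not $n$), main terms matched against the small-conductor characters with the conductor-restricted large sieve, off-diagonal frequencies reduced to Kloosterman-type sums --- is essentially the paper's route (Lemmas~\ref{lem_initred}, \ref{lem_congtodisp}, \ref{lem_redtokloo}, then \ref{lem_MII8.1}, \ref{lem_BFIIsec9}, \ref{lem_T5}). But two of your steps would fail as written. First, case \ref{(T.4)} is not a dispersion estimate: applying Cauchy--Schwarz in the $m$-variable discards precisely the structure $\alpha_m=\chi(m)1_{m\in I}$ that the hypothesis supplies, and since $M$ may be as large as $X^{1-\epsilon}$ (so $N$ as small as $X^{\epsilon}$) no configuration of the dispersion method reproduces the stated range $M\geq X^{\epsilon}\max\{Q,X^{-1}QR^4,Q^{1/2}R,X^{-2}Q^3R^4\}$, which is tailored to the argument of \cite{bfi}, Section 12. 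The paper's Lemma~\ref{lem_T4} instead applies Poisson summation directly in $m$, matches the $h=0$ frequency against the small-conductor part of $\mathfrak{u}_P$ via Gauss-sum identities, and estimates the $h\neq 0$ frequencies by the BFI treatment of the resulting sums over $r$. Since \ref{(T.4)} is what replaces BFI's Theorem 5* in the deduction of Proposition~\ref{thm_bfi2}, this is a genuine gap rather than a stylistic difference.

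Second, your mechanism for preserving the power of $P$ is not correct as stated. One cannot ``execute the dispersion steps only on the large-conductor part, where the total mass is already of size $P^{-1}$ times the trivial one'': $\mathfrak{u}_P$ already \emph{is} the large-conductor part, and the only pointwise information available is \eqref{eq_trivbound}, which still contains the full congruence indicator; no $\ell^1$-type saving of size $P^{-1}$ exists before an $L^2$ average over moduli. In the paper the full $\mathfrak{u}_P$ is squared after Cauchy--Schwarz, producing the three sums $\mathscr{S}_1,\mathscr{S}_2,\mathscr{S}_3$, and the $P$-saving enters at concrete points: the truncation of $(n_1,n_2)$ and $(q_1,q_2)$ at $P^{1/2}$, the large sieve over conductors $>P$ (Lemma~\ref{lem_BDHrough}) when comparing each $\mathscr{S}_i$ with the common main term $\mathcal{X}$, and the fact that the Kloosterman-sum estimates save a power of $X^{\epsilon}\geq P$. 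The square-root dilution from Cauchy--Schwarz is simply accepted --- this is exactly why the exponents $1/6$ and $1/7$ appear --- so no pre-splitting is needed, nor would the one you describe be available. Relatedly, the endgame is not uniformly ``Deshouillers--Iwaniec via Kuznetsov'': cases \ref{(T.2)} and \ref{(T.3)} are closed by adapting Maynard's Lemma 8.1 \cite{MaynardII} and \cite{bfi}, Section 9 (Weil-type bounds after a further Cauchy--Schwarz), while in case \ref{(T.5)} the reduction of Lemma~\ref{lem_redtokloo} forces congruence conditions on the moduli, so plain Deshouillers--Iwaniec does not suffice and one needs Drappeau's Theorem 2.1 \cite{drappeau} on sums of Kloosterman sums in arithmetic progressions; your sketch invokes Drappeau only for the (unnecessary and unworkable) pre-splitting, not for this, which is where his input is actually indispensable.
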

The case \ref{(T.1)} follows immediately from Lemma~\ref{le_BV_rough} and the Cauchy--Schwarz inequality. Roughly speaking, \ref{(T.2)} is a consequence of Lemma 8.1 of Maynard~\cite{MaynardII}, \ref{(T.3)} is related to Theorem 2 of Bombieri--Friedlander--Iwaniec~\cite{bfi}, and \ref{(T.5)} is a combination of~\cite[Theorem 6]{bfi} with Drappeau's estimate for sums of Kloosterman sums in arithmetic progressions (\cite[Theorem 2.1]{drappeau}).
	\begin{remark}
		Throughout this section we optimise neither the dependence of $\epsilon'$ on $\epsilon$ nor the exponent of $P$. With slightly more effort, along the lines of~\cite{drappeau} it should be possible to improve the estimate in Proposition~\ref{theorem_BFI_rough} to $X(\log X)^{O_C(1)}P^{-1}$.
	\end{remark}
	\begin{remark}\label{re_bfi1}
		We are missing the cases (S.2) and (S.5) from the set of seven bilinear estimates in~\cite[Lemma 4]{fouvry-grupp}, but we compensate for this by having a stronger case \ref{(T.2)}. Both of the missing cases, (S.2) and (S.5), originate from work of Fouvry~\cite{fouvry}. Fouvry restricts the coefficients $\beta_n$ to be supported only on $n$  with not too many prime divisors to introduce certain coprimality conditions, see~\cite[Lemme 7]{fouvry}. We were unable to get a power saving with this strategy. While the alternative proof of~\cite[Lemma 4, (S.2)]{fouvry-grupp} given in~\cite[Section 10]{bfi} does not use Fouvry's coprimality approach and looks like it could give a power saving, there seems to be an issue at the bottom of \cite[p. 231]{bfi}. To see this, consider there the case $a=h_1=h_2=q_0=q_1=q_2=1$, $n_1=6$, $n_2=5$, $n_3=3$, $n_4=10$, so $\delta_1=2$, $\delta_2=1$. Then the first expression for the terms in the exponential in the proof of~\cite[Lemma 8]{bfi} is
		\begin{align*}
		\frac{\overline{n_2}}{n_1}-\frac{\overline{n_4}}{n_3}=\frac{-1}{6}-\frac{1}{3}=-\frac{1}{2},
		\end{align*}
		whereas the second expression is
		\begin{align*}
		\Big(\frac{n_3 n_4}{\delta_1\delta_2}-\frac{n_1n_2}{\delta_1\delta_2}\Big)\frac{\overline{n_2n_4/\delta_1\delta_2}}{n_1n_3}=\Big(\frac{30}{2}-\frac{30}{2}\Big)\frac{\overline{n_2n_4/\delta_1\delta_2}}{n_1n_3}=0.
		\end{align*}
		The authors thank James Maynard for making them aware of this issue.
	\end{remark}    
	
	\subsection{Combinatorial dissection}

	We now apply a combinatorial dissection to (both of the) von Mangoldt functions in the statement of Proposition~\ref{thm_bfi2} to reduce it to suitable bilinear sum estimates.
	
	\begin{proof}[Proof of Proposition~\ref{thm_bfi2}, assuming Proposition~\ref{theorem_BFI_rough}]
		The proofs of the cases (i) and (ii) are based on the proofs of~\cite[Theorem 10]{bfi} and~\cite[Theorem 2]{fouvry-grupp}.
		
		We start with Heath-Brown's identity
		\begin{align*}
		  \Lambda(n)=-\sum_{1\leq j\leq J}(-1)^j\binom{J}{j}\sum_{\substack{n=n_1\cdots n_{2j}\\n_i\geq X^{1/J}\implies i\leq j}}(\log n_1)\mu(n_{j+1})\cdots \mu(n_{2j})  
		\end{align*}
		with $J=7$. Observe that the additional character $\chi$ in the statement of Proposition~\ref{thm_bfi2} carries through Heath-Brown's identity via multiplicativity and can be absorbed in the sequences $\alpha$ and $\beta$ unless we are in case \ref{(T.4)}. 
		
		We split each of the variables $n_i$ in Heath-Brown's identity to ranges of the form $N_i< n_i\leq (1+\Delta)N_i$ with $\Delta=P^{-1/200}$. We get the trivial estimate 
		\begin{align*}
		\Delta X (\log X)^{O_C(1)}
		\end{align*}
		for the ranges not covered precisely and the bound
		\begin{align*}
		\Delta^{-14}X(\log X)^{O_C(1)}P^{-1/7}
		\end{align*}
		for the remaining ranges, provided that they fit into one of the cases of Proposition~\ref{theorem_BFI_rough}. Recalling that $\Delta=P^{-1/200}$, we get a good enough error term.
		
		Let us first consider the case (i), that is $S_2\leq S_1$, $S_1S_2\leq N^{4/7-\epsilon}$. In that case the combined weight $\xi_1*\xi_2$ is well-factorable (see~\cite[Lemma 5]{fouvry-grupp}) and so is dealt with, without a power saving error term, in~\cite[Section 10]{bfi}. The proof there is based on an application of~\cite[Theorem 1]{bfi},~\cite[Theorem 2]{bfi}, and~\cite[Theorem 5*]{bfi}. We can replace these three by \ref{(T.2)} (with $Q_2=1$), \ref{(T.3)}, and \ref{(T.4)} respectively to get the desired improved error term.

		To handle the remaining case (ii), we follow the ideas of Fouvry--Grupp~\cite[Section III.]{fouvry-grupp} and in particular apply the same combinatorial decomposition as described in~\cite[Section III.1]{fouvry-grupp}. Write $S_i=X^{\theta_i}$ and recall that we are in the case $\theta_1+\theta_2\leq 11/20-\epsilon, \theta_2\leq 1/3-\epsilon$, and  $\theta_2\geq \theta_1$ (as otherwise we can apply (i)), and the weight $\xi_1$ is well-factorable. We define the intervals
		\begin{align*}
		I_1&\coloneqq [0,2\epsilon] \\
		I_2&\coloneqq (2\epsilon,\theta_1+2\epsilon]\\
		I_3&\coloneqq (\theta_1+2\epsilon,\theta_2+2\epsilon]\\
		I_4&\coloneqq (\theta_2+2\epsilon,3/7]\\
		I_5&\coloneqq (3/7,1].
		\end{align*}
		This differs from the intervals given in~\cite[Section III.3]{fouvry-grupp} only in that we are combining their $J_2$ and $J_3$. Let $N_i=X^{\nu_i}$ in our splitting of $n_i$ into intervals $(N_i,(1+\Delta)N_i]$. Let $\nu$ denote any nonempty subsum of the $\nu_i$. 
		
		If there is a $\nu\in I_4$, we apply \ref{(T.2)} with 
		\begin{align*}
		M&=X^{1-\nu}\\
		N&=X^\nu \\
		Q_1&\leq X^{\theta_1+\theta_2-\nu+2\epsilon}\\
		Q_2&=1\\
		R&\leq X^{\nu-2\epsilon}.
		\end{align*}
	This is essentially the same as in ~\cite[Section III.3]{fouvry-grupp}.
		
		Assume now that there is a $\nu\in I_2$. We start by using the factorability of $\xi_1$ and want to apply \ref{(T.2)} with
		\begin{align*}
		M&=X^{1-\nu}\\
		N&= X^\nu\\
		Q_1&= X^{\theta_1-\nu+2\epsilon}\\
		Q_2&= X^{\theta_2}\\
		R&= X^{\nu-2\epsilon}.
		\end{align*}
		The second and third statements in the minimum clearly make no problem, as 
		\begin{align*}
		\nu-2\epsilon+\epsilon\leq \nu \leq \epsilon+\min\{2-5\theta_1-2\theta_2+4\nu-8\epsilon,1-2\theta_1-(3/2)\theta_2+(3/2) \nu-3\epsilon \},
		\end{align*}
		the worst case here being $\nu=2\epsilon, \theta_1=\theta_2=11/40$.
		Thus, we can apply \ref{(T.2)} as long as
		\begin{align}
		\nu+2\theta_2+\theta_1\leq 1-\epsilon. \label{eq_1}
		\end{align}
		
		If $\eqref{eq_1}$ is not fulfilled, we apply \ref{(T.3)} with 
		\begin{align*}
		M&=X^{1-\nu}\\
		N&=X^{\nu}\\
		Q&\leq X^{11/20-\nu+2\epsilon}\\
		R&\leq X^{\nu-2\epsilon}.
		\end{align*}
		The second and third condition are again easily seen to be fulfilled, the worst case being $\theta_1=\theta_2=11/20$, and for example we have $\nu\leq 2/5(1-11/20+\nu)$ as $33/200\leq 9/50$. In order to check the condition 
		\begin{align}\label{eq_FG3.5}
		N\leq X^{1/2}Q^{-1}R^{1/2},
		\end{align}
		we take advantage of the fact that~\eqref{eq_1} can be assumed to be false. Recalling $\theta_2<1/3$, we can assume
		\begin{align*}
		\theta_1+\theta_2+\nu > 2/3-\epsilon.
		\end{align*}
		As $QR=X^{\theta_1+\theta_2}$ this means that
		\begin{align*}
		X^{1/2}Q^{-1}R^{1/2}\geq X^{5/6-(1/2)\delta}Q^{-3/2}N^{-1/2}.
		\end{align*}
		So~\eqref{eq_FG3.5} follows from
		\begin{align*}
		N^{3/2}Q^{3/2}\leq X^{5/6-(1/2)\epsilon}
		\end{align*}
		which holds for small enough $\epsilon$ as $33/40< 5/6$. 
		
		In the next step, Fouvry and Grupp decompose $\xi_2$ with the help of Heath-Brown's identity. As the arguments can be applied mostly unchanged, we are brief. Similarly as at the end of~\cite[Section III.5]{fouvry-grupp} we can reduce the critical range $I_3$ to an interval $I_3^*=[\theta_1^*+2\epsilon,\tau+2\epsilon]$. Indeed, the extensions of $I_2$ can be easily checked with our modified argument. The enlargement of $I_5$ and the following application of~\cite[(S.6), (S.4), (S.7)]{fouvry-grupp} that completes the treatment of $I_1, I_3, I_5$ can be done exactly as there by replacing the three cases with our \ref{(T.4)}, \ref{(T.3)}, \ref{(T.5)}. 
			\end{proof}

	\subsection{Initial reduction and special case}
	In this subsection we do an initial reduction of Proposition~\ref{theorem_BFI_rough} and prove the case \ref{(T.4)}. 
	
	\begin{lemma}\label{lem_initred}
		It suffices to show Proposition~\ref{theorem_BFI_rough} with the following technical modifications.
		\begin{enumerate}
			\item In the cases \ref{(T.2)}, \ref{(T.3)}, \ref{(T.4)}, and \ref{(T.5)} one may assume that $QR>X^{1/2-\epsilon}$.
			\item In the case \ref{(T.5)} one may assume that $Q^2R\leq X$, and in the cases \ref{(T.2)}, \ref{(T.3)}, and \ref{(T.5)} one may assume that $QN^{3/2}\leq X^{1-\epsilon}$ and that $Q^2 RN\leq X^{2-\epsilon}$.
			\item In the cases \ref{(T.2)}, \ref{(T.3)}, and \ref{(T.5)} one may assume that $\beta_n$ is supported on squarefree integers only, if the estimate \begin{align}\label{eq_reduction_strongerbound}
			\mathcal{D}\ll \|\alpha\|_2 \sqrt{XN} (\log X)^{O_C(1)}P^{-1/6}
			\end{align}
			is obtained for that case.
			\item In the case \ref{(T.4)} one may replace $1_{m\in I}$, $I=[M_1,M_2]\subset [M,2M]$ by a smooth indicator, i.e. take $\alpha_m=\chi(m)f_M(m/M)$ for some smooth function $f_M$ supported on $[(M_1/M)(1-M^{-\epsilon}),(M_2/M)(1+M^{-\epsilon})],$ equal to $1$ on $[M_1/M,M_2/M]$ with $\|f_M^{(j)}\|_\infty\ll_j M^{\epsilon j}$.
			\item In the case \ref{(T.5)} one may replace $1_{q\in I}$ with $I=[Q_1,Q_2]\subset [Q,2Q]$ by a smooth indicator, i.e. take $\gamma_q=f_Q(q/Q)$ for some smooth function $f_Q$ supported on $[(Q_1/Q)(1-Q^{-\epsilon}),(Q_2/Q)(1+Q^{-\epsilon})],$ equal to $1$ on $[Q_1/Q,Q_2/Q]$ with $\|f_Q^{(j)}\|_\infty\ll_j Q^{\epsilon j}$.
			
		\end{enumerate}
	\end{lemma}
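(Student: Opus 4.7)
The five assertions are essentially independent technical reductions, which I would address in the order stated.

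\emph{Part (1)} is immediate: if $QR\leq X^{1/2-\epsilon}$, then case~\ref{(T.1)} (which itself follows from Cauchy--Schwarz combined with Lemma~\ref{le_BV_rough}) already yields the desired bound, so in the remaining cases we may freely assume $QR>X^{1/2-\epsilon}$.

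\emph{Parts (4) and (5)} are standard smoothing arguments. For (4), I would construct a smooth majorant $f_M$ of $1_{m\in I}$ by convolving with a bump of width $M^{-\epsilon}$; the error lies in two intervals of length $M^{1-\epsilon}$ near the endpoints of $I$, and its contribution to $\mathcal{D}$ is bounded via the trivial estimate \eqref{eq_trivbound} by
\begin{align*}
\sum_{\substack{q\sim Q\\r\sim R}}|\gamma_q\delta_r|\sum_{m\in\text{short}}\sum_{n\sim N}|\chi(m)\beta_n|\Bigl(1_{mn\equiv a(qr)}+\tfrac{P\tau(qr)}{qr}\Bigr)\ll PXM^{-\epsilon}(\log X)^{O_C(1)},
\end{align*}
which is well within $XP^{-1/7}$ once $\epsilon'$ is taken sufficiently small in terms of $\epsilon$. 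The smoothing in (5) is entirely analogous with $q\in I$ in place of $m\in I$.

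\emph{Part (2)} is verified case by case from the range constraints and $MN\asymp X$. For instance, in~\ref{(T.3)} the hypothesis $N\leq X^{2/5}Q^{-2/5}$ gives $QN^{3/2}\leq X^{3/5}Q^{2/5}$, and the remaining constraints $QR\leq X^{1-\epsilon}$ and $N\geq X^{\epsilon}R$ then force $QN^{3/2}\leq X^{1-\epsilon}$; analogous bookkeeping yields $Q^2RN\leq X^{2-\epsilon}$ and, in~\ref{(T.5)}, $Q^2R\leq X$ from $N\leq X^{1/3-\epsilon}R^{-1/3}$ together with $MN\asymp X$. In any borderline regime where one of these inequalities would genuinely fail, the constraint system forces $N$ into a range so short that Cauchy--Schwarz in the $qr$-variable combined with Lemma~\ref{lem_BDHrough} already beats the $P^{-1/7}$ target.

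\emph{Part (3)} is the squarefree reduction. Writing $n=n_1n_2^2$ with $n_1$ squarefree, the $n_2=1$ contribution is exactly the squarefree case. For each $n_2\geq 2$, define $\tilde\beta^{(n_2)}_{n_1}\coloneqq\beta_{n_1n_2^2}$, a coefficient sequence of order $C$ supported on squarefree $n_1\sim N/n_2^2$. Provided $n_2\leq X^{\epsilon/4}$, the hypothesis $N/n_2^2\geq X^{\epsilon/2}$ remains valid, so the assumed bound \eqref{eq_reduction_strongerbound} applies to the resulting bilinear form, giving a contribution $\ll \|\alpha\|_2 N\sqrt{M}\cdot n_2^{-2}\cdot P^{-1/6}$; summation over $n_2$ together with $\|\alpha\|_2\ll M^{1/2}(\log X)^{O_C(1)}$ and $MN\asymp X$ yields the required $X(\log X)^{O_C(1)}P^{-1/6}$. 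The tail $n_2>X^{\epsilon/4}$, in which $n_1<N/X^{\epsilon/2}$ is extremely short, is handled directly by Cauchy--Schwarz in $qr$ together with Lemma~\ref{lem_BDHrough}.

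The \emph{main obstacle} is the case-by-case bookkeeping in part (2): the interplay of constraints in~\ref{(T.2)}, \ref{(T.3)}, and~\ref{(T.5)} is delicate, and one must carefully verify each inequality and cleanly invoke the trivial/large-sieve fallback whenever it is approached.
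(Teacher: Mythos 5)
Your parts (1), (4), (5) and the bulk of (3) follow essentially the paper's route, but part (2) contains a genuine gap at exactly the one point where the reduction is not mere bookkeeping: the claim that in case \ref{(T.5)} one has $Q^2R\leq X$ "from $N\leq X^{1/3-\epsilon}R^{-1/3}$ together with $MN\asymp X$" is false. The hypotheses of \ref{(T.5)} (together with $QR\leq X^{1-\epsilon}$, $M,N\geq X^{\epsilon}$) are perfectly consistent with, say, $R=X^{0.01}$, $N=X^{0.1}$, $Q=X^{0.9}$, for which $Q^2R=X^{1.81}\gg X$. Your fallback — "Cauchy--Schwarz in the $qr$-variable combined with Lemma~\ref{lem_BDHrough}" — cannot rescue this regime either, since the moduli $qr\asymp QR$ can be as large as $X^{1-\epsilon}$, far beyond the range in which the multiplicative large sieve (or case \ref{(T.1)}) gives any saving. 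The paper's actual argument is structural and uses the special shape of \ref{(T.5)}: since $\gamma_q=1_{q\in I}$ is an unweighted variable, one writes the underlying congruence as $mn=a+qrs$ with $s\asymp S\coloneqq X/(QR)$ also unweighted, and after a finer-than-dyadic dissection of all variables (to remove the dependence of the $s$-interval on $m,n,r$) the roles of $q$ and $s$ are interchangeable; hence either $Q^2R\leq X$ or $S^2R\leq X$, and one may relabel so that the former holds. This swapping idea is the missing ingredient, and without it the subsequent deductions of $QN^{3/2}\leq X^{1-\epsilon}$ and $Q^2RN\leq X^{2-\epsilon}$ in case \ref{(T.5)} (which the paper derives from $Q^2R\leq X$) are unsupported.

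Two smaller points in part (3). First, your tail $n_2>X^{\epsilon/4}$ is again justified by "Cauchy--Schwarz plus Lemma~\ref{lem_BDHrough}", which fails for the same reason (moduli up to $X^{1-\epsilon}$); the paper instead disposes of the large square(-full) part trivially via \eqref{eq_trivbound} together with divisor counting of the pairs $qr\mid mn-a$, obtaining $\ll PX(\log X)^{O_C(1)}K^{-1/2}$, and chooses the cutoff $K=P^{3}$ so small that all range conditions (including the lower bound $X^{\epsilon}R\leq N$) survive with $\epsilon/2$ in place of $\epsilon$. Second, to apply \eqref{eq_reduction_strongerbound} to the inner form you must restore the bilinear shape by absorbing the square part into the $m$-variable (as in the paper's $\alpha'_m=1_{k\mid m}\alpha_{m/k}$), since $\mathfrak{u}_P(mn_1n_2^2\overline{a};qr)$ is not of the form $\mathfrak{u}_P(m n_1\overline{a};qr)$ for a fixed residue; doing so gives a saving of $n_2^{-1}$ (not $n_2^{-2}$) per term, which is still summable up to logarithms, so this slip is harmless once the absorption is made explicit.
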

	\begin{proof}
		Statement (1) is clear, as else we get the result by \ref{(T.1)}.
		
		We now show (2). To see that we may assume in the case \ref{(T.5)} that
		\begin{align*}
		Q^2R\leq X,    
		\end{align*}
		note that we are counting
		\begin{align*}
		mn=a+qrs,    
		\end{align*}
		where $m\sim M$, $n\sim N$, $q\sim Q$, $r\sim R$ and $X\sim QRs$. As both $q$ and $s$ are unweighted variables, they play the same role, except that $s$ runs through an interval that depends on $m,n,r$. By splitting all variables into intervals of multiplicative length $X^{-\epsilon'}$, we get rid of the dependence of these intervals on $m,n,r'$; for this to work, we need $S\coloneqq X/(QR)\gg X^{\epsilon}$, which we do have by assumption. Hence, either $Q^2R\leq X$ or $S^2R\leq X$, and we may assume that the former holds.

		The bound $QN^{3/2}\leq X^{1-\epsilon} $ follows in  the case \ref{(T.2)} from $N\leq X^{1/2-\epsilon}Q_1^{-1/2}Q_2^{-1}$, in the case \ref{(T.3)} from $N\leq X^{2/5-\epsilon}Q^{-2/5}$, and in the case \ref{(T.5)} from the assumption $Q^2R\leq X$. The bound $Q^2 RN\leq X^{2-\epsilon}$ follows from $QN^{3/2}\leq X^{1-\epsilon}$ and the assumption $QR\leq X^{1-\epsilon}$. 
		
		To reduce to squarefree $n$ and obtain statement (3), we follow a routine strategy (see for example~\cite[Section 5.2]{drappeau} for a similar but more elaborate approach) and write $n=kn'$ with $n'$ squarefree. Let $\mathcal{K}$ denote the set of square-full integers. Then
		\begin{align*}
		\mathcal{D}=\sum_{\substack{k \\ k\in \mathcal{K}}}\sum_{\substack{ q\sim Q}}\sum_{\substack{r\sim R\\(qr,a)=1}}\gamma_q\delta_r\sum_{\substack{m\sim M\\n'\sim N/k}}\alpha_m \mu^2(n')\beta_{kn'} \mathfrak{u}_P(mn'k\overline{a};qr).
		\end{align*}
		By the trivial bound~\eqref{eq_trivbound} we can estimate the contribution of $k>K$ by
		\begin{align*}
		\ll P X (\log X)^{O_C(1)} K^{-1/2}.
		\end{align*}
		
		Write now
		\begin{align*}
		\alpha'_m&= 1_{k|m} \alpha_{m/k}\\
		\beta'_n&=k^{-\epsilon}\mu^2(n')\beta_{kn'}
		\end{align*}
		so that
		\begin{align*}
		&\sum_{\substack{ q\sim Q}}\sum_{\substack{r\sim R\\(qr,a)=1}}\gamma_q\delta_r\sum_{\substack{m\sim M\\n'\sim N/k}}\alpha_m \mu^2(n')\beta_{kn'} \mathfrak{u}_P(mnk\overline{a};qr)\\
		&=k^{\epsilon}\sum_{\substack{ q\sim Q}}\sum_{\substack{r\sim R\\(qr,a)=1}}\gamma_q\delta_r\sum_{\substack{m'\sim kM\\n'\sim N/k}}\alpha'_{m'} \beta'_{n'} \mathfrak{u}_P(m'n'\overline{a};qr).
		\end{align*}
		If $K<X^{\epsilon/2}$ we can apply~\eqref{eq_reduction_strongerbound} with the assumed improved estimate and $\epsilon/2$ taking the role of $\epsilon$ to get
		\begin{align*}
		\sum_{\substack{ q\sim Q}}\sum_{\substack{r\sim R\\(qr,a)=1}}\gamma_q\delta_r\sum_{\substack{m'\sim kM\\n'\sim N/k}}\alpha'_{m'} \beta'_{n'} \mathfrak{u}_P(m'n'\overline{a};qr)&\ll \|\alpha'\|_2 \sqrt{XN} (\log X)^{O_C(1)}P^{-1/6}\\
		&\ll X (\log X)^{O_C(1)}P^{-1/6} k^{-1/2+\epsilon} .
		\end{align*}
	   Consequently we can bound the contribution of $k\leq K$ by
	   \begin{align*}
	   X (\log X)^{O_C(1)}P^{-1/6} \sum_{\substack{k\leq K \\ k \in \mathcal{K}}} k^{-1/2+2\epsilon}\ll X (\log X)^{O_C(1)}P^{-1/6} K^{2\epsilon}.
	   \end{align*}
		We obtain the desired result after choosing $K=P^{3}$.
		
 	    In statement (4), the error induced by introducing a smooth cutoff can be estimated with the help of the trivial bound~\eqref{eq_trivbound} by
 	    \begin{align*}
 	 	XP (\log X)^{O_C(1)} M^{-\epsilon'},
 	    \end{align*}
 	    and as $P\leq X^{\epsilon'}$, $M>Q^{1/2}R\geq X^{1/4-\epsilon'/2}$, this is sufficient.
	    
	    Similarly as in (4), in statement (5) the error from replacing the cutoff by a smooth function can be estimated by
	    \begin{align*}
	    XP (\log X)^{O_C(1)} Q^{-\epsilon},
	    \end{align*}
	    which is sufficient, now for~\eqref{eq_reduction_strongerbound}, as we have $R\leq X^{1/3}R^{-1/3}$ and $QR>X^{1/2-\epsilon'}$. 
	\end{proof}
	
	We also need the following truncated version of the  Poisson summation formula on several occasions. 
\begin{lemma}[Truncated Poisson summation]\label{lem_Poiss}
	Let $\epsilon$ be small, $f$ be a smooth function supported on $[-10,10]$ with $\|f^{(j)}\|_\infty \ll_j X^{\epsilon j}$, and let $M,q\leq X$. Then we have
	\begin{align*}
\sum_{m\equiv a(q)}f\bigl(\frac{m}{M}\bigr)=\frac{M}{q}\sum_{|h|\leq H}\widehat{f}\bigl(\frac{hM}{q}\bigr)e\bigl(\frac{-ah}{q}\bigr)+O_{\epsilon}(X^{-100})
	\end{align*}
	for any choice $H>X^{2\epsilon}q/M$.
\end{lemma}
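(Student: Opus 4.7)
The plan is a direct quantitative Poisson summation. First I would apply the exact Poisson summation formula to the auxiliary function $x \mapsto f((a+qx)/M)$ on $\mathbb{Z}$, expressing the left-hand side by summing over $n$ with $m = a + qn$. A linear change of variables $y = (a+qx)/M$ in the resulting Fourier coefficients, followed by renaming $h \mapsto -h$ in the summation index, gives the exact identity
\begin{align*}
\sum_{m \equiv a(q)} f\Bigl(\frac{m}{M}\Bigr) = \frac{M}{q} \sum_{h \in \mathbb{Z}} \widehat{f}\Bigl(\frac{hM}{q}\Bigr) e\Bigl(\frac{-ah}{q}\Bigr).
\end{align*}
Thus it suffices to show that the tail $|h| > H$ contributes only $O_\epsilon(X^{-100})$.

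Second, I would exploit the smoothness of $f$. The hypotheses $\|f^{(j)}\|_\infty \ll_j X^{\epsilon j}$ and $\mathrm{supp}(f)\subset[-10,10]$ allow repeated integration by parts $j$ times in the defining integral of $\widehat{f}$, yielding
\begin{align*}
\bigl|\widehat{f}(\xi)\bigr| \ll_j X^{\epsilon j} (1 + |\xi|)^{-j}
\end{align*}
for every fixed $j \geq 0$. Specialising at $\xi = hM/q$ and using that $|h| > H > X^{2\epsilon}q/M$ forces $|\xi| > X^{2\epsilon}$, a geometric sum gives
\begin{align*}
\frac{M}{q}\!\sum_{|h|>H}\! \Bigl|\widehat{f}\Bigl(\tfrac{hM}{q}\Bigr)\Bigr| \ll_j X^{\epsilon j}\Bigl(\tfrac{q}{M}\Bigr)^{j-1} H^{-(j-1)} \ll_j X^{\epsilon j} \bigl(X^{2\epsilon}\bigr)^{-(j-1)} = X^{-\epsilon j + 2\epsilon}.
\end{align*}
Choosing $j = \lceil 102/\epsilon \rceil$, which depends only on $\epsilon$, this is $\ll_\epsilon X^{-100}$, completing the estimate.

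There is no genuine obstacle. The only bookkeeping point is to note that the two factors of $q/M$ appearing in the problem (one from the Jacobian giving the outer factor $M/q$, the other from evaluating $\widehat{f}$ at $hM/q$) combine so that the tail decay depends only on the dimensionless ratio $Hq/M$, and hence the hypothesis $H > X^{2\epsilon}q/M$ is exactly what is needed for integration by parts to beat the growth $X^{\epsilon j}$ of the derivatives of $f$ with room to spare.
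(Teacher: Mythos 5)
Your proposal is correct and follows essentially the same route as the paper: exact Poisson summation for the arithmetic-progression sum, then the derivative bounds $\|f^{(j)}\|_\infty\ll_j X^{\epsilon j}$ and repeated integration by parts to get $\widehat{f}(\xi)\ll_j X^{\epsilon j}|\xi|^{-j}$, so the tail $|h|>H$ contributes $\ll_j X^{-\epsilon j+2\epsilon}$, which is $O_\epsilon(X^{-100})$ for $j$ large in terms of $\epsilon$. The only difference is bookkeeping (you spell out the change of variables and the explicit choice of $j$), which matches the paper's argument in substance.
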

\begin{proof}
	By Poisson summation,
	\begin{align*}
\sum_{m\equiv a(q)}f\bigl(\frac{m}{M}\bigr)=\frac{M}{q}\sum_{h}\widehat{f}\bigl(\frac{hM}{q}\bigr)e\bigl(\frac{-ah}{q}\bigr).
	\end{align*}
	The bound $\|f^{(j)}\|_\infty \ll_j X^{\epsilon j}$ together with integration by parts gives $\widehat{f}(t)\ll_j X^{\epsilon j}t^{-j}$. So we can bound the contribution of $|h|>X^{2\epsilon}q/M$ by
	\begin{align*}
	\ll_j \frac{M}{q} X^{\epsilon j} \sum_{h>X^{2\epsilon}q/M} \Big(\frac{hM}{q}\Big)^{-j}\ll_j \frac{M}{q} X^{\epsilon j+(-j+1)2\epsilon}, 
	\end{align*}
	which is sufficiently small, after choosing $j$ large enough in terms of $\epsilon$.
\end{proof}

Given a character $\chi$ to the modulus $q$ Gauß sums of the form
\begin{align*}
	c_\chi(a):=\sum_{b(q)^*}\chi(b)e_q(ba)
\end{align*}
will appear on several occasions. They are multiplicative in the following sense. 	If $q=q_1q_2$ with $(q_1,q_2)=1$ then we have
\begin{align*}
	c_\chi(a)=c_{\chi^{(q_1)}}(\overline{q_2}a)c_{\chi^{(q_2)}}(\overline{q_1}a).
\end{align*}
It thus suffices to study them from prime powers and we have the following evaluation.
\begin{lemma}\label{lem_ccalc}
	Let $\chi$ be a character to the modulus $p^\alpha$ and let $p^{\alpha_0}$ denote the modulus of the primitive character, $\chi^*$, inducing $\chi$. Let $p^{\alpha_m}=(p^\alpha,m)$. If $\alpha_0>\alpha-\alpha_m$ then $c_\chi(m)=0$. If $\alpha_0\leq \alpha-\alpha_m$ then
	\begin{align*}
		c_{\chi}(m)=\overline{\chi}^*(m/p^{\alpha_m})\chi^*(p^{\alpha-\alpha_m-\alpha_0})\mu(p^{\alpha-\alpha_m-\alpha_0})\frac{\varphi(p^\alpha)}{\varphi(p^{\alpha-\alpha_m})}\tau(\chi^*)
	\end{align*}
\end{lemma}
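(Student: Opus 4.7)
The plan is to execute the standard reduction of imprimitive Gau\ss{} sums to primitive ones. The first step is to substitute $\chi(b)=\chi^*(b)1_{(b,p)=1}$ and factor $m=p^{\alpha_m}m'$ with $(m',p)=1$, so that
\begin{align*}
c_\chi(m)=\sum_{\substack{b(p^\alpha)\\(b,p)=1}}\chi^*(b)\,e_{p^{\alpha-\alpha_m}}(bm').
\end{align*}
The proof then splits according to whether $\alpha_0\leq \alpha-\alpha_m$ (the potentially non-vanishing case) or $\alpha_0>\alpha-\alpha_m$ (the vanishing case). The degenerate case $\alpha_m=\alpha$ is resolved immediately, since the exponential is trivial and $\sum_{b(p^\alpha)^*}\chi^*(b)$ vanishes unless $\alpha_0=0$.

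In the first case, I would decompose $b=b_1+p^{\alpha-\alpha_m}b_2$ with $b_1\pmod{p^{\alpha-\alpha_m}}$ coprime to $p$ and $b_2\pmod{p^{\alpha_m}}$. Since $p^{\alpha-\alpha_m}b_2\equiv 0\pmod{p^{\alpha_0}}$ we have $\chi^*(b)=\chi^*(b_1)$ and the exponential is independent of $b_2$, so summing over $b_2$ supplies the factor $p^{\alpha_m}=\varphi(p^\alpha)/\varphi(p^{\alpha-\alpha_m})$. The residual sum $S=\sum_{b_1(p^{\alpha-\alpha_m})^*}\chi^*(b_1)e_{p^{\alpha-\alpha_m}}(b_1m')$ is then evaluated by a second decomposition $b_1=u+p^{\alpha_0}v$ (when $\alpha_0\geq 1$, so that the coprimality passes to $u$), which splits $S$ into the product of a $u$-sum giving $\overline{\chi^*}(m')\tau(\chi^*)$ and a complete $v$-sum over $\mathbb{Z}/p^{\alpha-\alpha_m-\alpha_0}$; the latter vanishes (using $(m',p)=1$) unless $\alpha-\alpha_m=\alpha_0$. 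In the edge case $\alpha_0=0$ (principal $\chi$), $S$ is instead the Ramanujan sum $c_{p^{\alpha-\alpha_m}}(m')=\mu(p^{\alpha-\alpha_m})$. The unified prefactor $\chi^*(p^{\alpha-\alpha_m-\alpha_0})\mu(p^{\alpha-\alpha_m-\alpha_0})$---equal to $1$ when the exponent vanishes, $-1$ only in the principal-$\chi$/exponent-$1$ branch, and $0$ otherwise---reconciles both subcases with the stated formula.

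For the vanishing case $\alpha_0>\alpha-\alpha_m$, my plan is to apply the substitution $b\mapsto b(1+p^{\alpha-\alpha_m}t)$ directly inside $c_\chi(m)$ rather than decomposing further. The new variable still ranges over $(\mathbb{Z}/p^\alpha)^*$; the exponential is invariant since the additional factor $e(btm')$ is trivial; and $\chi^*(b)$ picks up the twist $\chi^*(1+p^{\alpha-\alpha_m}t)$. This yields $c_\chi(m)=\chi^*(1+p^{\alpha-\alpha_m}t)\,c_\chi(m)$ for every $t$. Primitivity of $\chi^*$ modulo $p^{\alpha_0}$ guarantees that its restriction to the proper subgroup $\{1+p^{\alpha-\alpha_m}t\pmod{p^{\alpha_0}}\}\subset(\mathbb{Z}/p^{\alpha_0})^*$ is non-trivial, so one can pick $t$ with $\chi^*(1+p^{\alpha-\alpha_m}t)\neq 1$, forcing $c_\chi(m)=0$.

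The main obstacle is not analytic but purely combinatorial: the edge cases (principal $\chi^*$, so $\alpha_0=0$, and $p^\alpha\mid m$, so $\alpha_m=\alpha$) must be cross-checked against the uniform closed form, which requires being careful about when the factor $\chi^*(p^k)\mu(p^k)$ is non-vanishing and about how the trivial character interacts with the Ramanujan-sum identity $c_q(m)=\mu(q)$ for $(m,q)=1$.
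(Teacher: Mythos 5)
Your argument is correct. Note, though, that the paper does not prove this lemma at all: its entire proof is the citation ``This is [Montgomery--Vaughan, Lemma 5.4] for prime power moduli'', so your contribution is a self-contained derivation, and the route you take (reduce to $\chi^*$, split $b=b_1+p^{\alpha-\alpha_m}b_2$ to extract the factor $\varphi(p^\alpha)/\varphi(p^{\alpha-\alpha_m})=p^{\alpha_m}$, then split $b_1=u+p^{\alpha_0}v$, with the substitution $b\mapsto b(1+p^{\alpha-\alpha_m}t)$ and primitivity of $\chi^*$ handling the vanishing case) is exactly the standard computation underlying the cited result. Two places are worth tightening when writing it up, neither of which is a genuine gap. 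First, in the non-vanishing case the $u$-sum is $\sum_{u(p^{\alpha_0})^*}\chi^*(u)e_{p^{\alpha-\alpha_m}}(um')$, whose modulus in the exponential is $p^{\alpha-\alpha_m}$ rather than $p^{\alpha_0}$; it equals $\overline{\chi^*}(m')\tau(\chi^*)$ only when $\alpha-\alpha_m=\alpha_0$, but since the $v$-sum annihilates every other case (as $(m',p)=1$) this does not affect the conclusion --- you should just phrase the product decomposition so that the identification of the $u$-sum is made only after the $v$-sum has forced $\alpha-\alpha_m=\alpha_0$. Second, the reduction $e_{p^{\alpha}}(bm)=e_{p^{\alpha-\alpha_m}}(bm')$ with $(m',p)=1$ genuinely requires $\alpha_m<\alpha$, and likewise the unit $1+p^{\alpha-\alpha_m}t$ and the properness of the subgroup $\{1+p^{\alpha-\alpha_m}t\}$ require $\alpha-\alpha_m\geq 1$; your separate dispatch of the degenerate case $\alpha_m=\alpha$ (trivial exponential, orthogonality of $\chi^*$, matching the formula since then $\alpha_0=0$, $\tau(\chi^*)=1$ and $\varphi(p^\alpha)/\varphi(1)=\varphi(p^\alpha)$) is therefore not just bookkeeping but a necessary step, and you do carry it out. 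With those clarifications the proof is complete and matches the stated closed form, including the role of the prefactor $\chi^*(p^{\alpha-\alpha_m-\alpha_0})\mu(p^{\alpha-\alpha_m-\alpha_0})$ in reconciling the principal and non-principal branches.
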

\begin{proof}
	This is~\cite[Lemma 5.4]{mv} for prime power moduli.
\end{proof}
	
	We now prove case \ref{(T.4)} of Proposition~\ref{theorem_BFI_rough} (based on the work of Bombieri--Friedlander--Iwaniec~\cite{bfi}), which is different from the other cases in that it does not rely on the dispersion method.
	\begin{lemma}[Variant of~\cite{bfi} Section 12]\label{lem_T4}
		Proposition~\ref{theorem_BFI_rough} is true in the case \ref{(T.4)}.
	\end{lemma}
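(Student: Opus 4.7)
The strategy follows Section 12 of Bombieri--Friedlander--Iwaniec~\cite{bfi}: I would exploit the fact that the $m$-weight is itself a multiplicative character, which together with the character expansion of $\mathfrak{u}_P$ converts the inner $m$-sum into a complete character sum that can be directly bounded by Poisson summation, without going through the dispersion method used for the other cases. First, by Lemma~\ref{lem_initred}(4), I may assume $\alpha_m=\chi(m)f_M(m/M)$ for a smooth cutoff with $\|f_M^{(j)}\|_\infty\ll_j M^{\epsilon j}$, since the smoothing error $X P (\log X)^{O_C(1)}M^{-\epsilon}$ is admissible under the hypothesis $M\geq X^{\epsilon}Q^{1/2}R$. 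By Lemma~\ref{lem_initred}(1), I may also assume $QR>X^{1/2-\epsilon}$.

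Next, I would unfold the definition~\eqref{eq_uPdef} of $\mathfrak{u}_P$ and regroup to get
\begin{align*}
\mathcal{D}=\sum_{q\sim Q}\sum_{\substack{r\sim R\\(qr,a)=1}}\frac{\gamma_q\delta_r}{\varphi(qr)}\sum_{\substack{\psi(qr)\\\text{cond}(\psi)>P}}\psi(\overline{a})\,U_n(\beta,\psi)\,U_m(\chi,\psi),
\end{align*}
where $U_n(\beta,\psi)=\sum_n\beta_n\psi(n)$ and $U_m(\chi,\psi)=\sum_m\chi(m)\psi(m)f_M(m/M)$. Passing each $\psi$ to the primitive character $\psi^*$ of conductor $d^*\mid qr$ that induces it, the decisive observation is that $\chi\psi^*$ is never principal: since $\chi$ has modulus $p_0\leq P$ and $\psi^*$ has conductor $d^*>P$, one has $\text{cond}(\chi\psi^*)\geq d^*/p_0>1$. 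Applying Poisson summation via Lemma~\ref{lem_Poiss} together with Lemma~\ref{lem_ccalc} to evaluate the resulting complete Gauss sums, I obtain the pointwise bound $U_m(\chi,\psi)\ll\sqrt{d^*}\cdot p_0^{1/2}(\log X)^{O(1)}+X^{-100}$, giving essentially square-root cancellation in the conductor of $\psi^*$.

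Finally, I would apply the Cauchy--Schwarz inequality to the triple sum over $(q,r,\psi)$ so as to separate the $\psi$-dependence of $U_m$ from that of $U_n$, and then invoke the large sieve Lemma~\ref{lem_BDHrough} to bound
\begin{align*}
\sum_{q\sim Q}\sum_{r\sim R}\frac{1}{\varphi(qr)}\sum_{\substack{\psi(qr)\\\text{cond}(\psi)>P}}|U_n(\beta,\psi)|^2\ll (QR+N/P)(\log X)^{O_C(1)}\,N.
\end{align*}
A dyadic decomposition in $d^*$, combined with the divisor count $\#\{(q,r)\colon d^*\mid qr\}\ll (QR/d^*)(\log X)^{O(1)}$, then leaves me with a purely arithmetic inequality: the various thresholds $M\geq X^{\epsilon}\max\{Q,X^{-1}QR^4,Q^{1/2}R,X^{-2}Q^3R^4\}$ are exactly those needed to make the Poisson step give a genuine saving in each regime of $(Q,R,d^*)$, and to absorb the $p_0$ loss against the $P^{-1/7}$ target.

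The main obstacle I expect is bookkeeping at the level of conductors: I need to track $p_0$ carefully when $\gcd(p_0,qr)>1$, handle the coprimality condition $(m,qr)=1$ coming out of the Gauss sum evaluation by Möbius inversion without destroying the savings, and verify that each of the four upper bounds on $M$ in \ref{(T.4)} corresponds to one of the balancing cases in the final optimisation. This is essentially the content that distinguishes our power-saving version from the statement of \cite[Theorem 5$^{*}$]{bfi}, and the extra logarithms are absorbed thanks to the factor $P^{-1/7}\ll P^{-1/2}(\log X)^{O_C(1)}$.
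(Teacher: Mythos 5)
There is a genuine gap: the estimation scheme you propose (pointwise square-root cancellation in the $m$-sum via Gauss sums/P\'olya--Vinogradov, then Cauchy--Schwarz and the multiplicative large sieve of Lemma~\ref{lem_BDHrough} on the $n$-sum) is essentially the ``simple type I/II'' bound of Lemma~\ref{le_I,IIsimple}, and it cannot reach moduli $QR$ beyond the $X^{1/2}$ barrier --- but in case \ref{(T.4)} one may (and by Lemma~\ref{lem_initred}(1) must) assume $QR>X^{1/2-\epsilon}$. Quantitatively, after the dyadic decomposition in the conductor $d^{*}\sim D^{*}$ your method gives at best
\begin{align*}
\mathcal{D}\ll X^{o(1)}P^{1/2}\bigl((D^{*})^{3/2}N^{1/2}+(D^{*})^{1/2}N\bigr),
\end{align*}
and in the worst case $D^{*}\asymp QR$ the first term forces $M\gg (QR)^{3}X^{-1}P^{1.3}$, which is \emph{not} implied by the hypothesis $M\geq X^{\epsilon}\max\{Q,X^{-1}QR^{4},Q^{1/2}R,X^{-2}Q^{3}R^{4}\}$. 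Concretely, with $Q=R=X^{0.3}$, $M=X^{0.55}$, $N=X^{0.45}$ the hypotheses of \ref{(T.4)} hold (the maximum is $X^{0.5}$), yet your bound is $\gg P^{1/2}(QR)^{3/2}N^{1/2}=P^{1/2}X^{1.125}$, worse than the trivial bound $X^{1+o(1)}P$, let alone $XP^{-1/7}$. The smooth weight does not rescue this: it makes $U_m$ negligible only when $qr\ll MX^{-2\epsilon}$, and the problematic regime $qr\gtrsim M$ is nonempty under the hypotheses (as in the example above).

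The missing ingredient is the Kloosterman-sum input of \cite{bfi}, Section 12, which is what the paper's proof actually uses. There one performs Poisson summation in the long smooth $m$-variable \emph{inside} the congruence representation of $\mathfrak{u}_P$ (keeping the indicator minus the low-conductor character part), checks that the $h=0$ frequencies of the two pieces cancel exactly, bounds the $h\neq 0$ contribution of the character part by Gauss sums (this cheap step is the only place where your argument and the paper's coincide), and then --- crucially --- treats the $h\neq 0$ frequencies of the congruence part, after factoring $W=W_1W_2$ and separating variables with the Fourier integral of $\widehat{f_M}$, as incomplete Kloosterman-type sums $\sum_{h}\sum_{r}\delta(h,r)e\bigl(-ah\overline{n'}/(q'r)\bigr)$, which are estimated with Weil's bound as after (12.2) of \cite{bfi}. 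The four thresholds on $M$ in \ref{(T.4)} are calibrated precisely to that Weil/completion analysis, not to a large-sieve computation, so your final claim that they ``are exactly those needed'' cannot be verified within your framework; replacing the Weil-bound step by the large sieve is a strictly weaker use of the available cancellation and is where the proposal breaks down.
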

	\begin{proof}
		Recall that $\mathcal{D}$ is given by~\eqref{eq_Ddef} and that by Lemma~\ref{lem_initred} we can assume to be in the case $\alpha_m=f_M(m/M)\chi(m)$, where $\chi\pmod s$ is a primitive character to some modulus $s \leq P$.
		
		We restrict $r$ into a fixed residue class $l$ modulo $s$ and further restrict $q$ such that $(q,s)=s_q$. Taking the values of $l$ and $s_q$ that give the largest contribution, it suffices to show
		\begin{align*}
		D\ll \frac{MN}{X^{\epsilon'} \tau(s)s},
		\end{align*}
		where
		\begin{align*}
		D\coloneqq \sum_{\substack{q,r \\ \substack{(q,s)=s_q,r\equiv l(s) \\ (qr,a)=1}  }}\gamma_q \delta_r \sum_{m,n}f_M(m/M)\chi(m) \beta_n \mathfrak{u}_P(mn\overline{a};qr).
		\end{align*}
		Set $W=[qr,s]$ and $H=X^{2\epsilon'}W/M$.  By Poisson summation (Lemma~\ref{lem_Poiss}), we have
		\begin{align}\label{eq_lemT4_1}
		\sum_{\substack{m\\ m\equiv a \overline{n} (qr)}}f_M(m/M)\chi(m)=\frac{M}{W}\sum_{\substack{c(W)\\ c\equiv a\overline{n}(qr)}}\chi(c)\sum_{|h|\leq H}\widehat{f_M}\bigl(\frac{hM}{W}\bigr)e\bigl(\frac{-ch}{W}\bigr)+O_{\epsilon'}(X^{-100})
		\end{align}
		and
		\begin{align}\nonumber
		&\frac{1}{\varphi(qr)}\sum_{\substack{\psi(qr)\\ \text{cond}(\psi)\leq P}}\psi(\overline{a}n)\sum_m f_M(m/M) \chi\psi(m)\\
		&=\frac{M}{\varphi(qr)W}\sum_{\substack{\psi(qr)\\ \text{cond}(\psi)\leq P}}\psi(\overline{a}n)\sum_{c(W)}\chi\psi(c)\sum_{|h|\leq H}\widehat{f_M}\bigl(\frac{hM}{W}\bigr)e\bigl(\frac{-ch}{W}\bigr)+O_{\epsilon'}(X^{-100}).\label{eq_lemT4_2}
		\end{align} 
		The error terms are obviously admissible. 
		
		We first consider the case $h=0$. We have 
		\begin{align*}
		\sum_{\substack{c(W)\\ c\equiv a\overline{n}(qr)}}\chi(c)=\chi(a\overline{n})1_{s|qr}
		\end{align*}
		and
		\begin{align*}
		\sum_{c(W)}\chi\psi(c)=\varphi(W)1_{\text{cond}(\chi\psi)=1}.
		\end{align*}
		As $s\leq P$, we have that  $s|qr$ is equivalent to the existence of a (necessarily unique) character $\psi \pmod {qr}$, $\text{cond}(\psi)\leq P$ such that $\text{cond}(\chi\psi)=1$. For this character it holds that $\psi(\overline{a}n)=\chi(a\overline{n})$. For $s|qr$ we have furthermore
		\begin{align*}
		\frac{1}{W}=\frac{\varphi(W)}{W\varphi(qr)}
		\end{align*}
		and so the $h=0$ contributions to the right-hand side of~\eqref{eq_lemT4_1} and~\eqref{eq_lemT4_2} are equal.
		
		We now consider the $h\neq 0$ terms on the right-hand side of~\eqref{eq_lemT4_2}. The conductor of $\chi\psi$ is at most $Ps$ and so by Lemma \ref{lem_ccalc} and the classical bound for Gau\ss{} sums we have
		\begin{align*}
		\sum_{c(W)}\chi\psi(c)e\bigl(\frac{-ch}{W}\bigr)\ll \sqrt{Ps}(W,h).
		\end{align*}
		Therefore, the $h\neq 0$ terms in~\eqref{eq_lemT4_2} contribute to $D$ at most
		\begin{align*}
		M\sqrt{Ps}\sum_{\substack{q,r \\ \substack{(q,s)=s_q,r\equiv l(s) \\ (qr,a)=1}  }}\frac{|\gamma_q| |\delta_r| H \tau(W)}{W \varphi(qr)} \sum_{n}|\beta_n|\ll X^{3\epsilon'}\sqrt{Ps}N,
		\end{align*}
		which is admissible by~\eqref{eq_BFI_Rough_1} and the assumptions $P,s\leq X^{\epsilon'}$.
		
		To bound the contribution of the $h\neq 0$ terms on the right-hand side of~\eqref{eq_lemT4_1} to $D$ we write $W=W_1W_2$, and analogously $\chi=\chi_{W_1}\chi_{W_2}$, with $W_1=(W,(qr)^\infty)$ and $s_r=(s,r)$ (which is fixed by $r\equiv l(s)$), and we observe that
		\begin{align*}
		\sum_{\substack{c(W)\\ c\equiv a\overline{n}(qr)}}\chi(c)e\bigl(\frac{-ch}{W}\bigr)&=\sum_{\substack{c_1(W_1)\\ c_1\equiv a\overline{n}(qr)}}\chi_{W_1}(c_1)e\bigl(\frac{-c_1h\overline{W_2}}{W_1}\bigr)\sum_{\substack{c_2(W_2)\\ }}\chi_{W_2}(c)e\bigl(\frac{-c_2h\overline{W_1}}{W_2}\bigr) \\
		&=\sum_{d(s,(s_qs_r)^\infty)}\chi_{W_1}(a\overline{n}+dqr)e\bigl(\frac{-(a\overline{n}+dqr)h\overline{W_2}}{qr (s,(s_qs_r)^\infty) }\bigr)\sum_{\substack{c_2(W_2)\\ }}\chi_{W_2}(c_2)e\bigl(\frac{-c_2h\overline{W_1}}{W_2}\bigr)\\
		&=e\bigl(-ah\frac{\overline{nW_2}}{qr (s,(s_q s_r)^\infty) }\bigr)\sum_{d(s,(s_q s_r)^\infty)}\chi_{W_1}(a\overline{n}+dqr)e\bigl(\frac{-dh\overline{W_2}}{ (s,(s_qs_r)^\infty) }\bigr)\\
		&\times \sum_{\substack{c_2(W_2)\\ }}\chi_{W_2}(c_2)e\bigl(\frac{-c_2h\overline{W_1}}{W_2}\bigr).
		\end{align*}
		We recall that $r\equiv l(s)$ and that $W_2 (s,(s_q s_r)^\infty)\leq s$. Therefore, it suffices to show
		\begin{align*}
		D'\ll \frac{N}{X^{\epsilon'}\tau(s)s^2},
		\end{align*}
		where
		\begin{align*}
		D'\coloneqq \sum_{\substack{q,r \\ \substack{(q,s)=s_q,r\equiv l(s) \\ (qr,a)=1}  }}\frac{\gamma_q \delta_r}{W} \sum_n \beta_n \sum_{1\leq |h|\leq H} \widehat{f_M}\bigl(\frac{Mh}{W}\bigr)e\bigl(-ah\frac{\overline{nW_2}}{qr (s,(s_q s_r)^\infty) }\bigr)e(\frac{-d'h}{s})
		\end{align*}
		for any fixed $d' (s)$. By definition $W=\frac{qrs }{[s_q,s_r]}$, and we can separate the variables by 
		\begin{align*}
		\widehat{f_M}\bigl(\frac{Mh}{W}\bigr)=\frac{q s}{[s_q, s_r] M}\int_{-\infty}^\infty f_M( \frac{\xi q s}{M[s_q, s_r]}) e\bigl(\frac{\xi h}{r}\bigr)d\xi.
		\end{align*}
		Thus,
		\begin{align*}
		D'&\ll X^{\epsilon'} Q^{-1}\sum_{\substack{q\sim Q\\ n\sim N}}\Bigl|\sum_{1\leq |h|\leq H}\sum_{\substack{r \\ (r,a)=1\\ r\equiv l (s) }} \frac{\delta_r}{r}e\bigl(\frac{\xi h}{r}\bigr)e(\frac{-d'h}{s})e\bigl(-ah\frac{\overline{nW_2}}{qr(s,(s_qs_r)^{\infty}) }\bigr)\Bigr|\\
		&\ll X^{\epsilon'} Q^{-1}\sum_{\substack{Q\leq q'\leq QX^{\epsilon'}\\ N\leq n'\leq X^{\epsilon'}N}}\Bigl|\sum_{1\leq |h|\leq H}\sum_{\substack{r \\ (r,a)=1\\ r\equiv l (s) }} \frac{\delta_r}{r}e\bigl(\frac{\xi h}{r}\bigr)e(\frac{-d'h}{s})e\bigl(-ah\frac{\overline{n'}}{q'r }\bigr)\Bigr|\\
		&\ll X^{\epsilon'} Q^{-1}R^{-1}\sum_{\substack{Q\leq q'\leq QX^{\epsilon'}\\ N\leq n'\leq X^{\epsilon'}N}}\Bigl|\sum_{1\leq |h|\leq H}\sum_{\substack{r \\ (r,a)=1 }} \delta(h,r)e\bigl(-ah\frac{\overline{n'}}{q'r }\bigr)\Bigr|.
		\end{align*}
		for some $|\delta(h,r)|\leq 1$. This expression is  as in~\cite[after eq. (12.2)]{bfi}, so the proof there now goes through (and gives a power-saving). 
	\end{proof}

	\subsection{Dispersion method and Kloosterman sums}
	The  cases \ref{(T.2)}, \ref{(T.3)}, \ref{(T.5)} of Theorem~\ref{theorem_BFI_rough} are proved with the dispersion method. We split its application into two lemmas.
	
	\begin{lemma}[Dispersion of bilinear sums]\label{lem_congtodisp}
		Assume Convention~\ref{def_BFIrough_assumptions} and let $\mathcal{D}$ be as in~\eqref{eq_Ddef}. Assume further that
		\begin{align}
		P&\leq X^{\epsilon'} \label{eq_congtodisp_cond01} \\
		R &\leq N \label{eq_congtodisp_cond02}X^{-\epsilon}\\
		QN^{3/2}&\leq X^{1-\epsilon} \label{eq_congtodisp_cond03}\\
		QR&\leq X^{1-\epsilon}. \label{eq_congtodisp_cond04}
		\end{align}
		Then we have
		\begin{align*}
		\mathcal{D}\ll \|\alpha\|_2 \sqrt{XN} P^{-1/6}(\log X)^{O_C(1)}+(MR)^{1/2}(\log X)^{O_C(1)} \Bigl(\sum_{\nu\leq P^{1/2}}\sum_{\substack{q_0\leq P^{1/2}\\ (q_0,a\nu)=1}}|\mathscr{E}(q_0,\nu)|\Bigr)^{1/2},
		\end{align*}
		where
		\begin{align}\label{eq_mscrEdef}
		\mathscr{E}(q_0,\nu)&\coloneqq \sum_{\substack{r\sim R\\(r,a\nu)=1}}\sum_{\substack{q_1, q_2 \\ \substack{(a\nu,q_1q_2)=1\\ (q_1,q_2)=1}}}\gamma_{q_0q_1}\overline{\gamma_{q_0q_2}}\sum_{\substack{n_1,n_2 \\ \substack{(n_1,n_2)=1 \\ n_1\equiv n_2 (q_0r)\\(n_i,q_0q_ir)=1}}}\beta_{\nu n_1}\overline{\beta_{\nu n_2}}\left(\sum_{\substack{m\\ m \nu n_i\equiv a(q_0q_i r)}}f_0\bigl(\frac{m}{M}\bigr)-\frac{M\widehat{f_0}(0)}{q_0q_1q_2r} \right).
		\end{align}
		
	\end{lemma}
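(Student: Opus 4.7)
The proof will follow Linnik's dispersion method, decoupled by a decomposition of $\mathfrak{u}_P$ into a main ``congruence'' fluctuation and a small-conductor non-principal character correction. The first step is Cauchy--Schwarz in the variable $m$: introducing a nonnegative smooth majorant $f_0(m/M)\geq 1_{m\sim M}$, one obtains
\[
|\mathcal{D}|^2 \leq \|\alpha\|_2^2 \sum_m f_0(m/M)|T(m)|^2, \qquad T(m):=\sum_{q,r,n}\gamma_q\delta_r\beta_n\,\mathfrak{u}_P(mn\bar a;qr).
\]
Using the identity $\mathfrak{u}_P(x;q) = [1_{x\equiv 1(q)} - \varphi(q)^{-1}1_{(x,q)=1}] - \varphi(q)^{-1}\sum_{\psi\,(q),\,\mathrm{cond}(\psi)\leq P,\,\psi\neq \chi_0^{(q)}}\psi(x)$, I would split $T=T^\sharp-T^\flat$; by $|T|^2\leq 2|T^\sharp|^2+2|T^\flat|^2$ it suffices to bound $\sum_m f_0|T^\sharp|^2$ and $\sum_m f_0|T^\flat|^2$ separately, with the former producing the $\mathscr{E}$ sum and the latter yielding the $P^{-1/6}$ saving.

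For the character piece $T^\flat$, I would pull $\psi(m)$ outside all the other variables, reduce $\psi$ to its primitive inducing character (of conductor $\leq P$), and apply Plancherel in $m$ together with a large sieve inequality in the spirit of Lemma~\ref{lem_BDHrough} to handle the averaging over characters. Combined with the level hypotheses~\eqref{eq_congtodisp_cond01}--\eqref{eq_congtodisp_cond04}, this should yield $\sum_m f_0(m/M)|T^\flat(m)|^2 \ll XN\,P^{-1/3}(\log X)^{O_C(1)}$, producing the $\|\alpha\|_2\sqrt{XN}\,P^{-1/6}$ contribution after taking square roots.

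For the congruence fluctuation $T^\sharp$, I would expand the square to obtain a bilinear sum in $(q_1,q_2,r_1,r_2,n_1,n_2)$ with inner sum $\sum_m f_0(m/M)\Delta_1\overline{\Delta_2}$, where $\Delta_i(m) := 1_{mn_i\equiv a(q_ir_i)} - \varphi(q_ir_i)^{-1}1_{(mn_i,q_ir_i)=1}$. I would then reorganise according to the GCDs $q_0=(q_1,q_2)$ and $\nu=(n_1,n_2)$ via M\"obius inversion, using Lemma~\ref{lem_initred}(3) to assume $\beta_n$ is squarefree-supported so that $\nu\mid n_i$ unambiguously. The contribution of pairs with $q_0>P^{1/2}$, $\nu>P^{1/2}$, or $r_1\neq r_2$ can be controlled directly via the trivial estimate~\eqref{eq_trivbound} combined with the hypotheses $R\leq NX^{-\epsilon}$, $QR\leq X^{1-\epsilon}$, $QN^{3/2}\leq X^{1-\epsilon}$, each such contribution being absorbable into the $P^{-1/6}$ error. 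In the remaining ``diagonal'' regime $r_1=r_2=r$, $q_0,\nu\leq P^{1/2}$, writing $q_i=q_0 q_i'$ with $(q_1',q_2')=1$ and $n_i=\nu n_i'$ with $(n_1',n_2')=1$, the Chinese remainder theorem (using $(a,q_0 r)=1$) forces the compatibility $n_1\equiv n_2\pmod{q_0 r}$ and combines the two congruences into one modulo $q_0 q_1' q_2' r$. Expanding $\Delta_1\overline{\Delta_2}$ into its four pieces and evaluating each inner $m$-sum by Poisson summation (Lemma~\ref{lem_Poiss}), the $1_{\equiv}\overline{1_{\equiv}}$ piece yields precisely $\mathscr{E}(q_0,\nu)$ after the usual cancellation with the three lower-order cross and ``$\varphi^{-1}\cdot\varphi^{-1}$'' pieces. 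Bounding $\|\alpha\|_2 \leq M^{1/2}(\log X)^{O_C(1)}$ trivially then produces the $(MR)^{1/2}$ prefactor in the stated bound.

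The main technical hurdle is the careful dispersion bookkeeping: verifying that the off-diagonal contributions ($r_1\neq r_2$, or $q_0$ respectively $\nu$ exceeding $P^{1/2}$) are indeed absorbable into the character error, and tracking the cancellation of main terms amongst the four pieces of $\Delta_1\overline{\Delta_2}$ so that only the fluctuation $\mathscr{E}(q_0,\nu)$ survives. Both points rely crucially on the level assumptions~\eqref{eq_congtodisp_cond01}--\eqref{eq_congtodisp_cond04} and on the smoothness of $f_0$ entering Poisson summation.
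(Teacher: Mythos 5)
Your proposal diverges from what the statement actually requires in two places, and both are genuine gaps rather than presentational differences. First, you apply Cauchy--Schwarz in $m$ alone, keeping $r$ (and the coefficients $\delta_r$) inside the square. After expanding you then face off-diagonal terms $r_1\neq r_2$, which you propose to dismiss ``directly via the trivial estimate~\eqref{eq_trivbound}''. This cannot work: a trivial bound on the balanced piece gives $\sum_m f_0(m/M)|T^{\sharp}(m)|^2\ll XN(\log X)^{O_C(1)}$ with no $P$-saving at all, and a genuine treatment of the $r_1\neq r_2$ fluctuations would involve moduli as large as $[q_1r_1,q_2r_2]\asymp Q^2R^2$, far beyond anything the hypotheses \eqref{eq_congtodisp_cond01}--\eqref{eq_congtodisp_cond04} can control; this is precisely why the dispersion method places $r$ \emph{outside} the Cauchy--Schwarz. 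The paper instead Cauchy--Schwarzes over the pair $(m,r)$, pulling out $\|\alpha\|_2\|\delta\|_2$, so the expanded square is automatically diagonal in $r$; that is also the only source of the $R^{1/2}$ in the prefactor $(MR)^{1/2}$, and it explains why $\mathscr{E}(q_0,\nu)$ contains a single, unweighted $r$-sum. In your arrangement even the diagonal terms carry weights $|\delta_{r}|^2\leq\tau(r)^{2C}$, which do not match the definition~\eqref{eq_mscrEdef}, so your argument cannot terminate in the stated bound.

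Second, the splitting $T=T^{\sharp}-T^{\flat}$ followed by $|T|^2\leq 2|T^{\sharp}|^2+2|T^{\flat}|^2$ throws away the cross terms, and the claimed bound $\sum_m f_0(m/M)|T^{\flat}(m)|^2\ll XN P^{-1/3}(\log X)^{O_C(1)}$ is false for general coefficient sequences: no Siegel--Walfisz-type hypothesis on $\beta$ is available here. Taking for instance $\beta_n=\overline{\psi_0}(n)$ for a fixed primitive character $\psi_0$ of conductor $3\leq P$ and $\gamma_q=\delta_r=1$, the $\psi_0$-component of $T^{\flat}(m)$ alone has size $\gg N$ for $\gg M$ values of $m$, so $\sum_m f_0|T^{\flat}|^2\gg XN$. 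The whole point of working with $\mathfrak{u}_P$ is that the low-conductor character contributions must \emph{cancel} against the corresponding parts of the congruence term; a triangle inequality destroys exactly this cancellation. The paper therefore expands the square exactly, writing $\mathscr{S}^{*}=\mathscr{S}_1-2\mathrm{Re}(\mathscr{S}_2)+\mathscr{S}_3$, extracts $\mathscr{E}(q_0,\nu)$ (after removing small gcds $(q_1,q_2),(n_1,n_2)>P^{1/2}$ via Shiu-type bounds) only from the congruence--congruence piece $\mathscr{S}_1$, and then shows that $\mathscr{S}_1$, $\mathscr{S}_2$, $\mathscr{S}_3$ all share the same main term $\mathcal{X}$, so that it cancels in the combination $1-2+1$. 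That comparison is not a soft step: it needs the large sieve (Lemma~\ref{lem_BDHrough}) to complete the main term over large-conductor characters, and Poisson summation with Gau\ss{} sum and Weil--Kloosterman bounds to control the $h\neq 0$ frequencies in $\mathscr{S}_2$ and $\mathscr{S}_3$, which is where \eqref{eq_congtodisp_cond03} and \eqref{eq_congtodisp_cond04} are actually used. Your sketch of ``the four pieces of $\Delta_1\overline{\Delta_2}$ cancel'' describes the classical $P=1$ dispersion and misses both of these ingredients.
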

	
	\begin{proof}
		At the heart of the dispersion method lies the Cauchy--Schwarz inequality. Its application gives us
		\begin{align*}
		\mathcal{D}^2&\leq \|\delta\|^2 \|\alpha\|^2 \mathscr{S}(M,N,Q,R)\\
		&\leq R \|\alpha\|^2 \ (\log X)^{O_C(1)}\mathscr{S}(M,N,Q,R),
		\end{align*}
		where 
		\begin{align*}\begin{split}
		\mathscr{S}(M,N,Q,R)&\coloneqq \sum_{\substack{r\sim R\\ (r,a)=1}}\sum_{\substack{m\sim M\\}}\Bigg|\sum_{\substack{q\sim Q\\ \substack{n\sim N \\(q,a)=1}}}\gamma_{q} \beta_n \mathfrak{u}_P(mn\bar{a};qr)\Bigg|^2
		\end{split}.
		\end{align*}
		
		Define $\mathscr{S}^*=\mathscr{S}^*(M,N,Q,R)$ similarly to $\mathscr{S}(M,N,Q,R)$ but with an additional smooth weight $f_0(\frac{m}{M})$ on the $m$ variable. Then $\mathscr{S}(M,N,Q,R)\leq \mathscr{S}^{*}(M,N,Q,R)$.
		We recall the definition of $\mathfrak{u}_P$ in~\eqref{eq_uPdef} and expand the square in the definition of $\mathscr{S}^*$ to write
		\begin{align*}
		\mathscr{S}^*=\mathscr{S}_1-2\textnormal{Re}(\mathscr{S}_2)+\mathscr{S}_3,
		\end{align*}
		where 
		\begin{align*}
		\mathscr{S}_1&\coloneqq \sum_{\substack{r\sim R, \,m \\ \substack{(r,a)=1\\ }}} f_0\bigl(\frac{m}{M}\bigr) \sum_{\substack{q_1, q_2 \\ \substack{ (a,q_1q_2)=1 \\ }}}\gamma_{q_1}\overline{\gamma_{q_2}} \sum_{\substack{n_1,n_2 \\ mn_i\equiv a(q_i r)}}\beta_{n_1}\overline{\beta_{n_2}}\\
		\mathscr{S}_2&\coloneqq \sum_{\substack{r\sim R, \,m \\ \substack{(r,a)=1\\ }}} f_0\bigl(\frac{m}{M}\bigr) \sum_{\substack{q_1, q_2 \\ \substack{ (a,q_1q_2)=1 \\ }}}\frac{\gamma_{q_1}\overline{\gamma_{q_2}}}{\varphi(q_2r)} \sum_{\substack{\psi(q_2r)\\ \text{cond}(\psi)\leq P}}\psi(m \overline{a})\sum_{\substack{n_1,n_2 \\ mn_1\equiv a(q_1 r)\\ \substack{\\ }}}\beta_{n_1}\overline{\beta_{n_2}} \psi(n_2)\\
		\mathscr{S}_3&\coloneqq \sum_{\substack{r\sim R, \,m \\ \substack{(r,a)=1\\ }}} f_0\bigl(\frac{m}{M}\bigr) \sum_{\substack{q_1, q_2 \\ \substack{ (a,q_1q_2)=1 \\ }}}\frac{\gamma_{q_1}\overline{\gamma_{q_2}}}{\varphi(q_1r)\varphi(q_2r)}\sum_{\substack{\psi_i(q_ir)\\ \text{cond}(\psi_i)\leq P}} \psi_1 \overline{\psi_2}(m \overline{a}) \sum_{\substack{n_1,n_2 \\ \substack{ \\ }}}\beta_{n_1}\overline{\beta_{n_2}} \psi_1(n_1)\overline{\psi_2}(n_2).
		\end{align*}
		
		We first estimate $\mathscr{S}_1$. We begin by discarding the terms with $(q_1,q_2)\geq Q_0$ and $(n_1,n_2)\geq N_0$ from $\mathscr{S}_1$ for  $Q_0=N_0=P^{1/2}$. This produces an error term which is $O\left(P^{-1/3}XN R^{-1} (\log X)^{O_C(1)}\right)$. Indeed, the contribution of $(n_1,n_2)=\nu$ is crudely
		\begin{align*}
		&\ll \sum_{r\sim R}\sum_{\substack{M/2\leq m\leq 3M\\(r,am)=1}}\sum_{\substack{q_1,q_2\sim Q\\(am,q_1q_2)=1\\(q_1q_2r,\nu)\mid a}}\tau(q_1)^C\tau(q_2)^C\sum_{\substack{n_1\sim N\\n_1\equiv a\overline{m}\pmod{q_1r}\\\nu\mid n_1}}\,\,\sum_{\substack{n_2\sim N\\n_2\equiv a\overline{m}\pmod{q_2r}\\\nu\mid n_2}}\tau(n_1)^{C}\tau(n_2)^C
		\\&\ll \sum_{r\sim R}\sum_{\substack{M/2\leq m\leq 3M\\(r,am)=1}}\sum_{\substack{n_1\sim N\\n_1\equiv a\overline{m}\pmod{r}\\\nu\mid n_1}}\,\,\sum_{\substack{n_2\sim N\\n_2\equiv a\overline{m}\pmod{r}\\\nu\mid n_2}}(\tau(n_1)\tau(n_2))^{C}(\tau(mn_1-a)\tau(mn_2-a))^{C+1}\\
		&\ll_a \sum_{r\sim R}\sum_{\substack{M/2\leq m\leq 3M\\(r,am)=1}}\frac{N^2}{r^2}(\log X)^{O_C(1)}\tau(\nu)^{4C+2}\nu^{-2}\\
		&\ll \frac{MN^2}{R}(\log X)^{O_C(1)}\nu^{1/100-2},
		\end{align*}
		where for the third line we used the trivial inequality $x_1\cdots x_4\leq \sum_{i\leq 4}x_i^4$ and a standard upper bound for $\sum_{n\leq x, n\equiv \alpha\pmod s}\tau(n)^{B}$ arising from Shiu's bound~\cite[Theorem 1]{shiu}. This summed over $\nu\geq P^{1/2}$ produces $\ll P^{-1/3}XN R^{-1} (\log X)^{O_C(1)}$. The contribution of $(q_1,q_2)\geq P^{1/2}$ is bounded similarly (cf.~\cite[p.219]{bfi}).
		
		Therefore, we obtain
		\begin{align} \label{eq_S_1}
		\mathscr{S}_1=\mathscr{S}_1'+O\left(P^{-1/3} XN R^{-1} (\log X)^{O_C(1)}\right),
		\end{align}
		where
		\begin{align*}
		\mathscr{S}_1'\coloneqq \sum_{\nu\leq P^{1/2}}\sum_{\substack{q_0\leq P^{1/2}\\ (q_0,a\nu)=1}}\mathscr{S}_1(\nu,q_0)
		\end{align*}
		and
		\begin{align*}
		\mathscr{S}_1(\nu,q_0)\coloneqq \sum_{\substack{r\sim R, \, m \\ (r,a\nu)=1}} f_0\bigl(\frac{m}{M}\bigr) \sum_{\substack{q_1, q_2 \\ (a\nu,q_1q_2)=1\\(q_1,q_2)=1}}\gamma_{q_0q_1}\overline{\gamma_{q_0q_2}} \sum_{\substack{n_1,n_2 \\ \substack{(n_1,n_2)=1 \\  m\nu n_i\equiv a(q_i r)}}}\beta_{\nu n_1}\overline{\beta_{\nu n_2}}.
		\end{align*}
		Changing the order of summation, we have 
		\begin{align*}
		\mathscr{S}_1(\nu,q_0)=\sum_{\substack{r\sim R\\ \substack{(r,a\nu)=1}}}\sum_{\substack{q_1, q_2 \\ \substack{(a\nu,q_1q_2)=1\\ (q_1,q_2)=1}}}\gamma_{q_0q_1}\overline{\gamma_{q_0q_2}}\sum_{\substack{n_1,n_2 \\ \substack{(n_1,n_2)=1 \\ n_1\equiv n_2 (r)\\(n_i,q_0q_ir)=1}}}\beta_{\nu n_1}\overline{\beta_{\nu n_2}} \sum_{\substack{m\\ m \nu n_i\equiv a(q_0q_i r)}}f_0\bigl(\frac{m}{M}\bigr).
		\end{align*}
		
		We approximate the sum over $m$ by $M\widehat{f_0}(0)/(q_0q_1q_2r)$ so that  
		\begin{align*}
		\mathscr{S}_1'=\mathscr{X}+\sum_{\nu\leq P^{1/2}}\sum_{\substack{q_0\leq P^{1/2}\\ (q_0,a\nu)=1}}\mathscr{E}(q_0,\nu),
		\end{align*}
		where $\mathscr{E}(q_0,\nu)$ is as in~\eqref{eq_mscrEdef} and
		\begin{align*}
		\mathscr{X}&\coloneqq M\widehat{f_0}(0)\sum_{\nu\leq P^{1/2}}\sum_{\substack{q_0\leq P^{1/2}\\ (q_0,a\nu)=1}} \sum_{\substack{r\sim R\\(r,a\nu)=1}}\frac{1}{r}\sum_{\substack{q_1, q_2 \\ \substack{(a\nu,q_1q_2)=1\\ (q_1,q_2)=1}}}\frac{\gamma_{q_0q_1}\overline{\gamma_{q_0q_2}}}{q_0q_1q_2}\sum_{\substack{n_1,n_2 \\ \substack{(n_1,n_2)=1 \\ n_1\equiv n_2 (q_0r)\\(n_i,q_0q_ir)=1}}}\beta_{\nu n_1}\overline{\beta_{\nu n_2}}.
		\end{align*}
		
		We complete the sum over $\nu\leq P^{1/2}$, again introducing and admissible error. Then the above becomes
		\begin{align*}
		M\widehat{f_0}(0)\sum_{\substack{q_0\leq P^{1/2}\\ (q_0,a)=1}}\sum_{\substack{r\sim R\\(r,a)=1}}\frac{1}{r}\sum_{\substack{q_1, q_2 \\ \substack{(a,q_1q_2)=1\\ (q_1,q_2)=1}}}\frac{\gamma_{q_0q_1}\overline{\gamma_{q_0q_2}}}{q_0q_1q_2\varphi(q_0r)}\sum_{\psi\pmod {q_0r}} \sum_{\substack{n_1,n_2  \\ (n_i,q_i)=1 \\ }}\psi(n_1)\beta_{ n_1}\overline{\psi(n_2)\beta_{n_2}}.
		\end{align*}
		The main term here comes from characters with small conductor, for us those that have conductor $\leq P$. Large conductor characters are handled by the Cauchy--Schwarz inequality and Lemma~\ref{lem_BDHrough}; this gives us
		\begin{align*}
		&\Bigl|\sum_{\substack{r\sim R\\(r,a\nu)=1}}\frac{1}{\varphi(q_0r)}\sum_{\substack{\psi\pmod {q_0r}\\ \text{ cond}(\psi)>P}} \sum_{\substack{n_1,n_2  \\ (n_i,q_i)=1 \\ }}\psi(n_1)\beta_{ n_1}\overline{\psi(n_2)\beta_{n_2}}\Bigr|\\
		&\leq \Bigl(\sum_{\substack{r\leq  Rq_0 \\}}\frac{1}{\varphi(r)}\sum_{\substack{\psi\pmod {r}\\ \text{ cond}(\psi)>P}} \bigl|\sum_{\substack{n  \\ (n,q_1)=1 \\ }}\psi(n)\beta_{n}\bigl|^2\Bigr)^{1/2}\\\
		&\times \Bigl(\sum_{\substack{r\leq  Rq_0 \\}}\frac{1}{\varphi(r)}\sum_{\substack{\psi\pmod {r}\\ \text{ cond}(\psi)>P}} \bigl|\sum_{\substack{n  \\ (n,q_2)=1 \\ }}\psi(n)\beta_{n}\bigl|^2\Bigr)^{1/2}\\
		&\ll \Big(Rq_0+\frac{N}{P}\Big)N (\log X)^{O_C(1)}.
		\end{align*}
		
		Thus we have
		\begin{align*}
		\mathscr{X}&=M\widehat{f_0}(0)\sum_{\substack{q_0\leq P^{1/2}\\ (q_0,a)=1}} \sum_{\substack{r\sim R\\(r,a)=1}}\frac{1}{r}\sum_{\substack{q_1, q_2 \\ \substack{(a,q_1q_2)=1\\ (q_1,q_2)=1}}}\frac{\gamma_{q_0q_1}\overline{\gamma_{q_0q_2}}}{q_0q_1q_2\varphi(q_0r)}\sum_{\substack{\psi\pmod {q_0r}\\ \cond(\psi)\leq P}} \sum_{\substack{n_1,n_2  \\  (n_i,q_i)=1}}\psi(n_1)\beta_{ n_1}\overline{\psi(n_2)\beta_{ n_2}}\\
		&+O\Bigl(\frac{MN (\log X)^{O_C(1)}}{R}\bigl(PR+\frac{N}{P^{1/3}}\bigr) \Bigr).
		\end{align*}
		By~\eqref{eq_congtodisp_cond01} and~\eqref{eq_congtodisp_cond02}, this is acceptable. After completing the sum over $q_0$ again, the main term here equals up to an admissible error
		\begin{align*}
		\mathcal{X}\coloneqq M\widehat{f_0}(0) \sum_{\substack{r\\(r,a)=1}}\frac{1}{r}\sum_{\substack{q_1, q_2 \\ \substack{(a,q_1q_2)=1\\ }}}\frac{\gamma_{q_1}\overline{\gamma_{q_2}}}{[q_1,q_2] \varphi((q_1,q_2)r)}\sum_{\substack{\psi\pmod {(q_1,q_2)r}\\ \cond(\psi)\leq P}}  \sum_{\substack{n_1,n_2 \\(n_1,n_2)=1 \\ }}\psi(n_1)\beta_{ n_1}\overline{\psi(n_2)\beta_{ n_2}}.
		\end{align*}
		To complete the proof of Lemma~\ref{lem_congtodisp}, we show that $\mathscr{S}_2$ and $\mathscr{S}_3$ are also well approximated by $\mathcal{X}$.
		
		We now consider $\mathscr{S}_2$ and follow a strategy that generalises both~\cite[Section 5.3.2]{drappeau} and~\cite[Section 5]{bfi}. Let $\mathfrak{c}$ denote the conductor of $\psi$ in the definition of $\mathscr{S}_2$. In the relevant range of summation, $m$ is coprime to $q_1r$ and we have
		\begin{align*}
		\sum_{\substack{\psi(q_2r)\\ \text{cond}(\psi)\leq P}}\psi(m\overline{a}n_2)&=\sum_{\substack{\mathfrak{c}\leq P \\ \psi(\mathfrak{c})^*}} \psi(m\overline{a}n_2)1_{\substack{(n_2,q_2r)=1\\ \mathfrak{c}|q_2r}}1_{(m,q_2)=1}\\
		&=\sum_{\substack{\mathfrak{c}\leq P \\ \psi(\mathfrak{c})^*}}\psi(m\overline{a}n_2)1_{\substack{(n_2,q_2r)=1\\ \mathfrak{c}|q_2r}} \sum_{\substack{e|(m,q_2), \, (e,q_1r)=1}}\mu(e).
		\end{align*}
		
		Since $e>P^{1/2}$ implies $(m,q_2)>P^{1/2}$, the contribution of $e>P^{1/2}$ to  $\mathscr{S}_2$ gives an admissible error. We denote the remaining terms with $e\leq P^{1/2}$ by $\mathscr{S}_2'$ and sort $r$ into residue classes modulo $\mathfrak{c}$ to get 
		\begin{align*}
		\mathscr{S}_2'&=\sum_{\substack{\mathfrak{c}\leq P\\ \psi(\mathfrak{c})^*}}\psi(\overline{a})\sum_{\substack{q_1,q_2 \\  \substack{(a,q_1q_2)=1\\ }}}\gamma_{ q_1}\gamma_{q_2}\sum_{\substack{e|q_2 , \, (e,q_1)=1\\ e\leq P^{1/2}}}\mu(e)\psi(e)\sum_{\substack{n_1,n_2\\ (n_i,q_i)=1}}\beta_{n_1}\overline{\beta_{n_2}}\psi(n_2) \\
		&\times \sum_{\substack{c(\mathfrak{c})\\ c\equiv 0( \mathfrak{c}/(\mathfrak{c},q_2))}}\mathscr{T}(n_1,n_2,q_1,q_2,\psi,c,e),
		\end{align*}
		where
		\begin{align*}
		\mathscr{T}(n_1,n_2,q_1,q_2,\psi,c,e)\coloneqq \sum_{\substack{r\sim R\\ \substack{(r,aen_1n_2)=1\\ r\equiv c(\mathfrak{c})}}}\frac{1}{\varphi( q_2 r)}\sum_{\substack{emn_1\equiv a (q_1 r) \\ }}\psi(m)f_0\bigl(\frac{em}{M}\bigr).
		\end{align*}
		
		Set $W=[q_1r,\mathfrak{c}]$, $H=WeX^{\epsilon'}/M$ so that by Poisson summation (Lemma~\ref{lem_Poiss})
		\begin{align}\label{eq_S2_1}
		\sum_{\substack{emn_1\equiv a (q_1 r) \\ }}\psi(m)f_0\bigl(\frac{em}{M}\bigr)=\frac{M}{eW}\sum_{\substack{b(W)\\ b\equiv a\overline{e n_1}(q_1r)}} \psi(b)\sum_{|h|\leq H}\widehat{f_0}\bigl(\frac{hM}{eW}\bigr) e\bigl(\frac{-bh}{W}\bigr)+O_{\epsilon'}(X^{-100}).
		\end{align}
		The error term is negligible. We furthermore have
		\begin{align*}
		\sum_{\substack{b(W)\\ b\equiv a\overline{e n_1}(q_1r)}} \psi(b)=\psi( a\overline{e n_1})1_{\mathfrak{c}|q_1r},
		\end{align*}
		and so the term $h=0$ on the right-hand side of~\eqref{eq_S2_1} contributes to $\mathscr{S}_2'$
		\begin{align}\label{eq_S2_2}
		M\widehat{f_0}(0)\sum_{\substack{\mathfrak{c}\leq P\\ \psi(\mathfrak{c})^*}}\sum_{\substack{q_1,q_2 \\  \substack{(a,q_1q_2)=1\\ }}}\sum_{\substack{r\sim R\\ \substack{(r,an_1n_2)=1\\ }}}1_{\mathfrak{c}|r(q_1,q_2)}\frac{\gamma_{ q_1}\gamma_{q_2}}{\varphi( q_2 r)q_1r} \sum_{\substack{e|q_2 , \, (e,rq_1)=1\\ e\leq P^{1/2}}}\frac{\mu(e)}{e} \sum_{\substack{n_1,n_2\\ (n_i,q_i)=1}}\beta_{n_1}\overline{\psi(n_1)\beta_{n_2}}\psi(n_2). 
		\end{align}
		
		Observing that
		\begin{align*}
		\frac{1}{\varphi(q_2 r)}\sum_{\substack{e|q_2 , \, (e,rq_1)=1}}\frac{\mu(e)}{e}&=\frac{1}{q_2 r}\prod_{\substack{p|q_2r\\}}\Bigl(1-\frac{1}{p}\Bigr)^{-1}\prod_{\substack{p|q_2\\ p\nmid rq_1}}\Bigl(1-\frac{1}{p}\Bigr)\\
		&=\frac{1}{q_2 r}\prod_{\substack{p|(q_1,q_2)r\\}}\Bigl(1-\frac{1}{p}\Bigr)^{-1},
		\end{align*}
		the expression in~\eqref{eq_S2_2} equals to $\mathcal{X}$ up to admissible error after completing the sum over $e$ again, as we may. 
		
		To estimate the contribution of terms with $h\neq 0$ to the right-hand side of~\eqref{eq_S2_1}, we start by writing $W=q_1 r \mathfrak{c}'\widetilde{\mathfrak{c}}$ with $\mathfrak{c}'=(\mathfrak{c}/(\mathfrak{c},q_1r),(q_1r)^\infty)$ and  observe
		\begin{align}\nonumber
		\sum_{\substack{b(W)\\ b\equiv a\overline{e n_1}(q_1r)}} \psi(b)e\bigl(\frac{-bh}{W}\bigr)&=\sum_{\substack{b_1(q_1r \mathfrak{c}')\\ b_1\equiv a\overline{e n_1}(q_1r)}}\psi^{(\mathfrak{c}/\widetilde{\mathfrak{c}})}(b_1)e\bigl(\frac{-b_1h}{q_1 r\mathfrak{c}'}\bigr)\sum_{b_2(\widetilde{\mathfrak{c}})}\psi^{(\widetilde{\mathfrak{c}})}(b_2)e\bigl(\frac{-b_2h}{\widetilde{\mathfrak{c}}}\bigr)\\
		&=e\bigl(-ah\frac{\overline{e n_1}}{q_1 r \mathfrak{c}'}\bigr)\sum_{b'(\mathfrak{c}')}\psi^{(\mathfrak{c}/\widetilde{\mathfrak{c}})}(a\overline{e n_1}+b' q_1 r ) e\bigl(\frac{-b'h}{\mathfrak{c}'}\bigr)\sum_{b_2(\widetilde{\mathfrak{c}})}\psi^{(\widetilde{\mathfrak{c}})}(b_2)e\bigl(\frac{-b_2h}{\widetilde{\mathfrak{c}}}\bigr),\label{eq_S2_3}
		\end{align}
		where we factorised $\psi=\psi^{(\mathfrak{c}/\widetilde{\mathfrak{c}})}\psi^{(\widetilde{\mathfrak{c}})}$. Given $q_1$ and $\mathfrak{c}$, the condition $r\equiv c (\mathfrak{c})$ determines in particular also $\mathfrak{c}'$ and $\widetilde{\mathfrak{c}}$. Thus, after changing the order of summation and trivially summing over $b'$ and $b_2$ that come from~\eqref{eq_S2_3}, the contribution of terms with $h\neq 0$ in $\eqref{eq_S2_1}$ to $\mathscr{T}(n_1,n_2,q_1,q_2,\psi,c,e)$ can be estimated by
		\begin{align*}
		\ll \frac{M}{eq_1}\sum_{0<|h|\leq H}  \Bigl|\sum_{\substack{r\sim R\\ \substack{(r,ae n_1n_2)=1\\ r\equiv c (\mathfrak{c})}}}\widehat{f_0}\bigl(\frac{hM}{eq_1 r \mathfrak{c}'\widetilde{\mathfrak{c}} }\bigr)\frac{1}{r \varphi(q_2r)}e\bigl(-ah\frac{\overline{e n_1}}{q_1 r \mathfrak{c}'}\bigr) \Bigr|
		\end{align*}
		
		This is almost the same object that is considered in~\cite[Section 5]{bfi}. Similarly as there, a routine calculation using reciprocity, Weil's Kloosterman sum bound, and partial summation lets us bound it by
		\begin{align*}
		\ll \frac{X^{2\epsilon'}\mathfrak{c}}{eq_1 R}\Bigl(N^{1/2}+\frac{R}{N}\Bigr).
		\end{align*}
		This contributes to $\mathscr{S}_2'$ at most
		\begin{align*}
		\frac{Q N^2 X^{2\epsilon'}P^4}{R}\Bigl(N^{1/2}+\frac{R}{N}\Bigr),
		\end{align*}
		which is admissible by~\eqref{eq_congtodisp_cond01},~\eqref{eq_congtodisp_cond03}, and~\eqref{eq_congtodisp_cond04}.
		
		We complete the proof of Lemma~\ref{lem_congtodisp} by extracting $\mathcal{X}$ out of $\mathscr{S}_3$. Our strategy is closely related to the one in~\cite[Section 5.3.1]{drappeau}. Set now $W=r[q_1,q_2]$, $H=WX^{\epsilon'}/M$, and apply Poisson summation (Lemma~\ref{lem_Poiss}) to get
		\begin{align}\nonumber
		\mathscr{S}_3&=\sum_{\substack{r\sim R, \\ \substack{(r,a)=1\\ }}}\sum_{\substack{q_1, q_2 \\ \substack{ (a,q_1q_2)=1 \\ }}}\frac{\gamma_{q_1}\overline{\gamma_{q_2}}}{\varphi(q_1r)\varphi(q_2r)}\sum_{\substack{\psi_i(q_ir)\\ \text{cond}(\psi_i)\leq P}} \psi_1 \overline{\psi_2}( \overline{a}) \sum_{\substack{n_1,n_2 \\ \substack{ \\ }}}\beta_{n_1}\overline{\beta_{n_2}} \psi_1(n_1)\overline{\psi_2}(n_2)\sum_{b(W)^*}\psi_1\overline{\psi_2}(b) \\
		&\times \Bigg(\frac{1}{W}\sum_{|h|\leq H} \widehat{f_0}\bigl(\frac{h}{H}\bigr)e\bigl(\frac{-bh}{W}\bigr)+O_{\epsilon'}(X^{-100}) \Bigg) \label{eq_S_3}.
		\end{align}
		The error term is negligible. 
		
		We continue with the observation that
		\begin{align*}
		\sum_{b(W)^*}\psi_1\overline{\psi_2}(b) = \varphi(W)1_{\text{cond}(\psi_1\overline{\psi_2})=1}.
		\end{align*}
		Since
		\begin{align*}
		\sum_{\substack{\psi_i(q_ir)\\ \text{cond}(\psi_i)\leq P}} \psi_1 \overline{\psi_2}( \overline{a}) \psi_1(n_1)\overline{\psi_2}(n_2)1_{\text{cond}(\psi_1\overline{\psi_2})=1}= \sum_{\substack{\psi((q_1,q_2)r)\\\textnormal{cond}(\psi)\leq P}}\psi(n_1\overline{n_2})
		\end{align*}
		and 
		\begin{align*}
		\frac{\varphi([q_1,q_2]r)}{\varphi(q_1r)\varphi(q_2r)}=\frac{1}{\varphi((q_1,q_2)r)},
		\end{align*}
		the $h=0$ contribution to $\mathscr{S}_3$ in~\eqref{eq_S_3} equals $\mathcal{X}$. 
		
		The $h\neq 0$ contribution can be estimated using
	    the bound (see Lemma \ref{lem_ccalc}))
		\begin{align*}
		\bigl|\sum_{b(W)^*}\psi_1\overline{\psi_2}(b)e\bigl(\frac{-bh}{W}\bigr)\bigr|\ll P (h,W).
		\end{align*}
		This gives a contribution of
		\begin{align*}
		&\ll M P (\log X)\sum_{\substack{r\sim R, \\ \substack{(r,a)=1\\ }}}\sum_{\substack{q_1, q_2 \\ \substack{ (a,q_1q_2)=1 \\ }}}\frac{\bigl|\gamma_{q_1}\overline{\gamma_{q_2}}\bigr|}{W\varphi(q_1r)\varphi(q_2r)}\sum_{\substack{\psi_i(q_ir)\\ \text{cond}(\psi_i)\leq P}}  \sum_{\substack{n_1,n_2 \\ \substack{ \\ }}}\bigl|\beta_{n_1}\overline{\beta_{n_2}}\bigr| \sum_{0<|h|\leq H} (h,W)\\
		&\ll X^{\epsilon'} P (\log X) \sum_{\substack{r\sim R, \\ \substack{(r,a)=1\\ }}}\tau(r)^3\sum_{\substack{q_1, q_2 \\ \substack{ (a,q_1q_2)=1 \\ }}}\frac{\bigl|\tau(q_1)^2\gamma_{q_1}\tau(q_2)^2\overline{\gamma_{q_2}}\bigr|}{\varphi(q_1r)\varphi(q_2r)}  \sum_{\substack{n_1,n_2 \\ \substack{ \\ }}}\bigl|\beta_{n_1}\overline{\beta_{n_2}}\bigr| \\
		&\ll X^{\epsilon'} P (\log X)^{O_C(1)} N^2 R^{-1}.
		\end{align*}
		This is admissible by~\eqref{eq_BFI_Rough_1} and~\eqref{eq_congtodisp_cond01}.
	\end{proof}
	
	Next we translate the remaining term of the previous lemma, $\mathcal{E}(q_0,\nu)$, into sums of Kloosterman sums.
	
	\begin{lemma}[Reduction to Kloosterman sums]\label{lem_redtokloo} Assume Convention~\ref{def_BFIrough_assumptions} and that
		\begin{align}\label{eq_redtoklo_c0}
		\beta_n \textnormal{ is supported on squarefree } n \textnormal{ only. }
		\end{align}  Assume further
		\begin{align}
		Q^2 R N &\leq X^{2-\epsilon} \label{eq_redtoklo_c2}.
		\end{align}
		Let $\mathscr{E}(q_0,\nu)$ be given by~\eqref{eq_mscrEdef}.
		Then we have 
		\begin{align*}
		|\mathscr{E}(q_0,\nu)|&\ll M\nu^5 W(q_0,\nu)+\frac{X^{1-\epsilon'}N}{R},
		\end{align*}
		where
		\begin{align}\label{eq_Wdef}
		W(q_0,\nu)&\coloneqq \max_{l_1,l_2,l_3,l_4.l_5(\nu)}\Bigg|\frac{1}{q_0}\sum_{\substack{\substack{r\sim R\\ r\equiv l_3}\\(r,a\nu)=1}}\frac{1}{r}\sum_{\substack{q_i\\ \substack{(a\nu,q_1q_2)=1\\ \substack{(q_1,q_2)=1}\\ q_i\equiv l_i(\nu)}}}\frac{\gamma_{q_0q_1}\overline{\gamma_{q_0q_2}}}{q_1q_2}\sum_{\substack{n_1,n_2 \\ \substack{(n_1,n_2)=1 \\ n_1\equiv n_2 (q_0r)\\\substack{(n_i,q_0q_ir)=1\\ n_1\equiv l_4(\nu)}}}}\beta_{\nu n_1}\overline{\beta_{\nu n_2}}\\
		&\nonumber \times \sum_{\substack{0<|h|\leq H \\ h\equiv l_5(\nu)}}\widehat{f_0}\bigl(\frac{hM}{q_0q_1q_2r} \bigr)e\left(ah\frac{n_2-n_1}{q_0 r}\frac{\overline{\nu q_1n_2}}{q_2n_1}\right)\Bigg|.
		\end{align}
	\end{lemma}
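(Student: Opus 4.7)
The strategy is to apply Poisson summation to the innermost sum over $m$, observe that the $h=0$ Poisson term cancels the expected main term subtracted in $\mathscr{E}$, and then simplify the remaining oscillatory phase through the Chinese remainder theorem and reciprocity until it takes the precise form visible in $W(q_0,\nu)$.

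By CRT the two congruences $m\nu n_i\equiv a\pmod{q_0q_ir}$ for $i=1,2$ (compatible precisely because $n_1\equiv n_2\pmod{q_0r}$ in the sum) pin $m$ down to a single residue class $b\pmod K$ with $K\coloneqq q_0q_1q_2r$. Applying Lemma~\ref{lem_Poiss} with $H\coloneqq X^{2\epsilon'}K/M$, I would write
\[
\sum_{m\equiv b\,(K)}f_0(m/M)=\frac{M\widehat{f_0}(0)}{K}+\frac{M}{K}\sum_{0<|h|\le H}\widehat{f_0}\Bigl(\tfrac{hM}{K}\Bigr)e\Bigl(-\tfrac{bh}{K}\Bigr)+O(X^{-100}).
\]
Hypothesis~\eqref{eq_redtoklo_c2} together with $M\asymp X/N$ and $q_0q_1q_2\asymp Q^2/q_0$ gives $K/M\le Q^2RN/X\le X^{1-\epsilon}$, so the hypothesis of the Poisson lemma is satisfied and the summed $X^{-100}$ tail error is completely negligible. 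Crucially, the $h=0$ Poisson contribution equals the subtracted main term, so these annihilate and only $0<|h|\le H$ survives.

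To reveal the Kloosterman structure I would use the CRT decomposition of $b/K$ into contributions modulo $q_0r$, $q_1$, and $q_2$ (any obstruction to coprimality between $q_0r$ and $q_1q_2$ can be absorbed into $q_0$, since $(q_1,q_2)=1$ holds in the sum). Substituting $b\equiv a\overline{\nu n_1}\pmod{q_0q_1r}$ and $b\equiv a\overline{\nu n_2}\pmod{q_0q_2r}$ into the appropriate pieces and then applying the standard reciprocity $\overline{n}/q\equiv -\overline{q}/n+1/(qn)\pmod 1$ to flip the inverses of $n_i$ modulo $q_i$ into inverses of $q_i$ modulo $n_i$, one combines the three contributions, exploiting the integrality of $(n_2-n_1)/(q_0r)$, to arrive at precisely the phase $e\bigl(ah\,\tfrac{n_2-n_1}{q_0r}\cdot\tfrac{\overline{\nu q_1 n_2}}{q_2n_1}\bigr)$ appearing in $W(q_0,\nu)$. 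The squarefree hypothesis~\eqref{eq_redtoklo_c0} on $\beta_n$ enters here to ensure that the relevant inverses modulo $q_2n_1$ are well defined and that the reciprocity manipulations do not pick up hidden factors from square divisors of $n_i$.

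Finally, since the reciprocity manipulations produce $\nu$-inverses in the denominators of the phase, to reach the precise shape of $W(q_0,\nu)$ I would split each of the five variables $q_1,q_2,r,n_1,h$ into residue classes modulo $\nu$, bounding the resulting sum by $\nu^5$ times the maximum over the five residues $l_1,\ldots,l_5$. The prefactor $M$ in the bound $M\nu^5 W(q_0,\nu)$ is inherited from the $M/K$ prefactor of the Poisson dual, while $W$ itself carries the weight $1/K$. I expect the CRT/reciprocity bookkeeping to be the main obstacle: ensuring that the three CRT contributions really do collapse to a single fraction with denominator $q_2 n_1$ after all the reciprocity flips, and that no stray $\nu$-factors remain unaccounted. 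Any contribution from tuples where a coprimality needed for the reciprocity step fails (e.g.\ $(h,K)$ unusually large or a non-trivial common factor between $n_i$ and a piece of $K$) must be estimated trivially and absorbed into the $X^{1-\epsilon'}N/R$ error term.
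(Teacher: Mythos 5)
Your skeleton is the same as the paper's (which follows Fouvry--Iwaniec): apply Poisson summation in $m$, note that the $h=0$ term cancels the subtracted main term, rewrite $e(-\ell h/(q_0q_1q_2r))$ via the divisibility/reciprocity manipulation, and then split $r,q_1,q_2,n_1,h$ into residue classes modulo $\nu$ to absorb the residual phase depending only on residues mod $\nu$, giving the factor $\nu^5$. However, there is a genuine gap in your error accounting. You claim to arrive ``precisely'' at the phase $e\bigl(ah\frac{n_2-n_1}{q_0r}\frac{\overline{\nu q_1n_2}}{q_2n_1}\bigr)$, but the manipulation in fact gives
\begin{align*}
e\Bigl(\frac{-\ell h}{q_0q_1q_2r}\Bigr)=e\Bigl(ah\frac{\overline{rq_0q_1q_2n_1}}{\nu}\Bigr)\,e\Bigl(ah\frac{n_2-n_1}{q_0r}\frac{\overline{\nu q_1n_2}}{q_2n_1}\Bigr)\,e\Bigl(\frac{ah}{\nu q_0q_1q_2rn_1}\Bigr),
\end{align*}
and the last factor --- exactly the $1/(qn)$-type term in your reciprocity identity, which you write down but then drop --- is only approximately $1$. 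Its cumulative contribution over $h\le H$, the $\asymp N^2/(q_0R)$ pairs $n_1\equiv n_2\ (q_0r)$, and $q_1,q_2,r$ is of size roughly $q_0^{-2}X^{2\epsilon'}N^2Q^2/X$, and it is precisely here that the hypothesis $Q^2RN\le X^{2-\epsilon}$ is needed; this is the actual source of the $X^{1-\epsilon'}N/R$ term in the conclusion. In your write-up that hypothesis is invoked only to ``verify the Poisson lemma hypothesis,'' which it is not needed for (Lemma~\ref{lem_Poiss} only requires $M,q\le X$ and $H>X^{2\epsilon}q/M$), and you instead attribute the error budget to coprimality failures or large $(h,K)$; but no such failing tuples exist, since all the coprimality you need ($(n_i,q_0q_ir)=1$, $(n_1,n_2)=1$, $(a\nu,q_1q_2)=1$, and $(\nu,n_i)=1$) is already built into $\mathscr{E}(q_0,\nu)$ together with the squarefree support.

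Two smaller points. The role of the squarefreeness assumption is precisely to give $(\nu,n_1)=1$ (because $\beta_{\nu n_1}$ is supported on squarefree integers), which is what makes $\overline{\nu}$ modulo $q_2n_1$ and the congruence $u_1\equiv-a\overline{rq_0q_1}\ (\nu)$ meaningful --- not to avoid ``hidden factors from square divisors of $n_i$.'' And the reason for restricting the five variables to classes mod $\nu$ is the leftover factor $e(ah\overline{rq_0q_1q_2n_1}/\nu)$, which becomes a unimodular constant once those residues are fixed; your sketch gestures at this correctly but should make it explicit, since otherwise the passage from the dispersion sum to $W(q_0,\nu)$ as defined in~\eqref{eq_Wdef} is not justified.
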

	
	\begin{proof}
		Given $n_i, q_i, r$ in the range of summation in ~\eqref{eq_mscrEdef}, let $\ell$ be such that
		\begin{align}\begin{split}\label{e14}
		\ell \nu n_1&\equiv a(q_0q_1r)\\
		\ell \nu n_2&\equiv a(q_0q_2r).
		\end{split}
		\end{align} 
		Let 
		\begin{align}\label{e21}
		H\coloneqq X^{\epsilon'} Q^2 R/M
		\end{align}
		and apply Poisson summation (Lemma~\ref{lem_Poiss}) to get
		\begin{align}\label{e10b}\begin{split}
		\sum_{\substack{m\\ m \nu n_i\equiv a(q_0q_i r)}}f_0\bigl(\frac{m}{M}\bigr)-\frac{M\widehat{f_0}(0)}{q_0q_1q_2r}&=\sum_{\substack{m\\ m \equiv \ell(q_0q_1q_2r)}}f_0\bigl(\frac{m}{M}\bigr)-\frac{M\widehat{f_0}(0)}{q_0q_1q_2r}\\
		&=\frac{M}{q_0q_1q_2 r}\sum_{1\leq |h| \leq H}\widehat{f_0}\left(\frac{hM}{q_0q_1q_2r}\right)e\left(\frac{-\ell h}{q_0q_1q_2r}\right)+O\left(X^{-100}\right).
		\end{split}
		\end{align}
		The error term is small enough.
		
		At this point, since we cannot make the assumption $(A_4)$ in~\cite{bfi} (roughness of $\beta_n$) to reduce to the case $\nu=1$, we instead follow the argument after~\cite[(3.18)]{fouvry-iwaniec} almost verbatim. Write $t=(n_2-n_1)/(q_0r)$. The congruence conditions~\eqref{e14} are equivalent to
		\begin{align*}
		\ell\nu n_1&=a+q_0q_1r u_1\\
		\ell\nu n_2&=a+q_0q_2r u_2
		\end{align*}
		for some integers $u_1, u_2$. Thus $q_2u_2n_1-q_1u_1n_2=at$ and
		\begin{align*}
		u_1\equiv -at \overline{q_1n_2} (q_2n_1).
		\end{align*}
		Furthermore, 
		\begin{align*}
		u_1\equiv -a \overline{r q_0 q_1} (\nu)
		\end{align*}
		and so, using that by~\eqref{eq_redtoklo_c0} we can assume $(n_1,\nu)=1$,
		\begin{align*}
		\frac{\ell h}{q_0 q_1 q_2 r}&=\frac{ah}{\nu q_0 q_1 q_2 r n_1}+\frac{u_1h}{\nu n_1 q_2}\\
		&\equiv -ah\frac{\overline{r q_0 q_1 q_2 n_1}}{\nu} -a t h\frac{\overline{q_1 n_2 \nu }}{q_2n_1}+\frac{ah}{\nu q_0 q_1 q_2 r n} \pmod 1,
		\end{align*}
		so
		\begin{align}\label{e11b}
		e\left(\frac{-\ell h}{q_0q_1q_2r}\right)=e\left(ah\frac{\overline{r q_0q_1q_2 n_1}}{\nu}\right)e\left(ah\frac{n_2-n_1}{q_0r}\frac{\overline{\nu q_1n_2}}{q_2n_1}\right)+O\left(\frac{q_0|ah|}{\nu NQ^2 R}\right).
		\end{align}
		The contribution of the error terms present in ~\eqref{e11b} to $\mathscr{E}(\nu,q_0)$ is crudely bounded using the divisor bound and $|\widehat{f_0}(y)|\ll 1$ by
		\begin{align*}
		\ll_a& \frac{q_0 X^{2\epsilon'}}{X} \frac{N^2 Q^2}{q_0^3},
		\end{align*}
		which is admissible by~\eqref{eq_redtoklo_c2}. The term we still have to consider is
		\begin{align*}
		&\frac{M}{q_0}\sum_{\substack{r\sim R\\(r,a\nu)=1}}\frac{1}{r}\sum_{\substack{q_1, q_2 \\ \substack{(a\nu,q_1q_2)=1\\ (q_1,q_2)=1}}}\frac{\gamma_{q_0q_1}\overline{\gamma_{q_0q_2}}}{q_1q_2}\sum_{\substack{n_1,n_2 \\ \substack{(n_1,n_2)=1 \\ n_1\equiv n_2 (q_0r)\\(n_i,q_0q_ir)=1}}}\beta_{\nu n_1}\overline{\beta_{\nu n_2}}\\
		&\times \sum_{0<|h|\leq H}\widehat{f_0}\left(\frac{hM}{q_0q_1q_2r}\right)e\left(ah\frac{\overline{r q_0q_1q_2 n_1}}{\nu}\right)e\left(ah\frac{n_2-n_1}{q_0r}\frac{\overline{\nu q_1n_2}}{q_2n_1}\right).
		\end{align*}
		The lemma follows after restricting $r$, $q_1$, $q_2$, $n_1$, and $h$ into fixed residue classes $\pmod \nu$ and taking the maximum over those.
	\end{proof}

	\subsection{Estimates for $W(q_0,\nu)$}
	In this section we estimate $W(q_0,\nu)$ and as a consequence deduce the remaining cases \ref{(T.2)}, \ref{(T.3)}, and \ref{(T.5)} of Proposition~\ref{theorem_BFI_rough}.
	
	The following lemma gives us case \ref{(T.2)}.
	
	\begin{lemma}[Variant of Lemma 8.1 in~\cite{MaynardII}]\label{lem_MII8.1} Assume Convention~\ref{def_BFIrough_assumptions} and let $W(q_0,\nu)$ be as in~\eqref{eq_Wdef} with $q_0,\nu\leq X^{\epsilon'}$. Let further $\gamma=\lambda \star \rho$ with $\lambda_s$ and $\rho_t$ being coefficient sequences of order $C$ supported on $t\sim T$, $s\sim S$ (so $ST\asymp Q$). Assume that
		\begin{align}
		N^2 T^2 S &<X^{1-\epsilon}, \\
		N^2 T^3 S^4 R&< X^{2- \epsilon},\\
		N T^2 S^5 R &< X^{2-\epsilon}.
		\end{align}
		Then we have
		\begin{align*}
		W(q_0,\nu)\ll \frac{N^2}{RX^{\epsilon'}}.
		\end{align*}
		In particular, case \ref{(T.2)} of Proposition~\ref{theorem_BFI_rough} holds.
	\end{lemma}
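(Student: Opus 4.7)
The plan is to adapt the proof of~\cite[Lemma 8.1]{MaynardII} to the $P$-truncated setting coming from the dispersion reduction of Lemma~\ref{lem_redtokloo}. Starting from~\eqref{eq_Wdef}, the factorisation $\gamma=\lambda\star\rho$ allows us to write $q_i=s_it_i$ with $s_i\sim S$ and $t_i\sim T$ after conditioning on the fixed value of $q_0$, which only restricts the support of the factorisation. The coprimality $(q_1,q_2)=1$ and the residue class conditions modulo $\nu$ can be dealt with by fixing residues, losing only a factor $\nu^{O(1)}\leq X^{O(\epsilon')}$. Applying reciprocity to the modulus $q_0 r q_2 n_1$, the phase becomes of Kloosterman type in the variables $t_1,t_2,n_1,n_2,r$, up to a fractional part coming from the $r$ denominator that can be separated using a smooth partition of unity.

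The core of the argument is a Cauchy--Schwarz inequality in the pair $(s_1,s_2)$: we keep the outer weights $\lambda_{s_1}\overline{\lambda_{s_2}}$ outside and square out the inner sum over $t_1,t_2,r,n_1,n_2,h$. Opening the square produces a sixfold sum with a difference-of-phases term, and applying Poisson summation in the $n_1$ variable (after an appropriate regularisation) brings matters to an incomplete sum of Kloosterman sums. These can then be bounded by the Deshouillers--Iwaniec spectral estimate~\cite{di} in the form used by Maynard; the smooth cutoff $\widehat{f_0}(\cdot)$ from~\eqref{eq_Wdef} provides the required regularity of the test function.

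The three size conditions $N^2 T^2 S<X^{1-\epsilon}$, $N^2T^3S^4R<X^{2-\epsilon}$, and $NT^2S^5 R<X^{2-\epsilon}$ will correspond respectively to the diagonal contribution, the full spectral contribution, and an intermediate off-diagonal contribution arising after the Kloosterman estimate. A careful comparison of terms shows that under these hypotheses one obtains $W(q_0,\nu)\ll N^2/(RX^{\epsilon'})$, with the extra factor of $X^{O(\epsilon')}$ absorbed to account for the $\nu,q_0\leq X^{\epsilon'}$ losses. The ``In particular'' statement about case~\ref{(T.2)} then follows by combining Lemma~\ref{lem_congtodisp}, Lemma~\ref{lem_redtokloo} and the bound on $W(q_0,\nu)$: setting $S=Q_1$ and $T=Q_2$, the three conditions $N\leq X^{-\epsilon}\min\{X^{1/2}Q_1^{-1/2}Q_2^{-1},\,X^2Q_1^{-5}Q_2^{-2}R^{-1},\,XQ_1^{-2}Q_2^{-3/2}R^{-1/2}\}$ translate respectively, after squaring where needed, into the three hypotheses of the present lemma, and the power saving $P^{-1/6}$ from Lemma~\ref{lem_congtodisp} is enough to deliver $\mathcal{D}\ll X(\log X)^{O_C(1)}P^{-1/7}$.

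The main obstacle will be verifying that the Deshouillers--Iwaniec spectral bound applies uniformly in the refined coprimality structure inherited from the $\mathfrak{u}_P$-decomposition and in the additional residue class restrictions modulo $\nu$ and $q_0$. Since Maynard's original treatment works with $\nu=q_0=1$ and without the $P$-twist, one must retrace the steps through Weil's bound and the spectral theory of automorphic forms, and check that the saving $X^{-\epsilon'}$ is preserved after each application of Cauchy--Schwarz and completion of sums; this is essentially bookkeeping but requires care since the three size conditions are tight in several ranges and the exponents cannot be relaxed without breaking the final bound.
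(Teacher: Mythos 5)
There is a genuine gap: the quantitative core of the lemma is never carried out, only announced. You write that ``a careful comparison of terms shows'' that your chain of steps yields $W(q_0,\nu)\ll N^2/(RX^{\epsilon'})$ under the three stated hypotheses, but those three conditions are exactly what has to be extracted from a very specific sequence of manoeuvres, and the sequence you propose is not the one that produces them. In particular, your Cauchy--Schwarz is taken ``in the pair $(s_1,s_2)$'', keeping $\lambda_{s_1}\overline{\lambda_{s_2}}$ outside and squaring the sum over $t_1,t_2,r,n_1,n_2,h$; the paper (following \cite{MaynardII}) does something structurally different: after removing $\widehat{f_0}$ by partial summation, absorbing $q_0$, the residue classes modulo $\nu$, and a normalising factor into new $1$-bounded sequences, and substituting $f=(n_1-n_2)/(q_0r)$, it applies Cauchy--Schwarz with $f,n_1,n_2,t_1,t_2,s_2$ \emph{outside} and only $s_1$ and $h$ inside the square, reducing to Maynard's quantity $\mathscr{W}_3$. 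The balance of diagonal and off-diagonal terms that gives precisely the exponents $N^2T^2S$, $N^2T^3S^4R$, $NT^2S^5R$ depends on this particular split, and nothing in your sketch verifies that your alternative split (followed by Poisson in $n_1$) can be made to close; as written it is an unproved plan rather than a proof. The genuinely new features relative to \cite{MaynardII} -- the congruence condition on $h$, the extra $\overline{\nu}$ in the phase, the slightly larger $H'$ -- are dealt with in the paper by observing that $h$ carries no cancellation (so its congruence restriction can be dropped) and that $\nu$ can be absorbed into Maynard's variable $z$, changing $Z$ to $\nu Z$, which is admissible since $\epsilon'$ is small compared with $\epsilon$; your proposal does not address these points beyond saying they require ``bookkeeping''.

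Two further inaccuracies are worth flagging. First, the final exponential-sum input for this case is not the Deshouillers--Iwaniec spectral estimate ``in the form used by Maynard'': Maynard's Lemma 8.1, which the paper imports nearly verbatim, rests on Weil/Kloosterman-fraction type bilinear estimates, and the spectral machinery of \cite{di} enters this paper only through Drappeau's theorem in case \ref{(T.5)}, not in case \ref{(T.2)}; so retracing ``the spectral theory of automorphic forms'' here is the wrong target. Second, the deduction of case \ref{(T.2)} needs Lemma~\ref{lem_initred} as well as Lemmas~\ref{lem_congtodisp} and~\ref{lem_redtokloo}: one must first reduce to squarefree $\beta_n$ with the stronger target bound \eqref{eq_reduction_strongerbound} and secure the auxiliary inequalities (such as $QN^{3/2}\leq X^{1-\epsilon}$ and $Q^2RN\leq X^{2-\epsilon}$) that Lemmas~\ref{lem_congtodisp} and~\ref{lem_redtokloo} require. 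Your matching of the three constraints in \ref{(T.2)} with the hypotheses of the lemma via $S=Q_1$, $T=Q_2$ is essentially right (the correspondence is a permutation of ``respectively'', with squaring), but without the initial reduction and without an actual proof of the bound on $W(q_0,\nu)$ the argument does not stand on its own.
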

	This result is closely related to Maynard's~\cite[Lemma 8.1]{MaynardII}, but differs in the following aspects:
	\begin{itemize}
		\item The variables $s_i,t_i$ (appearing after Cauchy--Schwarz) and $n_1$, $r$, and $h$ are restricted to a congruence class.
		\item There are factors $q_0$ and $\nu$ in the coefficients.
		\item There is no lower bound for $|n_1-n_2|$ and no Fouvry-style $\mathcal{N}$ condition.
		\item The exponential phase is of slightly different shape and has an additional $\nu$ term.
		\item The condition $(q_2 s_2, r q_0)=1$ is missing.
		\item There is an additional $q_0$ dependence in $\lambda$ and $\lambda'$.
		\item The variable $H'$ is slightly larger, in Maynard's statement $X^{o(1)}$ replaces $X^{\epsilon'}$.
		\item There is the $\widehat{f_0}\left(\frac{hM}{q_0q_1q_2r}\right)$ term.
		\item Our coefficients are only divisor bounded, not $1$-bounded.
	\end{itemize}
	These changes are harmless and require no considerable modifications of the original argument.
	
	\begin{proof}[Proof of Lemma~\ref{lem_MII8.1}] 
		First we remove the term $\widehat{f_0}\left(\frac{hM}{q_0q_1q_2r}\right)$ by summation by parts. We have $|\widehat{f_0}^{(j)}(\xi)|\ll_{j,k} |\xi|^{-k}$ and recall that $f$ is supported on $[1/2,5/2]$. Thus
		\begin{align*}
		\frac{\partial^{k_1+k_2+k_3+k_4}}{\partial r^{k_1}\partial {q_1}^{k_2} \partial {q_2}^{k_3} \partial h^{k_4}}\widehat{f_0}\left(\frac{hM}{q_0q_1q_2r}\right)\ll_{k_1,k_2,k_3,k_4} r^{-k_1}{q_1}^{-k_2}{q_2}^{-k_3}h^{-k_4}  
		\end{align*}
		and so
		\begin{align*}
		W(q_0,\nu)\ll W'(q_0,\nu),
		\end{align*}
		where
		\begin{align*}
		W'(q_0,\nu)&= \sup_{\substack{\substack{H'\leq 2H\\ R'\leq 2R}\\ Q_1,Q_2\leq 2Q}}\Bigg|\sup_{l_1,l_2,l_3,l_4.l_5(\nu)} \sum_{\substack{\substack{R\leq r \leq R'\\ r\equiv l_3(\nu)}\\(r,a\nu)=1}}\frac{1}{r}\sum_{\substack{q_1\leq Q_1, q_2\leq Q_2 \\ \substack{(a\nu,q_1q_2)=1\\ \substack{(q_1,q_2)=1}\\ q_i\equiv l_i(\nu)}}}\frac{\gamma_{q_0q_1}\overline{\gamma_{q_0q_2}}}{q_1q_2}\sum_{\substack{n_1,n_2 \\ \substack{(n_1,n_2)=1 \\ n_1\equiv n_2 (q_0r)\\\substack{(n_i,q_0q_ir)=1\\ n_1\equiv l_4(\nu)}}}}\beta_{\nu n_1}\overline{\beta_{\nu n_2}} \\
		&\times \sum_{\substack{0<|h|\leq H' \\ h\equiv l_5(\nu)}}e\left(ah\frac{n_2-n_1}{q_0 r}\frac{\overline{\nu q_1n_2}}{q_2n_1}\right)\Bigg|.
		\end{align*}
		For the remainder of the proof we consider the fixed choice of $H', R', Q_1, Q_2, l_1, l_2, l_3, l_4$ that corresponds to the largest output.
		
		Essential for Maynard's improvements over~\cite[Theorem 1]{bfi} is the factorisation $\gamma=\lambda\star \rho$. We apply it and furthermore absorb the congruence condition $q_i\equiv l_i(\nu)$ and the factor $q_0$ in $\gamma_{q_0q_i}$ into new coefficients. More precisely, for any fixed $q_0',q_0''$, $l_i',l_i''$ with
		\begin{align*}
		q_0&=q_0'q_0'',\\
		l_1&\equiv l_1' l_1''(\nu),\\
		l_2&\equiv l_2' l_2''(\nu),
		\end{align*}
		write
		\begin{align}\label{eq_lambda'}\begin{split}
		\lambda'_{s_1}&= X^{-\epsilon'/100}\lambda_{q_0' s_1}1_{s_1\equiv l_1'(\nu)}\\
		\lambda''_{s_2}&= X^{-\epsilon'/100}\lambda_{q_0' s_2}1_{s_2\equiv l_2'(\nu)}\\
		\rho'_{t_1}&=X^{-\epsilon'/100}\rho_{q_0'' t_1} 1_{t_1\equiv l_1'(\nu)}\\
		\rho''_{t_2}&=X^{-\epsilon'/100}\rho_{q_0'' t_2} 1_{t_2\equiv l_2''(\nu)}\\
		\beta'_{n_1}&=X^{-\epsilon'/100}\beta_{\nu n_1} 1_{n_1\equiv l_4(\nu)}\\
		\beta''_{n_2}&=X^{-\epsilon'/100}\beta_{\nu n_2} ,
		\end{split}
		\end{align}
		where the $X^{-\epsilon'/100}$ term ensures that the new coefficients are $1$-bounded, if $X$ is sufficiently large.
		
		Set
		\begin{align*}
		N'&=N/\nu
		\end{align*}
		and observe that
		\begin{align*}
		\frac{N'^2}{R X^{2\epsilon'}}\ll \frac{N^2}{R \nu^2  \tau(q_0) X^{11\epsilon'/10}}.
		\end{align*}
		Thus, for some choice of $q_0',q_0'',l_i,l_i''$ in~\eqref{eq_lambda'} it suffices to show
		\begin{align}\label{eq_lemMII8.1_1}
		W_1\ll  \frac{N'^2}{R X^{2\epsilon'}},
		\end{align}
		where
		\begin{align*}
		W_1\coloneqq \sum_{\substack{\substack{R\leq r \leq R'\\ r\equiv l_3(\nu)}\\(r,a\nu)=1}}\frac{1}{r}\sum_{\substack{s_1t_1\leq Q_1, s_2t_2\leq Q_2 \\ \substack{(a\nu,s_1t_1s_2t_2)=1\\ \substack{(s_1t_1,s_2t_2)=1}\\ }}}\frac{\lambda'_{s_1}\rho'_{t_1}\overline{\lambda''_{s_2}\rho''_{t_2}}}{ s_1t_1 s_2t_2}\sum_{\substack{n_1,n_2 \\ \substack{(n_1,n_2)=1 \\ n_1\equiv n_2 (q_0r)\\\substack{(n_i,q_0q_ir)=1\\ }}}}\beta'_{n_1}\overline{\beta''_{n_2}} \sum_{\substack{0<|h|\leq H' \\ h\equiv l_5(\nu)}}e\left(ah\frac{n_2-n_1}{q_0 r}\frac{\overline{\nu q_1n_2}}{q_2n_1}\right).
		\end{align*}
		Here the sequences are supported on
		\begin{align*}
		n_i&\sim N'\\
		s_i&\sim S'\\
		t_i&\sim T'
		\end{align*}
		for $N'=N/\nu$ and for some
		\begin{align*}
		S/q_0\leq S' &\leq S\\
		T/q_0\leq T'&\leq T.
		\end{align*}
		
		We now follow the steps in~\cite[proof of Lemma 8.1]{MaynardII} and write 
		\begin{align*}
		f=(n_1-n_2)/(q_0r)
		\end{align*}
		for some $1\leq |f|\leq 2N'/q_0R$ (note that $n_1\neq n_2$ by $(n_1,n_2)=1$). Thus the exponential becomes
		\begin{align*}
		e\Bigl(\frac{ahf \overline{\nu q_1 s_1 n_2}}{q_2s_2n_1}\Bigr),
		\end{align*}
		which is the same as in~\cite[Lemma 8.1]{MaynardII} after the $f$ substitution, except of the additional $\overline{\nu}$. We get
		\begin{align*}
		W_1&=\sum_{1\leq |f| \leq 2N/R} \sum_{\substack{t_i \\ (t_1t_2,a)=1}} \sum_{\substack{s_2\\ \substack{(t_2 s_2, a t_1)=1}}} \primesum_{\substack{n_i \\ n_1\equiv n_2 (q_0 f)}} \frac{\beta'_{n_1}\rho'_{t_1}\overline{\beta''_{n_2}\lambda''_{s_2}\rho''_{t_2}}q_0f}{t_1 t_2 s_2 (n_1-n_2)}\\
		&\times \sum_{\substack{s_1 \\ \substack{(s_1,an_1 s_2t_2)=1\\ s_1t_1\leq Q_1}}}\frac{\lambda'_{s_1}}{s_1} \sum_{\substack{1\leq |h|\leq H'\\ h\equiv l_2(\nu)}}e\Bigl(\frac{ahf \overline{\nu t_1 s_1 n_2}}{t_2s_2n_1}\Bigr),
		\end{align*}
		where $\sum'$ encodes the following summation conditions
		\begin{align*}
		(n_1,n_2)&=1\\
		(n_i,s_it_i)&=1\\
		(n_1-n_2)/(fq_0)&\equiv l_1(\nu)\\
		((n_1-n_2)/(fq_0),a\nu)&=1\\
		Rq_0f\leq n_1-n_2&\leq R' q_0f.
		\end{align*}
		
		We can remove the condition $ s_1 t_1\leq Q_1$ by Fourier analysis and apply the Cauchy--Schwarz inequality, verbatim as in~\cite[proof of Lemma 8.1]{MaynardII}, to get
		\begin{align*}
		W_1\ll \frac{N ' (\log X)^3}{R T' S'^{3/2}} \sup_\theta \bigl| W_1'(\theta) \bigr|^{1/2},
		\end{align*}
		where
		\begin{align*}
		W_1'(\theta)&\coloneqq \sum_{1\leq |f| \leq 2N'/R} \sum_{\substack{n_1,n_2 \sim N' \\ n_1\equiv n_2 (q_0 f)}}\sum_{t_i\sim T_i} \sum_{\substack{s_2\sim S'\\ (n_2t_1,n_1t_2s_2)=1}}  \\
		&\times \Bigg|\sum_{\substack{s_1\sim S'\\ (s_1,an_1 t_2s_2)=1}}\frac{S'\lambda'_{s_1} e(s_1\theta)}{s_1}\sum_{\substack{1\leq |h|\leq H'\\ h\equiv l_2(\nu)}} e\Bigl(\frac{ahf \overline{\nu t_1 s_1 n_2}}{t_2s_2n_1}\Bigr)\Bigg|^2,
		\end{align*}
		which is almost the same as Maynard's $\mathscr{W}_3$ with the only differences being the congruence condition on $h$ and the term $\overline{\nu}$ in the exponential. 
		
		In Maynard's proof there is no summation with cancellation over the $h$ variable, so we can drop the congruence condition on $h$ (the two variables $b$ and $c$ that are summed with cancellation arise in our notation from  $n_2t_1$ and $n_1t_2s_2$). Finally, the additional term $\overline{\nu}$ can be incorporated in the variable Maynard calls $z$. As a consequence, we replace his $b_{z,y}$ by
		\begin{align*}
		b'_{z,y}&=\sum_{s_1,s_2\sim S'}\sum_{\substack{1\leq |h_1|,|h_2|\leq H' \\ \substack{s_1s_2\nu=z\\af(h_1s_2-h_1s_2)=y}}}\sum_{f\sim F}1\\
		&=1_{\nu|z} b_{z/\nu,y}
		\end{align*} 
		and this changes $Z$ to $Z'=\nu Z$. By Maynard's proof, this increase, the slightly larger choice of $H'$, and the fact that we replace the bound $NM\gg X$ by
		\begin{align*}
		N'M\gg X^{1-\epsilon'} 
		\end{align*}
		are accounted for if $\epsilon$ is sufficiently large in terms of $\epsilon'$. Thus, observing the upper bounds $S'\leq S$, $T'\leq T$, we obtain~\eqref{eq_lemMII8.1_1} and so the lemma in the required range.
		
		By Lemmas~\ref{lem_initred},~\ref{lem_congtodisp},~\ref{lem_redtokloo}, this proves case \ref{(T.2)} of Proposition~\ref{theorem_BFI_rough}, after choosing the $\epsilon'$ there small enough.	\end{proof}

	We continue with the case \ref{(T.3)}.
	
	\begin{lemma}[Variant of~\cite{bfi} Section 9]\label{lem_BFIIsec9} Assume Convention~\ref{def_BFIrough_assumptions} and let $W(q_0,\nu)$ as in~\eqref{eq_Wdef} with $q_0,\nu\leq X^{\epsilon'}$. Assume that
		\begin{align*}
		N^2Q^2R^{-1}&\leq X^{1-\epsilon}\\
		N^5Q^2&\leq X^{2-\epsilon}\\
		N^4Q^3&\leq X^{2-\epsilon}.
		\end{align*}
		Then 
		\begin{align*}
		W(q_0,\nu)\ll \frac{N^2}{RX^{\epsilon'}}.
		\end{align*}
		In particular, case \ref{(T.3)} of Proposition~\ref{theorem_BFI_rough} holds.
	\end{lemma}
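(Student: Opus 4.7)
The plan is to adapt Section~9 of Bombieri--Friedlander--Iwaniec~\cite{bfi}, which handles the analogous sum arising in their Theorem~2 (the $4/7$ level of distribution bound for primes in arithmetic progressions), to our setting with the additional congruence parameters $q_0$ and $\nu$. Since Lemma~\ref{lem_congtodisp} already extracts the power saving in $P$ via the $\mathfrak{u}_P$ cutoff, the task in this lemma is the purely arithmetic bound $W(q_0,\nu)\ll N^2/(RX^{\epsilon'})$; no $P$-dependence appears at this stage.

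First I would remove the smooth factor $\widehat{f_0}(hM/(q_0q_1q_2r))$ by partial summation in $(r,q_1,q_2,h)$, exactly as at the start of the proof of Lemma~\ref{lem_MII8.1}, reducing to sharp cutoffs on all ranges. Next I would perform the substitution $f=(n_1-n_2)/(q_0 r)$; since $n_1\equiv n_2\pmod{q_0 r}$ and $(n_1,n_2)=1$, the quantity $f$ is a nonzero integer with $1\leq|f|\ll N/(q_0 R)$, and the exponential reshapes to $e(ahf\,\overline{\nu q_1 n_2}/(q_2 n_1))$ in the spirit of the calculation performed in the proof of Lemma~\ref{lem_redtokloo}. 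After restricting every remaining variable to a fixed residue class modulo $\nu$, absorbing a factor $\nu^{O(1)}\leq X^{O(\epsilon')}$, the resulting inner sum matches formally the one analysed in~\cite[Section~9]{bfi}.

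The core of the argument is then to apply Poisson summation in one of the inner variables (in BFI this is $n_2$), after which the diagonal contribution is controlled by the divisor bound and the off-diagonal phase becomes a Kloosterman sum $S(\ast,\ast;c)$ to a modulus $c$ of size $\approx QN$. These sums, averaged over their parameters, are bounded by the Deshouillers--Iwaniec spectral estimate~\cite{di}, applied in the form of~\cite[Theorem~12]{bfi}. The three stated numerical constraints $N^2Q^2\leq X^{1-\epsilon}R$, $N^5Q^2\leq X^{2-\epsilon}$, and $N^4Q^3\leq X^{2-\epsilon}$ arise respectively from balancing the diagonal contribution, the completion length of the Poisson step, and the spectral bound, exactly as in the proof of~\cite[Theorem~2]{bfi}.

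The main obstacle will be carrying along the extra congruence conditions modulo $\nu$ on all of $r,q_1,q_2,n_1,h$ together with the factor $q_0$ in the moduli, both of which are absent from BFI. Since $q_0,\nu\leq X^{\epsilon'}$ these should be absorbable as $X^{O(\epsilon')}$ losses after choosing $\epsilon'$ sufficiently small relative to $\epsilon$, but one must verify that the Poisson step interacts cleanly with these conditions (typically forcing the dual variable into a coset and introducing a Gauss-sum factor of size $O(\nu^{1/2})$, which is harmless). A second delicate point, inherited directly from BFI, is that the absence of any factorisation of $\gamma$ in case~\ref{(T.3)} (in contrast to case~\ref{(T.2)} handled in Lemma~\ref{lem_MII8.1}) precludes any preliminary refactoring before Cauchy--Schwarz, which is precisely what forces the smaller admissible range of $N$ in~\ref{(T.3)}.
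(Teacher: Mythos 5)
Your outer skeleton agrees with the paper's proof: strip off $\widehat{f_0}$ by partial summation, switch the modulus $r$ with the cofactor of $n_2-n_1$ (the paper first removes $(r,a\nu)=1$ by M\"obius inversion and then writes $n_2-n_1=\delta q_0 rk$, summing over $k$ instead of $r$), restrict the remaining variables to fixed residue classes modulo $\nu$ so that the $q_0,\nu\leq X^{\epsilon'}$ data can be absorbed into new coefficient sequences at a cost of $X^{O(\epsilon')}$, absorb $\nu$ into the $n_2$-variable, and then run Section~9 of \cite{bfi} (with the corrections of \cite{bficor}). You are also right that no $P$-dependence enters at this stage and that the lack of any factorisation of $\gamma$ is what separates this case from Lemma~\ref{lem_MII8.1}.

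The gap is in what you describe as the core of the argument. Poisson summation in $n_2$ is not available: $n_2$ carries the arbitrary divisor-bounded coefficients $\beta_{\nu n_2}$, and the only smooth variable, $m$, has already been Poisson-summed in Lemma~\ref{lem_redtokloo} (that is where $h$ came from). What actually happens after the switch is that the congruence $n_1\equiv n_2\pmod{\delta q_0 k}$ and the coprimality conditions are opened by multiplicative characters to the modulus $k\delta q_0$, and the coupled range condition $q_0|k|R<|n_2-n_1|\leq q_0|k|R'$ is opened by Fourier analysis (the corrected (9.7) and (9.12) of \cite{bficor}); only then does one apply Cauchy--Schwarz and estimate the resulting incomplete exponential sums by Weil's bound for Kloosterman sums. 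In particular, the final arithmetic input here is Weil, not the Deshouillers--Iwaniec spectral estimates: in case \ref{(T.3)} all of $\gamma_q,\delta_r,\beta_n$ are arbitrary divisor-bounded sequences, so the modulus $q_2n_1$ of the Kloosterman-type phases is weighted by arbitrary coefficients and now also carries congruence conditions modulo $\nu$, which precludes a direct appeal to \cite{di}; averaged Kloosterman-sum bounds require an essentially unweighted (or smooth) modulus variable, which is exactly why the paper reserves the spectral input for case \ref{(T.5)}, where $\gamma_q=1_{q\in I}$ and Drappeau's \cite[Theorem 2.1]{drappeau} is invoked precisely to handle the congruence conditions there. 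Accordingly, your explanation of the three hypotheses as coming partly from ``the spectral bound'' is not substantiated; they are the conditions under which the character-decomposition/Cauchy--Schwarz/Weil argument of \cite[Section 9]{bfi} closes. With the core step corrected in this way, your reduction does coincide with the paper's proof.
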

	This result is closely related to Bombieri--Friedlander--Iwaniec's work~\cite[Section 9]{bfi} with the following harmless differences.
	\begin{itemize}
		\item There is an additional congruence condition on the $h$ and $r$ summation.
		\item There is an additional term $\overline{\nu}$ in the exponential.
		\item There is a $\nu$ factor in the $\beta$ coefficients.
		\item There is the restriction $(r,\nu)=1$.
	\end{itemize}
	
	\begin{proof}
		We follow the strategy of proof in~\cite[Section 9]{bfi} with the corrections given in~\cite{bficor}. 
		
		We remove the condition $(r,a\nu)=1$ with the help of Möbius inversion,
		\begin{align*}
		1_{(r,a\nu)=1}=\sum_{\delta|(r,a\nu)}\mu(\delta).
		\end{align*}
		Afterwards we change the variable $r$ into $k$ (compare~\cite[eq. (9.3), (9.4)]{bfi}) with
		\begin{align*}
		n_2-n_1&=\delta q_0 r k\\
		(\delta q_0 k ,n_1n_2)&=1\\
		n_2&\equiv n_1  (\delta q_0 k)\\
		n_2-n_1 &\equiv \delta l_3 q_0 k (\nu).
		\end{align*}
		
		To remove the coupling of $n_2$ and $k$ from the congruence condition, we restrict $k$ into a fixed class $k\equiv l(\nu)$. We can now, similarly to the start of the proof of Lemma~\ref{lem_MII8.1}, absorb several of our additional conditions into new coefficients. More precisely, let
		\begin{align*}
		\gamma'_{q_0q_1}&=\gamma_{q_0q_1}1_{q_1\equiv l_1(\nu)}1_{(q_1,\nu)=1}\\
		\gamma''_{q_0q_2}&=\gamma_{q_0q_2}1_{q_1\equiv l_2(\nu)}1_{(q_2,\nu)=1}\\
		\beta'_{n_1}&=\beta_{\nu n_1} 1_{n_1\equiv l_4(\nu)}\\
		\beta''_{n_2}&=\beta_{\nu n_2}1_{n_2\equiv l_3 q_0 l+l_4(\nu)} .
		\end{align*}
		Taking the maximum over $l_i(\nu)$, removing $\widehat{f_0}$ just as in the proof of Lemma~\ref{lem_MII8.1}, applying the triangle inequality, and using the lower bounds $r\geq R$, $q_i\geq Q/q_0$, it suffices to show
		\begin{align*}
		W_2\ll \frac{N^2 Q^2}{\nu q_0^2 X^{\epsilon'}},
		\end{align*}
		where
		\begin{align*}
		W_2=\sum_{\delta|a\nu}\sum_{1\leq |k| \leq K}\sum_{\substack{q_i \\ \substack{(a\nu,q_1q_2)=1\\ \substack{(q_1,q_2)=1}\\ }}}\bigl|\gamma'_{q_0q_1}\overline{\gamma''_{q_0q_2}}\bigr| \Bigg|\sum_{\substack{n_1,n_2 \\ \substack{(n_1,n_2)=1 \\ n_1\equiv n_2 (\delta q_0k)\\\substack{(n_i,q_iq_0k)=1\\ q_0|k|R<|n_2-n_1|\leq q_0|k|R'}}}}\beta'_{n_1}\overline{\beta''_{n_2}} \sum_{\substack{0<|h|\leq H' \\ h\equiv l_5(\nu)}}e\left(ahk\frac{\overline{\nu q_1n_2}}{q_2n_1}\right)\Bigg|,
		\end{align*}
		$K_0=N/q_0R$, $R\leq R'\leq 2R$, and $\gamma'_{q}, \gamma''_{q}$, $\beta'_{n}$ $\beta''_{n}$ are supported on $q\sim Q, n\sim N'=N/\nu$ respectively.

		At this point we can open the congruence condition $n_2\equiv n_1 (\delta q_0 k)$, $(n_1n_2,\delta q_0 k)$ with the help of characters and the range condition on $|n_2-n_1|$ with Fourier analysis, just as in~\cite[eq. (9.7)]{bficor} and~\cite[eq. (9.8)]{bfi} to get
		\begin{align*}
		W_2&\ll (\log 2N) \sum_{\delta|a\nu}\sum_{\substack{\substack{1\leq |k| \leq K_0\\ }\\}}\sum_{\psi(k\delta q_0)}\frac{1}{\varphi(k\delta q_0)}\sum_{\substack{q_i \\ (q_1,q_2)=1}}|\gamma'_{q_0q_1}\gamma''_{q_0q_2}| \sum_{\substack{n_1 \\ (n_1,q_1)=1}}|\beta'_{n_1}|\\
		&\times\sum_{1\leq |h|\leq H} \left|\sum_{\substack{n_2 \\ (n_2,n_1q_1)=1}}\beta(h,n_2)\chi \psi(n_2) e\left(ahk\frac{\overline{\nu q_1n_2}}{q_2n_1}\right)\right|
		\end{align*}
		for some $|\beta(h,n_2)|=|\beta''_{n_2}|$. This is essentially the same object as in~\cite[eq. (9.12)]{bficor}, as the fact that $q_1, q_2$ and $n_1,n_2$ are associated to different sequences is irrelevant. We can absorb the factor $\nu$ into the $n_2$ variable, only increasing this instance of $N$ to $\nu N$. Afterwards the proof in the remainder of~\cite[section 9]{bfi} goes through with the simplifications that as $\beta_n$ are supposed to be divisor-bounded, we do not actually need the assumption that they are supported on squarefree $n$ only for this step, and condition~\cite[(A.5)]{bfi} holds obviously (unless $\sum_{n}|\beta_n|^2\ll N^{1-\varepsilon/10}$ in which case Proposition~\ref{theorem_BFI_rough} follows trivially from Cauchy--Schwarz).
		
		By Lemma~\ref{lem_initred}, Lemma~\ref{lem_congtodisp}, and~\ref{lem_redtokloo}, this proves case \ref{(T.3)} of Proposition~\ref{theorem_BFI_rough}.
		
	\end{proof}

	We end this section with proving the case \ref{(T.5)}. While it is based on the work of Bombieri--Friedlander--Iwaniec~\cite[Section 13]{bfi}, the fact that Lemma~\ref{lem_redtokloo} introduces a congruence condition on the $q_i$ makes it necessary to use Drappeau's extension of the arguments on the spectral side given in~\cite[Theorem 2.1]{drappeau} that is built precisely to handle this situation.
	
	\begin{lemma}[\cite{bfi} Section 13 meets~\cite{drappeau} Theorem 2.1]\label{lem_T5}
		Let $W(q_0,\nu)$ as in~\eqref{eq_Wdef} with $\gamma_q=f_Q(q/Q)$ as in Lemma~\ref{lem_initred}(5). Let further $\nu, q_0\leq X^{\epsilon'}$. Assume that
		\begin{align}\label{eq_lem_T5_1}
		N^3 R &\leq X^{1-\epsilon}\\
		Q^2 R&\leq X. \label{eq_lem_T5_2}
		\end{align}
		Then we have
		\begin{align*}
		W(q_0,\nu)\ll \frac{N^2}{RX^{\epsilon'}}.
		\end{align*}
		In particular, case \ref{(T.5)} of Proposition~\ref{theorem_BFI_rough} holds.
	\end{lemma}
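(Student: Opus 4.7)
The plan is to adapt the argument of~\cite[Section 13]{bfi}, which proves their Theorem~6 in the case of a smooth cutoff $\gamma_q$, substituting the Deshouillers--Iwaniec input for Drappeau's~\cite[Theorem 2.1]{drappeau}, which permits congruence conditions on the modulus of the sums of Kloosterman sums. The latter is exactly what is needed to accommodate the extra restrictions modulo $\nu$ present in $W(q_0,\nu)$.

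First, we do the preliminary cleanup. As in the opening of the proof of Lemma~\ref{lem_MII8.1}, we remove the smooth factor $\widehat{f_0}(hM/(q_0q_1q_2r))$ by summation by parts in the four variables $r,q_1,q_2,h$, reducing to the same sum with hard cutoffs $r\leq R'$, $q_i\leq Q_i$, $|h|\leq H'$ and no analytic weight. Next, we absorb the congruence conditions $q_i\equiv l_i(\nu)$, $r\equiv l_3(\nu)$, $n_1\equiv l_4(\nu)$, $h\equiv l_5(\nu)$ into modified sequences. Crucially, since $\gamma_q=f_Q(q/Q)$ is smooth and $\nu\leq X^{\epsilon'}$, restriction of $q_i$ to a residue class modulo $\nu$ still yields a smooth function $\widetilde f_{Q_i}(q_i/Q_i)$ with $\|\widetilde f_{Q_i}^{(j)}\|_\infty\ll_j X^{\epsilon' j}$, so the key feature exploited in~\cite[Section 13]{bfi} --- namely, that the variable $q_1$ is weighted by a smooth factor supported on a short dyadic interval --- is preserved.

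Next, the Poisson--reciprocity step. The exponential in $W(q_0,\nu)$ depends on $q_1$ only through $\overline{q_1}\pmod{q_2n_1}$. Using reciprocity
\[
\frac{\overline{q_1}}{q_2 n_1}\equiv -\frac{\overline{q_2 n_1}}{q_1}+\frac{1}{q_1 q_2 n_1}\pmod 1,
\]
we rewrite the $q_1$-dependence as a linear phase $e(-ahk\,\overline{q_2n_1}/q_1)$ plus a negligible remainder (the latter being controlled by~\eqref{eq_lem_T5_1}--\eqref{eq_lem_T5_2} exactly as in BFI). Poisson summation on $q_1$ (now applicable because of the smooth cutoff) then introduces a dual variable $c$, producing sums of incomplete Kloosterman sums $S(c\,\overline{q_2n_1},ahk;q_1^\ast)$ over an appropriate new modulus. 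The diagonal term $c=0$ contributes the ``false main term'', which cancels with a portion of the contribution after averaging in $\nu$-classes, leaving only $c\ne 0$ to estimate.

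Finally, we feed the resulting average of Kloosterman sums --- which now carries the congruence restrictions on the moduli $q_2,r,h$ modulo $\nu$ inherited from the sequences --- into Drappeau's~\cite[Theorem 2.1]{drappeau}. Under the hypotheses~\eqref{eq_lem_T5_1}--\eqref{eq_lem_T5_2}, the resulting spectral estimate produces the bound $W(q_0,\nu)\ll N^2/(RX^{\epsilon'})$ (uniform in $l_1,\dots,l_5$ and $q_0,\nu\leq X^{\epsilon'}$). Combining this with Lemma~\ref{lem_initred}, Lemma~\ref{lem_congtodisp} and Lemma~\ref{lem_redtokloo} then yields case~\ref{(T.5)} of Proposition~\ref{theorem_BFI_rough}, provided $\epsilon'$ is chosen sufficiently small in terms of $\epsilon$. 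The principal technical obstacle is the second step: bookkeeping for the extra factor $\overline{\nu}$ and for the residue conditions $h\equiv l_5(\nu)$, $r\equiv l_3(\nu)$ through reciprocity and Poisson inversion, so that after all manipulations the sum is cast into the form to which Drappeau's theorem directly applies (in particular, so that the resulting Kloosterman moduli are supported on progressions of bounded conductor, with all coprimality conditions matching his hypotheses).
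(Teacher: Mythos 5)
Your overall plan (extend BFI Section 13 by replacing Deshouillers--Iwaniec with Drappeau's Theorem~2.1 so as to tolerate the congruence conditions modulo $\nu$) points in the right direction, but two of your concrete steps would not survive, and they are precisely where the proof lives. First, stripping the analytic weight $\widehat{f_0}(hM/(q_0q_1q_2r))$ by partial summation and working with hard cutoffs on $q_1,q_2$ is counterproductive here: in the paper's argument the smooth dependence on $q_1q_2$ is \emph{kept}, because after writing $\widehat{f_Q}\bigl(\tfrac{hM}{q_0q_1q_2r}\bigr)=\tfrac{q_0q_1q_2}{M}\int f_Q\bigl(\tfrac{\xi q_0q_1q_2}{M}\bigr)e(\xi h/r)\,d\xi$ (which separates the $h/r$ phase from the $q_1,q_2$ dependence) the factor $f_Q(\xi\delta_1\delta_2 q_0q_1q_2/M)$ becomes exactly the smooth test function on the unweighted modulus variables $(\bm{c},\bm{d})\leftarrow(q_2,q_1)$ that Drappeau's Theorem~2.1 requires; this is the whole reason case \ref{(T.5)} carries the smoothness/interval hypothesis on $\gamma_q$ in Lemma~\ref{lem_initred}(5). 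Second, your central device --- reciprocity in $q_1$, then Poisson summation in $q_1$ producing incomplete Kloosterman sums, with a diagonal $c=0$ ``false main term'' that ``cancels after averaging in $\nu$-classes'' --- does not work as described. After reciprocity, $q_1$ appears as the \emph{modulus} of $\overline{q_2n_1}$, so Poisson in $q_1$ does not linearise the phase; one does not manufacture Kloosterman sums by hand at this stage at all. Moreover $W(q_0,\nu)$ in~\eqref{eq_Wdef} is a pure error term: the main terms were already extracted in Lemmas~\ref{lem_congtodisp} and~\ref{lem_redtokloo} (only $h\neq 0$ survives), and the residue classes $l_1,\dots,l_5$ are \emph{fixed} by a maximum, so there is neither a main term to cancel nor any averaging over $\nu$-classes available.

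What the paper actually does after the variable separation and a M\"obius detection of $(a,q_1q_2)=1$ is to cast the resulting sum $W_3$ \emph{directly} into the bilinear shape of Drappeau's Theorem~2.1, via the dictionary $\bm{c}\leftarrow q_2$, $\bm{d}\leftarrow q_1$, $\bm{n}\leftarrow ah(n_2-n_1)/(q_0r)$, $\bm{r}\leftarrow \nu\delta_1 n_2$, $\bm{s}\leftarrow \delta_2 n_1$, with coefficients $b_{\bm{n},\bm{r},\bm{s}}$ absorbing the $r$-, $h$-, $n_1$-, $n_2$-sums together with all congruence conditions modulo $\nu$ (Drappeau's theorem handles the residue restriction on the moduli, which is why it replaces DI here); the spectral input is entirely inside that theorem. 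The remaining substance of the lemma, which your sketch omits, is the quantitative endgame: the bound $\|b_{\bm{n},\bm{r},\bm{s}}\|_2^2\ll X^{\epsilon'}N^2HR^{-2}$ as in BFI (13.3), and the verification that, after inserting the dictionary and using $\nu,q_0\leq X^{\epsilon'}$ and $Q\leq X^{1/2}$ (from~\eqref{eq_lem_T5_2}), Drappeau's estimate yields $W_3\ll N^2Q^2R^{-1}X^{-2\epsilon'}$ precisely under the hypotheses~\eqref{eq_lem_T5_1} and~\eqref{eq_lem_T5_2}. Asserting that ``the resulting spectral estimate produces the bound'' skips exactly the computation the lemma is about; in its present form the proposal would not establish case \ref{(T.5)}.
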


	This result is closely related to~\cite[Section 13]{bfi} with the following differences.
	\begin{itemize}
		\item There is an additional congruence condition on the $h$ and $r$ summation.
		\item There is an additional term $\overline{\nu}$ in the exponential.
		\item There is a $\nu$ factor in the $\beta$ coefficients.
		\item There is the restriction $(r,\nu)=1$.
		\item There is a congruence condition on the $q_i$.
	\end{itemize}
	Apart from the congruence condition on the $q_i$, these modifications are again harmless.
	
	\begin{proof}
		
		Similarly as in~\cite[Section 13]{bfi} we write
		\begin{align*}
		\widehat{f_Q}\Bigl(\frac{hM}{q_0 q_1 q_2 r}\Bigr)=\frac{q_0 q_1 q_2}{M}\int_{-\infty}^\infty f_Q\Bigl(\frac{\xi q_0 q_1 q_2}{M}\Bigr)e(\xi h/r)d\xi
		\end{align*}
		and detect the condition $(a, q_1q_2)$ by M\"{o}bius inversion so that 
		\begin{align*}
		W(q_0,\nu)&= M^{-1}\Bigg|\sup_{\substack{l_1,l_2,l_3,l_4.l_5(\nu)\\ (l_1l_2,\nu)=1}}\sum_{\substack{\delta_1\delta_2|a\\ \substack{(\delta_1,\delta_2)=1\\ (\delta_1\delta_2,\nu)=1}}}\int_{-\infty}^\infty \sum_{\substack{\substack{r\sim R\\ r\equiv l_3}\\(r,a\nu)=1}}\frac{1}{r}\sum_{\substack{q_i\\ \substack{(q_1,q_2)=1\\ q_i\equiv \overline{\delta_1}l_i(\nu)}}}\gamma_{\delta_1q_0q_1}\overline{\gamma_{\delta_2 q_0q_2}}f_Q\Bigl(\frac{\xi\delta_1\delta_2 q_0 q_1 q_2}{M}\Bigr)\\
		&\times \sum_{\substack{n_1,n_2 \\ \substack{(n_1,n_2)=1 \\ n_1\equiv n_2 (q_0r)\\\substack{(n_i,q_0q_ir)=1\\ n_1\equiv l_4(\nu)}}}}\beta_{\nu n_1}\overline{\beta_{\nu n_2}}\sum_{\substack{0<|h|\leq H \\ h\equiv l_5(\nu)}}e(\xi h/r)e\left(ah\frac{n_2-n_1}{q_0 r}\frac{\overline{\nu q_1n_2}}{q_2n_1}\right)d\xi\Bigg|.
		\end{align*}
		
		Taking the supremum of $\delta_1,\delta_2, \xi$ and specialising to the maximal choice of admissible $l_i$, it suffices to show
		\begin{align}\label{eq_lem_T5_3}
		W_3&\ll \frac{N^2Q^2}{R X^{2\epsilon'}},
		\end{align} 
		where
		\begin{align*}
		W_3&=\sum_{\substack{\substack{r\sim R\\ r\equiv l_3}\\(r,a\nu)=1}}\frac{1}{r}\sum_{\substack{q_i\\ \substack{(q_1,q_2)=1\\  q_i\equiv \overline{\delta_1}l_i(\nu)}}}\gamma_{\delta_1q_0q_1}\overline{\gamma_{\delta_2 q_0q_2}}f_Q(\xi\delta_1\delta_2 q_0 q_1 q_2)\sum_{\substack{n_1,n_2 \\ \substack{(n_1,n_2)=1 \\ n_1\equiv n_2 (q_0r)\\\substack{(n_i,q_0q_ir)=1\\ n_1\equiv l_4(\nu)}}}}\beta_{\nu n_1}\overline{\beta_{\nu n_2}} \\
		&\times \sum_{\substack{0<|h|\leq H \\ h\equiv l_5(\nu)}}e(\xi h/r)e\left(ah\frac{n_2-n_1}{q_0 r}\frac{\overline{\nu \delta_1 q_1n_2}}{\delta_2 q_2n_1}\right).
		\end{align*}
		
		This is a mixture of Drappeau's $\mathcal{R}_1''$ (defined after~\cite[eq. (5.25)]{drappeau}) and~\cite[eq. (13.2)]{bfi}. (Note an inaccuracy in~\cite[eq. (13.2)]{bfi}: Similarly as in Remark~\ref{re_bfi1}, the term $\overline{\delta_1}$ cannot be made to $1/\delta_1$. Here this is without consequence.). We write $W_3$ in the form of~\cite[eq. (2.3)]{drappeau} with
		\begin{align*}
		&\bm{c}\leftarrow q_2 &&\bm{d}\leftarrow q_1 &&\bm{n}\leftarrow ah (n_2-n_1)/(q_0r) && \bm{r}\leftarrow \nu \delta_1 n_2 &\bm{s}\leftarrow \delta_2 n_2 && \bm{q}\leftarrow \nu\\
		&\bm{C}\leftarrow Q/(q_0\delta_2) &&\bm{D}\leftarrow Q/(q_0\delta_1) &&\bm{N}\leftarrow |a|HN/(q_0R\nu ) && \bm{R}\leftarrow \delta_1 N &\bm{S}\leftarrow \delta_2 N/\nu.
		\end{align*} 
		The new coefficients are given by
		\begin{align*}
		b_{\bm{n},\bm{r},\bm{s}}=\sum_{\substack{r\sim R \\ r\equiv l_3(\nu)\\ (r,a\nu)=1 }}\frac{1}{r}\sum_{\substack{n_1,n_2\\(n_1,n_2)=1\\ n_1\equiv n_2 (q_0r) \\ n_1\equiv l_4(\nu)}}\beta_{\nu n_1}\overline{\beta_{\nu n_2}}\sum_{\substack{\bm{r}=\nu \delta_1 n_2\\ \bm{s}=\delta_2n}} \sum_{\substack{0<|h|\leq H\\ h\equiv l_5(\nu)\\ \bm{n}=ah(n_1-n_2)/(q_0r)}}e(\xi h/r).
		\end{align*}
		
		By~\cite[Theorem 2.1]{drappeau},  (with the correction in~\cite{bficor} that also applies here and means that there should be no $\bm{S}^{-1}$ factor)  we get
		\begin{align*}
		W_3\ll X^{O(\epsilon')}\bm{q}^{3/2}\Bigl(\bm{q}\bm{C}\bm{S}(\bm{R}\bm{S}+\bm{N})(\bm{C}+\bm{R}\bm{D})+\bm{C}^2\bm{D}\bm{S}\sqrt{\bm{R}\bm{S}+\bm{N}}\sqrt{\bm{R}}+\bm{D}^2\bm{N}\bm{R} \Bigr)^{1/2}\|b_{\bm{n},\bm{r},\bm{s}}\|_2.
		\end{align*}
		Similarly as in~\cite[eq. (13.3)]{bfi} we have
		\begin{align*}
		\|b_{\bm{n},\bm{r},\bm{s}}\|_2^2\ll X^{\epsilon'}N^2HR^{-2}.
		\end{align*}
		Plugging in our choice of variables and using that we have $\nu\leq X^{\epsilon'}$ and that by the assumption $Q^2R\leq X$ we have $Q\leq \sqrt{X}$,  we get
		\begin{align*}
		W_3&\ll X^{O(\epsilon')}\Bigl(Q^2 N^4+Q^3 N^{5/2} \Bigr)^{1/2} NQ (RM)^{-1/2}\\
		&\ll X^{O(\epsilon')} \frac{N^2 Q^2}{R} \frac{R^{1/2}}{NQM^{1/2}} \Bigl(Q N^2+Q^{3/2} N^{5/4} \Bigr) \\
		&= \frac{N^2 Q^2}{R} X^{1/2+O(\epsilon')} \Bigl( N^{3/2}R^{1/2}+Q^{1/2} R^{1/2} N^{3/4} \Bigr).
		\end{align*}
		By~\eqref{eq_lem_T5_1} and~\eqref{eq_lem_T5_2} this is sufficient for~\eqref{eq_lem_T5_3}.
		
		By Lemmas~\ref{lem_initred},~\ref{lem_congtodisp} and~\ref{lem_redtokloo}, this proves case \ref{(T.5)} of Proposition~\ref{theorem_BFI_rough}.
	\end{proof}

		\section{An application -- Proof of Theorem~\ref{MT3} }
		
		We now prove Theorem~\ref{MT3} that extends~\cite[Theorem 1.1]{MaynardII}. This extension is possible as our application of the  dispersion method does not require the coefficients after the combinatorial dissection to be rough, and we work with $\mathfrak{u}_P$ that takes into account the contribution of all low conductor characters.
		
		We shall prove the following proposition, which directly implies Theorem~\ref{MT3}.
		\begin{prop}\label{MT3b}
Let $k\geq 1$, $a\in \mathbb{Z}\setminus \{0\}$ and $\varepsilon>0$ be fixed, with $\varepsilon$ sufficiently small. Let $N\geq 2$ and $P\leq N^{\varepsilon}$. Let $|\lambda_d|\leq \tau_k(d)$ be any triply well-factorable sequence. 
Then it holds that
\begin{align}\label{eq_theorem3}
	\sum_{\substack{d\leq N^{3/5-\varepsilon} \\(d,a)=1}}\lambda_d\left(\sum_{\substack{n\leq N}}\frac{\mu(n)}{\varphi(d)}\sum_{\substack{\psi(d)\\ \textup{cond}(\psi)>P}}\psi(\overline{a}n)\right) \ll  NP^{-1/200}.
	\end{align}
	\end{prop}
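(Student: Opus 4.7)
The plan is to mirror Maynard's proof of~\cite[Theorem 1.1]{MaynardII} (which establishes the same level of distribution for $\Lambda$ in place of $\mu$), using the bilinear estimates of Proposition~\ref{theorem_BFI_rough} as the arithmetic input. Because that Proposition already bounds the quantity $\mathfrak{u}_P$ with a power saving in $P$, this saving transfers automatically to our setting once the level $3/5$ has been reached; no separate ``major arc'' treatment is necessary.

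The first step is a combinatorial dissection of $\mu$. Via a Heath-Brown-type identity for $\mu$, for $n\leq N$ and $J=5$ one writes $\mu(n)$ as a bounded linear combination of Dirichlet convolutions of $2j$ factors with $j\leq J$, at least $j$ of which are supported on $[1,N^{1/J}]$. Each resulting variable is then split into dyadic intervals of multiplicative length $1+\Delta$ for a suitable $\Delta=\Delta(P)$; the boundary contribution is estimated trivially using~\eqref{eq_trivbound} together with the Brun--Titchmarsh inequality, and is $\ll N P^{-1/200}$. After these reductions it suffices to bound
\[
\mathcal{D}_0 \;=\; \sum_{\substack{d\leq N^{3/5-\varepsilon}\\(d,a)=1}}\lambda_d\sum_{\substack{n_1\sim N_1,\ldots,n_k\sim N_k}}\xi_1(n_1)\cdots\xi_k(n_k)\,\mathfrak{u}_P(\overline{a}\,n_1\cdots n_k;d)
\]
for each dyadic configuration $(N_1,\ldots,N_k)$ with $\prod_i N_i\asymp N$, $k\leq 2J$, and each $\xi_i$ a $1$-bounded coefficient sequence.

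Next I would group the $n_i$'s into two blocks of sizes $M$ and $N'$ with $MN'\asymp N$, and simultaneously use the triple well-factorability of $\lambda$ to write $\lambda=\alpha'\star\beta'\star\gamma'$ with prescribed supports $[1,Q_1]$, $[1,Q_2]$, $[1,R]$ whose sizes multiply to at most $N^{3/5-\varepsilon}$. The claim is that for every configuration $(N_1,\ldots,N_k)$ one can choose the grouping and the factorisation $(Q_1,Q_2,R)$ so that $\mathcal{D}_0$ falls under one of cases~\ref{(T.1)}--\ref{(T.5)} of Proposition~\ref{theorem_BFI_rough}, thereby yielding the desired gain of $P^{-1/7}(\log N)^{O(1)}$. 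This case analysis is essentially the one carried out in~\cite[\S 8]{MaynardII}: if some $N_i\geq N^{2/5}$, we isolate that variable and invoke case~\ref{(T.4)} or case~\ref{(T.5)}; otherwise every $N_i$ is $\leq N^{2/5}$ and we concatenate variables to build a block $n\sim N'\in[N^{1/5},N^{2/5}]$, applying case~\ref{(T.2)} with the optimal two-piece split $(Q_1,Q_2)\asymp(N^{1/5},N^{2/5})$ --- here the level $3/5$ arises as the extremal configuration.

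The main obstacle, and the only genuine novelty beyond~\cite{MaynardII}, is that the factors $\xi_i$ appearing after the Möbius dissection are not supported on rough numbers. In Maynard's $\Lambda$ setup, a Vaughan-type identity leaves the ``type II'' coefficient supported on primes, a roughness feature exploited in some of the underlying dispersion inputs. In our framework this obstacle dissolves: cases~\ref{(T.2)}--\ref{(T.5)} of Proposition~\ref{theorem_BFI_rough} were established without any roughness hypothesis on $\alpha_m$ or $\beta_n$, so the adaptation from $\Lambda$ to $\mu$ proceeds without loss. The only remaining classical ingredient is Siegel--Walfisz for $\mu$, which feeds into case~\ref{(T.1)} via Lemma~\ref{le_BV_rough} (whose argument goes through verbatim with $\Lambda$ replaced by $\mu$, and which accounts for the ineffectivity of $C_A$ in the statement of the Proposition).
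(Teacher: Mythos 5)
Your overall architecture is the same as the paper's: Heath--Brown's identity for $\mu$, a finer-than-dyadic dissection with a trivial bound for boundary ranges, and then feeding each configuration into the power-saving estimates of Proposition~\ref{theorem_BFI_rough}; and your treatment of the type~II configurations is in substance the paper's first lemma (a variant of \cite[Proposition 5.1]{MaynardII}), namely that triple well-factorability plus case~\ref{(T.2)} covers any block $N'\in[X^{\epsilon},X^{2/5}]$ at level $X^{3/5-\varepsilon}$. However, your explicit split does not work: with $(Q_1,Q_2)\asymp(N^{1/5},N^{2/5})$ the first entry of the minimum in \ref{(T.2)}, $X^{1/2}Q_1^{-1/2}Q_2^{-1}$, is $\asymp 1$, so the requirement $N'\le X^{-\epsilon}\min\{\cdots\}$ fails outright. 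The admissible choice (the one the paper makes) is $R\asymp N'X^{-\epsilon}$, $Q_2\asymp X^{2/5}N'^{-1}$, $Q_1\asymp DX^{-2/5+\epsilon}$, where $D\le N^{3/5-\varepsilon}$ is the level of the modulus block; one checks that all three entries of the minimum then reduce to $Q_1Q_2R\le X^{3/5-c\varepsilon}$-type conditions.

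Where you genuinely diverge from the paper is the remaining (type~I) configurations, and this is where the gaps are. First, case~\ref{(T.5)} is not available to you: it requires the modulus coefficient $\gamma_q$ to be the unweighted indicator of an interval, and a general triply well-factorable $\lambda_d$ supplies no such smooth factor (in the paper, \ref{(T.5)} is only used inside Proposition~\ref{thm_bfi2}, where smooth modulus factors come from decomposing a von Mangoldt factor of the sieve weight). Second, your claim that every configuration falls under \ref{(T.1)}--\ref{(T.5)} fails in the corner case of a single smooth variable of size $>X^{1-\epsilon}$ with all remaining factors multiplying to at most $X^{\epsilon}$: there is then no block in $[X^{\epsilon},X^{2/5}]$ for \ref{(T.2)}, and \ref{(T.4)} is excluded by its hypothesis $M\le X^{1-\epsilon}$; this range needs a separate elementary type~I bound, e.g.\ \eqref{eq_Isimple} of Lemma~\ref{le_I,IIsimple}, which gives $\ll X^{\epsilon}Q^{3/2}P^{1/2}(\log X)^{O(1)}\ll X^{9/10+2\epsilon}$ and is more than sufficient. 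What does survive is your use of \ref{(T.4)} for a smooth variable $M\in[X^{2/5+\epsilon},X^{1-\epsilon}]$ (in particular for the configuration with two smooth variables above $X^{2/5}$, the other one being absorbed into $\beta_n$): at level $D=QR\le X^{3/5-\varepsilon}$ take $R\asymp D^{1/3}$, $Q\asymp D^{2/3}$, and all four terms in the max are $\le\max(D^{2/3},D^{2}X^{-1})\le X^{2/5-2\varepsilon/3}\le MX^{-\epsilon}$ once the $\epsilon$ of Proposition~\ref{theorem_BFI_rough} is small compared with $\varepsilon$. Note, though, that this is a different route from the paper, which does not invoke \ref{(T.4)} here at all but instead proves a dedicated two-smooth-variable estimate (a variant of \cite[Proposition 5.2]{MaynardII}, by Poisson summation in both smooth variables, with the $\mathfrak{u}_P$ main term recovered from the conductor-$\le P$ characters) and then imports Maynard's bookkeeping verbatim; your route would avoid proving that lemma at the price of the repairs above. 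A last small point: Lemma~\ref{le_BV_rough} does not rest on Siegel--Walfisz (avoiding it is precisely the point of $\mathfrak{u}_P$), and the bound in Proposition~\ref{MT3b} is effective; the ineffective constant $C_A$ enters only the $(\log N)^{-A}$ branch of Theorem~\ref{MT3}, which is not part of the statement you are proving.
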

	
	Applying this with $d\mapsto \lambda_d\rho(d,P)$ in place of $d\mapsto \lambda_d$ gives Theorem~\ref{MT3}.
		
	With the notation~\eqref{eq_uPdef}, the statement~\eqref{eq_theorem3} of Proposition~\ref{MT3b} that we want to show becomes 
		\begin{align}\label{eq_MT3reduced}
			\sum_{\substack{d\leq N^{3/5-\epsilon}\\(d,a)=1}}\lambda_d\left(\sum_{\substack{n\leq N}}\mu(n)\mathfrak{u}_P(n\overline{a};d)\right) \ll  NP^{-1/200}.
		\end{align}
	  We now prove two variants of Maynard's central results that quickly lead to this using the methods of the previous sections.
				 
		\begin{lemma}[Variant of Proposition 5.1 in~\cite{MaynardII}]
			Let $C\geq 1$ be fixed. Assume Convention~\ref{def_BFIrough_assumptions}  and let $\lambda_q$ be any triply well-factorable sequence of order $C$ and level $Q\leq X^{3/5-\epsilon}$, and assume 
			\begin{align*}
			 N \leq X^{2/5}.
			\end{align*}
		Then for any $P\leq X^{\epsilon'}$ we have
		\begin{align*}
			\sum_{q\leq Q}\lambda_q \sum_{\substack{n,m \\ }}\alpha_m \beta_n \mathfrak{u}_P(mn\overline{a}:q)\ll X (\log X)^{O_C(1)}P^{-1/13}.
		\end{align*}
			\end{lemma}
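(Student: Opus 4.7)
The plan is to leverage the triply well-factorable structure of $\lambda$ to reduce to one of the cases of Proposition~\ref{theorem_BFI_rough}. Writing $\lambda=\xi_{R_1}\star\xi_{R_2}\star\xi_{R_3}$ for any factorisation $R_1R_2R_3=Q$ with $|\xi_{R_i}|\leq \tau_{C'}$ supported on $d\leq R_i$, after a dyadic decomposition we may assume each factor is supported on a dyadic range $\sim R_i$. Grouping two of the three factors together as $\gamma\coloneqq \xi_{R_1}\star\xi_{R_2}$ supported on modulus $q\sim R_1R_2$, and treating $\xi_{R_3}$ as a weight on a separate modulus $r\sim R_3$, the sum under consideration takes the shape of $\mathcal{D}$ in~\eqref{eq_Ddef} in the setup of case \ref{(T.2)} of Proposition~\ref{theorem_BFI_rough}, with $Q_1=R_1$, $Q_2=R_2$ and $R=R_3$.

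The core of the argument is a numerical optimisation: given $N\in[X^\epsilon,X^{2/5}]$, one must choose $(R_1,R_2,R_3)$ with $R_1R_2R_3\leq X^{3/5-\epsilon}$ so that the four inequalities of case \ref{(T.2)} hold. At the critical endpoint $N\asymp X^{2/5}$ the boundary choice is $R_1\asymp X^{1/5}$, $R_2\asymp 1$, $R_3\asymp X^{2/5}$, and the slack $X^\epsilon$ available in the level of $\lambda$ is exactly what is needed to convert the equalities into strict inequalities in all three upper bounds on $N$ and in the lower bound $X^\epsilon R_3\leq N$. For smaller $N$ (say in the range where $QR_3\leq X^{1/2-\epsilon}$) one may alternatively appeal to the trivial case \ref{(T.1)}, while in the intermediate range a reshuffling of $R_1,R_2,R_3$ along the same lines again satisfies \ref{(T.2)}. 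This case analysis mirrors the one carried out in~\cite[Proposition 5.1]{MaynardII}.

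Collecting these bounds, each dyadic piece contributes $\ll X(\log X)^{O_C(1)}P^{-1/7}$ by Proposition~\ref{theorem_BFI_rough}; summing over the $O((\log X)^3)$ dyadic ranges of $R_1,R_2,R_3$ and absorbing this logarithmic loss into the $P$-exponent (weakening $1/7$ to $1/13$) yields the claim. The main obstacle is purely the numerical bookkeeping: the constraints of case \ref{(T.2)} are tight at $N=X^{2/5}$, $Q=X^{3/5-\epsilon}$, so one must verify carefully that the three-fold factorability supplies precisely the flexibility needed to land in the admissible region of $(R_1,R_2,R_3)$-space. Once this verification is done, the conclusion is immediate from Proposition~\ref{theorem_BFI_rough}, whose added strength over Maynard's original bilinear input lies in carrying the $\mathfrak{u}_P$ weight with a power saving in $P$.
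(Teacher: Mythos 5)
Your proposal follows essentially the same route as the paper: exploit triple well-factorability to split $\lambda$ into a piece $\delta_r$ of size $R$ and a product $\gamma'\star\gamma''$ of sizes $Q_1,Q_2$, and invoke case (T.2) of Proposition~\ref{theorem_BFI_rough}. The paper does this in one stroke with the uniform choice $R=NX^{-\epsilon}$, $Q_1=QX^{-2/5+\epsilon}$, $Q_2=X^{2/5}N^{-1}$ (as in Maynard), which is valid for the whole range $X^{\epsilon}\leq N\leq X^{2/5}$; since the (T.2) constraints only become easier when the dyadic sizes shrink, no case analysis over $N$ and no appeal to (T.1) is needed. One small correction to your bookkeeping: at the endpoint $N\asymp X^{2/5}$ your allocation $R_3\asymp X^{2/5}$ violates the lower-bound constraint $X^{\epsilon}R\leq N$ as stated, and the slack in the level of $\lambda$ does not repair this; the $X^{-\epsilon}$ must be placed in the $R$-factor itself (i.e.\ $R=NX^{-\epsilon}$), with the compensating $X^{+\epsilon}$ absorbed into $Q_1$, after which all four inequalities of (T.2) hold (up to the usual re-choice of the proposition's $\epsilon$). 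Also, the weakening from $P^{-1/7}$ to $P^{-1/13}$ is free since $P\geq 1$; there is no need to "absorb" the $O((\log X)^{3})$ dyadic loss into the $P$-exponent, as logarithmic factors are already permitted in the bound.
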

	
		 \begin{proof}
		 	This follows from the case \ref{(T.2)} of Proposition~\ref{theorem_BFI_rough} by using triply well-factorability with $Q=RQ_1Q_2$, where (as in~\cite[Proof of Proposition 5.1]{MaynardII})
		 	\begin{align*}
		 		R&=NX^{-\epsilon}\\
		 		Q_1&=QX^{-2/5+\epsilon}\\
		 		Q_2&=N^{-1}X^{2/5 }.
		 	\end{align*}
		 	\end{proof}

		\begin{lemma}[Variant of Proposition 5.2 in~\cite{MaynardII}]
Let $N_1,N_2\geq X^\epsilon$ and $N_1N_2M\asymp X$ and \begin{align*}
	Q\leq \left(\frac{X}{M}\right)^{2/3-\epsilon}. 
\end{align*}			
Let $\alpha_m$ be a sequence with $|\alpha_m|\leq m^{\epsilon'}$. Then for $P\leq X^{\epsilon'}$
\begin{align*}
	\sum_{q\sim Q}\Bigg|\sum_{\substack{n_1\sim N_1, n_2\sim N_2\\ m\sim M \\ n_1n_2m\leq X}}\alpha_m \mathfrak{u}_P(mn_1n_2\overline{a};q) \Bigg|\ll X^{1-\epsilon'}.
\end{align*}
		\end{lemma}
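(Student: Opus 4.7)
The plan is to imitate the proof of Proposition 5.2 of~\cite{MaynardII}, replacing Maynard's $(\log X)^{-A}$-type bilinear estimates by the power-saving estimates of Proposition~\ref{theorem_BFI_rough} and in particular Lemma~\ref{lem_MII8.1}.

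First I would carry out a standard dyadic smoothing: split each of $m,n_1,n_2$ into subintervals of multiplicative length $1+X^{-c\epsilon}$, which eliminates the cross condition $n_1n_2m\leq X$ at the cost of an admissible error $O(X^{1-c\epsilon})$. I would then dispense with the outer absolute value by introducing unimodular complex numbers $c_q$, $|c_q|=1$, reducing the task to bounding the trilinear form
\begin{align*}
\mathcal{T}\coloneqq \sum_{q\sim Q}c_q\sum_{\substack{m\sim M, n_1\sim N_1, n_2\sim N_2}}\alpha_m\,\mathfrak{u}_P(mn_1n_2\overline{a};q)
\end{align*}
by $O(X^{1-\epsilon'})$.

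The key is to view $\mathcal{T}$ as a bilinear form in which the $n$-variable factorises as $n=n_1n_2$, with $\alpha_m$ on the arithmetic side and $c_q$ on the modulus side (taking $R=1$). Although the modulus coefficient $c_q$ is not factorisable, the combined $n$-coefficient $\beta_n=\#\{(n_1,n_2):n=n_1n_2,\,n_i\sim N_i\}$ inherits the factorisation $\beta=1_{[N_1,2N_1]}\star 1_{[N_2,2N_2]}$. I would then follow the dispersion framework of Lemmas~\ref{lem_congtodisp} and~\ref{lem_redtokloo}: applying Cauchy--Schwarz in $m$, then Poisson summation, reduces the problem to a sum of Kloosterman sums of exactly the type handled by Lemma~\ref{lem_MII8.1}, where the factorisation of $\beta_n$ now plays the role of the ``$\gamma=\lambda\star\rho$'' factorisation required there, with the natural choice $S=N_1$, $T=N_2$ (or vice versa, depending on sizes). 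The hypothesis $Q\leq(X/M)^{2/3-\epsilon}$ is tailored precisely so that the three size conditions $N^2T^2S<X^{1-\epsilon}$, $N^2T^3S^4R<X^{2-\epsilon}$, $NT^2S^5R<X^{2-\epsilon}$ of Lemma~\ref{lem_MII8.1} are satisfied (with a margin $X^{c\epsilon}$ that yields the absolute saving).

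The main obstacle is that treating $n=n_1n_2$ naively as a single divisor-bounded coefficient reduces matters to the bilinear form of Proposition~\ref{theorem_BFI_rough}, which cannot reach $Q\gg N^{1/2}$; to go up to $Q\leq N^{2/3-\epsilon}$ we must preserve the factorisation $n=n_1n_2$ all the way through the dispersion/Poisson step, so that it enters as the factorisable $\gamma$-coefficient in Lemma~\ref{lem_MII8.1} after Cauchy--Schwarz rearranges the roles of the arithmetic and modulus variables. A mild case analysis on the relative sizes of $N_1,N_2,M,Q$ determines which of $n_1,n_2$ plays the $\lambda$ versus $\rho$ role, but in every case the conditions of Lemma~\ref{lem_MII8.1} reduce to our hypothesis $Q\leq (X/M)^{2/3-\epsilon}$, so that we obtain the claimed bound $\mathcal{T}\ll X^{1-\epsilon'}$ uniformly.
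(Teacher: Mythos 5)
There is a genuine gap: the dispersion route you propose cannot yield this lemma, and it is not how the paper argues. First, the factorisation hypothesis of Lemma~\ref{lem_MII8.1} (equivalently case \ref{(T.2)} of Proposition~\ref{theorem_BFI_rough}) is a factorisation $\gamma=\lambda\star\rho$ of the \emph{modulus} coefficients; after you remove the absolute values, your $q$-coefficients are arbitrary unimodular numbers $c_q$ with no such structure, and the factorisation $\beta=1_{[N_1,2N_1]}\star 1_{[N_2,2N_2]}$ of the $n$-coefficient cannot stand in for it: in the dispersion argument the $\beta$'s survive Cauchy--Schwarz only as products $\beta_{n_1}\overline{\beta_{n_2}}$ and their multiplicative structure is never used, whereas the gain in Maynard's Lemma 8.1 comes precisely from splitting the modulus variable $q=st$ and deploying the two factors at different stages. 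Second, the size constraints of the dispersion set-up fail on most of the admissible range: condition~\eqref{eq_congtodisp_cond03} of Lemma~\ref{lem_congtodisp} demands $QN^{3/2}\leq X^{1-\epsilon}$ with $N$ the length of the coefficient variable, here $N=N_1N_2\asymp X/M$, while $Q$ may be as large as $(X/M)^{2/3-\epsilon}$; this forces $X/M\lesssim X^{6/13}$ and so excludes, e.g., $M=X^{\epsilon}$, $N_1\asymp N_2\asymp X^{1/2}$. Third, even where it applies, the dispersion machinery only saves powers of $P$ (the term $\|\alpha\|_2\sqrt{XN}P^{-1/6}$ in Lemma~\ref{lem_congtodisp}, the bound $X(\log X)^{O(1)}P^{-1/7}$ in Proposition~\ref{theorem_BFI_rough}), whereas the lemma asserts the absolute saving $X^{1-\epsilon'}$ uniformly for all $P\leq X^{\epsilon'}$, including bounded $P$; no bound of the shape $XP^{-c}$ can deliver that.

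What the paper actually does is more elementary and works modulus by modulus, with no averaging over $q$ and no Cauchy--Schwarz: after a fine dissection and the introduction of smooth weights as in Lemma~\ref{lem_initred}(4), it suffices to bound, for the worst single $q\sim Q$, the smoothed sum $\mathscr{K}$ by $X^{1-\epsilon/2}/Q$. Splitting $\mathfrak{u}_P$ into the congruence part $\mathscr{K}_2$ and the low-conductor character part $\mathscr{K}_1$, the part $\mathscr{K}_2$ is exactly the object of \cite[Lemma 7.1]{MaynardII}, treated by Poisson summation in \emph{both} smooth variables $n_1,n_2$ and Weil's bound for the resulting Kloosterman sums --- this completion-plus-Weil mechanism is where the exponent in $Q\leq (X/M)^{2/3-\epsilon}$ comes from --- while $\mathscr{K}_1$ is handled by a single Poisson summation in $n_1$ together with the Gau\ss{} sum bound of Lemma~\ref{lem_ccalc}, contributing $O(X^{1+2\epsilon'}P^{3/2}/(QN_1))$; both main terms match and cancel. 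The two-smooth-variable structure must be exploited through this route rather than being fed into the bilinear dispersion estimates.
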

	
	\begin{proof}
		By a sufficiently fine dissection of the range of the variables  $n_1,n_2,m$ and the introduction of smooth weights $f_{N_1},f_{N_2}$ as in  Lemma~\ref{lem_initred}(4), it suffices to show
		\begin{align*}
			\mathscr{K}\ll \frac{X^{1-\epsilon/2}}{Q},
		\end{align*}
	where
	\begin{align*}
		\mathscr{K}=\sup_{\substack{(a,q)=1\\ q\sim Q}}\left|\sum_{m\sim M}\alpha_m \sum_{n_1,n_2}f_{N_1}(n/N_1)f_{N_2}(n/N_2) \mathfrak{u}_P(mn_1n_2\overline{a};q)\right|.
	\end{align*}
Compare~\cite[Lemma 7.1]{MaynardII}. Note that the derivatives of the smooth weights there are slightly smaller. This only changes the error term in the application of Poisson summation in an inconsequential manner. Let the supremum occur at $a$ and $q$ and write $\mathscr{K}=|\mathscr{K}_2-\mathscr{K}_1|$ with
\begin{align*}
	\mathscr{K}_1&\coloneqq \frac{1}{\varphi(q)}\sum_{\substack{\psi(q)\\\text{cond}(\psi)\leq P}}\sum_{m\sim M}\alpha_m \sum_{n_1,n_2}f_{N_1}(n/N_1)f_{N_2}(n/N_2) \psi(mn_1n_2\overline{a})\\
	\mathscr{K}_2&\coloneqq \sum_{m\sim M}\alpha_m \sum_{n_1,n_2}f_{N_1}(n/N_1)f_{N_2}(n/N_2)1_{mn_1n_2\equiv a(q)}.
\end{align*}
Here $\mathscr{K}_2$ is the same Maynard considers and he shows, assuming $N_1\leq N_2$, \begin{align*}
\mathscr{K}_2=\mathscr{K}_{\text{MT}}+O\Bigl(\frac{X^{1+o(1)}}{QN_1}+X^{o(1)}MQ^{1/2}\Bigr),
\end{align*}
where
\begin{align*}
	\mathscr{K}_{\text{MT}}\coloneqq N_1N_2 \widehat{f_{N_1}}(0)\widehat{f_{N_2}}(0)\frac{\varphi(q)^2}{q^2}\sum_{\substack{m\sim M \\ (m,q)=1}}\alpha_m.
\end{align*}

This main term can be extracted out of $\mathscr{K}_1$ similarly as in the proof of Lemma~\ref{lem_T4} (case \ref{(T.4)}). By Poisson summation (Lemma~\ref{lem_Poiss}) we have any character $\psi$ with modulus $q$ that
\begin{align*}
	\sum_{n_1}f_{N_1}(n/N_1)\psi(n_1)=\frac{N_1}{q}\sum_{|h|\leq \frac{qX^{\epsilon'}}{N_1}}\widehat{f_{N_1}}\bigl(\frac{hn_1}{q}\bigr)\sum_{\substack{c(q)}}\psi(c)e\bigl(\frac{-ch}{q}\bigr)+O_{\epsilon'}(X^{-100}).
\end{align*}
For $h=0$, and $\psi$ being the principal character the sum over $c$ equals $\varphi(q)$. In all other cases, we can estimate it by $O(\sqrt{P}(h,q))$, by Lemma \ref{lem_ccalc}. Thus, 
\begin{align*}
	\mathscr{K}_1=\mathscr{K}_{\text{MT}}+O\left(\frac{X^{1+2\epsilon'} P^{3/2}}{QN_1}\right).
\end{align*}
This gives the lemma by the condition $N_i\geq X^{\epsilon}$, $Q\leq (X/M)^{2/3-\epsilon}$.
	\end{proof}
	
\begin{proof}[Proof of Proposition~\ref{MT3b}]
	As we have the same arithmetic information available, the proof can be done with Heath-Brown's identity as in~\cite[Proof of Theorem 1.1]{MaynardII}. The only difference is the necessity for a finer-than-dyadic dissection and a trivial bound for ranges not covered completely, for which we gave the details in the proof of Proposition~\ref{thm_bfi2}.
\end{proof}

\part{The Montgomery--Vaughan result with sieve weights}
\label{Part:IV}
\section{Proof of Key Proposition~\ref{prop2}}
Our goal in this section is to prove Key Proposition~\ref{prop2} assuming a few lemmas (Lemmas~\ref{lem_Fmult},~\ref{le_Fsimple},~\ref{lem_SS1},~\ref{lem_Feval} and~\ref{lem_SS2}) relating to singular series and ``modified Gau\ss{} sums'' that will be proved in the next section. 

\subsection{Initial reduction}

We start by noting that it suffices to consider only
\begin{align}\label{eq_nmini}
m\geq NP^{-c_1}.
\end{align}
We furthermore can replace $\Lambda$ in the definition of $\nu_i$ by 
\begin{align}\label{eq_L_0def}
1_{n\in \mathbb{P}}\log n=:\Lambda_0(n),
\end{align}
as the contribution of prime powers to $\nu_1*\nu_2(m)$ can be trivially bounded by $\ll_\varepsilon N^{1/2+\varepsilon}$.

To prove Key Proposition~\ref{prop2}, we apply the circle method to the convolution 
$$\nu_1*\nu_2(m)=\int_{0}^1\widehat{\nu_1}\widehat{\nu_2}(\alpha)e(-\alpha m) \,d\alpha.$$ 
We use the same major and minor arc splitting as in Section~\ref{sec: prop1a}.
The minor arcs can be handled just as in Section~\ref{sub: minor} (taking $\lambda_2(d)=1_{d=1}$ there), so we conclude that for all $m\leq N$ apart from $O(NP^{-c_1})$ exceptions we have
\begin{align*}
\nu_1*\nu_2(m)=\int_{\mathfrak{M}} \widehat{\nu_1}\widehat{\nu_2}(\alpha)e(-\alpha m) \,d\alpha+O(NP^{-c_1}).
\end{align*}

In contrast to the proof of Propositions~\ref{prop1a} and~\ref{prop1b}, we need to asymptotically estimate the summands over the moduli $q$ in the expression of the major arc integral as
\begin{align*}
\int_{\mathfrak{M}} \widehat{\nu_1}\widehat{\nu_2}(\alpha)e(-\alpha m)=\sum_{q\leq P^{c_0}}\sum_{a(q)^{*}}e_q(-am) \int_{|\eta|\leq Q^{-1}}\widehat{\nu_1}\widehat{\nu_2}(a/q+\eta) e(-\eta m) \,d\eta.
\end{align*}
By the conditions of Key Proposition~\ref{prop2}, we may write
\begin{align*}
\nu_i(n)=\Lambda_0(n)\sum_{d\mid n+2}\lambda_i(d),
\end{align*}
where $\lambda_i(d)$ is the product of the two sieve weights in the definition of the admissible pre-sieve $\nu_i$, i.e. one weight $\lambda_i^{\dagger}$ for the primes $3\leq p\leq P$, $p\nmid \widetilde{r}$ and the sieve of Eratosthenes weight $\mu$ for the primes $p\geq 3, p\mid\widetilde{r}$. We recall that $\widetilde{r}$ denotes the exceptional modulus, if it exists, and $\widetilde{r}=1$ otherwise. 

Recall that by Definition~\ref{def_PtildPdag} we may write the just mentioned sifting ranges as
\begin{align*}
\widetilde{\mathcal{P}}=\prod_{\substack{p\geq 3  \\p\mid \widetilde{r}}}p,\quad \mathcal{P}^{\dagger}\coloneqq \prod_{\substack{3 \leq  p\leq P\\p\nmid \widetilde{r}}}p.    
\end{align*}
We now analogously for $n\geq 1$ write
\begin{align}\label{e22}
 \widetilde{n}=\prod_{\substack{p\mid n\\p\in \widetilde{\mathcal{P}}}}p^{v_p(n)},\quad n^{\dagger}=\prod_{\substack{p\mid n\\p\in \mathcal{P}^{\dagger}}}p^{v_p(n)}.   
\end{align}
Note that this only gives a factorisation $n=\widetilde{n}n^{\dagger}$ if every prime divisor of $n$ divides $\widetilde{\mathcal{P}}\mathcal{P}^{\dagger}$ (which happens if and only if $n$ is $P$ smooth and odd). 

We need furthermore what we call \emph{modified Gau\ss{} sums}, namely given a character $\chi$ to the modulus $q$, we set
\begin{align}\label{e37}
c_\chi(a,j)\coloneqq \asum_{\substack{b\pmod q\\ (b+2,\textnormal{rad}(q))=(j,\textnormal{rad}(q))}}\chi(b)e_q(ab),\,\,\textnormal{ where }\,\, \textnormal{rad}(q)\coloneqq \prod_{p|q}p.
\end{align}
We will also encounter the sieve-theoretic sums
\begin{align} \label{eq_Sdef}
S_i(q,j,e)\coloneqq \frac{1}{\varphi(e)}\sum_{\substack{c,d \\ c\mid j,(d,q e)=1 }}\frac{\lambda_i(cde)}{\varphi(d)}
\end{align}
and their restriction to divisors of $\mathcal{P}^\dagger$ given by
\begin{align*}
S_i^\dagger(q,j,e)\coloneqq \frac{1}{\varphi(e^\dagger)}\sum_{\substack{c,d |\mathcal{P}^\dagger \\ c\mid j ,(d,q e)=1 }}\frac{\lambda_i^\dagger(cde^{\dagger})}{\varphi(d)}.
\end{align*}
Note that $S_i^\dagger$ is invariant under replacing any of its arguments by their $^\dagger$ components. Both modified Gau\ss{} sums and the sieve-related sums will be studied in more detail later, in Subsections~\ref{sec: gauss} and~\ref{sec: sieve}, respectively.

Finally, define a modified Euler function that accounts for the fact that the sifted primes only occupy $p-2$ residue classes modulo a given prime $p>2$, as
\begin{align*}
\varphi_2(n)=n\prod_{p\mid n}(1-2/p)
\end{align*}
and 
\begin{align*}
\varphi_2(2^s)=\varphi(2^s).
\end{align*}
With this notation, we are ready to state a technical decomposition of $\widehat{\nu}_i(\alpha)$ on the major arcs.

\begin{lemma}[A splitting of the major arc contribution] \label{le_majorarcs} Let $1\leq a\leq q\leq P^{c_0}$, $(a,q)=1$. We have
\begin{align*}
&\widehat{\nu_i}(a/q+\eta)\\
&= \frac{V(\widetilde{\mathcal{P}})}{\varphi_2(\widetilde{q})\varphi(q/\widetilde{q})}\sum_{\substack{j^{\dagger}|\textnormal{rad}(q^\dagger) \\ l|q}}\sum_{\substack{e|\mathcal{P}^{\dagger}\widetilde{\mathcal{P}}\\ (e,q)=1\\ el\leq P^{2c_0}}}\frac{\mu(\widetilde{e})S_i^\dagger(q,j,e)}{\varphi_2(\widetilde{e})} \sum_{\substack{\psi(e)^*\\ \chi(l)^*}}\overline{\psi}(-2)c_{\overline{\chi} \chi^{(q)}_{0}}(a,j^{\dagger}) \\
&\times \sum_{n\leq N} \Lambda_0(n)\chi\psi(n)e(\eta n)+O(NP^{-c_0/10}),
\end{align*}
where $\chi^{(q)}_{0}$ denotes the principal character $\pmod q$.
\end{lemma}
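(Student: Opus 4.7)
The plan is to carry out a multiplicative character expansion on both the additive phase $e_q(an)$ and the divisibility conditions imposed by the sieve $\omega_i(n+2)$, following the Montgomery--Vaughan template from~\cite{mv}, with the extra technical complication that the sieve weights may share prime factors with $q$, requiring a careful split by $\gcd$. As a preparatory reduction, one may restrict to primes $p=n$ in $\widehat{\nu_i}(a/q+\eta)$, since prime powers contribute $O(N^{1/2+\varepsilon})$.

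The first step is to factor the admissible pre-sieve as $\omega_i=\omega_i^\dagger\cdot 1_{(\cdot,\widetilde{\mathcal P})=1}$, where $\omega_i^\dagger(n+2)=\sum_{d^\dagger\mid n+2,\,d^\dagger\mid\mathcal P^\dagger}\lambda_i^\dagger(d^\dagger)$ is the beta-sieve component and the coprimality indicator is detected by M\"obius inversion as $\sum_{\widetilde e\mid(n+2,\widetilde{\mathcal P})}\mu(\widetilde e)$. Each divisor $d=d^\dagger\widetilde e$ that arises is split by its common part with $q$: write $d=j\,e$ with $j=(d,\mathrm{rad}(q))$ and $(e,q)=1$. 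Because $\lambda_i$ is supported on odd squarefree numbers, $j$ factorises as $j^\dagger\widetilde j$ with $j^\dagger\mid\mathrm{rad}(q^\dagger)$ and $\widetilde j\mid(\widetilde q,\widetilde{\mathcal P})$, and similarly $e=e^\dagger\widetilde e'$ is a product of its $\dagger$- and $\sim$-parts, both coprime to $q$. The joint divisibility $d\mid p+2$ splits as the pair of conditions $j\mid p+2$ and $e\mid p+2$. I would detect the second condition, for primes $p\nmid e$, via $1_{p\equiv-2\,(e)}=\varphi(e)^{-1}\sum_{\psi\,(e)}\overline\psi(-2)\psi(p)$, while the standard expansion $e_q(ap)1_{(p,q)=1}=\varphi(q)^{-1}\sum_{\chi\,(q)}\chi(p)c_{\overline\chi}(a)$ combined with the residual condition $j\mid p+2$ should produce precisely the modified Gau\ss{} sum $c_{\overline\chi}(a,j^\dagger)$ from~\eqref{e37}, after noticing that the $\widetilde j$-part of the divisibility, together with the $\mu$-weighted summation over the $\widetilde e$-factors coprime to $q$, collapses via an inclusion--exclusion of Mertens type into the uniform density $V(\widetilde{\mathcal P})/(\varphi_2(\widetilde q)\,\varphi(q/\widetilde q))$ displayed at the front of the formula.

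After this, the step is to reduce $\chi$ and $\psi$ to primitive characters of conductors $l\mid q$ and $e\mid\mathcal P^\dagger\widetilde{\mathcal P}$ with $(e,q)=1$, respectively, and to replace $\overline\chi$ by $\overline\chi\chi_0^{(q)}$ to align with the notation in~\eqref{e37}. The remaining sieve weights $\lambda_i^\dagger$, averaged over the fibre of divisors $d$ with $(d,qe)=1$ that contribute trivially to the primitive character, will assemble into $S_i^\dagger(q,j,e)$, the factor $1/\varphi(d)$ coming from the diagonal term in the orthogonality of $\psi$. The main obstacle, and the technical crux of the lemma, will be justifying the truncation $el\leq P^{2c_0}$: since $l\mid q\leq P^{c_0}$, this amounts to discarding the contribution of primitive $\psi$ of conductor $e>P^{c_0}$ coprime to $q$. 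I expect this tail to be bounded by Cauchy--Schwarz combined with a large-sieve estimate in the spirit of Lemma~\ref{lem_BDHrough} applied to $\sum_{p\leq N}\Lambda_0(p)\chi\psi(p)e(\eta p)$, using the divisor-boundedness of $\lambda_i^\dagger$, the modest level $D_0\leq N^{1/1000}$ of the pre-sieve and the trivial bound $\sum_{d}|\lambda_i^\dagger(d)|^2/\varphi(d)\ll (\log P)^{O(1)}$; after summation over the surviving parameters the loss should fit into $O(NP^{-c_0/10})$. Minor auxiliary points include discarding the $O(N^{\varepsilon})$ primes $p\mid qe$ (which drop out by a divisor bound) and handling the $2$-adic factor of $q$, on which $\omega_i$ imposes no constraint, via the convention $\varphi_2(2^s)=\varphi(2^s)$ built into the density factor.
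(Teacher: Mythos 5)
Your proposal follows essentially the same route as the paper's proof: expand both the additive phase and the sieve divisibility into multiplicative characters, split each sieve divisor by its gcd with $q$ (producing the modified Gau\ss{} sums $c_{\overline{\chi}\chi_0^{(q)}}(a,j^\dagger)$ and the sums $S_i^\dagger(q,j,e)$), sort by primitive characters, truncate the conductors $el>P^{2c_0}$ via Cauchy--Schwarz and the large sieve of Lemma~\ref{lem_BDHrough}, and collapse the $\widetilde{\mathcal P}$-part (forcing $\widetilde{j}=1$) into the Mertens-type factor $V(\widetilde{\mathcal P})/(\varphi_2(\widetilde q)\varphi(q/\widetilde q))$. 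Apart from minor organisational differences (you split $d=je$ before introducing characters, whereas the paper introduces characters mod $d$ and then refolds the $q$-part into the $b$-sum) and glossing the removal of the phase $e(\eta n)$ before applying the large sieve, this matches the paper's argument.
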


\begin{proof}
Clearly it suffices to consider the case $i=1$. 
We begin by translating $\widehat{\nu_1}(a/q+\eta)$ to the language of Dirichlet characters. For $(n,q)=1$, we have the Fourier expansion
\begin{align*}
e_q(an)&=\frac{1}{\varphi(q)}\sum_{\substack{b(q)\\ \chi(q)}} e_q(ab)\overline{\chi}(b) \chi(n). 
\end{align*}
By orthogonality relations, for $n,d$ odd we have
\begin{align*}
1_{d|n+2}=\frac{1}{\varphi(d)}\sum_{\psi(d)}\overline{\psi}(-2)\psi(n).
\end{align*}
Recalling that we are only sieving for odd primes, the above together result in 
\begin{align*}
\widehat{\nu_1}(a/q+\eta)&=\frac{1}{\varphi(q)}\sum_{d}\frac{\lambda_1(d)}{\varphi(d)}  \sum_{\substack{\psi(d) \\ \substack{ \chi (q)\\ b(q)}}} \overline{\psi}(-2) e_q(ab) \overline{\chi}(b) \sum_{n\leq N} \Lambda_0(n)\chi\psi(n)e(\eta n)+O(qN^{c}),
\end{align*}
where the error term accounts for those $n$ that are not coprime to $2q$ and $c=1/100$, say. 

Note that $d$ is always odd and squarefree in the support of $\lambda_1(d)$, and factorise $d=d_1d_2$, where $d_1\mid q$ and $(d_2,q)=1$. Split the sum over the characters $\psi$ accordingly to reach
\begin{align*}
&\widehat{\nu_1}(a/q+\eta)\\
&=\frac{1}{\varphi(q)}\sum_{\substack{d_1, d_2\\ d_1|q, (d_2,q)=1}}\frac{\lambda_1(d_1 d_2)}{\varphi(d_1)\varphi(d_2)}\sum_{\substack{\psi_1(d_1)\\\psi_2(d_2)}} \sum_{\substack{\chi(q)\\ b(q)}}\overline{\psi_1\psi_2}(-2)\overline{\chi}(b)e_q(ab)\sum_{n\leq N} \Lambda_0(n)\chi\psi_1\psi_2(n)e(\eta n)\\
&+O(qN^{c}).
\end{align*}

To simplify, note that for any character $\chi'\pmod q$ we have
\begin{align*}
\sum_{\substack{\psi_1(d_1), \chi (q) \\ \psi_1 \chi= \chi'}} \overline{\psi_1}(-2) \overline{\chi}(b) &= \overline{\chi'}(b) \sum_{\psi_1(d_1)} \overline{\psi_1}(-2)\psi_1(b) \\
&=\overline{\chi'}(b) \varphi(d_1) 1_{d_1|b+2}.
\end{align*}
Summing over different $\chi'\pmod q$ gives us
\begin{align*}
&\widehat{\nu_1}(a/q+\eta)\\
&=\frac{1}{\varphi(q)}\sum_{d_1\mid q, (d_2,q)=1}\frac{\lambda_1(d_1 d_2)}{\varphi(d_2)}\sum_{\substack{\psi_2(d_2)\\ \substack{\chi(q)\\ b(q)}}}\overline{\psi_2}(-2)\overline{\chi}(b)1_{d_1|b+2}e_q(ab)\sum_{n\leq N} \Lambda_0(n)\chi\psi_2(n)e(\eta n)\\
&+O(qN^{c}).
\end{align*}

Each pair of characters $\psi_2$ and $\chi$ in above summation is induced by a unique pair of primitive characters to some moduli $e|d_2$ and $l|q$, respectively. We sort the characters based on those, write $d_2=de$ (with $d,e$ necessarily coprime) and  trivially estimate the error from replacing $\psi_2,\chi$ with their primitive parts in the $n$ sum to obtain
\begin{align*}
&\widehat{\nu_1}(a/q+\eta)\\
&=\frac{1}{\varphi(q)}\sum_{l|q}\sum_{\substack{e\\(e,q)=1}}\sum_{d_1\mid q,(d,qe)=1}\frac{\lambda_1(d_1 d e)}{\varphi(e)\varphi(d)} \sum_{\substack{\psi(e)^*\\ \chi(l)^*}}\overline{\psi}(-2)\sum_{b(q)}\overline{\chi}\chi^{(q)}_{0}(b)1_{d_1|b+2}e_q(ab) \\
&\times\sum_{n\leq N} \Lambda_0(n)\chi\psi(n)e(\eta n)+O(qN^{c}).
\end{align*}

As $\lambda_1$ is supported on squarefree integers only, for each fixed $b$ the summation condition for $d_1$, that is $d_1|q, d_1|b+2$, can be replaced by $d_1|(b+2,\textnormal{rad}(q))$. Writing $j=(b+2,\textnormal{rad}(q))$ and sorting according to $j|\textnormal{rad}(q)$ gives us
\begin{align*}
&\widehat{\nu_1}(a/q+\eta)\\
&=\frac{1}{\varphi(q)}\sum_{\substack{j|\textnormal{rad}(q)\\l |q}}\sum_{\substack{e\\(e,q)=1}}\sum_{d_1\mid j,(d_2,q)=1}\frac{\lambda_1(d_1 d e)}{\varphi(e)\varphi(d)} \sum_{\substack{\psi(e)^*\\ \chi(l)^*}}\overline{\psi}(-2)\sum_{\substack{b(q)\\(b+2,\textnormal{rad}(q))=j}}\overline{\chi}\chi^{(q)}_{0}(b)e_q(ab) \\
&\times \sum_{n\leq N} \Lambda_0(n)\chi\psi(n)e(\eta n)+O(qDN^{c})\\
&=\frac{1}{\varphi(q)}\sum_{\substack{j|\textnormal{rad}(q)\\l |q}}\sum_{\substack{e\\(e,q)=1}} S_1(q,j,e) \sum_{\substack{\psi(e)^*\\ \chi(l)^*}}\overline{\psi}(-2) c_{\overline{\chi}\chi^{(q)}_{0}}(a,j) \sum_{n\leq N} \Lambda_0(n)\chi\psi(n)e(\eta n)+O(qN^{c}),
\end{align*}
with the notation from~\eqref{e37} and~\eqref{eq_Sdef}.

We next bound the contribution of terms with $el>P^{2c_0}$. By the choice of sieves, $S(q,j,e)$ vanishes if $e>D_0P^{2c_0}$. By trivial estimates and combining $\chi,\psi$ into one character, we have
\begin{align*}
&\frac{1}{\varphi(q)}\sum_{\substack{j|\textnormal{rad}(q)\\l |q}}\sum_{\substack{e\\(e,q)=1\\ el>P^{2c_0}}} S_1(q,j,e) \sum_{\substack{\psi(e)^*\\ \chi(l)^*}}\overline{\psi}(-2) c_{\overline{\chi}\chi^{(q)}_{0}}(a,j) \sum_{n\leq N} \Lambda_0(n)\chi\psi(n)e(\eta n)\\
&\ll_{\varepsilon} q^{1/2+\varepsilon}\sum_{\substack{P^{2c_0}\leq e \leq D_0P^{3c_0}\\ \chi(e)^*}}\frac{1}{\varphi(e)}\Big|\sum_{n\leq N} \Lambda_0(n)\chi(n)e(\eta n)\Big|.
\end{align*}
After reinserting again higher prime powers and dealing with the exponential phase as in $\eqref{e8}$, we can apply Lemma~\ref{lem_BDHrough} and Cauchy--Schwarz to give the upper bound 
\begin{align*}
Nq^{1/2+\varepsilon}(\log N)^{O(1)}P^{-2c_0/3}\ll N P^{-c_0/10}.
\end{align*}

To finish the proof of the lemma, it remains to separate the contribution of the two sieves in the definition of $\lambda_i$. The sieve weights $\lambda_i$ are composed of two sieve weights, $\lambda_1^{\dagger}$ and $\mu$, with the ranges $\mathcal{P}^{\dagger}$ and $\widetilde{\mathcal{P}}$ as given in Definition~\eqref{def_PtildPdag}. Note that $e$ must be odd and squarefree whenever $S_1(q,j,e)\neq 0$, and write $e=e^\dagger \widetilde{e}$, where $e^\dagger, \widetilde{e}$ are as in~\eqref{e22}, and similarly for $j$ and all other variables that occur. For $j\mid \textnormal{rad}(q)$, we get
\begin{align*}
S_1(q,j,e)=\frac{1}{\varphi(e^\dagger)\varphi(\widetilde{e})}\sum_{\substack{d_1^\dagger, d^\dagger |\mathcal{P} \\ d_1^\dagger|j^\dagger , (d^\dagger,e^\dagger q)=1}}\frac{\lambda_1^\dagger(d_1^\dagger e^\dagger d^\dagger)}{\varphi(d^\dagger)}\sum_{\substack{\widetilde{d}_1,\widetilde{d}|\widetilde{\mathcal{P}}\\ \widetilde{d}_1|\widetilde{j}, (\widetilde{d},\widetilde{e}q)=1}} \frac{\mu(\widetilde{d}_1 \widetilde{e}\widetilde{d})}{\varphi(\widetilde{d})}.
\end{align*}
The sum over $\widetilde{d}_1$ vanishes unless $\widetilde{j}=1$. If $\widetilde{j}=1$, then
\begin{align*}
\frac{1}{\varphi(\widetilde{e})}\sum_{\substack{\widetilde{d}_1,\widetilde{d}|\widetilde{\mathcal{P}}\\ \widetilde{d}_1|\widetilde{j}, (\widetilde{d},\widetilde{e}q)=1}} \frac{\mu(\widetilde{d}_1 \widetilde{e}\widetilde{d})}{\varphi(\widetilde{d})}=\frac{\mu(\widetilde{e})}{\varphi(\widetilde{e})}\prod_{\substack{p\mid \widetilde{\mathcal{P}}\\p\nmid \widetilde{e}q}}\left(1-\frac{1}{p-1}\right)=\frac{\mu(\widetilde{e})\varphi(\widetilde{q})V(\widetilde{\mathcal{P}})}{\varphi_2(\widetilde{q})\varphi_2(\widetilde{e})}.
\end{align*}

Having simplified $S_1(q,j,e)$, we return to studying $\widehat{\nu_1}$. We arrive at
\begin{align*}
&\widehat{\nu_1}(a/q+\eta)\\
&= \frac{V(\widetilde{\mathcal{P}})}{\varphi_2(\widetilde{q})\varphi(q/\widetilde{q})}\sum_{\substack{j^{\dagger}|\textnormal{rad}(q^\dagger)\\ l|q}}\sum_{\substack{e|\mathcal{P}^{\dagger}\widetilde{\mathcal{P}}\\ (e,q)=1\\ el\leq P^{2c_0}}}\frac{\mu(\widetilde{e})S_1^\dagger(q^\dagger,j^{\dagger},e^\dagger)}{\varphi_2(\widetilde{e})} \sum_{\substack{\psi(e)^*\\ \chi(l)^*}}\overline{\psi}(-2)c_{\overline{\chi} \chi^{(q)}_{0}}(a,j^{\dagger}) \\
&\times \sum_{n\leq N} \Lambda_0(n)\chi\psi(n)e(\eta n)+O( N P^{-c_0/10}),
\end{align*}
which we can lastly simplify by recalling that $S_1^\dagger(q^\dagger,j^{\dagger},e^\dagger)=S_1^\dagger(q,j,e)$.
\end{proof}

We expect the contribution of $\sum_{n\leq N} \Lambda_0(n)\chi\psi(n)e(\eta n)$ in Lemma~\ref{le_majorarcs} to be small for $|\eta|\leq Q^{-1}$ for any given pair of primitive characters, except when $\chi\psi=\chi_{0}^{(1)}$ is the trivial character or when $\chi\psi=\widetilde{\chi}$ is the exceptional character. We define the exponential sums
\begin{align*}
T(\eta)&\coloneqq\sum_{n\leq N}e(\eta n) \\
\widetilde{T}(\eta)&\coloneqq -\sum_{n\leq N}n^{\widetilde{\beta}-1}e(\eta n)\\
W(\chi\psi,\eta)&\coloneqq \sum_{n\leq N} \Lambda_0(n)\chi\psi(n)e(\eta n)-1_{\chi\psi=\chi_{0}^{(1)}}T(\eta)-1_{\chi\psi=\widetilde{\chi}} \widetilde{T}(\eta)
\end{align*}
corresponding to the expected contribution of the trivial character, the Siegel character, and the remainder to $\sum_{n\leq N} \Lambda_0(n)\chi\psi(n)e(\eta n)$. 

In the decomposition given by Lemma~\ref{le_majorarcs}, the cases $\chi\psi=\chi^{(1)}_{0}$ and $\chi\psi=\widetilde{\chi}$ can respectively only happen if $e=l=1$ and $el=\widetilde{r}$, since $(e,l)=1$ and the $\psi$, $\chi$ are primitive. Let $2^t||\widetilde{r}$. As the exceptional character is quadratic and primitive, we note that $t\in \{0,2,3\}$ and that  $\widetilde{r}/2^t=\widetilde{\mathcal{P}}$. As $e$ is odd and $(e,q)=1$, the statement $el=\widetilde{r}$ is equivalent to $l=2^t \textnormal{rad}(\widetilde{q})$ and $e=\widetilde{r}/(2^t  \textnormal{rad}(\widetilde{q}))$. In particular, this can happen only if $2^t|q$ and $e^\dagger=1$. These considerations motivate the following definitions. Write
\begin{align}\label{eq_q'def}
q'\coloneqq 2^t \textnormal{rad}(\widetilde{q})
\end{align} 
and set
\begin{align*}
\nu_1^T(a,q,\eta)&\coloneqq \frac{V(\widetilde{\mathcal{P}})}{\varphi_2(\widetilde{q})\varphi(q/\widetilde{q})}\sum_{\substack{j|q^\dagger \\ }} S_1^\dagger(q,j,1) c_{\chi^{(q)}_{0}}(a,j)T(\eta), \\
\nu_1^{\widetilde{T}}(a,q,\eta)&\coloneqq 1_{2^t|q}\frac{\overline{\widetilde{\chi}^{(\widetilde{r}/q')}}(-2)\mu(\widetilde{r}/q')V(\widetilde{\mathcal{P}})}{\varphi_2(\widetilde{q})\varphi_2(\widetilde{r}/q')\varphi(q/\widetilde{q})}\sum_{\substack{j|q^\dagger \\ }} S_1^\dagger(q,j,1) c_{\overline{\widetilde{\chi}^{(q')}\chi^{(q)}_{0}}}(a,j)\widetilde{T}(\eta),
\end{align*}
where, following the notation given in~\eqref{e23}, we denote by $\widetilde{\chi}^{(\widetilde{r}/q')}$ and $\widetilde{\chi}^{(q')}$ the $\widetilde{r}/q'$ and $q'$ component of the exceptional character, respectively. We define further the part of $\nu_1$ that remains after subtracting these terms as
\begin{align*}
\nu_1^W(a,q,\eta)&\coloneqq \widehat{\nu_1}(a/q+\eta)-\nu_1^T(a,q,\eta)-\nu_1^{\widetilde{T}}(a,q,\eta)\\
&=\frac{V(\widetilde{\mathcal{P}})}{\varphi_2(\widetilde{q})\varphi(q/\widetilde{q})}\sum_{\substack{j|q^\dagger \\ l|q}}\sum_{\substack{e|\mathcal{P}^{\dagger}\widetilde{\mathcal{P}}\\ (e,q)=1\\el\leq P^{2c_0}}}\frac{\mu(\widetilde{e})S_1^\dagger(q,j,e)}{\varphi_2(\widetilde{e})} \sum_{\substack{\psi(e)^*\\ \chi(l)^*}}\overline{\psi}(-2)c_{\overline{\chi} \chi^{(q)}_{0}}(a,j)W(\chi\psi,\eta).
\end{align*}

We use this decomposition and Lemmas~\ref{le_fourier},~\ref{le_majorarcs} to get for $m\leq N$ outside a set of size $O(NP^{-c_1})$ that 
\begin{align}
\nonumber &\int_{\mathfrak{M}} \widehat{\nu_1}\widehat{\nu_2}(\alpha)e(-\alpha m)\, d\alpha\\
\begin{split}
&=\sum_{\substack{q\leq P^{c_0}\\ a(q)^*}}e_q(-am) \int_{|\eta|\leq Q^{-1}}\bigl(\nu_1^T(a,q,\eta)+\nu_1^{\widetilde{T}}(a,q,\eta)+\nu_1^W(a,q,\eta)\bigr)\\
&\times \bigl(\nu_2^T(a,q,\eta)+\nu_2^{\widetilde{T}}(a,q,\eta)+\nu_2^W(a,q,\eta)\bigr)e(-\eta m)\,d\eta+O(NP^{-c_1}). \label{eq_initredend}
\end{split}
\end{align}

We split this up into two parts, the main term that consists out of all those pairs that do not contain the superscript $W$, and the remaining error term. It will make matters easier if we enlarge the range of $q$ for the main terms, in the case the exceptional zero exists, from $\{q: q\leq P^{c_0}\}$ to $\{q=2^s\widetilde{q}q^\dagger: \widetilde{q}|\widetilde{r}, 2^s q^\dagger \leq P^{c_0}\}=:\mathcal{Q}$, say. To justify this, note that $c_{\chi_0^{(q)}}(a,j)$ respectively $c_{\widetilde{\chi}^{(q')}\chi_0^{(q)}}(a,j)$ vanish if $\widetilde{q}$ has a square prime divisor and $(j,\widetilde{q})=1$. Furthermore in the added non-overlapping intervals we have $q>P^{c_0}$ and the estimate
\begin{align*}
\max_{q\in \mathcal{Q}, q>P^{c_0}}\{|\nu_i^T(a,q,\eta)|,|\nu_2^{\widetilde{T}}(a,q,\eta)|\}\leq N (\log N)^{O(1)}P^{-c_0/3}
\end{align*}
that follows from the trivial estimates $|T(\eta)|,|\widetilde{T}(\eta)|\leq N$. Thus, by Lemma~\ref{le_fourier} for all $m$ outside a sufficiently small exceptional set, this larger range main term approximates the previous one with an admissible error term.  In the same way also the error term of Lemma~\ref{le_majorarcs} can be handled.

Now, to finish the proof of Key Proposition~\ref{prop2}, it suffices to show that there exist functions $\mathcal{M}(m)$ and $\mathcal{E}(m)$ as proposed, such that
\begin{align}\label{e34}\begin{split}
 &\sum_{\substack{q\in \mathcal{Q}\\ a(q)^*}}e_q(-am) \int_{|\eta|\leq Q^{-1}}\bigl(\nu_1^T(a,q,\eta)+\nu_1^{\widetilde{T}}(a,q,\eta)\bigr)\cdot \bigl(\nu_2^T(a,q,\eta)+\nu_2^{\widetilde{T}}(a,q,\eta)\bigr)e(-\eta m)\,d\eta\\
 &= mV(P)^2 \mathfrak{S}(m) \left(\mathcal{M}(m)\Bigl(1+O\bigl(e^{- c\frac{\log D_0}{\log P}}\bigr)\Bigr)+O\bigl(e^{100\sqrt{\log N}}P^{-c}\bigr)\right)
 \end{split}
\end{align}
and for $i\in \{1,2\}$  
\begin{align}
\nonumber &\Big|\sum_{\substack{q\leq P^{c_0}\\ a(q)^*}}e_q(-am) \int_{|\eta|\leq Q^{-1}}\nu_i^W(a,q,\eta)
\cdot \bigl(\nu_{i+1}^T(a,q,\eta)+\nu_{i+1}^{\widetilde{T}}(a,q,\eta)+\nu_{i+1}^W(a,q,\eta)\bigr)e(-\eta m)\,d\eta\Big|\\
&\ll mV(P)^2 \mathfrak{S}(m)e^{-c\frac{\log N}{\log P}}\mathcal{E}(m),   \label{e35} 
\end{align}
where the index $i+1$ is taken $\pmod 2$.

\subsection{Main term} \label{sec:majarcMT}

In this subsection, we give the desired asymptotic evaluation~\eqref{e34} of the contribution of products of all terms in~\eqref{eq_initredend} that do not contain a superscript $W$. To do this, we first need to introduce some auxiliary quantities and some lemmas about them whose proofs are postponed to the next section.

Let $\chi_1$ and $\chi_2$ be characters to the modulus $q$ and define
\begin{align}\label{eq_Fdef}
F(\chi_1,\chi_2,j_1,j_2,m)\coloneqq \asum_{a(q)}c_{\chi_1}(a,j_1)c_{\chi_2}(a,j_2)e_q(-am).
\end{align}

We recall the notation in~\eqref{e23}. The following lemmas tell us that $F$ is multiplicative and that it vanishes if the modulus has a square divisor that is not shared by both conductors of $\chi_1$, $\chi_2$.

\begin{lemma}[$F$ is multiplicative]\label{lem_Fmult}
Let $\chi_1$ and $\chi_2$ be characters to the modulus $q$ and factorise $\chi_i$ as in~\eqref{e23}. Then we have 
\begin{align*}
F(\chi_1,\chi_2,j_1,j_2,m)=\prod_{p\mid q}F(\chi^{(p^{\alpha_1(p)})}_{1},\chi^{(p^{\alpha_2(p)})}_{2},j_{1},j_{2},m).
\end{align*}
\end{lemma}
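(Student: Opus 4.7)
The plan is to prove Lemma~\ref{lem_Fmult} by a direct application of the Chinese Remainder Theorem (CRT) on the triple sum defining $F$, and observing that every ingredient -- the primitivity conditions, the sieve-type divisibility condition in~\eqref{e37}, the characters, and the additive phases -- splits as a product over the prime-power divisors of $q$.

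First, I would parameterise each residue class $b\pmod q$ by its tuple $(b_p)_{p\mid q}$ with $b_p:=b\bmod p^{\alpha(p)}$, and similarly for $a$ and $m$. The following factorisations are then standard:
\begin{itemize}
 \item $(b,q)=1 \iff (b_p,p)=1$ for every $p\mid q$;
 \item $(b+2,\textnormal{rad}(q))=(j,\textnormal{rad}(q)) \iff$ for every $p\mid q$, $p\mid b_p+2$ precisely when $p\mid j$;
 \item $\chi_i(b)=\prod_{p\mid q}\chi_i^{(p^{\alpha_i(p)})}(b_p)$ by the very definition in~\eqref{e23};
 \item $e_q(ab)=\prod_{p\mid q} e_{p^{\alpha(p)}}(\overline{q_p^{*}}\,a_p b_p)$, where $q_p^{*}:=q/p^{\alpha(p)}$ and the inverse is taken modulo $p^{\alpha(p)}$; and analogously for $e_q(-am)$.
\end{itemize}

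Substituting these factorisations into $c_{\chi_i}(a,j_i)$, the inner sum $\sum^{*}_{b(q)}$ turns into a product of local sums over $b_p \pmod{p^{\alpha(p)}}$. Each local factor is easily recognised as
\[
c_{\chi_i^{(p^{\alpha(p)})}}\!\bigl(\overline{q_p^{*}}\,a_p,\; j_i\bigr),
\]
because the local divisibility condition $p\mid b_p+2 \iff p\mid j_i$ coincides with the condition $(b_p+2,p)=(j_i,p)$ appearing in the definition of $c$ for a prime-power modulus (only the $p$-adic part of the second argument matters). Therefore
\[
c_{\chi_i}(a,j_i)=\prod_{p\mid q} c_{\chi_i^{(p^{\alpha(p)})}}\!\bigl(\overline{q_p^{*}}\,a_p,\; j_i\bigr).
\]

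Inserting these expressions into $F$ and using that $\sum^{*}_{a(q)}=\prod_{p\mid q}\sum^{*}_{a_p(p^{\alpha(p)})}$, the whole sum factors as
\[
F(\chi_1,\chi_2,j_1,j_2,m)=\prod_{p\mid q}\Bigl[\sideset{}{^{*}}\sum_{a_p\,(p^{\alpha(p)})}\!\!c_{\chi_1^{(p^{\alpha(p)})}}(\overline{q_p^{*}}a_p,j_1)\,c_{\chi_2^{(p^{\alpha(p)})}}(\overline{q_p^{*}}a_p,j_2)\,e_{p^{\alpha(p)}}(-\overline{q_p^{*}}a_p m)\Bigr].
\]
Finally, in each local factor I make the change of variable $a_p\mapsto q_p^{*}a_p$ (a bijection on $(\mathbb{Z}/p^{\alpha(p)})^{*}$), which removes all instances of $\overline{q_p^{*}}$, producing exactly $F(\chi_1^{(p^{\alpha(p)})},\chi_2^{(p^{\alpha(p)})},j_1,j_2,m)$. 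This completes the proof. There is no real obstacle; the only point requiring a little care is checking that the sieve condition $(b+2,\textnormal{rad}(q))=(j,\textnormal{rad}(q))$ and the corresponding prime-power analogue match up under CRT, but this follows at once because only the $p$-part of $(j,\textnormal{rad}(q))$ interacts with $b_p$.
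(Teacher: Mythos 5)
Your argument is correct and is essentially the same as the paper's: both proofs are a Chinese Remainder Theorem factorisation of the sum defining $F$, using that the coprimality condition, the condition $(b+2,\mathrm{rad}(q))=(j,\mathrm{rad}(q))$, the characters, and the additive phases all split over the prime powers dividing $q$. The only (harmless) organisational difference is that the paper first executes the sum over $a$, so the phase collapses to a Ramanujan sum $c_q(b_1+b_2-m)$ whose multiplicativity does the bookkeeping, whereas you factor all three sums at once and remove the CRT cofactors $\overline{q_p^{*}}$ by the change of variables $a_p\mapsto q_p^{*}a_p$ in each local factor.
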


\begin{lemma}\label{le_Fsimple}
 If $p^{\alpha}\mid \mid q$ for some $\alpha>1$, we have 
 \begin{align}\label{eq_Festi1}
 F(\chi_1,\chi_2,j_1,j_2,m)=0\,\, \textnormal{unless}\,\, \alpha_1^*(p)=\alpha_2^*(p)=\alpha.
\end{align}
\end{lemma}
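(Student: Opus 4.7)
By Lemma~\ref{lem_Fmult} the quantity $F$ factors over prime powers, so I would immediately reduce to the single-prime-power case $q=p^{\alpha}$ with $\alpha\geq 2$, and let $\alpha_i=\alpha_i^{*}(p)$. Without loss of generality (by the symmetry $\chi_1\leftrightarrow \chi_2$, $j_1\leftrightarrow j_2$ in the definition~\eqref{eq_Fdef}) it is enough to show $F=0$ when $\alpha_2\leq \alpha-1$.

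First I would unfold the definitions of $c_{\chi_i}(a,j_i)$, interchange the sums, and perform the $a$-summation via the classical Ramanujan sum
\[
\sum_{a(p^{\alpha})^{*}} e_{p^{\alpha}}(a k)=\begin{cases} p^{\alpha}-p^{\alpha-1},& p^{\alpha}\mid k,\\ -p^{\alpha-1},& p^{\alpha-1}\| k,\\ 0,& \text{else}.\end{cases}
\]
This converts $F$ into the combination
\[
F=p^{\alpha}\,S_{\alpha}-p^{\alpha-1}\,S_{\alpha-1},
\]
where, for $k\in\{\alpha-1,\alpha\}$,
\[
S_{k}\coloneqq \sum_{\substack{b_1,b_2(p^{\alpha})^{*}\\ b_1+b_2\equiv m\,(p^{k})\\ (b_i+2,p)=(j_i,p)}}\chi_1(b_1)\chi_2(b_2).
\]

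The heart of the argument is then to show $S_{\alpha-1}=p\cdot S_{\alpha}$, which forces $F=0$. I would parametrise $b_2=(m-b_1)+p^{k}t$ with $t$ running modulo $p^{\alpha-k}$, and note two key simplifications valid because $k\geq 1$: (i) the coprimality condition $(b_2,p)=1$ and the sieve condition $(b_2+2,p)=(j_2,p)$ depend only on $b_1\pmod p$, hence are independent of $t$; and (ii) since the assumption $\alpha_2\leq \alpha-1\leq k$ means $\chi_2$ is induced from a character of modulus $p^{\alpha_2}\mid p^{k}$, we have $\chi_2(b_2)=\chi_2(m-b_1)$ independently of $t$. Therefore the inner $t$-sum just contributes the counting factor $p^{\alpha-k}$, so $S_{\alpha-1}/S_{\alpha}=p$. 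Plugging in,
\[
F=p^{\alpha}S_{\alpha}-p^{\alpha-1}(p\,S_{\alpha})=0,
\]
as desired.

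The main (very mild) obstacle to watch is the book-keeping of the various coprimality and sieve conditions when passing from $b_2$ to $(m-b_1)+p^{k}t$: one must verify that none of them couples to $t$. This is precisely where the hypothesis $\alpha\geq 2$ (so $k\geq 1$) is used, since modulo $p$ every $b_2$ in the parametrisation coincides with $m-b_1$. Once this is observed, the computation above goes through without further subtlety.
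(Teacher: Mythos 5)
Your proof is correct. After the reduction to $q=p^{\alpha}$ via Lemma~\ref{lem_Fmult}, your computation is sound: summing over $a$ first gives the Ramanujan-sum decomposition $F=p^{\alpha}S_{\alpha}-p^{\alpha-1}S_{\alpha-1}$, and when (say) $\alpha_2^{*}\leq \alpha-1$ the parametrisation $b_2=(m-b_1)+p^{k}t$ shows that for both $k=\alpha-1$ and $k=\alpha$ the inner sum collapses to $p^{\alpha-k}$ times the same quantity (the coprimality and gcd conditions only see $b_2\bmod p$, and $\chi_2(b_2)$ only sees $b_2\bmod p^{\alpha_2^{*}}$, both independent of $t$ since $k\geq 1$), whence $S_{\alpha-1}=pS_{\alpha}$ and $F=0$; the symmetry of \eqref{eq_Fdef} in $(\chi_1,j_1)\leftrightarrow(\chi_2,j_2)$ then finishes the lemma. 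This is, however, a different route from the paper's. The paper disposes of the gcd conditions first — by inclusion–exclusion as in \eqref{eq_Fred1}–\eqref{eq_Fred2} and by opening the congruence $b\equiv -2\ (p)$ with characters modulo $p$, exactly as in the proof of Lemma~\ref{lem_Festi} — so that each resulting term carries factors $\tau(\chi_1\psi_1)\tau(\chi_2\psi_2)$ with $\psi_i$ of conductor at most $p$; the lemma then follows instantly from \eqref{eq_lemFestitau1}, i.e.\ the vanishing of Gau\ss{} sums of imprimitive characters to a modulus $p^{\alpha}$ with $\alpha\geq 2$ (twisting by a conductor-$p$ character cannot raise the conductor to $p^{\alpha}$ when $\alpha_i^{*}<\alpha$ and $\alpha\geq 2$). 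Your argument is essentially a self-contained re-proof of that Gau\ss{}-sum vanishing, unfolded inside $F$: it buys you an elementary, character-expansion-free treatment in which the gcd conditions never need to be opened, at the cost of redoing by hand a cancellation the paper gets for free from the standard fact and the machinery already set up for Lemma~\ref{lem_Festi}. Only cosmetic points: write $\chi_2(b_2)=\chi_2^{*}(m-b_1)$ (the primitive character) rather than $\chi_2(m-b_1)$ to be precise, and phrase the key identity as $S_{\alpha-1}=pS_{\alpha}$ rather than as a ratio, since $S_{\alpha}$ may vanish.
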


Now, with $F$ as in~\eqref{eq_Fdef}, we can write
\begin{align*}
&\sum_{a(q)^*}\nu_1^T(a,q,\eta)\nu_2^T(a,q,\eta)e_q(-am)\\
&= \frac{V(\widetilde{\mathcal{P}})^2 T(\eta)^2}{\varphi_2(\widetilde{q})^2\varphi(q/\widetilde{q})^2}\sum_{j_i|q^\dagger}S_1^\dagger(q,j_1,1)S_2^\dagger(q,j_2,1)\sum_{a(q)^*}c_{\chi^{(q)}_{0}}(a,j_1)c_{\chi^{(q)}_{0}}(a,j_2)e_q(-am)\\
&=\frac{V(\widetilde{\mathcal{P}})^2 T(\eta)^2}{\varphi_2(\widetilde{q})^2\varphi(q/\widetilde{q})^2}\sum_{j_i|q^\dagger}S_1^\dagger(q,j_1,1)S_2^\dagger(q,j_2,1)F(\chi^{(q)}_{0},\chi^{(q)}_{0},j_1,j_2,m).
\end{align*}
Lemma~\ref{le_Fsimple} gives that $F(\chi^{(q)}_{0},\chi^{(q)}_{0},j_1,j_2,m)=0$ unless $q$ is squarefree. We can factorise it by Lemma~\ref{lem_Fmult} as
\begin{align*}
F(\chi^{(q)}_{0},\chi^{(q)}_{0},j_1,j_2,m)=F(\chi_{0}^{(q^{\dagger})},\chi_{0}^{(q^{\dagger})},j_1,j_2,m)F(\chi^{(q/q^{\dagger})}_{0},\chi^{(q/q^{\dagger})}_{0},1,1,m),
\end{align*}
where we used that $j_i$ divides $q^\dagger$ and so is coprime to $q/q^\dagger$.

We arrive at 
\begin{align*}
&\sum_{a(q)^*}\nu_1^T(a,q,\eta)\nu_2^T(a,q,\eta)e_q(-am)\\
&= \frac{V(\widetilde{\mathcal{P}})^2 T(\eta)^2F(\chi^{(q/q^{\dagger})}_{0},\chi^{(q/q^{\dagger})}_{0},1,1,m)}{\varphi_2(\widetilde{q})^2\varphi(q/\widetilde{q})^2}\sum_{j_i|q^\dagger}S_1^\dagger(q,j_1,1)S_2^\dagger(q,j_2,1)F(\chi_{0}^{(q^{\dagger})},\chi_{0}^{(q^{\dagger})},j_1,j_2,m)\\
&=\frac{V(\widetilde{\mathcal{P}})^2 T(\eta)^2 F(\chi^{(q/q^{\dagger})}_{0},\chi^{(q/q^{\dagger})}_{0},1,1,m) }{\varphi_2(\widetilde{q})^2\varphi(q/\widetilde{q})^2}\mathcal{F}(q^\dagger,m),
\end{align*}
say. We follow the same steps for $\nu_1^T \nu_2^{\widetilde{T}}$ and observe that the $q^\dagger$ component of $\widetilde{\chi}^{(\widetilde{q})}\chi^{(q)}_{0}$ is $\chi^{(q^\dagger)}_{0}$. We get
\begin{align*}
&\sum_{a(q)^*}\nu_1^T(a,q,\eta)\nu_2^{\widetilde{T}}(a,q,\eta)e_q(-am)\\
&= 1_{2^t|q}\frac{\overline{\widetilde{\chi}^{(\widetilde{r}/q')}}(-2)\mu(\widetilde{r}/q')V(\widetilde{\mathcal{P}})^2 T(\eta)\widetilde{T}(\eta) F(\chi^{(q/q^{\dagger})}_{0},\widetilde{\chi}^{(q')}\chi^{(q/q^{\dagger})}_{0},1,1,m)}{\varphi_2(\widetilde{q})^2\varphi_2(\widetilde{r}/q')\varphi(q/\widetilde{q})^2}\mathcal{F}(q^\dagger,m),
\end{align*}
and as $F(\chi_1,\chi_2,1,1,m)=F(\chi_2,\chi_1,1,1,m)$ we also have the same formula for
\begin{align*}
\sum_{a(q)^*}\nu_1^{\widetilde{T}}(a,q,\eta)\nu_2^T(a,q,\eta)e_q(-am).
\end{align*}

We recall that $\widetilde{r}/q'$ is odd and squarefree, so $\mu(\widetilde{r}/q')^2=1$ and, since $\widetilde{\chi}$ is a quadratic character, $\overline{\widetilde{\chi}^{(\widetilde{r}/q')}}^2(-2)=1$. We get
\begin{align*}
&\sum_{a(q)^*}\nu_1^{\widetilde{T}}(a,q,\eta)\nu_2^{\widetilde{T}}(a,q,\eta)e_q(-am)\\
&=1_{2^t|q}\frac{V(\widetilde{\mathcal{P}})^2 \widetilde{T}(\eta)^2F(\widetilde{\chi}^{(q')}\chi^{(q/q^{\dagger})}_{0},\widetilde{\chi}^{(q')}\chi^{(q/q^{\dagger})}_{0},1,1,m)}{\varphi_2(\widetilde{q})^2\varphi_2(\widetilde{r}/q')^2\varphi(q/\widetilde{q})^2}\mathcal{F}(q^{\dagger},m).
\end{align*}
Combining these, we see that the left-hand side of~\eqref{e34} is 
\begin{align}\begin{split} &V(\widetilde{\mathcal{P}})^2\sum_{q\in \mathcal{Q}}\mathcal{F}(q^\dagger,m) \int_{|\eta|\leq Q^{-1}}\Bigl(\frac{T(\eta)^2F(\chi^{(q/q^{\dagger})}_{0},\chi^{(q/q^{\dagger})}_{0},1,1,m)}{\varphi_2(\widetilde{q})^2\varphi(q/\widetilde{q})^2} \\
&+1_{2^t|q}\frac{2 \overline{\widetilde{\chi}^{(\widetilde{r}/q')}}(-2)\mu(\widetilde{r}/q')T(\eta)\widetilde{T}(\eta) F(\chi^{(q/q^{\dagger})}_{0},\widetilde{\chi}^{(q')}\chi^{(q/q^{\dagger})}_{0},1,1,m)}{\varphi_2(\widetilde{q})^2\varphi_2(\widetilde{r}/q')\varphi(q/\widetilde{q})^2}  \\
&+1_{2^t|q}\frac{\widetilde{T}(\eta)^2F(\widetilde{\chi}^{(q')}\chi^{(q/q^{\dagger})}_{0},\widetilde{\chi}^{(q')}\chi^{(q/q^{\dagger})}_{0},1,1,m)}{\varphi_2(\widetilde{q})^2\varphi_2(\widetilde{r}/q')^2\varphi(q/\widetilde{q})^2}\Bigr)e(-\eta m)\,d\eta.
\label{e25}
\end{split}
\end{align}

We now apply the factorisation $q=2^s\widetilde{q}q^\dagger$ to get
\begin{align*}
\frac{F(\chi^{(q/q^{\dagger})}_{0},\chi^{(q/q^{\dagger})}_{0},1,1,m)}{\varphi_2(\widetilde{q})^2\varphi(q/\widetilde{q})^2}=\frac{F(\chi_{0,2^s\widetilde{q}},\chi_{0,2^s\widetilde{q}},1,1,m)}{\varphi_2(2^s\widetilde{q})^2} \frac{1}{\varphi(q^\dagger)^2}=:\mathcal{L}_1(2^s\widetilde{q},m)\frac{1}{\varphi(q^\dagger)^2}
\end{align*}
and
\begin{align*}
&\frac{2 \overline{\widetilde{\chi}^{(\widetilde{r}/q')}}(-2)\mu(\widetilde{r}/q')F(\chi^{(q/q^{\dagger})}_{0},\widetilde{\chi}^{(q')}\chi^{(q/q^{\dagger})}_{0},1,1,m)}{\varphi_2(\widetilde{q})^2\varphi_2(\widetilde{r}/q')\varphi(q/\widetilde{q})^2}\\
&=\frac{2 \overline{\widetilde{\chi}^{(\widetilde{r}/q')}}(-2)\mu(\widetilde{r}/q') F(\chi_{0,2^s\widetilde{q}},\widetilde{\chi}^{(q')}\chi_{0,2^s\widetilde{q}},1,1,m)}{\varphi_2(2^s\widetilde{q})\varphi_2(\widetilde{r})}\frac{1}{\varphi(q^\dagger)^2}\\
&=:\mathcal{L}_2(2^s\widetilde{q},m)\frac{1}{\varphi(q^\dagger)^2}
\end{align*}
and
\begin{align*}
\frac{F(\widetilde{\chi}^{(q')}\chi^{(q/q^{\dagger})}_{0},\widetilde{\chi}^{(q')}\chi^{(q/q^{\dagger})}_{0},1,1,m)}{\varphi_2(\widetilde{q})^2\varphi_2(\widetilde{r}/q')^2\varphi(q/\widetilde{q})^2}
&=\frac{F(\widetilde{\chi}^{(q')}\chi_{0,2^s\widetilde{q}},\widetilde{\chi}^{(q')}\chi_{0,2^s\widetilde{q}},1,1,m)}{\varphi_2(\widetilde{r})^2}\frac{1}{\varphi(q^\dagger)^2}\\
&=:\mathcal{L}_3(2^s\widetilde{q},m)\frac{1}{\varphi(q^\dagger)^2},
\end{align*}
say. 

By Lemma~\ref{le_Fsimple} $\mathcal{L}_1(2^s\widetilde{q})$ vanishes if $s>1$, $\mathcal{L}_2(2^s\widetilde{q})$ vanishes unless $s=t=0$ (as $t\neq 1$), and $\mathcal{L}_3(2^s\widetilde{q})$ vanishes unless $s=t$. Therefore, with the just introduced notation and the fact that $t\leq 3$,~\eqref{e25} becomes
\begin{align} \nonumber&V(\widetilde{\mathcal{P}})^2\sum_{q\in \mathcal{Q}}\frac{\mathcal{F}(q^\dagger,m)}{\varphi(q^\dagger)^2}\int_{|\eta|\leq Q^{-1}}\Bigl(1_{s\leq 1}T(\eta)^2\mathcal{L}_1(2^s\widetilde{q},m)\\
&+1_{s=t=0}T(\eta)\widetilde{T}(\eta)\mathcal{L}_2(2^s\widetilde{q},m)+1_{s=t}\widetilde{T}(\eta)^2\mathcal{L}_3(2^s\widetilde{q},m)\Bigr)e(-\eta m)\,d\eta \nonumber \\
\begin{split}
&= \label{eq_after25}V(\widetilde{\mathcal{P}})^2\int_{|\eta|\leq Q^{-1}}\sum_{\substack{s\leq 3\\ \widetilde{q}|\widetilde{\mathcal{P}}}}\Bigl(1_{s\leq 1}T(\eta)^2\mathcal{L}_1(2^s\widetilde{q},m)\\
&+1_{s=t=0}T(\eta)\widetilde{T}(\eta)\mathcal{L}_2(2^s\widetilde{q},m)+1_{s=t}\widetilde{T}(\eta)^2\mathcal{L}_3(2^s\widetilde{q},m)\Bigr)e(\eta m)\, d\eta \sum_{q^\dagger\leq P^{c_0}/2^s}\frac{\mathcal{F}(q^\dagger,m)}{\varphi(q^\dagger)^2},  
\end{split}
\end{align}
At this point it was helpful that the integration range is independent of $q$.

We now evaluate the sum over $q^\dagger$ in~\eqref{eq_after25}. It is in this evaluation that the singular series appears. The relevant local factors are 
\begin{align}
\label{eq_sigmadef}\sigma(p,m)&\coloneqq F(\chi^{(p)}_{0},\chi^{(p)}_{0},1,1,m)
\end{align}
and the singular series is given by
\begin{align}\label{eq_SSdef}
\mathfrak{S}(m)\coloneqq \prod_p \Bigl(1+\frac{\sigma(p,m)}{\varphi_2(p)^2}\Bigr).
\end{align}
As we sieve separately for the exceptional modulus and $2$, we also require notation for the partial singular series given by
\begin{align*}
\mathfrak{S}(m,2\widetilde{\mathcal{P}})\coloneqq \prod_{p|2\widetilde{\mathcal{P}}} \Bigl(1+\frac{\sigma(p,m)}{\varphi_2(p)^2}\Bigr).
\end{align*}

With this notation, we postulate the following evaluation for the sum over $q^\dagger$ in~\eqref{eq_after25}. It will be proved in Section~\ref{sec: singular} and can be seen as a combination of the typical completion of the singular series in the circle method with a fundamental lemma of sieve theory.
\begin{lemma}\label{lem_SS1} Let the notation be as before and assume $P'\leq P^{c_0} $. We have 
	\begin{align*}
	\sum_{q^\dagger\leq P'}\frac{\mathcal{F}(q^\dagger,m)}{\varphi(q^\dagger)^2}=V(\mathcal{P}^\dagger)\frac{\mathfrak{S}(m)}{\mathfrak{S}(m,2\widetilde{\mathcal{P}})}\left(1+O\bigl(e^{-c\frac{\log D_0}{\log P}}+e^{100\sqrt{\log N}}(P')^{-1}\bigr)\right).
	\end{align*}
\end{lemma}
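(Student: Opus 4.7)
The strategy is to show that $\mathcal{F}(q^\dagger,m)/\varphi(q^\dagger)^2$ is an (approximately) multiplicative function of $q^\dagger$ whose local Euler factor matches that of the singular series $\mathfrak{S}$, and then to complete the sum to an Euler product. By Lemma~\ref{le_Fsimple} only squarefree $q^\dagger$ contribute, and by Lemma~\ref{lem_Fmult},
$$F(\chi_0^{(q^\dagger)},\chi_0^{(q^\dagger)},j_1,j_2,m)=\prod_{p\mid q^\dagger}\sigma_{J_1(p),J_2(p)}(p,m),\qquad J_i(p)\coloneqq 1_{p\mid j_i},$$
where $\sigma_{\varepsilon_1,\varepsilon_2}(p,m)\coloneqq F(\chi_0^{(p)},\chi_0^{(p)},p^{\varepsilon_1},p^{\varepsilon_2},m)$, so that $\sigma_{0,0}=\sigma$. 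I expand $S_i^\dagger(q,j_i,1)$ from~\eqref{eq_Sdef} in the pre-sieve weights $\lambda_i^\dagger(c_id_i)$ with $c_i\mid j_i\mid q^\dagger$, $d_i\mid\mathcal{P}^\dagger$, $(d_i,q^\dagger)=1$, exchange the order of summation, and perform the $j_1,j_2$ sum first. Since $q^\dagger$ is squarefree this factorises into local sums at each $p\mid q^\dagger$ of the four quantities $\sigma_{\varepsilon_1,\varepsilon_2}(p,m)$ depending only on $(1_{p\mid c_1},1_{p\mid c_2})$.

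The residual sum over $d_1,d_2\mid\mathcal{P}^\dagger$ coprime to $q^\dagger c_1c_2$, weighted by $\lambda_i^\dagger(c_id_i)/\varphi(d_i)$, is then handled by the fundamental lemma for Iwaniec's $\beta$-sieve (cf.~\eqref{vomega} and~\cite[Lemma~6.8]{cribro}), which replaces it, up to a multiplicative error $1+O(e^{-c\log D_0/\log P})$, by a main term of shape $V(\mathcal{P}^\dagger)^2\prod_{p\mid q^\dagger c_1c_2}V_p^{-2}$, where $V_p\coloneqq 1-1/(p-1)$. Collecting all contributions and dividing by $\varphi(q^\dagger)^2=\prod_{p\mid q^\dagger}(p-1)^2$, a direct computation of the $\sigma_{\varepsilon_1,\varepsilon_2}(p,m)$ from the definition~\eqref{e37} of $c_\chi(a,j)$ via additive orthogonality shows that the resulting multiplicative representation of $\mathcal{F}(q^\dagger,m)/\varphi(q^\dagger)^2$ has local Euler factor at each $p\mid q^\dagger$ equal to $V(\mathcal{P}^\dagger)\,V_p^{-1}\sigma(p,m)/\varphi_2(p)^2$, while the contribution of each $p\nmid q^\dagger$ is $1$.

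Completing the sum over all squarefree $q^\dagger\mid\mathcal{P}^\dagger$ then yields
$$\sum_{q^\dagger\mid\mathcal{P}^\dagger}\frac{\mathcal{F}(q^\dagger,m)}{\varphi(q^\dagger)^2}=V(\mathcal{P}^\dagger)\prod_{p\in\mathcal{P}^\dagger}\Bigl(1+\frac{\sigma(p,m)}{\varphi_2(p)^2}\Bigr),$$
which equals $V(\mathcal{P}^\dagger)\,\mathfrak{S}(m)/\mathfrak{S}(m,2\widetilde{\mathcal{P}})$ up to the missing factor $\prod_{p>P,\,p\text{ odd},\,p\nmid\widetilde{r}}(1+\sigma(p,m)/\varphi_2(p)^2)$ at primes $p>P$. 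An elementary Gau\ss-sum computation from~\eqref{e37} gives $\sigma(p,m)/\varphi_2(p)^2\ll 1/p^2$ when $p\nmid m(m+2)(m+4)$ and $\ll 1/p$ otherwise, so this missing factor is $1+O((\log N)^{O(1)}/P)$; a parallel Rankin-trick bound using $\omega(m(m+2)(m+4))\ll\log N/\log\log N$ controls the tail $q^\dagger>P'$ by $\ll V(\mathcal{P}^\dagger)(P')^{-1}\exp(O(\log\log N))$. Both error contributions fit comfortably inside $e^{100\sqrt{\log N}}(P')^{-1}$. The main technical obstacle is step two: the coupling $c_i\mid j_i\mid q^\dagger$ between the pre-sieve and the variables $j_i$ must be carefully unwound so that after the fundamental lemma the local factor at each $p\mid q^\dagger$ reassembles exactly to $V_p^{-1}\sigma(p,m)/\varphi_2(p)^2$ and the local factor at each $p\nmid q^\dagger$ is $1$, so that the Euler product telescopes cleanly to $\prod_{p\in\mathcal{P}^\dagger}(1+\sigma(p,m)/\varphi_2(p)^2)$ rather than to a shifted variant.
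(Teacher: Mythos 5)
Your proposal does not close the real difficulty, and the step where it fails is exactly the one you flag as "the main technical obstacle". After expanding $S_i^\dagger(q,j_i,1)$ (see~\eqref{eq_Sdef}) in the pre-sieve weights you are left, for each fixed $c_i\mid j_i\mid q^\dagger$, with conditioned sums of the shape $\sum_{(d,\,q^\dagger c_i)=1}\lambda_i^\dagger(c_id)/\varphi(d)$, and you propose to evaluate these "by the fundamental lemma (cf.~\eqref{vomega})" with a clean main term and a multiplicative error $1+O(e^{-c\log D_0/\log P})$. But \eqref{vomega} controls only $\sum_d\lambda_d g(d)$ for the sieve weight itself; it gives no asymptotic for $\sum_d\lambda(cd)g(d)$ with a fixed divisor $c>1$ inside the weight, and for a combinatorial $\beta$-sieve such conditioned sums have no fundamental-lemma evaluation at all — they are pure error objects. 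This is precisely why the paper's proof proceeds differently: Lemma~\ref{lem_sie1} rewrites $S_i^\dagger(q,j_i,1)$ in terms of $\theta_i(j_ib)$, the sole main term is the single term $j_1=j_2=b_1=b_2=1$, and every term with $j_ib_i\neq 1$ is only \emph{bounded}, via the structural estimate of Lemma~\ref{lem_fundlem}; making the resulting sum \eqref{eq_SS2b} converge with the required $e^{-c\log D_0/\log P}$ saving is what forces the choice of a very large sieve parameter ($\beta>185$ here, $\beta=750$ in the paper). Your proposal has no substitute for this mechanism, and the internal inconsistencies are symptomatic of the unfinished bookkeeping: you state a main term $V(\mathcal{P}^\dagger)^2\prod_{p\mid q^\dagger c_1c_2}V_p^{-2}$, then a local factor $V(\mathcal{P}^\dagger)\,V_p^{-1}\sigma(p,m)/\varphi_2(p)^2$, then a final formula with a single power of $V(\mathcal{P}^\dagger)$; moreover a local factor $1+V_p^{-1}\sigma(p,m)/\varphi_2(p)^2$ would not telescope to $1+\sigma(p,m)/\varphi_2(p)^2$. (Note also that the $q^\dagger=1$ term alone equals $S_1^\dagger(1,1,1)S_2^\dagger(1,1,1)\asymp V(\mathcal{P}^\dagger)^2$, so the true main term carries the square, as the paper's own proof and its later application show.)

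Two smaller points. Your tail bound for $q^\dagger>P'$ is not correct as justified: with Rankin exponent $1$, each prime $p\mid m(m+2)(m+4)$ contributes a factor as large as $1+p|\sigma(p,m)|/(p-2)^2\leq 2$, so the bound is $(P')^{-1}\exp\bigl(O(\omega(m(m+2)(m+4)))\bigr)=(P')^{-1}\exp\bigl(O(\log N/\log\log N)\bigr)$, much larger than your claimed $\exp(O(\log\log N))$ and than $e^{100\sqrt{\log N}}$; the paper instead takes exponent $1/2$, accepting $(P')^{-1/2}$ but gaining $\exp\bigl(O(\sum_{p\mid m(m+2)(m+4)}p^{-1/2})\bigr)\ll e^{100\sqrt{\log N}}$. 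This is immaterial for the application but needs fixing in your write-up. Your completion over the primes $p>P$ (a factor $1+O((\log N)^{O(1)}/P)$) is fine and corresponds to the paper's completion of the $q$-sum to all squarefree moduli coprime to $2\widetilde{\mathcal{P}}$.
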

We recall that $V(\mathcal{P}^\dagger)V(\widetilde{\mathcal{P}})=V(\mathcal{P})$ so that an application of Lemma~\ref{lem_SS1} tells us that~\eqref{eq_after25} is
\begin{align}\label{eq_after252}\nonumber
&=V(\mathcal{P})^2\frac{\mathfrak{S}(m)}{\mathfrak{S}(m,2\widetilde{\mathcal{P}})}\int_{|\eta|\leq Q^{-1}}\sum_{\substack{s\leq 3\\ \widetilde{q}|\widetilde{\mathcal{P}}}}\Bigl(1_{s\leq 1}T(\eta)^2\mathcal{L}_1(2^s\widetilde{q},m)+1_{s=t=0}T(\eta)\widetilde{T}(\eta)\mathcal{L}_2(2^s\widetilde{q},m)\\
&+1_{s=t}\widetilde{T}(\eta)^2\mathcal{L}_3(2^s\widetilde{q},m)\Bigr)e(\eta m)\, d\eta\times \Bigl(1+O\bigl(e^{-c\frac{\log D_0}{\log P}}+e^{100\sqrt{\log N}} P^{-c_0}\bigr) \Bigr)
\end{align}

We can complete the integrals over $\eta$ to obtain
\begin{align}
\int_{|\eta|\leq Q^{-1}}T(\eta)^2e(-\eta m) \,d\eta &=\int_0^1 T(\eta)^2e(-\eta m) \,d\eta+O(Q)= m+O(Q)\label{e24a} \\
\int_{|\eta|\leq Q^{-1}}\widetilde{T}(\eta)T(\eta) e(-\eta m)\,d\eta&=\int_0^1 \widetilde{T}(\eta)T(\eta)e(-\eta m) \,d\eta+O(Q)=: \widetilde{J}(m)+O(Q)\label{e24b}\\
\int_{|\eta|\leq Q^{-1}}\widetilde{T}(\eta)^2e(-\eta m)\,d\eta&=\int_0^1 \widetilde{T}(\eta)^2e(-\eta m) \,d\eta+O(Q)=: \widetilde{I}(m)+O(Q)\label{e24c}.
\end{align}

To prove~\eqref{e24a}, note that by the geometric sum formula we have $|T(\eta)|\ll Q$ for $\eta \in [-1/2,1/2]$, $|\eta|\geq 1/Q$, and then claim follows by crudely apply the triangle inequality to the integral over $[-1/2,1/2]\setminus [-1/Q,1/Q]$. To prove~\eqref{e24b} and~\eqref{e24c}, we argue essentially in the same way; partial summation gives $|\widetilde{T}(\eta)|\ll Q$ for $\eta \in [-1/2,1/2]\setminus [-1/Q,1/Q],$ and then we can again apply a crude pointwise bound. 

We can now define the main term function $\mathcal{M}(m)$ that appears in Proposition~\ref{prop2} and in~\eqref{e34} as 
\begin{align*}
\mathcal{M}(m)\coloneqq \frac{1}{m\mathfrak{S}(m,2\widetilde{\mathcal{P}})}\sum_{\substack{s\leq 3\\ \widetilde{q}|\widetilde{\mathcal{P}}}}\Bigl(1_{s\leq 1}m\mathcal{L}_1(2^s\widetilde{q},m)+1_{s=t=0}\widetilde{J}(m)\mathcal{L}_2(\widetilde{q},m)+1_{s=t}\widetilde{I}(m)\mathcal{L}_3(2^t\widetilde{q},m)\Bigr).
\end{align*}
With this definition,~\eqref{e24a},~\eqref{e24b},~\eqref{e24c}, the trivial estimates $|\widetilde{J}(m)|,|\widetilde{I}(m)|\leq N$, and recalling that $Q=NP^{-c_0}$, we see that~\eqref{eq_after252} is
\begin{align}\begin{split} &V(\mathcal{P})\mathfrak{S}(m)\Bigl( m\mathcal{M}(m)\bigl(1+O(e^{-c\frac{\log D_0}{\log P}})\bigr) \\
	&+O\Bigl(\frac{Ne^{O(\sqrt{\log N})}}{P^{c_0}\mathfrak{S}(m,2\widetilde{\mathcal{P}})}\Bigl(\bigl| \sum_{\substack{s\leq 1\\ \widetilde{q}|\widetilde{\mathcal{P}}}}\mathcal{L}_1(2^s\widetilde{q},m)\bigr|+\bigl|\sum_{\substack{ \widetilde{q}|\widetilde{\mathcal{P}}}}\mathcal{L}_2(\widetilde{q},m)\bigr|+\bigl|\sum_{\substack{ \widetilde{q}|\widetilde{\mathcal{P}}}}\mathcal{L}_3(2^t\widetilde{q},m)\bigr|\Bigr)\Bigr)\Bigr)\label{e27_MT}
	\end{split}
\end{align}
To complete our treatment of the main term, it remains to show that $\mathcal{M}(m)$ is as required in Key Proposition~\ref{prop2} (see~\eqref{eq_prop2_1} and~\eqref{eq_prop2_2}) and to estimate the error term in~\eqref{e27_MT}. Both are achieved if we understand the relation of the sums over $\mathcal{L}_i$ to $\mathfrak{S}(m,2\widetilde{P})$.

The functions $\mathcal{L}_i$ are can be expressed in terms of the local contribution of the exceptional modulus and the prime $2$ to the binary additive problem. More precisely, if we define
\begin{align}
\label{eq_sigma'def}\sigma'(p,m)&\coloneqq F(\chi^{(p)}_{0},\widetilde{\chi}^{(p)},1,1,m) 
\\
\label{eq_sigmatildedef}\widetilde{\sigma}(p,m)&\coloneqq \begin{cases}F(\widetilde{\chi}^{(p)},\widetilde{\chi}^{(p)},1,1,m) & \text{ if } p\neq 2, \\
F(\widetilde{\chi}^{(2^t)},\widetilde{\chi}^{(2^t)},1,1,m)& \text{ if } p= 2,
\end{cases}
\end{align}
then by multiplicativity (Lemma~\ref{lem_Fmult}) and the vanishing at higher prime powers (Lemma~\ref{le_Fsimple}) we have 
\begin{align*}
\mathcal{L}_1(2^s \widetilde{q})&=1_{s\leq 1}|\mu( \widetilde{q})|\prod_{p\mid 2^s\widetilde{q}}\frac{\sigma(p,m)}{\varphi_2(p)^2}\\
|\mathcal{L}_2( \widetilde{q})|&=|\mu(\widetilde{q})|\frac{1}{\varphi_2(\widetilde{r})}\prod_{p|\widetilde{q}}\frac{|\sigma'(p,m)|}{\varphi_2(p)}\\
\mathcal{L}_3(2^t \widetilde{q})&=|\mu(\widetilde{q})|\frac{1}{\varphi_2(\widetilde{r})^2}\prod_{p|2^t\widetilde{q}}\widetilde{\sigma}(p,m).
\end{align*}

Therefore, we get
\begin{align*}
\sum_{\substack{s\leq 1\\ \widetilde{q}|\widetilde{\mathcal{P}}}}\mathcal{L}_1(2^s\widetilde{q})&=\prod_{p\mid 2\widetilde{\mathcal{P}}}\Bigl(1+\frac{\sigma(p,m)}{\varphi_2(p)^2}\Bigr)=\mathfrak{S}(m,2\widetilde{\mathcal{P}})\\
\bigl|\sum_{\substack{\widetilde{q}|\widetilde{\mathcal{P}}}}\mathcal{L}_2(\widetilde{q})\bigr|&\leq\prod_{p|\widetilde{\mathcal{P}}}\frac{\bigl(1+\frac{|\sigma'(p,m)|}{p-2}\bigr)}{p-2}\\
\sum_{\substack{ \widetilde{q}|\widetilde{\mathcal{P}}}}\mathcal{L}_3(2^t\widetilde{q})&=\frac{\widetilde{\sigma}(2,m)}{\varphi(2^t)^2}\prod_{p|\widetilde{\mathcal{P}}}\frac{1+\widetilde{\sigma}(p,m)}{(p-2)^2}.
\end{align*}

The following lemma gives explicitly the value of the three versions of $\sigma$. It is proved in Section~\ref{sec: gauss} and also confirms that the singular series matches the one in Key Proposition~\ref{prop2}.
\begin{lemma}[Evaluation of main term local densities]
	\label{lem_Feval} Let $p>2$ be a prime. We have 
	\begin{align}
	\label{eq_Feval1} \sigma(p,m)&=\begin{cases}
	p-4 &\,\,\textnormal{if}\,\, p\mid m\,\, \textnormal{or}\,\, p\mid m+4 \\
	2p-4 &\,\,\textnormal{if}\,\, p\mid m+2 \\
	-4 &\,\,\textnormal{else},
	\end{cases}\\
		\label{eq_Feval5} \sigma'(p,m)&=\begin{cases}
	-p(\widetilde{\chi}_p(m)+\widetilde{\chi}_p(-2)-\widetilde{\chi}_p(m+2))-\widetilde{\chi}_p(-2)-\widetilde{\chi}_p(2)&\,\, \textnormal{if}\,\, p\mid m+2\\
	-p(\widetilde{\chi}_p(m)+\widetilde{\chi}_p(2)-\widetilde{\chi}_p(m+2))-\widetilde{\chi}_p(-2)-\widetilde{\chi}_p(2)&\,\, \textnormal{if}\,\, p\mid m+4\\
	-p(\widetilde{\chi}_p(m)-\widetilde{\chi}_p(m+2))-\widetilde{\chi}_p(-2)-\widetilde{\chi}_p(2)&\,\, \textnormal{else,}
	\end{cases}\\
	\label{eq_Feval4} \widetilde{\sigma}(p,m)&=\begin{cases}(p^2-3p)\widetilde{\chi}_p(-1)-1 &\,\, \textnormal{if}\,\, p\mid m\\
	p(-\widetilde{\chi}_p(-1)-2\widetilde{\chi}_p(2)-1)-1 &\,\, \textnormal{if}\,\, p\mid m+4 \\
	p\widetilde{\chi}_p(-1)(-1-2\widetilde{\chi}_p(2m+4))-1 &\,\, \textnormal{else.}
	\end{cases} 
	\end{align}
	We have furthermore
	\begin{align}
	\label{eq_Fevalp=2a}\sigma(2,m)&=\begin{cases}
	1 &\,\,\textnormal{if}\,\, 2\mid m\,\, \\
	-1 &\,\,\textnormal{else},
	\end{cases}\\
		\label{eq_Fevalp=2b}|\widetilde{\sigma}(2,m)|&\leq \begin{cases}
	2^{2t-1} &\,\,\textnormal{if}\,\, 2\mid m\,\, \\
	0 &\,\,\textnormal{else.}
	\end{cases}
	\end{align}
\end{lemma}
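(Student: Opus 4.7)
\textbf{Proof proposal for Lemma~\ref{lem_Feval}.} The strategy is a prime-by-prime direct computation. Fix a prime $p>2$. Since $j=1$ and $\operatorname{rad}(p)=p$, the coprimality condition $(b+2,\operatorname{rad}(p))=(1,p)=1$ in the definition of $c_\chi(a,1)$ just amounts to $p\nmid b+2$, so
\[
c_\chi(a,1)\;=\;\sum_{b=1}^{p-1}\chi(b)e_p(ab)\;-\;\chi(-2)\,e_p(-2a).
\]
For the trivial character this collapses, via the Ramanujan-sum identity $\sum_{b=1}^{p-1}e_p(ab)=-1$ (valid for $(a,p)=1$), to
\[
c_{\chi^{(p)}_0}(a,1)=-1-e_p(-2a).
\]
For the exceptional character $\widetilde\chi^{(p)}$, which at $p\mid\widetilde r$ is the primitive real (quadratic) character mod $p$, the standard Gau\ss{} evaluation gives $\sum_b\widetilde\chi^{(p)}(b)e_p(ab)=\widetilde\chi^{(p)}(a)\tau$, where $\tau=\tau(\widetilde\chi^{(p)})$ satisfies $\tau^2=\widetilde\chi^{(p)}(-1)\,p$. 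Hence
\[
c_{\widetilde\chi^{(p)}}(a,1)=\widetilde\chi^{(p)}(a)\,\tau-\widetilde\chi^{(p)}(-2)\,e_p(-2a).
\]

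Next I would substitute these expressions into $F(\chi_1,\chi_2,1,1,m)=\sum_{a(p)^{*}}c_{\chi_1}(a,1)c_{\chi_2}(a,1)e_p(-am)$ for each of the three pairs $(\chi^{(p)}_0,\chi^{(p)}_0)$, $(\chi^{(p)}_0,\widetilde\chi^{(p)})$, $(\widetilde\chi^{(p)},\widetilde\chi^{(p)})$, expand the product, and evaluate the resulting sums by (i) the orthogonality identity $\sum_{a=1}^{p-1}e_p(-ak)=(p-1)1_{p\mid k}-1_{p\nmid k}$, (ii) the twisted orthogonality $\sum_{a=1}^{p-1}\widetilde\chi^{(p)}(a)e_p(-ak)=\widetilde\chi^{(p)}(-k)\tau\cdot 1_{p\nmid k}$ (which uses primitivity and reality of $\widetilde\chi^{(p)}$), and (iii) $\tau^2=\widetilde\chi^{(p)}(-1)\,p$ and $\widetilde\chi^{(p)}(-2)^2=1$. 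The three exponential shifts that arise correspond to $-m$, $-(m+2)$, and $-(m+4)$, so splitting according to which of the pairwise disjoint cases $p\mid m$, $p\mid m+2$, $p\mid m+4$ (or none) holds produces the closed forms~\eqref{eq_Feval1},~\eqref{eq_Feval5},~\eqref{eq_Feval4}.

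For $p=2$, the situation is elementary. The only odd residue mod $2$ is $b=1$, for which $b+2=3$ is already odd, so $c_{\chi^{(2)}_0}(a,1)=e_2(a)=-1$ and hence $\sigma(2,m)=\sum_{a(2)^*}e_2(-am)=(-1)^m$, giving~\eqref{eq_Fevalp=2a}. For the bound~\eqref{eq_Fevalp=2b} at $q=2^t$ with $t\in\{2,3\}$, I would write
\[
\widetilde\sigma(2,m)\;=\;\sum_{\substack{b_1,b_2\,(2^t)^{*}\\ 2\nmid b_i+2}}\widetilde\chi^{(2^t)}(b_1b_2)\sum_{a(2^t)^{*}}e_{2^t}\!\bigl(a(b_1+b_2-m)\bigr),
\]
and observe that the inner Ramanujan sum vanishes unless $2^{t-1}\mid b_1+b_2-m$; since $b_1+b_2$ is even, this forces $m$ to be even. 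For even $m$, the trivial bounds $|c_{\widetilde\chi^{(2^t)}}(a,1)|\leq 2^{t-1}$ (from the total number of odd residues) and $|\sum_{a(2^t)^*}e_{2^t}(-am)|\leq 2^{t-1}$ combine into $|\widetilde\sigma(2,m)|\leq 2^{2t-1}$.

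The proof is entirely computational; the only point requiring care is the bookkeeping of the cross-terms in the expansion of $c_{\chi_1}c_{\chi_2}$ for $\sigma'$ and $\widetilde\sigma$, where several contributions involving $\widetilde\chi^{(p)}(-1)$, $\widetilde\chi^{(p)}(\pm 2)$ and $\widetilde\chi^{(p)}(m+2)$ interact, and in each case one must determine which of these character values collapse to $\pm 1$ under the divisibility hypothesis before extracting the stated closed form. No deeper input than the elementary identities listed above is needed.
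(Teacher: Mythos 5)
Your treatment of the odd primes is exactly the paper's own argument in different clothing: writing $c_\chi(a,1)$ as the complete sum minus the single term $b\equiv-2$ and multiplying out $c_{\chi_1}(a,1)c_{\chi_2}(a,1)$ reproduces, term by term, the inclusion--exclusion~\eqref{eq_Fred1} into the four quantities $F(\chi_1,\chi_2,-,-,m)$, $F(\chi_1,\chi_2,-,p,m)$, $F(\chi_1,\chi_2,p,-,m)$, $F(\chi_1,\chi_2,p,p,m)$, which the paper then evaluates with the same ingredients you list (Lemma~\ref{lem_ccalc}, $\tau(\widetilde\chi^{(p)})^2=\widetilde\chi^{(p)}(-1)p$, and the case split according to $p\mid m$, $p\mid m+2$, $p\mid m+4$). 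So for $p>2$ your method is correct and is the paper's method. One caution, though, about the final sentence ``produces the closed forms'': if you actually push the bookkeeping through you obtain, e.g., $\widetilde\sigma(p,m)=\widetilde\chi_p(-1)p\,c_p(m)-2\widetilde\chi_p(-1)p\,\widetilde\chi_p(2m+4)\,1_{p\nmid m+2}+c_p(m+4)$, which at $p\mid m+4$ equals $-p(\widetilde\chi_p(-1)+1)-1$; a direct check at $p=5$, $m\equiv1\ (5)$ gives $\widetilde\sigma(5,m)=-11$, while the displayed middle case of~\eqref{eq_Feval4} gives $-1$, and similarly direct computation gives $\sigma'(7,1)=2$ whereas the displayed~\eqref{eq_Feval5} gives $-14$. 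So the expansion does not literally land on the printed right-hand sides in those two cases; this is a bookkeeping issue in the displayed formulas rather than in your approach, but it means the case analysis has to be written out and compared rather than asserted.

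The one step of yours that genuinely fails as written is the bound~\eqref{eq_Fevalp=2b}: from $|c_{\widetilde\chi^{(2^t)}}(a,1)|\le 2^{t-1}$ and $\bigl|\sum_{a(2^t)^*}e_{2^t}(-am)\bigr|\le 2^{t-1}$ you cannot conclude $\bigl|\sum_{a(2^t)^*}c_{\widetilde\chi^{(2^t)}}(a,1)^2e_{2^t}(-am)\bigr|\le 2^{2t-1}$; a pointwise bound on one factor cannot be multiplied against a bound on the oscillating $a$-sum, and the estimate this reasoning actually delivers is only $\varphi(2^t)\cdot 2^{2(t-1)}=2^{3t-3}$, which exceeds $2^{2t-1}$ when $t=3$. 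The repair is immediate and is what the paper's appeal to Montgomery--Vaughan amounts to: at $p=2$ the gcd condition is vacuous, so $c_{\widetilde\chi^{(2^t)}}(a,1)=\overline{\widetilde\chi^{(2^t)}}(a)\tau(\widetilde\chi^{(2^t)})$ is a genuine Gau\ss{} sum; since $\widetilde\chi^{(2^t)}$ is quadratic, $c_{\widetilde\chi^{(2^t)}}(a,1)^2=\tau(\widetilde\chi^{(2^t)})^2$ is independent of $a$, whence $\widetilde\sigma(2,m)=\tau(\widetilde\chi^{(2^t)})^2c_{2^t}(m)$, which vanishes unless $2^{t-1}\mid m$ (in particular for odd $m$, so your Ramanujan-sum observation survives) and has modulus at most $2^{t}\cdot 2^{t-1}=2^{2t-1}$. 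With that substitution your $p=2$ treatment is complete.
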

With this evaluation, a short calculation shows that for $p\mid m$, $p\neq 2$
\begin{align}\label{eq_sigmatildsigma}
\Bigl|\frac{1+\widetilde{\sigma}(p,m)}{(p-2)^2}\Bigr|=1+\frac{\sigma(p,m)}{(p-2)^2}
\end{align}
and, applying a straightforward estimate for $p>7$ and a character table for quadratic characters for primes up to $7$, for $p\nmid m$, $p\neq 2$
\begin{align*}
\Bigl|\frac{1+\widetilde{\sigma}(p,m)}{(p-2)^2}\Bigr|\Big(1+\frac{\sigma(p,m)}{(p-2)^2}\Big)^{-1}\leq \frac{21}{25}.
\end{align*}
Similarly we get for $2\mid m$
\begin{align*}
\Bigl|\frac{\widetilde{\sigma}(2,m)}{\varphi(2^t)^2}\Bigr|\leq 2 = 1+\sigma(p,m).
\end{align*}

Together with the simple bound $\widetilde{J}(m)\leq m$, the evaluation for $\sigma'(p,m)$ in~\eqref{eq_Feval5}, and the inequality $\widetilde{I}(m)\leq m^{\widetilde{\beta}}$ we get consequently
\begin{align*}
\mathcal{M}(m)\geq 1-m^{\widetilde{\beta}-1}\prod_{\substack{p|\widetilde{r}\\ p\nmid m}}\frac{21}{25}+O(\widetilde{r}^{-0.99}).
\end{align*}
Here the second and third term are not present in the absence of an exceptional modulus and we have $\mathcal{M}(m)=\frac{\mathfrak{S}(m,2)}{\mathfrak{S}(m,2)}=1$ in that case. This confirms the proposed behaviour of the function $\mathcal{M}(m)$.

Finally, we consider the error term in~\eqref{e27_MT}. By the same considerations as for $\mathcal{M}(m)$, the sums over $s, \widetilde{q}$ are $O(\mathfrak{S}(m,2\widetilde{\mathcal{P}}))$ and so the error is admissible, recalling that we consider $m\geq NP^{-c_1}$. 

\subsection{Error term}
In this subsection we establish~\eqref{e35}. By definition,
\begin{align*}
&\sum_{a(q)^*}\nu_1^W(a,q,\eta)\nu_2^W(a,q,\eta)e_q(-am)\\
&=\frac{V(\widetilde{\mathcal{P}})^2 }{\varphi_2(\widetilde{q})^2\varphi(q/\widetilde{q})^2}\sum_{\substack{j_i|q^\dagger \\ l_i|q}}\sum_{\substack{e_i|\mathcal{P}^{\dagger}\widetilde{\mathcal{P}}\\ (e_i,q)=1\\eq\leq P^{2c_0}}}\frac{\mu(\widetilde{e}_1)\mu(\widetilde{e}_2)S_1^\dagger(q,j_1,e_1)S_2^\dagger(q,j_2,e_2)}{\varphi_2(\widetilde{e}_1)\varphi_2(\widetilde{e}_2)} \\
&\times \sum_{\substack{\psi_i(e_i)^*\\ \chi(l_i)^*}}\overline{\psi_1}(-2)\overline{\psi_2}(-2)F(\chi_1\chi^{(q)}_{0},\chi_2\chi^{(q)}_{0},j_1,j_2,m)W(\chi_1\psi_1,\eta)W(\chi_2\psi_2,\eta).
\end{align*}
 For a primitive character $\xi$, put 
\begin{align*}
\mathscr{W}(\xi)\coloneqq \Bigl(\int_{|\eta|\leq Q^{-1}} |W(\xi,\eta)|^2 \,d\eta \Bigr)^{1/2}.
\end{align*}
This is a larger range of integration than in the analogous integral in~\cite[formula (6.5)]{mv}, but the proof there would go still go through with this wider choice of major arcs and, just as for the main term, the fact that we chose the range of integration independent of $q$ again simplifies matters for us. 

An application of the Cauchy--Schwarz inequality gives
\begin{align}
&\sum_{\substack{q\leq P^{c_0}\\ a(q)^*}}e_q(-am) \int_{|\eta|\leq Q^{-1}}\nu_1^W(a,q,\eta)\nu_2^W(a,q,\eta)e(-\eta m)\,d\eta \nonumber\\
\begin{split}\label{e31}
\leq & V(\widetilde{\mathcal{P}})^2 \sum_{q\leq P^{c_0}} \frac{1}{\varphi_2(\widetilde{q})^2\varphi(q/\widetilde{q})^2} \sum_{\substack{j_i|q^\dagger \\ l_i|q}}\sum_{\substack{e_i|\mathcal{P}^{\dagger}\widetilde{\mathcal{P}}\\ (e_i,q)=1\\ e_il_i\leq P^{2c_0}}} \frac{|S_1^\dagger(q,j_1,e_1)S_2^\dagger(q,j_2,e_2)|}{\varphi_2(\widetilde{e}_1)\varphi_2(\widetilde{e}_2)}\\
&\times \sum_{\substack{\psi_i(e_i)^*\\ \chi(l_i)^*}} |F(\chi_1\chi^{(q)}_{0},\chi_2\chi^{(q)}_{0},j_1,j_2,m)|\, \mathscr{W}(\chi_1\psi_1)\mathscr{W}(\chi_2\psi_2).
\end{split}
\end{align}

Just as in the previous subsection we can use Lemma~\ref{lem_Fmult} to split $F$ into factors over divisors of $\mathcal{P}^{\dagger}$ and $\widetilde{\mathcal{P}}$ and write $q=2^s\widetilde{q}q^\dagger$. We furthermore discard the range condition $2^s\widetilde{q}q^\dagger<P^{c_0}$ to see that~\eqref{e31} is
\begin{align}\nonumber
&\leq V(\widetilde{\mathcal{P}})^2\sum_{2^s\widetilde{q}q^\dagger}\sum_{\substack{j_i|q^\dagger \\ l_i|q}}\sum_{\substack{e_i|\mathcal{P}^{\dagger}\widetilde{\mathcal{P}}\\ (e_i,q)=1\\ e_il_i\leq P^{2c_0}}}\sum_{\substack{\psi_i(e_i)^*\\ \chi(l_i)^*}} \frac{|F(\chi^{(l_1^\dagger)}_{1}\chi^{(q^\dagger)}_{0},\chi^{(l_2^\dagger)}_{1}\chi^{(q^\dagger)}_{0},j_1,j_2,m)||S_1^\dagger(q,j_1,e_1)S_2^\dagger(q,j_2,e_2)|}{\varphi(q^\dagger)^2}\\
&\times \frac{|F(\chi^{(\widetilde{l_1}2^s)}_{1}\chi^{(\widetilde{q}2^s)}_{0},\chi^{(\widetilde{l_2}2^s)}_{2}\chi^{(\widetilde{q}2^s)}_{0},1,1,m)|}{\varphi_2(\widetilde{q}2^s)^2\varphi_2(\widetilde{e_1})\varphi_2(\widetilde{e_2})} \mathscr{W}(\chi_1\psi_1)\mathscr{W}(\chi_2\psi_2).\label{eq_32}
\end{align}

Each $\xi_i\coloneqq \chi_i\psi_i$ is a primitive character to the modulus $g_i\coloneqq e_i l_i\leq P^{2c_0}$. We sort after fixed pairs $\xi_1,\xi_2$ to see that~\eqref{eq_32} is
\begin{align}
\leq V(\widetilde{\mathcal{P}})^2\sum_{\substack{g_i\leq P^{2c_0} \\ \xi_i(g_i)^*}}\widetilde{\mathcal{G}}(\xi_1,\xi_2,m)\mathcal{G}^\dagger(\xi_1,\xi_2,m) \mathscr{W}(\xi_1)\mathscr{W}(x_2),\label{eq_33}
\end{align}
where 
\begin{align}\label{eq_Gtilddef}
\widetilde{\mathcal{G}}(\xi_1,\xi_2,m)&\coloneqq \sum_{\substack{e_i l_i=2^{g_i(2)}\widetilde{g}_i\\ e_i|\widetilde{\mathcal{P}}}} \frac{1}{\varphi_2(e_1)\varphi_2(e_2)} \sum_{\substack{q|(2\widetilde{\mathcal{P}})^\infty\\  l_i|q \\ (q,e_1e_2)=1}}\frac{|F(\xi^{(l_1)}_{1}\chi^{(q)}_{0},\xi^{(l_2)}_{2}\chi^{(q)}_{0},1,1,m)|}{\varphi_2(q)^2}, \\
\label{eq_Gdagdef}\mathcal{G}^\dagger(\xi_1,\xi_2,m)&\coloneqq \sum_{\substack{e_i l_i=g_i^\dagger\\ e_i|\mathcal{P^\dagger}}} \sum_{\substack{q|(\mathcal{P}^{\dagger})^\infty\\  \substack{ l_i|q \\ (q,e_1e_2)=1}}}\sum_{j_i|q} \frac{ |S_1^\dagger(q,j_1,e_1)S_2^\dagger(q,j_2,e_2)F(\xi^{(l_1)}_{1}\chi^{(q)}_{0},\xi^{(l_2)}_{2}\chi^{(q)}_{0},j_1,j_2,m)|}{\varphi(q)^2}.
\end{align}
Here $2^{g_i(2)}$ denotes the $2$ component of $g_i$. 
The product $\mathcal{G}^\dagger(\xi_1,\xi_2,m)\widetilde{\mathcal{G}}(\xi_1,\xi_2,m)$ can be seen as a sieve-weighted pseudo-singular series induced by the pair $\xi_1,\xi_2$. We can bound it by the following Lemma that is proved in Subsection~\ref{sec: singular}. It is our analogue of~\cite[Lemma 5.5]{mv}. 
\begin{lemma}\label{lem_SS2}
Let $\xi_i$ be primitive characters.  We have
\begin{align}\label{eq_lemSS2_1}
\widetilde{\mathcal{G}}(\xi_1,\xi_2,m)\mathcal{G}^\dagger(\xi_1,\xi_2,m)\ll V(\mathcal{P}^\dagger)^2 \mathfrak{S}(m).
\end{align}
\end{lemma}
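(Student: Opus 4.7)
The strategy is to factor the product $\widetilde{\mathcal{G}}(\xi_1,\xi_2,m)\mathcal{G}^\dagger(\xi_1,\xi_2,m)$ as an Euler product over primes and bound each local factor individually. The multiplicativity of $F$ (Lemma~\ref{lem_Fmult}), together with the multiplicativity of the sieve sums $S_i^\dagger(q,j,e)$ in $q$ (which is immediate from the definition as $\lambda_i^\dagger$ is supported on squarefree integers in $\mathcal{P}^\dagger$), allow us to write
\[
\widetilde{\mathcal{G}}(\xi_1,\xi_2,m)\mathcal{G}^\dagger(\xi_1,\xi_2,m)=\prod_{p\mid 2\widetilde{\mathcal{P}}}L_p^{\widetilde{}}(\xi_1^{(p^\ast)},\xi_2^{(p^\ast)},m)\prod_{p\in \mathcal{P}^\dagger}L_p^\dagger(\xi_1^{(p^\ast)},\xi_2^{(p^\ast)},m),
\]
where each $L_p^{\widetilde{}}$, $L_p^\dagger$ is a finite local sum involving the $p$-components of the two conductors, the relevant sieve weights, and the local value of $F$. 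By Lemma~\ref{le_Fsimple}, a local factor can only be non-zero if either the corresponding prime power $p^\alpha \parallel q$ has $\alpha=1$ or both $\xi_i$-components are non-trivial with matching conductor; this massively restricts the admissible configurations.

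For the local factors at primes $p\mid 2\widetilde{\mathcal{P}}$, one follows the strategy of Montgomery--Vaughan~\cite[Lemma 5.5]{mv}. Summing the geometric series in $q$ (using $|\varphi_2(p^k)|^{-2}\asymp p^{-2k}$) and applying Lemma~\ref{lem_Feval}, we obtain a local bound
\[
L_p^{\widetilde{}}(\xi_1^{(p^\ast)},\xi_2^{(p^\ast)},m)\ll 1+\frac{\sigma(p,m)}{\varphi_2(p)^2}
\]
uniformly in the conductors $p^\ast$, which upon taking the product yields $\widetilde{\mathcal{G}}\ll \mathfrak{S}(m,2\widetilde{\mathcal{P}})$. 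The crucial cancellations come from Lemma~\ref{le_Fsimple} ruling out high prime powers unless both $\xi_i$ have conductor divisible by exactly the same power of $p$, and from the identity~\eqref{eq_sigmatildsigma} whenever $p\mid m$.

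For the local factors at primes $p\in \mathcal{P}^\dagger$, we must also handle the sieve weight contributions $S_i^\dagger(q,j,e)$. By the definition of $S_i^\dagger$ and the well-known sieve fundamental-lemma-type estimates (see~\cite[Lemma 6.8]{cribro}), for each prime $p$ the local sum of sieve weights produces a factor comparable to $V_p^2=(1-1/(p-1))^2$. Combining this with the same case analysis for $F$ at $p$ as above — now also summing over the variables $j_1,j_2\in \{1,p\}$ and applying Lemma~\ref{lem_Feval} to evaluate $\sigma(p,m)$ — we get
\[
L_p^\dagger(\xi_1^{(p^\ast)},\xi_2^{(p^\ast)},m)\ll V_p^2\Bigl(1+\frac{\sigma(p,m)}{\varphi_2(p)^2}\Bigr).
\]
Taking the product over $p\in \mathcal{P}^\dagger$ gives $\mathcal{G}^\dagger\ll V(\mathcal{P}^\dagger)^2\cdot \mathfrak{S}(m)/\mathfrak{S}(m,2\widetilde{\mathcal{P}})$, and the two estimates combine to the desired bound.

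The main obstacle is the precise local analysis at primes $p\in \mathcal{P}^\dagger$ where $\xi_1^{(p^\ast)}$ and/or $\xi_2^{(p^\ast)}$ are non-trivial, since there the sieve weights interact with the modified Gau{\ss} sums $c_\chi(a,j)$ through the Fourier variables $j_1,j_2$. Here one must carefully expand $c_\chi(a,j)$ (using its evaluation as Gau{\ss} sums, cf.\ Lemma~\ref{lem_ccalc} and~\eqref{e37}) and separate cases according to whether $p\mid m$, $p\mid m+2$, or $p\mid m+4$, matching the behaviour of $\sigma(p,m)$ in~\eqref{eq_Feval1} so that no extraneous factor arises that would destroy the Euler product bound.
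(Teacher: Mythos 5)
There is a genuine gap at the very first step: the asserted Euler-product factorisation of $\widetilde{\mathcal{G}}(\xi_1,\xi_2,m)\mathcal{G}^\dagger(\xi_1,\xi_2,m)$ does not exist, because $S_i^\dagger(q,j,e)$ is \emph{not} multiplicative in $q$. The weights $\lambda_i^\dagger$ are truncated $\beta$-sieve weights of level $D_0$; being supported on squarefree integers does not make them multiplicative, and the level truncation couples the primes, so the sums over $q$, $j_i$, $e_i$ in $\mathcal{G}^\dagger$ cannot be bounded one prime at a time. This is precisely why the paper's proof does not proceed locally: it first uses Lemma~\ref{lem_sie1} to Möbius-invert $S_i^\dagger(q,j_i,e_i)$ into $V(\mathcal{P}^\dagger)\frac{\varphi(q)}{\varphi_2(q)}$ times a sum of $\theta_i(j_ib)h(b)\mu((e_i,b))/h((e_i,b))$ over $b\mid\mathcal{P}^\dagger/q$, and only the $q$-sum (with $F$ and $\varphi_2(q)^{-2}$) is then factored via Lemmas~\ref{lem_Fmult} and~\ref{le_Fsimple}; the remaining global sum over $b_i,j_i,e_i,l_i$ weighted by $|\theta_1\theta_2|$ and divisor-type factors is controlled by the explicit tail decomposition of the $\beta$-sieve (Lemma~\ref{lem_fundlem}), and it is exactly the convergence of that tail, in the presence of the accumulated divisor functions, that forces $\beta>739$ (hence the choice $\beta=750$). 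Your claim that ``for each prime $p$ the local sum of sieve weights produces a factor comparable to $V_p^2$'' is not something the fundamental lemma provides: it is a global statement with an $e^{-cs}$ error relative to the full main term, and any prime-by-prime loss (even a constant per prime dividing the conductors, let alone a genuine factor) cannot be absorbed, since the only compensating savings available downstream (Gallagher's theorem, the fundamental lemma) are merely $o(1)$ in this power-saving regime.

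A second, related gap is the treatment of the local $F$-factors at primes dividing the conductors. Lemma~\ref{lem_Feval} only evaluates $F$ for the principal and exceptional characters to prime modulus; for arbitrary primitive $\xi_i$ of prime-power conductor you need uniform bounds of the type in Lemma~\ref{lem_Festi}, in particular the dichotomy between the ``diagonal'' case $\chi_1=\overline{\chi_2}$, $p^\alpha\mid m$ (where the factor is $1+O(p^{-2})$ after dividing by $1+\sigma(p,m)/(p-2)^2$) and the generic case (where one gets a saving $p^{-1/2}+O(p^{-1})$, respectively $2p^{-1}$ when $p\mid j_1j_2$). Without these the product over conductor primes is not uniformly bounded, and the final estimate $\ll V(\mathcal{P}^\dagger)^2\mathfrak{S}(m)$ with an absolute implied constant — which is what the application in~\eqref{eq_33} requires — does not follow. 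You gesture at this in your closing paragraph, but as it stands the proposal is missing both the correct handling of the non-multiplicative sieve weights and the character-sum input that makes the conductor contribution $O(1)$.
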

Applying this lemma, we find that~\eqref{eq_33} is
\begin{align*}
\ll V(\mathcal{P})^2 \mathfrak{S}(m) \mathscr{W}^2,
\end{align*}
where
\begin{align}\label{eq_W}
\mathscr{W}\coloneqq \sum_{\substack{g\leq P^{2c_0} \\ \xi(g)^*}}\mathscr{W}(\xi).
\end{align}
Since
\begin{align*}
\int_0^1 |T(\eta)|^2\, d\eta \ll N,\quad 
\int_0^1 |\widetilde{T}(\eta)|^2\, d\eta \ll N,
\end{align*}
and we can also apply Lemma~\ref{lem_SS2} if one of the characters is primitive modulo $1$ or the exceptional character, we similarly estimate
\begin{align*}
\sum_{\substack{q\leq P^{c_0}\\ a(q)^*}}e_q(-am) \int_{|\eta|\leq Q^{-1}}\nu_1^T(a,q,\eta)\nu_2^W(a,q,\eta)e(-\eta m)\,d\eta&\ll V(\mathcal{P})^2 \mathfrak{S}(m) N^{1/2}\mathscr{W}\\
\sum_{\substack{q\leq P^{c_0}\\ a(q)^*}}e_q(-am) \int_{|\eta|\leq Q^{-1}}\nu_1^{\widetilde{T}}(a,q,\eta)\nu_2^W(a,q,\eta)e(-\eta m)\,d\eta&\ll V(\mathcal{P})^2 \mathfrak{S}(m) N^{1/2}\mathscr{W}.
\end{align*}
Of course the same holds for the contribution of $\nu_1^W$ with $\nu_2^T$ or $\nu_2^{\widetilde{T}}$. 

It remains to bound $\mathscr{W}$. For this we use Gallagher's prime number theorem.
\begin{lemma}[Gallagher's prime number theorem] \label{lem_GPNT} Let $N\geq R\geq 2$. Then 
\begin{align}\label{eq_gallagher}
\sum_{q\leq R}\sum_{\chi(q)^*}\max_{\substack{h<x\leq N}}\frac{1}{h+N/R}\bigl|\,\,\,\,\psum_{x-h\leq n\leq x} \chi(n)\Lambda_0(n)\bigr|\ll e^{-c\frac{\log N}{\log R}},
\end{align}
where the $\#$ in summation denotes that if $q=1$ we need to subtract from the sum the term
\begin{align*}
\sum_{x-h\leq n\leq x}1,
\end{align*}
and if there is an exceptional zero $\widetilde{\beta}$ of level $R$, then for  $\chi=\widetilde{\chi}$ being the exceptional character we need to subtract
\begin{align*}
-\sum_{x-h\leq n\leq x}n^{\widetilde{\beta}-1},
\end{align*}
and the bound on the right of~\eqref{eq_gallagher} may be improved by a factor of 
\begin{align*}
(1-\widetilde{\beta})(\log R).
\end{align*}
\end{lemma}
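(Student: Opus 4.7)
The plan is to follow the classical Gallagher strategy for short intervals, combined with Linnik's log-free zero density theorem and the Deuring--Heilbronn repulsion phenomenon. First I would reduce the problem to the standard Chebyshev function: replacing $\Lambda_0(n)$ by $\Lambda(n)$ costs only $O(N^{1/2}\log N)$ across all characters of conductor $\leq R$ (since prime powers $p^k$ with $k\geq 2$ up to $N$ number at most $N^{1/2}$), which is absorbed by the right-hand side. For each primitive $\chi\pmod q$ with $q\leq R$, applying the truncated explicit formula to $\psi(x,\chi)\coloneqq \sum_{n\leq x}\Lambda(n)\chi(n)$ with height $T$ gives
\[
\psi(x,\chi)-\psi(x-h,\chi)-\mathcal{E}_\chi(x,h)
=-\sum_{\substack{\rho=\beta+i\gamma\\|\gamma|\leq T}}\frac{x^{\rho}-(x-h)^{\rho}}{\rho}+O\Big(\frac{N(\log qN)^2}{T}\Big),
\]
where $\mathcal{E}_\chi$ is the main term one subtracts by the $\psum$ convention (i.e.\ $h$ if $\chi$ is trivial, and $-\int_{x-h}^{x}u^{\widetilde{\beta}-1}\,du$ if $\chi=\widetilde\chi$), and $\rho$ ranges over the non-trivial zeros of $L(s,\chi)$ other than the subtracted ones.

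Next I would apply Gallagher's $L^2$ trick to handle the double maximum over $x$ and $h$. The key point is that $|x^{\rho}-(x-h)^{\rho}|\ll \min(h,|\gamma|^{-1})N^{\beta}$, and after dividing by $h+N/R$ and taking the supremum, a Plancherel/Parseval argument on dyadic scales of $h$ converts the max into an $L^2$-average, reducing matters to bounding
\[
\sum_{q\leq R}\sum_{\chi(q)^*}\sum_{|\gamma|\leq T}\frac{N^{\beta}}{1+|\gamma|(N/R)}
\]
plus a truncation error that is negligible on choosing $T=N$. Writing this last sum as a Stieltjes integral against $N(\sigma,T,\chi)$ and summing over characters, Linnik's log-free zero density theorem
\[
\sum_{q\leq R}\sum_{\chi(q)^*}N(\sigma,T,\chi)\ll (R^2T)^{c(1-\sigma)}
\]
(uniform for $\sigma\geq 1/2$) yields a bound of the form $N\exp(-c\log N/\log R)$, which is the claimed estimate.

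For the improvement by the factor $(1-\widetilde\beta)\log R$ when the exceptional zero exists and its contribution is subtracted, the essential input is Deuring--Heilbronn: once $\widetilde\beta$ is removed from the sum over zeros, every other non-trivial zero $\rho=\beta+i\gamma$ of every $L(s,\chi)$ with $\chi$ of conductor $\leq R$ satisfies
\[
\beta\leq 1-\frac{c\log\big(e/((1-\widetilde{\beta})\log R)\big)}{\log R}.
\]
Substituting this into the estimate above and re-integrating produces an extra factor of $(1-\widetilde{\beta})\log R$ in the exponential, as desired.

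The main obstacle is the bookkeeping for the short-interval normalization together with the exceptional zero: one has to verify that Gallagher's Plancherel step is compatible with removing a single specific zero (and not inadvertently reintroducing it on the $L^2$ side), and that the Deuring--Heilbronn repulsion propagates cleanly through the hybrid density estimate. Given that this is essentially the content of Gallagher's original paper combined with the standard treatment of exceptional characters (as used, for instance, in Montgomery--Vaughan~\cite{mv}), no new ideas are needed, but the uniformity in $h$, $x$, $q$, and $\chi$ must be tracked with care.
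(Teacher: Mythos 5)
The paper itself does not prove this lemma at all: its proof is a one-line citation to Montgomery--Vaughan (Lemma 4.3 of~\cite{mv}), which is Gallagher's prime number theorem, and your outline (explicit formula, hybrid log-free zero-density estimate, Deuring--Heilbronn repulsion for the exceptional improvement) is precisely the standard proof of that cited result. So in substance you are reconstructing the known argument rather than taking a different route. However, two steps as you have written them do not work and need to be replaced by the standard pointwise treatment. First, you cannot ``convert the max into an $L^2$-average'' by Plancherel on dyadic scales of $h$: a maximum is not dominated by a mean square, and Gallagher's lemma goes in the opposite direction (it bounds an $L^2$-integral of short-interval sums, which is how it is used elsewhere in this paper, not how the maximum in \eqref{eq_gallagher} is handled). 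In Gallagher's and Montgomery--Vaughan's argument the maximum over $x$ and $h$ is dealt with zero by zero: from $|x^{\rho}-(x-h)^{\rho}|/|\rho|\le\min\bigl(hN^{\beta-1},\,2N^{\beta}/(1+|\gamma|)\bigr)$ and $h+N/R\ge\max(h,N/R)$, each zero contributes $\ll N^{\beta-1}\min\bigl(1,R/(1+|\gamma|)\bigr)$ to the \emph{normalised} difference, uniformly in $x$ and $h$, and this is then summed against the density estimate and the zero-free region $\beta\le 1-c/\log\bigl(R(2+|\gamma|)\bigr)$; note it is this $\gamma$-damping, not the zero-free region at height $N$, that makes the truncation $T=N$ harmless.

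Second, your bookkeeping of the normalisation is off in a way that changes the statement being proved: your displayed zero sum carries $N^{\beta}$ where it should carry $N^{\beta-1}$ times the $R/(1+|\gamma|)$-damping, and your final bound $N\exp(-c\log N/\log R)$ is not the claimed estimate --- the left-hand side of \eqref{eq_gallagher} is already divided by $h+N/R$, and the lemma asserts $\ll\exp(-c\log N/\log R)$ with no factor of $N$. Since $h+N/R$ can be as small as $N/R$, an unnormalised bound of size $N\exp(-c\log N/\log R)$ only yields $R\exp(-c\log N/\log R)$ after renormalising, which is far weaker than needed; the division by $h+N/R$ must be exploited \emph{before} summing over zeros, exactly as in the pointwise bound above. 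Finally, for the improvement by the factor $(1-\widetilde{\beta})\log R$, merely inserting the repulsion-improved zero-free region ``into the exponential'' gives a power $\bigl((1-\widetilde{\beta})\log R\bigr)^{c}$ rather than the first power; obtaining the stated factor requires carrying the repulsion through the density-estimate integration as Gallagher does. None of this requires new ideas --- it is the content of the reference the paper cites --- but as written these steps would not compile into a proof of the lemma as stated.
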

\begin{proof}
 This is ~\cite[Lemma 4.3]{mv}.
\end{proof}

We follow the strategy in~\cite[Section 7]{mv}. By Gallagher's Lemma~\cite[Lemma 4.2]{mv} and the fact that we accounted for the possible exceptional zero, we have 
\begin{align*}
\mathscr{W}(\xi)&\ll \Bigl(\int_0^{2N}\frac{1}{Q} \bigl|\psum_{\substack{ n\leq N \\ |n-x|\leq Q}} \xi(n) \Lambda_0(n) \bigr|^2\, dx \Bigr)^{1/2}\\
&\ll N^{1/2} \max_{x\leq 2N} \frac{1}{Q} \Bigl| \psum_{\substack{n\leq N \\ |n-x|\leq Q}} \xi(n) \Lambda_0(n) \Bigr|.
\end{align*}

Applying Lemma~\ref{lem_GPNT} with $R=P^{2c_0}$ (and recalling~\eqref{eq_W}) gives us 
\begin{align*}
\mathscr{W}&\ll N^{1/2}\sum_{\substack{g\leq P^{2c_0} D_0\\ \xi(g)^*}} \max_{x\leq 2N} \frac{1}{Q} \Bigl| \psum_{\substack{n\leq N \\ |n-x|\leq Q}} \xi(n) \Lambda_0(n) \Bigr|\\
&\ll N^{1/2}e^{-c\frac{\log N}{\log P^{c_0}}}
\end{align*}
if there is no exceptional modulus. If the exceptional zero does exist we get the improved estimate
\begin{align*}
\mathscr{W}&\ll N^{1/2}(1-\widetilde{\beta})(\log P^{c_0}) e^{-c\frac{\log N}{\log  P^{c_0}}}. 
\end{align*}
As $c_0$ is fixed, this concludes the proof of~\eqref{e35} and also Proposition~\ref{prop2}, under the assumption of Lemmas~\ref{lem_Fmult},~\ref{le_Fsimple},~\ref{lem_SS1},~\ref{lem_Feval} and~\ref{lem_SS2} whose proofs are postponed to the next section.

\section{Auxiliary results}

The goal of this section is to prove several auxiliary results that were used in the previous section in the proof of Key Proposition~\ref{prop2}.

\subsection{Gau\ss{} Sums with gcd condition}\label{sec: gauss}
In this subsection we consider the modified Gau\ss{}  sums $c_{\chi}(a,j)$ and the related function $F(\chi_1,\chi_2,j_1,j_2,m)$ defined in~\eqref{e37} and~\eqref{eq_Fdef} respectively. 

We now introduce some more notation to handle the remaining statements. We write $c_{\chi^{(q)}_{0}}(a,j)=c_q(a,j)$ if $\chi$ is principal. As usual, we also use $c_{\chi}(a)\coloneqq \sum_{b(q)^*}\chi(b)e_q(ab)$ to denote the Gau\ss{} sum without a gcd condition and write $c_{\chi^{(q)}_{0}}(a)=c_q(a)$ (which is a Ramanujan sum). We use the placeholder '$-$' in place of $j_i$ in $F(\chi_1,\chi_2,j_1,j_2,m)$ to denote that the corresponding Gau\ss{} sum has no gcd restriction. For example, this means that
\begin{align*}
	F(\chi_1,\chi_2,-,j_2,m)\coloneqq \sum_{a(q)^*}c_{\chi_1}(a)c_{\chi_2}(a,j_2)e_q(-am).
\end{align*}
We follow the usual notation and write 
\begin{align*}
	\tau(\chi)\coloneqq c_\chi(1).
\end{align*}

We start by proving the stated multiplicativity of $F$.
\begin{proof}[Proof of Lemma~\ref{lem_Fmult}]
Let $r,s$ be coprime integers with $q=rs$. It suffices to show that 
\begin{align*}
    F(\chi_1,\chi_2,j_1,j_2,m)=F(\chi^{(r)}_{1},\chi^{(r)}_{2},j_1,j_2,m)F(\chi^{(s)}_{1},\chi^{(s)}_{2},j_1,j_2,m)
\end{align*} We observe
\begin{align*}
\chi_i(b_i)1_{(b_i+2,q')=(j_i,q')}=\chi_{i}^{(r)}(b_i)1_{(b_i+2,r')=(j_i,r')}\chi_{i}^{(s)}(b_i)1_{(b_i+2,s')=(j_i,s')}.
\end{align*}
The lemma now follows from the Chinese remainder theorem since
\begin{align*}
F(\chi_1,\chi_2,j_1,j_2,m)=\sum_{b_1,b_2(q)}\chi_1(b_1)1_{(b_1+2,q')=(j_1,q')}\chi_2(b_2)1_{(b_2+2,q')=(j_2,q')}c_q(b_1+b_2-m),
\end{align*}
all appearing functions are suitably periodic, and $c_q(b_1+b_2-m)$ is multiplicative in $q$.
\end{proof}

By the principle of inclusion-exclusion we can rearrange the gcd condition. In particular, if $\chi_i$ are characters whose modulus is a power of $p$, to calculate $F(\chi_1,\chi_2,j_1,j_2,m)$ we observe that
\begin{align}\label{eq_c10p}
c_{\chi_i}(a,1)=c_{\chi_i}(a)-c_{\chi_i}(a,p)
\end{align}
and so
\begin{align}\label{eq_Fred1}
F(\chi_1,\chi_2,1,1,m)=F(\chi_1,\chi_2,-,-,m)-F(\chi_1,\chi_2,-,p,m)-F(\chi_1,\chi_2,p,-,m)+F(\chi_1,\chi_2,p,p)
\end{align}
and 
\begin{align}\label{eq_Fred2}
F(\chi_1,\chi_2,1,p,m)=F(\chi_1,\chi_2,-,p,m)-F(\chi_1,\chi_2,p,p,m).
\end{align}
To make use of these decompositions, we recall Lemma \ref{lem_ccalc}. We can now prove the formulas for $\sigma(p,m)$, $\sigma'(p,m)$, $\widetilde{\sigma}(p,m)$ that were used for the main term evaluation in Subsection~\ref{sec:majarcMT}. We also prove the following closely related result.
\begin{lemma}\label{lem_Feval2}
	Let $p\neq 2$. We have
	\begin{align}
	\label{eq_Feval2} F(\chi^{(p)}_{0},\chi^{(p)}_{0},p,1,m)&=\begin{cases}
	-p+2 &\,\,\textnormal{if}\,\, p\mid m+2\,\, \textnormal{or}\,\, p\mid m+4 \\
	2 &\,\,\textnormal{else},
	\end{cases}  \\
	\label{eq_Feval3} F(\chi^{(p)}_{0},\chi^{(p)}_{0},p,p,m)&=\begin{cases}
	p-1 &\,\,\,\,\,\textnormal{if}\,\,  p\mid m+4 \\
	-1 &\,\,\,\,\,\textnormal{else} .
	\end{cases}
	\end{align}
\end{lemma}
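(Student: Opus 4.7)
The plan is to reduce both quantities to elementary Ramanujan sums by exploiting the fact that, for odd $p$, the gcd-restriction $(b+2,p)=p$ singles out a single residue class, namely $b\equiv -2\pmod p$, which is automatically coprime to $p$. This gives at once the closed form
\[
c_{\chi_0^{(p)}}(a,p) = e_p(-2a),
\]
and combining with the identity~\eqref{eq_c10p} yields $c_{\chi_0^{(p)}}(a,1) = c_p(a) - e_p(-2a)$, where $c_p$ denotes the usual Ramanujan sum.

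Substituting these two identities into the definition~\eqref{eq_Fdef} collapses both $F$-sums to short linear combinations of Ramanujan sums evaluated at the shifts $m+2$ and $m+4$. For the $(p,p)$ case the product $e_p(-2a)\cdot e_p(-2a)\cdot e_p(-am)$ collapses immediately to $e_p(-(m+4)a)$, so that
\[
F(\chi_0^{(p)},\chi_0^{(p)},p,p,m) = c_p(m+4).
\]
For the $(p,1)$ case, expanding the bracket produces $\sum_{a(p)^*} c_p(a) e_p(-(m+2)a) - c_p(m+4)$; the first sum simplifies because $c_p(a) = -1$ for every $a \in (p)^*$, so it reduces to $-c_p(m+2)$. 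We therefore arrive at
\[
F(\chi_0^{(p)},\chi_0^{(p)},p,1,m) = -c_p(m+2) - c_p(m+4).
\]

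To finish, I would invoke the standard evaluation $c_p(x) = p-1$ if $p\mid x$ and $c_p(x) = -1$ otherwise, and split into the three cases $p\mid m+2$, $p\mid m+4$, and neither. The key observation which ensures that the formulas come out cleanly is that for $p>2$ the two divisibility conditions $p\mid m+2$ and $p\mid m+4$ are mutually exclusive, so that no mixed case arises. The entire argument is a short, direct calculation; there is no substantive obstacle beyond managing the case split carefully.
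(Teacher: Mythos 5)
Your proposal is correct: the identity $c_{\chi_0^{(p)}}(a,p)=e_p(-2a)$, the inclusion–exclusion $c_{\chi_0^{(p)}}(a,1)=c_p(a)-e_p(-2a)$, and the resulting reduction to the Ramanujan sums $c_p(m+2)$, $c_p(m+4)$ give exactly the stated case values, and the divisibility conditions are indeed mutually exclusive for odd $p$. This is essentially the paper's own argument (inclusion–exclusion on the gcd condition as in~\eqref{eq_c10p}--\eqref{eq_Fred2}, then evaluation of the resulting Gauss/Ramanujan sums), just written out directly for the principal character.
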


\begin{proof}[Proof of Lemma~\ref{lem_Feval} and Lemma~\ref{lem_Feval2}]
For arbitrary characters $\chi_1,\chi_2$ to the modulus $p$ by~\eqref{eq_Fred1} we have
\begin{align*}
F(\chi_1,\chi_2,1,1,m)&=\sum_{\substack{a(p)^*\\b_i(p)}}\chi_1(b_1)\chi_2(b_2)e_p(a(b_1+b_2-m))\\
&-\chi_1(-2)\sum_{\substack{a(p)^*\\b_2(p)}}\chi_2(b_2)e_p(a(-2+b_2-m))\\
&-\chi_1(-2)\sum_{\substack{a(p)^*\\b_1(p)}}\chi_1(b_1)e_p(a(b_1-2-m))\\
&+\chi_1\chi_2(-2)\sum_{a(p)^*}e_p(a(-4-m)) \\
&=\tau(\chi_1)\tau(\chi_2)c_{\overline{\chi_1 \chi_2}}(-m)-\chi_1(-2)\tau(\chi_2)c_{\overline{\chi_2}}(-m-2)\\
&-\chi_2(-2)\tau(\chi_1)c_{\overline{\chi_1}}(-m-2)+\chi_1\chi_2(-2)c_p(-m-4).
\end{align*}

If $\chi_1=\chi_2=\chi^{(p)}_{0}$ then $\tau(\chi_i)=-1$ and~\eqref{eq_Feval1} follows from Lemma~\ref{lem_ccalc}. For the case $\chi_1=\chi_2=\chi$ for some quadratic character $\chi$, we have
\begin{align*}
F(\chi,\chi,1,1,m)=\tau(\chi)^2 c_{p} (-m)-2\chi(-2)\tau(\chi)c_{\chi}(-m-2)+c_p(-m-4)
\end{align*}
Note that $\chi$ is primitive and so by Lemma~\ref{lem_ccalc} we get that $c_{\chi}(-m-2)=0$ if $p\mid m+2$. For the case $p\nmid m+2$ we get
\begin{align*}
\chi(-2)\tau(\chi)c_{\chi}(-m-2)=\chi(2(m+2))\tau(\chi)^2.
\end{align*}
Since $\chi$ is quadratic, we have  $\tau(\chi)^2=\chi(-1)p$, so that~\eqref{eq_Feval4} follows. The remaining results can be deduced in similar fashion, starting with~\eqref{eq_Fred2} to show~\eqref{eq_Feval2} and~\eqref{eq_Feval3}.

The case of $p=2$, that is~\eqref{eq_Fevalp=2a} and~\eqref{eq_Fevalp=2b} is as in~\cite[Section 5]{mv}, as the gcd condition has no effect.
\end{proof}

 We continue with an estimate that is used later in Section~\ref{sec: singular} in the proof of Lemma~\ref{lem_SS2} to bound the contribution of all characters induced by a fixed pair of primitive ones.
\begin{lemma} \label{lem_Festi}
Let $\chi_i$ be characters to the modulus $p^\alpha$, $p\neq 2$, $\alpha>0$. We have
\begin{align}\label{eq_lemFeststatement1}
|F(\chi_1,\chi_2,1,1,m)|&\leq \begin{cases}p^{2\alpha}-3p^{2\alpha-1}+1 & \text{ if } \chi_1=\overline{\chi_2} \text{ and } p^{\alpha}\mid m,  \\
p^{2\alpha-1/2}+3p^{2\alpha-1} & else.
\end{cases}
\end{align}
and 
\begin{align}\label{eq_lemFeststatement2}
|F(\chi_1,\chi_2,j_1,j_2,m)|&\leq 2p^{2 \alpha-1} \quad \text{ if } p|j_1 j_2.
\end{align}
Furthermore, for $p=2$ we have
\begin{align}\label{eq_lemFeststatementp=2}
|F(\chi_1,\chi_2,1,1,m)|&\leq 2^{2\alpha}.
\end{align}
\end{lemma}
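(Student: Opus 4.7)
The overall approach would be to reduce $F(\chi_1,\chi_2,j_1,j_2,m)$ to sums of classical Gauss and Ramanujan sums. Two preliminary simplifications apply at the outset: Lemma~\ref{le_Fsimple} allows us to assume both $\chi_1,\chi_2$ are primitive modulo $p^\alpha$ whenever $\alpha \geq 2$, and the inclusion--exclusion identities~\eqref{eq_Fred1} and~\eqref{eq_Fred2} rewrite the $j_i = 1$ restrictions as signed combinations of sums where each $c_{\chi_i}(a,j_i)$ is replaced either by the unrestricted $c_{\chi_i}(a)$ (a classical Gauss sum) or by $c_{\chi_i}(a,p)$ (with the complementary restriction $b_i \equiv -2 \pmod p$).

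The key technical building block to compute would be $c_\chi(a,p)$ for primitive $\chi$ modulo $p^\alpha$. Parameterizing $b = -2 + pc$ with $c \pmod{p^{\alpha-1}}$ and factoring $\chi(-2+pc) = \chi(-2)\chi(1+pc \cdot \overline{(-2)})$, the logarithm isomorphism $(1 + p\mathbb{Z}/p^\alpha)^\times \cong \mathbb{Z}/p^{\alpha-1}$ (for odd $p$) turns the restriction of $\chi$ to $1 + p\mathbb{Z}/p^\alpha$ into an additive character $e_{p^{\alpha-1}}(\lambda c)$ with $(\lambda,p) = 1$ by primitivity. Orthogonality then shows that $|c_\chi(a,p)| = p^{\alpha-1}$ on a single residue class of $a$ modulo $p^{\alpha-1}$ and vanishes elsewhere; for $\alpha = 1$ one uses the elementary identity $c_\chi(a,p) = \chi(-2) e_p(-2a)$ directly. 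Feeding these into the four-term decomposition and combining with $|\tau(\chi)| = p^{\alpha/2}$ and $|c_{\overline{\chi_1\chi_2}}(-m)| \leq p^{\alpha-1/2}$ (from Lemma~\ref{lem_ccalc}, using that in the generic case of~\eqref{eq_lemFeststatement1} either $\overline{\chi_1\chi_2}$ is nontrivial or $p^\alpha \nmid m$) yields $p^{2\alpha-1/2} + 3p^{2\alpha-1}$. The bound~\eqref{eq_lemFeststatement2} follows similarly after distinguishing whether one or both of $j_1,j_2$ are divisible by $p$. For~\eqref{eq_lemFeststatementp=2}, when $p = 2$ the coprimality $(b,2^\alpha) = 1$ forces $(b+2,2) = 1$ automatically, so $c_\chi(a,1) = c_\chi(a)$, and the trivial bound $|c_\chi(a)| \leq 2^{\alpha/2}$ combined with $|\{a : (a,2)=1\}| = 2^{\alpha-1}$ gives $|F| \leq 2^{2\alpha-1}$.

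The main obstacle will be the sharp bound $p^{2\alpha} - 3p^{2\alpha-1} + 1$ in the case $\chi_1 = \overline{\chi_2}$ primitive with $p^\alpha \mid m$, since the triangle inequality applied to the four-term decomposition only yields $O(p^{2\alpha})$ and misses the decisive $-3p^{2\alpha-1}$ coefficient. The plan is to compute this case directly by swapping the order of summation: starting from $F = \sum_{b_1,b_2}^{*} \chi_1(b_1\overline{b_2}) 1_{b_i \not\equiv 0,-2(p)}\, c_{p^\alpha}(b_1+b_2-m)$ and using $p^\alpha \mid m$, the decomposition $c_{p^\alpha}(c) = \varphi(p^\alpha) 1_{p^\alpha \mid c} - p^{\alpha-1} 1_{p^{\alpha-1} \| c}$ splits $F$ into a diagonal piece from $b_2 \equiv -b_1 \pmod{p^\alpha}$ and an off-diagonal remainder. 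On the diagonal one has $\chi_1(b_1\overline{b_2}) = \chi_1(-1)$, and the count of admissible $b_1 \pmod{p^\alpha}$ (namely $b_1 \not\equiv 0, 2, -2 \pmod p$) is exactly $(p-3)p^{\alpha-1}$, producing the leading $-3p^{2\alpha-1}$ correction after combination with $\varphi(p^\alpha)$ and $-p^{\alpha-1}$. The off-diagonal contribution telescopes to $\pm 1$ via the identity $\sum_{b \not\equiv 0,-2(p)} \chi_1(b) = -\chi_1(-2)$ for nontrivial $\chi_1$, giving the final constant $+1$.
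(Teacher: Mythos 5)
Your skeleton is the same as the paper's (the inclusion--exclusion identities \eqref{eq_Fred1}--\eqref{eq_Fred2}, reduction to Gauss/Ramanujan sums via Lemma~\ref{lem_ccalc}, a separate sharp treatment of the case $\chi_1=\overline{\chi_2}$, $p^\alpha\mid m$, and the observation that the gcd condition is vacuous at $p=2$), but two of your key computations fail. First, your evaluation of $c_\chi(a,p)$ for primitive $\chi\pmod{p^\alpha}$ --- magnitude $p^{\alpha-1}$ on a single class of $a\pmod{p^{\alpha-1}}$, zero elsewhere --- is false once $\alpha\ge 3$. The map $c\mapsto 1+pc\,\overline{(-2)}$ is a bijection onto $1+p\mathbb{Z}/p^\alpha\mathbb{Z}$ but not a homomorphism, so $\chi(1+pc\,\overline{(-2)})$ is \emph{not} an additive character in $c$; under the genuine logarithm the phase is a nonlinear polynomial in $c$ and the complete sum shows square-root cancellation. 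Concretely, opening $1_{b\equiv-2\,(p)}=\frac{1}{p-1}\sum_{\psi\,(p)}\overline{\psi}(-2)\psi(b)$ and applying Lemma~\ref{lem_ccalc} gives $|c_\chi(a,p)|\le p^{\alpha/2}$ for all $a$ (and vanishing for $p\mid a$), which contradicts $p^{\alpha-1}$ for $\alpha\ge3$. This is exactly the route the paper takes: it never evaluates $c_\chi(a,p)$ in isolation but inserts characters mod $p$ and reduces $F(\chi_1,\chi_2,-,p,m)$ to Gauss sums, obtaining a closed form of size $\le p^{2\alpha-1}$; with your (incorrect) building block the asserted totals $p^{2\alpha-1/2}+3p^{2\alpha-1}$ and $2p^{2\alpha-1}$ are simply not derived --- e.g.\ pairing your $c_{\chi_2}(a,p)$ against $|c_{\chi_1}(a)|\le p^{\alpha/2}$ over the claimed support gives $p^{3\alpha/2}$, which already exceeds $p^{2\alpha-1}$ at $\alpha=1$, so the per-term bookkeeping needs the multiplicative structure you have discarded.

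Second, in the sharp case with $\chi_1=\overline{\chi_2}$ primitive and $p^\alpha\mid m$, your diagonal count $(p-3)p^{\alpha-1}$ is correct, but the off-diagonal piece does \emph{not} telescope to $\pm1$ when $\alpha\ge2$: the identity $\sum_{b\not\equiv 0,-2\,(p)}\chi_1(b)=-\chi_1(-2)$ holds only for characters mod $p$, while for primitive $\chi_1\pmod{p^\alpha}$, $\alpha\ge2$, that sum is $0$. Writing $b_2\equiv -b_1+p^{\alpha-1}t$ with $t\not\equiv0\,(p)$, one finds $\chi_1(b_1\overline{b_2})=\chi_1(-1)e_p(\lambda t\overline{b_1})$ with $(\lambda,p)=1$, so the $-p^{\alpha-1}1_{p^{\alpha-1}\|\,b_1+b_2}$ piece contributes $\chi_1(-1)(p-3)p^{2\alpha-2}$ and the exact value is $F=\chi_1(-1)(p^{2\alpha}-3p^{2\alpha-1})$; the diagonal alone gives only $\chi_1(-1)(p^{2\alpha}-4p^{2\alpha-1}+3p^{2\alpha-2})$. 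Your "diagonal plus $\pm1$" is therefore a false evaluation of $F$ (it undercounts $|F|$ by $(p-3)p^{2\alpha-2}\mp1$); it happens to imply the stated upper bound only because it is numerically smaller, so the step producing it is genuinely wrong rather than merely lossy. The paper instead gets the constant $-3p^{2\alpha-1}+1$ by exactly evaluating the three auxiliary sums in \eqref{eq_Fred1}, namely $F(\cdot,\cdot,-,-,m)=\chi_1(-1)p^{2\alpha}(1-1/p)$, $F(\cdot,\cdot,-,p,m)=F(\cdot,\cdot,p,-,m)=\chi_1(-1)p^{2\alpha-1}$ and $|F(\cdot,\cdot,p,p,m)|\le1$, and letting the signs do the work. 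Your $p=2$ argument is fine and matches the paper's.
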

\begin{proof}

For $i\in \{1,2 \}$ we denote by $p^{\alpha_i^*}$ the conductor of $\chi_i$. Let further $p^{\alpha'}$ be the conductor of $\chi_1\overline{\chi}_2$ and $p^{\alpha_m}=(p^\alpha,m)$. With the help of the identities~\eqref{eq_Fred1} and~\eqref{eq_Fred2}, we can reduce the lemma to the study of
\begin{align*}
&F(\chi_1,\chi_2,-,-,m),\\
 &F(\chi_1,\chi_2,-,p,m)= F(\chi_2,\chi_1,p,-,m)\,\,\textnormal{ and }\\
&F(\chi_1,\chi_2,p,p,m).
\end{align*}

We start by observing that (see also~\cite[proof of Lemma 5.5]{mv})
\begin{align*}
F(\chi_1,\chi_2,-,-,m)=\tau(\chi_1)\tau(\chi_2)c_{\overline{\chi_1 \chi_2}}(-m).
\end{align*}
We apply Lemma~\ref{lem_ccalc} to $c_{\overline{\chi_1 \chi_2}}(-m)$. If $\chi_1=\overline{\chi}_2$ (i.e. $\alpha'=0$) we have
\begin{align*}
c_{\overline{\chi_1 \chi_2}}(-m)=c_{p^\alpha}(m)=\begin{cases} p^{\alpha}(1-\frac{1}{p}) & \text{ if } p^\alpha|m \\
-p^{\alpha-1}  & \text{ if } p^{\alpha-1}||m \\
0  & \text{ else. }
\end{cases}
\end{align*}
If $\alpha'\neq 0$, then $c_{\overline{\chi_1\chi_2}}(-m)$ vanishes, unless $\alpha-\alpha_n-\alpha'=0$. We get
\begin{align*}
|c_{\overline{\chi_1\chi_2}}(-m)|&\leq p^{(\alpha-\alpha_m)/2}\frac{\varphi(p^{\alpha})}{\varphi(p^{\alpha-\alpha_m})} \\
&\leq p^{\alpha-1/2}.
\end{align*}

For $\tau(\chi_1)$, we have the following well-known results
\begin{align}
|\tau(\chi_1)|&=\begin{cases}p^{\alpha/2} &\text{ if } \alpha_1^*=\alpha \\
1 &\text{ if } \alpha_1^*=0, \alpha=1 \\
0 & \text{ else,}
\end{cases} \label{eq_lemFestitau1}\\
\tau(\chi_1)\tau(\overline{\chi_1})&=\chi(-1)|\tau(\chi_1)|^2. \label{eq_lemFestitau2}
\end{align}
We get
\begin{alignat}{2}
\label{eq_lemFesti_1} F(\chi_1,\chi_2,-,-,m)&=\chi_1(-1)p^{2\alpha}(1-\frac{1}{p}) &&\,\,\text{ if } \chi_1=\overline{\chi_2}, \,\,\alpha_1^*=\alpha \text{ and } p^\alpha\mid m\\
\label{eq_lemFesti_2}|F(\chi_1,\chi_2,-,-,m)|&\leq p^{2\alpha-1/2} &&\,\,\text{ else. }
\end{alignat}

We continue with the next case of $F$ and open the divisibility condition in the definition of $F(\chi_1,\chi_2,-,p,m)$ by characters. This gives us
\begin{align}
F(\chi_1,\chi_2,-,p,m)&=\frac{1}{\varphi(p)}\sum_{\psi(p)}\overline{\psi}(-2)\sum_{b_1(p^\alpha)}\chi_1(b_1)\sum_{b_2(p^\alpha)}\chi_2\psi(b_2)\sum_{a(p^\alpha)^*}e_{p^\alpha}(a(b_1+b_2-m)) \nonumber\\
&=\frac{1}{\varphi(p)}\sum_{\psi(p)}\overline{\psi}(-2)\tau(\chi_1)\tau(\chi_2\psi)c_{\overline{\chi_1\chi_2\psi}}(-m). \label{eq_lemFesti_3}
\end{align}

We consider first the case $\alpha'\leq 1$. In that case the conductor of $\chi_1\chi_2\psi$ is always at most $p$. Thus, 
\begin{align*}
c_{\overline{\chi_1\chi_2\psi}}(-m)&=\sum_{b(p^\alpha)}\overline{\chi_1\chi_2\psi}(b)e_{p^\alpha}(-bm)\\
&=\sum_{\substack{b'(p)^* \\ b''(p^{\alpha-1})}} \overline{\chi_1\chi_2\psi}(b''p+b')e_{p^\alpha}(-(b''p+b')m) \\
&=\sum_{\substack{b'(p)^*}}\overline{(\chi_1\chi_2)^*\psi}(b')e_{p^\alpha}(-b'm)\sum_{b''(p^{\alpha-1})}e_{p^{\alpha-1}}(-b'' m)\\
&= 1_{p^{\alpha-1}\mid m}p^{\alpha-1}\sum_{\substack{b'(p)^*}}\overline{(\chi_1\chi_2)^*\psi}(b')e_{p^{\alpha}}(-b'm),
\end{align*}
where $(\chi_1\chi_2)^*$ is the primitive character inducing $\chi_1\chi_2$. Plugging this in~\eqref{eq_lemFesti_3} gives
\begin{align*}
&F(\chi_1,\chi_2,-,p,m)\\
&=\frac{1_{p^{\alpha-1}\mid m}p^{\alpha-1}\tau(\chi_1)}{\varphi(p)}\sum_{\psi(p)}\overline{\psi}(-2)\sum_{b_2(p^{\alpha})}\chi_2\psi(b_2)e_{p^{\alpha}}(b_2) \sum_{\substack{b'(p)^*}}\overline{(\chi_1\chi_2)^*\psi}(b')e_{p}(-b'm/p^{\alpha-1})\\
&=\frac{1_{p^{\alpha-1}|m}p^{\alpha-1}\tau(\chi_1)}{\varphi(p)}\sum_{b_2(p^{\alpha})}\chi_2(b_2)e_{p^{\alpha}}(b_2)\sum_{\substack{b'(p)^*}}\overline{(\chi_1\chi_2)^*}(b')e_{p^{\alpha}}(-b'm)\sum_{\psi(p)}\psi(\overline{-2b'}b_2 )\\
&=1_{p^{\alpha-1}|m}p^{\alpha-1}\tau(\chi_1)\sum_{b_2(p^{\alpha})}\chi_2(b_2)e_{p^{\alpha}}(b_2) \overline{(\chi_1\chi_2)}(\overline{-2}b_2)e_{p^{\alpha}}(\overline{2}b_2m)\\
&=1_{p^{\alpha-1}|m}p^{\alpha-1}\tau(\chi_1)\chi_1\chi_2(-2)c_{\overline{\chi_1}}(\overline{2}m+1).
\end{align*}

If $p^\alpha|m$ then  $c_{\overline{\chi_1}}(\overline{2}m+1)=c_{\overline{\chi_1}}(1)=\tau(\overline{\chi_1})$ and we get
\begin{align}
\label{eq_lemFesti_4} F(\chi_1,\chi_2,-,p,m)=\chi(-1)p^{2\alpha-1} \quad  \text{ if } \chi_1=\overline{\chi_2} \text{ and } p^\alpha|m.
\end{align}
Furthermore, if $\alpha_1\geq 1$, then $\tau(\chi_1)=0$ unless $\alpha_1^*=\alpha$. By Lemma~\ref{lem_ccalc} and~\eqref{eq_lemFestitau2}, we have
\begin{align}\label{eq_lemFesti_5}
|\tau(\chi_1)c_{\overline{\chi_1}}(\overline{2}m+1)|\leq p^\alpha.
\end{align}

If $\alpha_1^*=0$, then $\tau(\chi_1)=-1$ and $|c_{\overline{\chi_1}}(\overline{2}m+1)|\leq p^\alpha$, so again~\eqref{eq_lemFesti_5} follows. We get (for the moment still restricted to $\alpha'\leq 1$) the estimate
\begin{align}\label{eq_lemFesti_6}
|F(\chi_1,\chi_2,-,p,m)|\leq p^{2\alpha-1}.
\end{align}

For the case $\alpha'\geq 2$ we observe that for any $\psi$ the conductor of $\chi_1\chi_2\psi$ in~\eqref{eq_lemFesti_3} is $p^{\alpha'}$. From Lemma~\ref{lem_ccalc} we get that $c_{\overline{\chi_1\chi_2\psi}}(-m)=0$ unless $\alpha_m= \alpha-\alpha'$ and so we can estimate 
\begin{align*}
|c_{\overline{\chi_1\chi_2\psi}}(-m)| &\leq p^{\alpha-\alpha'/2}\\
 &\leq p^{\alpha-1}.
\end{align*}
Using this bound, a summation with absolute values over $\psi$ in~\eqref{eq_lemFesti_3} shows that~\eqref{eq_lemFesti_6} holds for any value of $\alpha'$.

We now consider $F(\chi_1,\chi_2,p,p,m)$. We first show that
\begin{align}\label{eq_lemFesti_7}
|F(\chi_1,\chi_2,p,p,m)|\leq 1  &\text{ if } \chi_1=\overline{\chi_2} \text{ and } p^\alpha\mid m.
\end{align}
For $\alpha=1$ we have
\begin{align}\label{eq_lemFesti_8}
F(\chi_1,\chi_2,p,p,m)=\chi_1\chi_2(-2)c_p(m-4)
\end{align}
and~\eqref{eq_lemFesti_7} follows. Furthermore, we claim
\begin{align}\label{eq_lemFesti_9}
F(\chi_1,\chi_2,p,p,m)=0, \quad \text{ if } \alpha'\leq 1 \text{ and } \alpha\geq 2.
\end{align}
This will give~\eqref{eq_lemFesti_7} for the remaining $\alpha$. 

To show~\eqref{eq_lemFesti_9}, we note that the choice of $\alpha'$ implies that $\chi_2=\overline{\chi_1}\chi'$ for some character $\chi'$ with conductor at most $p$. Thus
\begin{align*}
F(\chi_1,\chi_2,p,p,m)=\sum_{\substack{b_i(p^\alpha)^*\\ b_i\equiv -2(p)}}\chi_1(b_1\overline{b_2})\chi'(b_2)\sum_{a(p^\alpha)^*}e_{p^\alpha}(a(b_1+b_2-m)).
\end{align*}
We substitute $b_2=b_1r$ so that this equals
\begin{align*}
\chi'(-2)\sum_{\substack{b_1,r(p^\alpha)^*\\ b_1\equiv -2(p), r\equiv 1(p)}}\chi_1(\overline{r})\sum_{a(p^\alpha)^*}e_{p^\alpha}(a(b_1+b_1\overline{r}-m)),
\end{align*}
where we used that $\chi'(rb_1)=\chi'(-2)$, since $rb_1\equiv -2(p)$ and the conductor of $\chi'$ is at most $p$. We consider the sum over $b_1$ and get
\begin{align*}
\sum_{\substack{b_1(p^\alpha),\\ b_1\equiv -2(p)}}e_{p^\alpha}(a(b_1+b_1\overline{r}))&=e_{p^\alpha}(-2a(1+\overline{r}))\sum_{\substack{b'(p^{\alpha-1})}} e_{p^{\alpha-1}}(b'a(1+\overline{r}))\\
&=0,
\end{align*}
since $p\nmid a(1+\overline{r})$. From~\eqref{eq_lemFesti_8} and~\eqref{eq_lemFesti_9} we also get the always valid estimate
\begin{align}\label{eq_lemFesti_10}
|F(\chi_1,\chi_2,p,p,m)|\leq p-1.
\end{align}

We gather the results and get from~\eqref{eq_Fred1},~\eqref{eq_lemFesti_1},~\eqref{eq_lemFesti_4}, and~\eqref{eq_lemFesti_7} the case $\chi_1=\overline{\chi_2},  p^\alpha|m$ of~\eqref{eq_lemFeststatement1}.The remaining cases of the lemma follow from a combination of the decompositions~\eqref{eq_Fred1} or~\eqref{eq_Fred2} with~\eqref{eq_lemFesti_2},~\eqref{eq_lemFesti_6}, and~\eqref{eq_lemFesti_10}.

For the statement in~\eqref{eq_lemFeststatementp=2} the gcd conditions do not play a role and so it follows directly from the considerations in~\cite[Section 5]{mv}.
\end{proof}

With the tools used in the previous proof, we can now also quickly deduce Lemma~\ref{le_Fsimple} and complete our considerations of Gau\ss{} sums with a gcd condition.
\begin{proof}[Proof of Lemma~\ref{le_Fsimple}] The statement follows from the fact that $\tau(\chi)$ vanishes if $\chi$ is a character to the modulus $\alpha>1$ that is not primitive (see~\eqref{eq_lemFestitau1}) and writing the congruence conditions with the help of characters as in the previous proof.
\end{proof}

\subsection{Sieve results}\label{sec: sieve}

An application of a sieve of the form
\begin{align}\label{eq_theta}
\theta(n)=\sum_{d\mid n}\lambda_d   
\end{align}
with range $\mathcal{P}$ on a sequence with local density function $\varphi(d)^{-1}$ gives rise to a main term of the form
\begin{align}
\label{eq_sieve1}\sum_{d|\mathcal{P}}\frac{\lambda_d}{\varphi(d)}=V(\mathcal{P}) \sum_{b|\mathcal{P}}\theta(b) h(b).
\end{align}
Here the function $h$ is multiplicative and given on primes by
\begin{align*}
h(p)=\frac{\varphi(p)^{-1}}{1-\varphi(p)^{-1}}.
\end{align*}

Let $D$ be the level of the sieve, $z=\mathcal{P}^+$ and write $s=\frac{\log D}{\log z}$. If the sieve fulfils a so-called fundamental lemma we have
\begin{align*}
\sum_{d|\mathcal{P}(z)}\frac{\lambda_d}{\varphi(d)}=V(\mathcal{P})\bigl(1+O(e^{-cs})\bigr)
\end{align*}
which is equivalent to
\begin{align}\label{eq_fundlem}
\Big|\sum_{\substack{b|\mathcal{P}\\ b\neq 1}}\theta(b) h(b)\Big|\ll e^{-cs}.
\end{align}
Since our pre-sieves are interacting with the additive problem we consider, we are led to the study of $S_i(q,j,e)$ as given in~\eqref{eq_Sdef} instead of~\eqref{eq_sieve1}. 

We state the results in this subsection in slightly more general terms and use the notation $g(d)$ and $h(d)$ with or without subscripts to denote pairs of multiplicative functions supported on the squarefree numbers with
\begin{align*}
0\leq g(p) < 1
\end{align*}
and satisfying the relation
\begin{align*}
h(p)=\frac{g(p)}{1-g(p)}.
\end{align*}
The effect of $q$ in $S_i(q,j,e)$ can be absorbed into the definition of the local density function, and we consider
\begin{align*}
g(e)\sum_{\substack{\substack{c|j \\ d|\mathcal{P}/(ej)}}}\lambda_{cde}g(d).
\end{align*}

We will see in the next subsection that treating the pseudo-singular series with sieve weights gives rise to a problem that is very similar to, but slightly more delicate than, what is encountered in the study of sieve weights in short intervals. The first lemma we need is a technical identity that translates $S(q,j,e)$ into the language of $\theta$, generalising~\eqref{eq_sieve1}. This result and its proof are motivated by~\cite[proof of Lemma 6.18]{cribro} or more precisely~\cite[proof of Lemma 5.1]{matoalmost} which fixes a mistake in the former. 

\begin{lemma} \label{lem_sie1}
Let $\mathcal{P}$ be a squarefree integer, $\lambda_d$ sieve weights supported on $d|\mathcal{P}$ only. Let $j$, $e$ be integers with $j|\mathcal{P}$, $e|\mathcal{P}$, and $(j,e)=1$. 
Then it holds that
\begin{align*}
g(e)\sum_{\substack{\substack{c|j \\ d|\mathcal{P}/(ej)}}}\lambda_{cde}g(d)=V\frac{h(j)}{g(j)}\mu(e)h(e)\sum_{b|\mathcal{P}/j}\theta(jb)h(b) \frac{\mu((b,e))}{h((b,e))},
\end{align*}
where $V=\prod_{p|\mathcal{P}}\bigl(1-g(p)\bigr)$, and $\theta(n)$ is given by~\eqref{eq_theta}.
\end{lemma}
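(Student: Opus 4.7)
The plan is to express the left-hand side through Möbius inversion of $\theta(n)=\sum_{d\mid n}\lambda_d$, namely $\lambda_d=\sum_{b\mid d}\mu(d/b)\theta(b)$, and then compare with the right-hand side by reparameterising. Since $c\mid j$, $d\mid \mathcal{P}/(ej)$, and $e$ divide pairwise coprime parts of the squarefree $\mathcal{P}$, writing $\lambda_{cde}=\sum_{c'\mid c,\,d'\mid d,\,e'\mid e}\mu(c/c')\mu(d/d')\mu(e/e')\theta(c'd'e')$ splits cleanly. I would then swap the order of summation so that $c',d',e'$ are the outer variables.

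The inner sums collapse nicely. The sum over $c$ with $c'\mid c\mid j$ gives $\sum_{c'\mid c\mid j}\mu(c/c')=1_{c'=j}$, forcing $c'=j$. The sum over $d$ with $d'\mid d\mid \mathcal{P}/(ej)$ yields $g(d')\prod_{p\mid\mathcal{P}/(ejd')}(1-g(p))$, which equals $V\cdot g(d')/\prod_{p\mid ejd'}(1-g(p))$. Using the multiplicativity of $h$ and the identity $h(p)/g(p)=1/(1-g(p))$, the factor $g(e)/\prod_{p\mid e}(1-g(p))$ becomes $h(e)$, the factor $g(d')/\prod_{p\mid d'}(1-g(p))$ becomes $h(d')$, and $1/\prod_{p\mid j}(1-g(p))$ becomes $h(j)/g(j)$. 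So the left-hand side equals
\begin{align*}
V\,\frac{h(j)}{g(j)}\,h(e)\sum_{d'\mid \mathcal{P}/(ej)}h(d')\sum_{e'\mid e}\mu(e/e')\,\theta(j d' e').
\end{align*}

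To identify this with the right-hand side, I would reparameterise the $b$-sum there by writing $b=d'e'$ where $e'=(b,e)$ and $d'=b/e'$; the conditions $b\mid \mathcal{P}/j$ and $(d',e)=1$ then correspond exactly to $e'\mid e$ and $d'\mid \mathcal{P}/(ej)$. Under this parameterisation $h(b)\mu((b,e))/h((b,e))=h(d')\mu(e')$, and using squarefreeness of $e$ the identity $\mu(e)\mu(e')=\mu(e/e')$ converts $\mu(e)h(e)\cdot \mu(e')$ into $h(e)\mu(e/e')$, matching the expression obtained above.

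There is no substantive obstacle beyond careful bookkeeping: the key points are (i) the pairwise coprimality of $j,e,\mathcal{P}/(ej)$ which makes the triple Möbius inversion factor, (ii) the standard identity $\sum_{c'\mid c\mid j}\mu(c/c')=1_{c'=j}$, and (iii) the algebraic identities $h(p)/g(p)=(1-g(p))^{-1}$ together with multiplicativity, which turn the product over primes of $\mathcal{P}$ into the global constant $V$ times the desired factors $h(j)/g(j)$, $h(e)$, and $h(b)/h((b,e))$.
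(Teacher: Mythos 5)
Your proof is correct and is essentially the paper's argument: both rest on the M\"obius inversion $\lambda=\mu\star\theta$ together with multiplicativity over the squarefree range $\mathcal{P}$, with only the bookkeeping arranged differently (the paper folds $g(d)$ and the sum over $c\mid j$ into a single modified density $\gamma(\cdot,j)$ and pushes straight through to the right-hand side, whereas you factor the inversion over the coprime parts $c,d,e$ and then match the two sides by reparameterising $b=d'e'$ with $e'=(b,e)$). In particular, your collapse $\sum_{c'\mid c\mid j}\mu(c/c')=1_{c'=j}$ plays exactly the role of the paper's observation that the Euler product vanishes unless $j\mid b$, so no new ideas are needed beyond your bookkeeping.
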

\begin{proof}
Let $\gamma(d,j)$ be a multiplicative function in $d$, given on primes by
\begin{align*}
\gamma(p,j)=\begin{cases}g(p) & \quad\textnormal{ if }\quad p\nmid j \\
1 & \quad\textnormal{ if }\quad p| j .
\end{cases}
\end{align*}
We have
\begin{align*}
g(e)\sum_{\substack{\substack{c|j \\ d|\mathcal{P}/(ej)}}}\lambda_{cde}g(d)=\sum_{\substack{k|\mathcal{P}\\ k\equiv 0(e)}}\lambda_d \gamma(k,j).
\end{align*}

By M\"{o}bius inversion
\begin{align*}
\lambda_k=\sum_{ab=k}\mu(a)\theta(b)
\end{align*}
and so 
\begin{align}
\sum_{\substack{k|\mathcal{P}\\ k\equiv 0(e)}}\lambda_d \gamma(k,j)&=\sum_{\substack{b|\mathcal{P}\\}}\theta(b)\gamma(b,j) \sum_{\substack{a|\mathcal{P}/b\\\frac{e}{(b,e)}|a}}\mu(a) \gamma(a,j)\nonumber \\
&=\sum_{\substack{b|\mathcal{P}\\}}\theta(b)\gamma(b,j)\mu(e/(e,b))\gamma(e/(e,b),j) \sum_{\substack{a|\mathcal{P}/[b,e]}}\mu(a) \gamma(a,j)\nonumber \\
&=\sum_{\substack{b|\mathcal{P}\\}}\theta(b)\gamma(b,j)\mu(e/(e,b))\gamma(e/(e,b),j) \prod_{p|\mathcal{P}/[b,e]}(1-\gamma(p,j)). \label{eq_lemsie1_1}
\end{align}

We recall $(e,j)=1$ and so $\gamma(e/(e,b),j)=g(e/(e,b))$ and the product in~\eqref{eq_lemsie1_1} vanishes unless $j|b$. If $j|b$ then
\begin{align*}
\prod_{\substack{p|\mathcal{P}/[b,e]\\ }}(1-\gamma(p,j))&=\prod_{p|\mathcal{P}/j}\bigl(1-g(p)\bigr) \prod_{ p | [b,e]/j}\bigl(1-g(p)\bigr)^{-1}\\
&=V \prod_{p|j}\bigl(1-g(p)\bigr)^{-1}\prod_{ p | [b,e]/j}\bigl(1-g(p)\bigr)^{-1}\\
&=V \frac{h(j)}{g(j)}\prod_{ p | [b,e]/j}\bigl(1-g(p)\bigr)^{-1}.
\end{align*}
Plugging this in, recalling again $(e,j)=1$, and writing $b=jb'$, we see that~\eqref{eq_lemsie1_1} is
\begin{align*}
&V \frac{h(j)}{g(j)}\sum_{\substack{b|\mathcal{P}\\j|b}}\theta(b)\gamma(b,j)\mu(e/(e,b))g(e/(e,b))\prod_{ p | [b,e]/j}\bigl(1-g(p)\bigr)^{-1}\\
&=V \frac{h(j)}{g(j)}\sum_{\substack{b'|\mathcal{P}/j\\}}\theta_{b'j}g(b')\mu(e/(e,b'))g(e/(e,b'))\prod_{ p | [b',e]}\bigl(1-g(p)\bigr)^{-1}\\
&=V \frac{h(j)}{g(j)}\sum_{\substack{b'|\mathcal{P}/j\\}}\theta_{b'j}\mu(e)\mu(e,b')g([b',e])\prod_{ p | [b',e]}\bigl(1-g(p)\bigr)^{-1}\\
&=V \frac{h(j)}{g(j)}h(e)\mu(e)\sum_{\substack{b'|\mathcal{P}/j\\}}\theta_{b'j}h(b')\frac{\mu((e,b'))}{h((e,b'))},
\end{align*}
as required. \end{proof}

The fundamental lemma of sieve theory states results of type~\eqref{eq_fundlem}. We require a slightly different form that is easier for us to apply it in the next subsection. This is again motivated by a similar approach for almost primes in short intervals, see~\cite[around eq. (25), (26)]{matoalmost}.

\begin{lemma}\label{lem_fundlem}Let $\theta$ belong to an upper or lower bound $\beta$ sieve with range $\mathcal{P}_0$ with $\mathcal{P}_0^+\leq z$ and level $D$. Write $z_r=z^{\bigl(\frac{\beta-1}{\beta+1}\bigr)^r}$, $s=\frac{\log D}{\log z}$, $\mathcal{P}_0(x,y)=\prod_{\substack{p\in \mathcal{P}_0\\ x < p\leq y}}p$, and $\mathcal{P}_0(x)=\mathcal{P}_0(0,x)$ . Assume that $s>\beta+1$. Then it holds that
\begin{align}
|\theta_{n}-1_{(n,\mathcal{P}_0(z))=1}|\leq \tau(n)^2\sum_{r>( s-\beta-1)/2}2^{-r}1_{(n,\mathcal{P}_0(z_r))=1}.
\end{align}
\end{lemma}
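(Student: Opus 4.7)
The plan is to unpack the combinatorial definition of the $\beta$-sieve and express the error pointwise via iterated Buchstab identities, keeping the geometric gain from each level of iteration. Since $\mathcal{P}_0^+ \leq z$ we have $\mathcal{P}_0(z)=\mathcal{P}_0$, so $1_{(n,\mathcal{P}_0(z))=1}$ is just the indicator that $n$ is coprime to the full sifting range.

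First I would recall the explicit construction of the $\beta$-sieve weights $\lambda^\pm_d$ from \cite[Chapter 6.4]{cribro}: for $d=p_1 p_2 \cdots p_\ell$ with $p_1>p_2>\cdots>p_\ell$ in $\mathcal{P}_0$, one sets $\lambda^\pm_d=\mu(d)$ exactly when the chain satisfies the truncation condition $p_m(p_1\cdots p_m)^{\beta}<D$ for every $m$ of the prescribed parity, and $\lambda^\pm_d=0$ otherwise. By the telescoping Buchstab identities built into this construction, the difference $\theta^\pm_n - 1_{(n,\mathcal{P}_0)=1}$ can be written as a signed sum, over ``minimal rejected chains'' $p_1>\cdots>p_m$ of primes in $\mathcal{P}_0$ dividing $n$, of indicators of the form $1_{(n/(p_1\cdots p_m),\mathcal{P}_0(p_m))=1}$. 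This identity is the standard starting point for proving the $\beta$-sieve fundamental lemma and just needs to be recorded pointwise rather than after summation.

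Next I would analyse what minimality of the rejected chain forces on $p_m$. The rejection inequality $p_m(p_1\cdots p_m)^\beta \geq D$, combined with the fact that none of the initial sub-chains $p_1\cdots p_{m-1}$ is rejected, relates $p_m$ to the levels $z_r$: the calibration $z_r=z^{((\beta-1)/(\beta+1))^r}$ is engineered precisely so that the first $r$ levels of Buchstab iteration can be carried out without truncation, and the first time a rejection is possible is at chain length roughly $2r+1$, forcing $p_m \geq z_r$ for $r \asymp m/2$. Consequently each term is bounded by $1_{(n,\mathcal{P}_0(z_r))=1}$ for the corresponding $r$, and the minimum admissible $r$ is $(s-\beta-1)/2$, coming from the condition $s>\beta+1$ that guarantees rejection can occur at all.

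Finally I would sum over chains. The number of chains of length $m$ of distinct primes dividing $n$ is at most $\tau(n)^m/m!$, and combining this with the identification $r\asymp m/2$ produces the geometric factor $2^{-r}$ after accounting for $1/m!$. The factor $\tau(n)^2$ absorbs both an overall divisor-counting loss and a bookkeeping constant from encoding the chain as a divisor pair $(d,n/d)$. The main obstacle will be making the correspondence $m \leftrightarrow r$ completely rigorous: one needs the exact iterative inequality relating consecutive thresholds $z_r,z_{r+1}$ to the truncation test, together with a careful accounting that the parity constraint in the upper/lower sieve does not waste more than a factor of two per level. This is morally the same discrete book-keeping as the proof of the fundamental lemma in \cite[Chapter 6.8]{cribro}, and the pointwise version was executed in the almost-prime-in-short-intervals setting in \cite{matoalmost}, which I would follow closely.
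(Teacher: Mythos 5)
Your overall plan is the paper's own proof: expand the $\beta$-sieve weights via their combinatorial construction, so that $\theta(n)-1_{(n,\mathcal{P}_0(z))=1}$ becomes a signed sum over minimal rejected chains $p_1>\dots>p_r$ of primes of $\mathcal{P}_0$ dividing $n$, with each term carrying the indicator $1_{(n/(p_1\cdots p_r),\mathcal{P}_0(p_r))=1}$; then use that minimality forces $p_r\geq z^{((\beta-1)/(\beta+1))^{r/2}}$ (this is \cite[Corollary 6.6]{cribro}, which the paper quotes), bound the admissible chain lengths from below, and count chains to produce the factor $\tau(n)^2 2^{-r}$. However, two steps of your execution are wrong as stated, and both sit exactly where the lemma's numerology is decided.

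First, you have transposed the exponent in the truncation rule: the $\beta$-sieve keeps $p_1\cdots p_m$ when $(p_1\cdots p_m)\,p_m^{\beta}<D$ (equivalently $p_1\cdots p_{m-1}p_m^{\beta+1}<D$) for the relevant parity of $m$, not when $p_m(p_1\cdots p_m)^{\beta}<D$. The correct rejection inequality gives $D\leq (p_1\cdots p_r)p_r^{\beta}\leq z^{r+\beta}$, hence $r\geq s-\beta$, and after writing $r=2r'$ or $r=2r'+1$ this is precisely the source of the threshold $r'>(s-\beta-1)/2$; with your version one only gets $r\geq(s-1)/\beta$, which is far too weak, and the hypothesis $s>\beta+1$ by itself (``rejection can occur at all'') does not produce the threshold. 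Second, your chain count $\tau(n)^m/m!$ does not imply the claimed bound: when $n$ has many prime factors, $\tau(n)^m/m!$ vastly exceeds $\tau(n)^2 2^{-m/2}$, and ``accounting for $1/m!$'' cannot rescue this. What is needed (and what the paper does with slightly different bookkeeping) is that a chain of length $r$ is just an $r$-element subset of the primes dividing $n$, so there are at most $\binom{\omega(n)}{r}$ of them, and for instance $\binom{\omega(n)}{r}2^{r}\leq 3^{\omega(n)}\leq \tau(n)^2$, while the cofactor $d=n/(p_1\cdots p_r)$ is determined by the chain (or crudely counted by an extra $\tau(n)$). With these two corrections, your argument coincides with the paper's proof, including the final parity step $r=2r'$ or $2r'+1$.
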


\begin{proof}
The $\beta$ sieve is a combinatorial sieve that is constructed by iteration with certain cutoff parameters, see~\cite[eq. (6.31)--(6.34), (6.54)]{cribro}. From this it follows immediately that in the case of an upper bound sieve
\begin{align*}
\theta(n)=1_{(n,\mathcal{P}_0(z))=1}+ \sum_{r \text{ odd}}V_r(n),
\end{align*}
and in the case of a lower bound sieve
\begin{align*}
\theta(n)=1_{(n,\mathcal{P}_0(z))=1}- \sum_{r \text{ even}}V_r(n),
\end{align*}
where
\begin{align*}
V_r(n)=\sum_{\substack{p_1\ldots p_rd=n\\ p_i\in \mathcal{P}_0(z), \,\,\, p_r<p_{r-1}<\ldots p_1 \\ p_1p_2\ldots p_r p_r^\beta\geq D \\ p_1p_2\ldots p_hp_h^{\beta} <D \text{ for all } h<r, h\equiv r(2) }}1_{(d,\mathcal{P}_0(p_r))=1}.
\end{align*}

Let us take a closer look at this sum. By~\cite[Corollary 6.6]{cribro} we have that $p_r\geq z^{\bigl(\frac{\beta-1}{\beta+1}\bigr)^{r/2}}$. We also have $D\leq p_1^{r+\beta}\leq z^{r+\beta}$, which is impossible unless $r\geq s-\beta$. Finally, the number of choices for $d$ is bounded by $\tau(n)$ and the number of choices for $p_1,\ldots,p_r$ by $2^{\omega(n)-r}\leq \tau(n)2^{-r}$. By writing $r=2r'+1$ or $r=2r'$ respectively for the case of an upper bound or lower bound sieve, the lemma follows.
\end{proof}

\subsection{Singular series}\label{sec: singular}
In this subsection we conclude the auxiliary results by proving Lemmas~\ref{lem_SS1} and~\ref{lem_SS2}. 

\begin{proof}[Proof of Lemma~\ref{lem_SS1}]
We recall the notation $s_0=\frac{\log D_0}{\log P}$ and that we have to show
\begin{align*}
\sum_{q^\dagger\leq P'}\frac{\mathcal{F}(q^\dagger,m)}{\varphi(q^\dagger)^2}=V(\mathcal{P}^\dagger)\frac{\mathfrak{S}(m)}{\mathfrak{S}(m,2\widetilde{\mathcal{P}})}\Bigl(1+O\Big(e^{-cs_0}+\frac{e^{100\sqrt{\log N}}}{P'}\Big)\Bigr),
\end{align*}
where the summation runs (by notation) over $q^\dagger|(\mathcal{P}^{\dagger})^\infty$,
\begin{align*}
\mathcal{F}(q,m)=\sum_{j_i|q}S_1^\dagger(q,j_1,1)S_2^\dagger(q,j_2,1)F(\chi_{0}^{(q)},\chi_{0}^{(q)},j_1,j_2,m),
\end{align*}
and $S_i^{\dagger}$, $F$, are given by~\eqref{eq_Sdef},~\eqref{eq_Fdef} respectively.

We start by applying Lemma~\ref{lem_sie1} with 
\begin{align*}
g(p)=g(p,q)=\begin{cases}\frac{1}{p-1} & \text{ if } p\nmid q \\
0  &\text{ else}
\end{cases}
\end{align*}
to get
\begin{align*}
S_i^\dagger(q,j_i,1)=\prod_{p|\mathcal{P}^\dagger}\bigl(1-g(p,q)\bigr) \sum_{b|\mathcal{P}^\dagger/j_i} \theta_i(j_i b) h(b,q).
\end{align*}
Here
\begin{align*}
h(p,q)=\begin{cases}\frac{1}{p-2} & \text{ if } p\nmid q \\
0  &\text{ else}
\end{cases}
\end{align*}
and
\begin{align*}
\prod_{p|\mathcal{P}^\dagger}\bigl(1-g(p,q)\bigr)&=\frac{\varphi(q)}{\varphi_2(q)}\prod_{p|\mathcal{P}^\dagger}\Bigl(1-\frac{1}{p-1}\Bigr)\\
&=\frac{\varphi(q)}{\varphi_2(q)}V(\mathcal{P^\dagger}).
\end{align*}

Thus, since $j_i|q$, 
\begin{align*}
S_i^\dagger(q,j_i,1)=V(\mathcal{P}^\dagger)\frac{\varphi(q)}{\varphi_2(q)} \sum_{b|\mathcal{P}^\dagger/q} \theta_i(j_i b) h(b),
\end{align*}
where $h(b)=h(b,1)$. Plugging this in and using that $F$ vanishes unless $q$ is squarefree, we arrive at
\begin{align}
&\sum_{\substack{q\leq P'\\ q|(\mathcal{P}^\dagger)^\infty}} \sum_{j_i|q} \frac{S_1^\dagger(q,j_1,1)S_2^\dagger(q,j_2,1)F(\chi^{(q)}_{0},\chi^{(q)}_{0},j_1,j_2,m)}{\varphi(q)^2} \nonumber \\
&= V(\mathcal{P}^\dagger)^2 \sum_{\substack{j_i\leq P' \\ j_i|(\mathcal{P}^\dagger)^\infty}} \sum_{b_i|\mathcal{P}^\dagger/j_i} \theta_1(b_1j_1)h(b_1)\theta_2(b_2j_2)h(b_2) \sum_{\substack{q\leq P'  \\ \substack{[j_1,j_2]|q\\ q|\mathcal{P}^\dagger/((b_1,b_2))}}}\frac{F(\chi^{(q)}_{0},\chi^{(q)}_{0},j_1,j_2,m)}{\varphi_2(q)^2}. \label{eq_SS1}
\end{align}

Recall that $\mathcal{P}^\dagger$ is odd. For any pair $j_1,j_2$ with $j_i|(\mathcal{P}^\dagger)^\infty$ we have by multiplicativity (Lemma~\ref{lem_Fmult})
\begin{align}\label{eq_SS2}
\sum_{\substack{q\leq P'  \\ \substack{[j_1,j_2]|q\\ q|\mathcal{P}^\dagger/((b_1,b_2))}}}\frac{F(\chi^{(q)}_{0},\chi^{(q)}_{0},j_1,j_2,m)}{\varphi_2(q)^2}=\prod_{p|[j_1,j_2]}\bigl(\frac{F(\chi^{(p)}_{0},\chi^{(p)}_{0},j_1,j_2,m)}{(p-2)^2}\bigr)\sum_{\substack{q\leq P'/[j_1,j_2]\\ q|\mathcal{P}^\dagger/((b_1,b_2,j_1,j_2))}} \prod_{p|q}\frac{\sigma(p,m)}{(p-2)^2},
\end{align}
where $\sigma(p,m)=F(\chi^{(p)}_{0},\chi^{(p)}_{0},1,1,m)$ and more generally $F(\chi^{(p)}_{0},\chi^{(p)}_{0},j_1,j_2,m)$ are given explicitly by Lemmas~\ref{lem_Feval} and~\ref{lem_Feval2}. The term $b_1=b_2=j_1=j_2=1$ in~\eqref{eq_SS1} contributes 
\begin{align*}
V(\mathcal{P}^\dagger)^2 \sum_{\substack{q\leq P'\\ q|\mathcal{P}^\dagger }} \prod_{p|q}\frac{\sigma(p,m)}{(p-2)^2}&=V(\mathcal{P}^\dagger)^2  \left(\frac{\mathfrak{S}(m)}{\mathfrak{S}(m,2\widetilde{\mathcal{P}})}-\sum_{\substack{q>P'\\ (q,\widetilde{2\mathcal{P}})=1}} \prod_{p|q}|\mu(q)|\frac{\sigma(p,m)}{(p-2)^2}\right).
\end{align*}

We use~\eqref{eq_Feval1} and Rankin's trick to estimate the tail sum as
\begin{align*}
\sum_{\substack{q>P'\\ (q,2\widetilde{\mathcal{P}})=1}} \prod_{p|q}|\mu(q)|\frac{\sigma(p,m)}{(p-2)^2}&\ll \sum_{\substack{q>P'\\ (q,2)=1}}\prod_{p\mid m(m+2)(m+4)}\frac{20}{p}\prod_{p\mid q, p\nmid m(m+2)(m+4)}\frac{4}{(p-2)^2} \\
&\ll (P')^{-1/2}\prod_{p\mid m(m+2)(m+4)}\left(1+p^{1/2}\cdot\frac{20}{p}\right)\\
&\quad \times \prod_{p\nmid m(m+2)(m+4)}\left(1+p^{1/2}\cdot \frac{4}{(p-2)^2}\right)\\
&\ll (P')^{-1/2}\exp\Bigl(\sum_{p\mid m(m+2)(m+4)}\frac{20}{p^{1/2}}\Bigr)\\
&\ll (P')^{-1/2}\exp(100\omega(m(m+2)(m+4))^{1/2})\\
&\ll (P')^{-1/2}\exp(100(\log N)^{1/2}).
\end{align*}

The remaining part of~\eqref{eq_SS1} is part of the error term and to deal with it we first estimate~\eqref{eq_SS2}. For $p|j_1j_2$ we have by Lemma~\ref{lem_Feval2} the always valid bound $|F(\chi^{(p)}_{0},\chi^{(p)}_{0},j_1,j_2,m)|$ $\leq p-1$. We get
\begin{align*}
&\Bigl|\prod_{p|[j_1,j_2]}\Bigl(\frac{F(\chi^{(p)}_{0},\chi^{(p)}_{0},j_1,j_2,m)}{(p-2)^2}\Bigr)\sum_{\substack{q\leq P/[j_1,j_2]\\ q|\mathcal{P}^\dagger/((b_1,b_2,j_1,j_2))}} \prod_{p|q}\frac{\sigma(p,m)}{(p-2)^2}\Bigr|\\
&\ll \frac{\prod_{p|j_1j_2}\bigl(1+\frac{3}{p}\bigr)}{[j_1,j_2]}\sum_{q} \prod_{p|q}\frac{|\sigma(p,m)|}{\varphi_2(p)^2}\\
&\ll \frac{\prod_{p|j_1j_2}\bigl(1+\frac{3}{p}\bigr)}{[j_1,j_2]} \mathfrak{S}(m),
\end{align*}
where we used that either $\sigma(p,m)>0$ or $\sigma(p,m)=O(1)$.
To complete the proof of Lemma~\ref{lem_SS1} it consequently suffices to show
\begin{align}\label{eq_SS2b}
\sum_{\substack{j_i| \mathcal{P}^\dagger}} \sum_{\substack{b_i|\mathcal{P}^\dagger/j_i \\ b_1j_1b_2j_2\neq 1}}  \frac{|\theta_1(b_1j_1)\theta_2(b_2j_2)|\prod_{p|b_1b_2j_1j_2}\bigl(1+\frac{O(1)}{p}\bigr)}{b_1b_2[j_1,j_2]} &\ll e^{-cs_0}.
\end{align}

Let us first consider the contribution of terms with $b_1j_1\neq 1 $ and $b_2j_2\neq 1$ to~\eqref{eq_SS2b}. We can apply Lemma~\ref{lem_fundlem} (with $z=P$) and write $j_i=d l_i$ with $(l_1,l_2)=1$ to estimate it by
\begin{align}\nonumber
\leq &\sum_{r_i>(s_0-\beta-1)/2}2^{-r_1-r_2} \sum_{d|\mathcal{P}^\dagger}\sum_{\substack{b_il_i|\mathcal{P}^\dagger/d\\(db_il_i,\mathcal{P}^\dagger(P_{r_i}))=1\\}}\frac{\tau(d)^4 \tau(b_1)^2\tau(b_2)^2\tau(l_1)^2\tau(l_2)^2\prod_{p|dl_1l_2b_1b_2}\bigl(1+\frac{O(1)}{p}\bigr)}{dl_1l_2b_1b_2}\\ \nonumber
\leq & \sum_{r_i>(s_0-\beta-1)/2}2^{-r_1-r_2} \sum_{\substack{db_1l_1|\mathcal{P}^\dagger\\ \nonumber (db_1l_1,\mathcal{P}^\dagger(P_{r_1}))=1}} \frac{\tau(d)^4 \tau(b_1)^2\tau(l_1)^2\prod_{p|dl_1b_1}\bigl(1+\frac{O(1)}{p}\bigr)}{dl_1b_1}\\
&\times \sum_{\substack{b_2l_2|\mathcal{P}^\dagger\\ \nonumber (b_2l_2,\mathcal{P}^\dagger(P_{r_2}))=1}} \frac{\tau(b_2)^2\tau(l_2)^2\prod_{p|l_2b_2}\bigl(1+\frac{O(1)}{p}\bigr)}{l_2b_2}\\
&\leq \Bigg(\sum_{r>(s_0-\beta-1)/2}2^{-r} \sum_{\substack{b|\mathcal{P}^\dagger\\ (b,\mathcal{P}^\dagger(P_{r}))=1}}\frac{\tau(b)^{6}\prod_{p|b}\bigl(1+\frac{O(1)}{p}\bigr)}{b}\Bigg)^2. \label{eq_rsum}
\end{align}

By the definition of $P_r=P^{\bigl(\frac{\beta-1}{\beta+1}\bigr)^r}$ in Lemma~\ref{lem_fundlem} we have
\begin{align*}
\sum_{\substack{b|\mathcal{P}^\dagger\\ (b,\mathcal{P}^\dagger(P_{r}))=1}}\frac{\tau(b)^{6}\prod_{p|b}\bigl(1+\frac{O(1)}{p}\bigr)}{b}&\ll \prod_{P_r<p\leq P}\Bigl(1+\frac{2^6}{p}\Bigr)\\
&\ll \Bigl(\frac{\log P}{\log P_r} \Bigr)^{2^{6}}\\
&= \Bigl(\frac{\beta+1}{\beta-1} \Bigr)^{2^{6}r}.
\end{align*}

As long as
\begin{align}\label{eq_betacond1}
\beta>185,
\end{align}
we have $\Bigl(\frac{\beta+1}{\beta-1} \Bigr)^{2^{6}}<2$ and so the tail sum over $r$ in~\eqref{eq_rsum} converges with exponential decay in $s_0$. The required estimate~\eqref{eq_SS2b} follows in the currently considered case $j_1b_1\neq 1$, $j_2b_2\neq 1$. The remaining case that $b_1j_1b_2j_2\neq 1$ but $b_1j_1=1$ or $b_2j_2=1$ works similarly but is easier. In fact, it follows directly from a standard application of the fundamental lemma.
\end{proof}

We now prove the last remaining result, Lemma~\ref{lem_SS2}. While the just proved Lemma~\ref{lem_SS1} asymptotically evaluates the singular series with sieve weights, we now only require an upper bound of correct order of magnitude, but need it for any pseudo-singular series associated with a pair of primitive characters and again including sieve weights. 

  \begin{proof}[Proof of Lemma~\ref{lem_SS2}]
 We recall that we have to show that for any pair of primitive characters $\xi_1,\xi_2$ it holds that
 \begin{align*}
\mathcal{G}^\dagger(\xi_1 \xi_2,m)\widetilde{\mathcal{G}}(\xi_1,\xi_2,m)\ll V(\mathcal{P}^\dagger)^2\mathfrak{S}(m)
 \end{align*}
 and split this task into showing
\begin{align}\label{eq_lemSS2_1b}
\mathcal{G}^\dagger(\xi_1, \xi_2,m)\ll V(\mathcal{P}^\dagger)^2 \frac{\mathfrak{S}(m)}{\mathfrak{S}(m,2\widetilde{\mathcal{P}})}
\end{align}
and
\begin{align}\label{eq_lemSS2_2}
\widetilde{\mathcal{G}}(\xi_1,\xi_2,m)\ll \mathfrak{S}(m,2\widetilde{\mathcal{P}}).
\end{align}

We start with showing~\eqref{eq_lemSS2_1b} and further recall
\begin{align*}
\mathcal{G}^\dagger(\xi_1,\xi_2,m)&=\sum_{\substack{e_i l_i=g_i^\dagger\\ e_i|\mathcal{P}^\dagger}}  \sum_{\substack{q|(\mathcal{P}^{\dagger})^\infty\\  \substack{ l_i|q \\ (q,e_1e_2)=1}}}\sum_{j_i|q} \frac{ |S_1^\dagger(q,j_1,e_1)S_2^\dagger(q,j_2,e_2)F(\xi^{(l_1)}_{1}\chi^{(q)}_{0},\xi^{(l_2)}_{2}\chi^{(q)}_{0},1,1,m)|}{\varphi(q)^2},
\end{align*}
where $g_i$ is the conductor of $\xi_i$ and $g_i^\dagger$ denotes its $\mathcal{P}^\dagger$ component. 

We start as in the proof of Lemma~\ref{lem_SS1} and apply Lemma~\ref{lem_sie1}. It gives us now, observing that $(e_i,q)=1$,
\begin{align*}
S_i^\dagger(q,j_i,e_i)=V(\mathcal{P}^\dagger) \frac{\varphi(q)}{\varphi_2(q)}\mu(e_i)h(e_i)\sum_{b|\mathcal{P}/q}\theta_i(j_i b)h(b)\frac{\mu((e_i,b))}{h((e_i,b))},
\end{align*} 
where $h$ is as before. The condition $(q,e_1e_2)=1$ and $[l_1,l_2]|q$ can only be fulfilled simultaneously if $(e_1e_2,l_1l_2)=1$. Applying the triangle inequality, we thus estimate
\begin{align}\nonumber
\mathcal{G}^\dagger(\xi_1,\xi_2,m)&\leq V(\mathcal{P}^\dagger)^2 \sum_{\substack{\substack{e_il_i=g_i^\dagger\\  (e_1e_2,l_1l_2)=1 }\\ e_i|\mathcal{P}^\dagger}}  \sum_{j_ib_i|\mathcal{P}^{\dagger}}|\theta_1(b_1j_1)\theta_2(b_2j_2)|h([b_1,e_1])h([b_2,e_2]) \\
&\times \sum_{\substack{q|(\mathcal{P}^\dagger)^\infty \\ \substack{[j_1,j_2,l_1,l_2]|q\\ (q,b_1b_2e_1e_2)=1}}} \frac{|F(\xi^{(l_1)}_{1}\chi^{(q)}_{0},\xi^{(l_2)}_{2}\chi^{(q)}_{0},j_1,j_2,m)|}{\varphi_2(q)^2}.\label{eq_lemss2_1c}
\end{align}
Consider the sum over $q$. The multiplicativity of $F$ (Lemma~\ref{lem_Fmult}) and Lemma~\ref{le_Fsimple} give us 
\begin{align*}
&\sum_{\substack{q|(\mathcal{P}^\dagger)^\infty \\ \substack{[j_1,j_2,l_1,l_2]|q\\ (q,b_1b_2e_1e_2)=1}}}\frac{|F(\xi^{(l_1)}_{1}\chi^{(q)}_{0},\xi^{(l_2)}_{2}\chi^{(q)}_{0},j_1,j_2,m)|}{\varphi_2(q)^2}\\
&=\prod_{p^\alpha || [j_1,j_2,l_1,l_2]} \frac{|F(\xi^{((l_1,p^\alpha))}_{1}\chi^{(p^\alpha)}_{0},\xi^{((l_2,p^\alpha))}_{2}\chi^{(p^\alpha)}_{0},j_1,j_2,m)|}{\varphi_2(p^\alpha)^2}\\
&\times\sum_{\substack{q|(\mathcal{P}^\dagger)^\infty\\ (q,b_1b_2e_1e_2j_1j_2l_1 l_2)=1}}\frac{|F(\chi^{(q)}_{0},\chi^{(q)}_{0},1,1,m)|}{\varphi_2(q)^2}\\
&\leq \prod_{p^\alpha || [j_1,j_2,l_1,l_2]} \frac{|F(\xi^{((l_1,p^\alpha))}_{1}\chi^{(p^\alpha)}_{0},\xi^{((l_2,p^\alpha))}_{2}\chi^{(p^\alpha)}_{0},j_1,j_2,m)|}{\varphi_2(p^\alpha)^2 } \sum_{\substack{q\\ (q,2\widetilde{\mathcal{P}}l_1l_2j_1j_2)=1}}\frac{|F(\chi^{(q)}_{0},\chi^{(q)}_{0},1,1,m)|}{\varphi_2(q)^2}\\
&=\prod_{p^\alpha || [j_1,j_2,l_1,l_2]} \frac{|F(\xi^{((l_1,p^\alpha))}_{1}\chi^{(p^\alpha)}_{0},\xi^{((l_2,p^\alpha))}_{2}\chi^{(p^\alpha)}_{0},j_1,j_2,m)|}{\varphi_2(p^\alpha)^2\bigl(1+\frac{|\sigma(p,m)|}{(p-2)^2}\bigr) }\prod_{p\nmid 2\widetilde{\mathcal{P}}} \Bigl(1+\frac{|\sigma(p,m)|}{(p-2)^2}\Bigr) \\
&\ll  \prod_{p^\alpha || [j_1,j_2,l_1,l_2]} \frac{|F(\xi^{((l_1,p^\alpha))}_{1}\chi^{(p^\alpha)}_{0},\xi^{((l_2,p^\alpha))}_{2}\chi^{(p^\alpha)}_{0},j_1,j_2,m)|}{\varphi_2(p^\alpha)^2\bigl(1+\frac{|\sigma(p,m)|}{(p-2)^2}\bigr) }\frac{\mathfrak{S}(m)}{\mathfrak{S}(m,2\widetilde{\mathcal{P}})}.
\end{align*}
Thus, by~\eqref{eq_lemss2_1c} our goal~\eqref{eq_lemSS2_1b} follows if we can show
\begin{align}\begin{split}\label{eq_lemSS2_2a}
&\sum_{\substack{\substack{e_il_i=g_i^\dagger\\ (e_1e_2,l_1l_2)=1 }\\ e_i|\mathcal{P}^\dagger}}  \sum_{j_ib_i|\mathcal{P}^{\dagger}}|\theta_1(b_1j_1)\theta_2(b_2j_2)|h([b_1,e_1])h([b_2,e_2]) \\
& \times \prod_{p^\alpha || [j_1,j_2,l_1,l_2]} \frac{|F(\xi^{((l_1,p^\alpha))}_{1}\chi^{(p^\alpha)}_{0},\xi^{((l_2,p^\alpha))}_{2}\chi^{(p^\alpha)}_{0},j_1,j_2,m)|}{\varphi_2(p^\alpha)^2\bigl(1+\frac{|\sigma(p,m)|}{(p-2)^2}\bigr) }
\ll 1.
\end{split}
\end{align}

We continue estimating the remaining Euler factor with the help of Lemma~\ref{lem_Festi}, starting with the case $p\nmid (j_1,j_2)$, $(l_1,p^\alpha)=(l_2,p^\alpha)=p^\alpha$, and $p|m$. We are then possibly in the bad case of ~\eqref{eq_lemFeststatement1} and together with~\eqref{eq_Feval1} get
\begin{align*}
\frac{|F(\xi_1^{(l_1,p^\alpha)}\chi^{(p^\alpha)}_{0},\xi_2^{(l_2,p^\alpha)}\chi^{(p^\alpha)}_{0},j_1,j_2,m)|}{\varphi_2(p^\alpha)^2\bigl(1+\frac{|\sigma(p,m)|}{(p-2)^2}\bigr) }&\leq \frac{p^{2\alpha}-3p^{2\alpha-1}+1}{\varphi_2(p^\alpha)^2\bigl(1+\frac{(p-4)}{(p-2)^2}\bigr)}\\
&=\frac{1-3p^{-1}+p^{-2\alpha}}{(1-\frac{2}{p})^2\bigl(1+\frac{(p-4)}{(p-2)^2}\bigr) }\\
&=\frac{1-3p^{-1}+O(p^{-2})}{\bigl(1-4p^{-1}+O(p^{-2})\bigr)\bigl(1+p^{-1} +O(p^{-2})\bigr)}\\
&=1+O(p^{-2}).
\end{align*}
If now $p\nmid (j_1,j_2)$ and either $p^{2\alpha}\nmid l_1l_2$ or $p\nmid m$ we are in the better case of~\eqref{eq_lemFeststatement1} and together with ~\eqref{eq_Feval1} obtain now
\begin{align*}
\frac{|F(\xi_1^{(l_1,p^\alpha)}\chi^{(p^\alpha)}_{0},\xi_2^{(l_2,p^\alpha)}\chi^{(p^\alpha)}_{0},j_1,j_2,m)|}{\varphi_2(p^\alpha)^2\bigl(1+\frac{|\sigma(p,m)|}{(p-2)^2}\bigr) }&\leq \frac{p^{2\alpha-1/2}+3p^{2\alpha-1}}{\varphi_2(p^\alpha)^2\bigl(1-\frac{4}{(p-2)^2}\bigr)}\\
&=\frac{p^{-1/2}+3p^{-1}}{(1-\frac{2}{p})^2\bigl(1-\frac{4}{(p-2)^2}\bigr)}\\
&\leq p^{-1/2}+O(p^{-1}).
\end{align*}

Finally, if $p|(j_1,j_2)$ we get with the help of~\eqref{eq_lemFeststatement2}
\begin{align*}
\frac{|F(\xi_1^{(l_1,p^\alpha)}\chi^{(p^\alpha)}_{0},\xi_2^{(l_2,p^\alpha)}\chi^{(p^\alpha)}_{0},j_1,j_2,m)|}{\varphi_2(p^\alpha)^2\bigl(1+\frac{|\sigma(p,m)|}{(p-2)^2}\bigr) }&\leq \frac{2p^{2\alpha-1}}{\varphi_2(p^\alpha)^2\bigl(1-\frac{4}{(p-2)^2}\bigr)} \\
&=\frac{2p^{-1}}{(1-\frac{2}{p})^2\bigl(1-\frac{4}{(p-2)^2}\bigr)}\\
&=2p^{-1}\bigl(1+O(p^{-1}) \bigr).
\end{align*}

Combining the cases, we estimate the Euler factor by
\begin{align*} &\prod_{p^\alpha || [j_1,j_2,l_1,l_2]} \frac{|F(\xi_1^{(l_1)}\chi^{(p^\alpha)}_{0},\xi_2^{(l_2)}\chi^{(p^\alpha)}_{0},j_1,j_2,m)|}{\varphi_2(p^\alpha)^2\bigl(1+\frac{|\sigma(p,m)|}{(p-2)^2}\bigr) }\\
	&\ll  \prod_{p|j_1j_2}\frac{2}{p}(1+O(p^{-1}))\prod_{\substack{p|l_1l_2, p\nmid j_1j_2 \\ (l_1,p^\infty)\neq (l_2,p^\infty)}}(p^{-1/2}+O(p^{-1}))\\
&\leq  \frac{\tau([j_1,j_2])\prod_{p|j_1j_2}(1+2p^{-1})}{[j_1,j_2]} \prod_{\substack{p|l_1l_2, p\nmid j_1j_2 \\ (l_1,p^\infty)\neq (l_2,p^\infty)}}(p^{-1/2}+O(p^{-1})).
\end{align*}
Plugging this in and changing order of summation shows that the left-hand side of~\eqref{eq_lemSS2_2a} is
\begin{align}
&\ll \nonumber \sum_{j_ib_i|\mathcal{P}^{\dagger}}\bigl|\theta_1(b_1j_1)\theta_2(b_2j_2)\bigr|\frac{\tau([j_1,j_2])\prod_{p|j_1j_2}(1+2p^{-1})}{[j_1,j_2]}\sum_{\substack{\substack{e_il_i=g_i^\dagger\\ (e_1e_2,l_1l_2)=1 }\\ e_i|\mathcal{P}^\dagger}} h([b_1,e_1])h([b_2,e_2]) \\
&\times \prod_{\substack{p|l_1l_2, p\nmid j_1j_2 \\ (l_1,p^\infty)\neq (l_2,p^\infty)}}(p^{-1/2}+O(p^{-1}))\label{eq_lemSS2_2b}.
\end{align}

We can express the sum over $e_il_i$ in multiplicative fashion in the following sense 
\begin{align*}
\prod_{\substack{p|g_1^\dagger g_2^\dagger \\ p^{\beta_1}||g_i^\dagger}}\sum_{\substack{\substack{e_il_i=p^{\beta_i}\\ (e_1e_2,l_1l_2)=1 }\\ e_i|\mathcal{P}^\dagger}} h([b_1,e_1])h([b_2,e_2]) \prod_{\substack{p|l_1l_2, p\nmid j_1j_2 \\ (l_1,p^\infty)\neq (l_2,p^\infty)}}(p^{-1/2}+O(p^{-1})).
\end{align*}
If $\beta_2=0$, we give an upper bound for the double sum over $e_il_i$ by
\begin{align*}
h(b_2)\sum_{\substack{\substack{e_1l_1=p^{\beta_1}\\ (e_1,l_1)=1 }\\ e_i|\mathcal{P}^\dagger}} h([b_1,e_1])\prod_{\substack{p|l_1, p\nmid j_1j_2 \\ }}(p^{-1/2}+O(p^{-1}))\leq h(b_2)h(b_1)\begin{cases}2 &  p|b_1j_1j_2\\
p^{-1/2}+O(p^{-1}) & p\nmid b_1j_1j_2.
\end{cases}
\end{align*}
Naturally a similar estimate holds for $\beta_1=0$. If $\beta_1 \beta_2\neq 0$, we estimate the double sum by
\begin{align*}
&\sum_{\substack{\substack{e_il_i=p^{\beta_i}\\ (e_1e_2,l_1l_2)=1 }\\ e_i|\mathcal{P}^\dagger}} h([b_1,e_1])h([b_2,e_2]) \prod_{\substack{p|l_1l_2, p\nmid j_1j_2 \\ (l_1,p^\infty)\neq (l_2,p^\infty)}}(p^{-1/2}+O(p^{-1}))\\
\leq &\sum_{\substack{\substack{e_il_i=p^{\beta_i}\\ (e_1e_2,l_1l_2)=1 }\\ e_i|\mathcal{P}^\dagger}} h([b_1,e_1])h([b_2,e_2]) \\
&=h(b_1)h(b_2)+1_{\beta_1=\beta_2=1}h([b_1,p])h([b_2,p])\\
\leq & h(b_1)h(b_2)\begin{cases}2 &  p|b_1b_2\\
1+O(p^{-2}) & p\nmid b_1b_2.
\end{cases}
\end{align*}

The estimates of the sum over $e_il_i$ show that the contribution of primes that do not divide the sieve weighted coefficients $b_i,j_i$ is $O(1)$. This is crucial for us, as we could compensate for any losses here only by getting more saving out of Gallagher's prime number theorem (Lemma~\ref{lem_GPNT}), i.e. miss power saving. We discard the condition that $p|g_1^\dagger g_2^\dagger$ and get that~\eqref{eq_lemSS2_2b} is
\begin{align*}
\ll \sum_{j_ib_i|\mathcal{P}^{\dagger}}\bigl|\theta_1(b_1j_1)\theta_2(b_2j_2)\bigr|\frac{\tau(j_1)^2\tau(j_2)^2\tau(b_1)\tau(b_2)\prod_{p|j_1j_2b_1b_2}(1+O(p^{-1}))}{b_1 b_2 [j_1,j_2]}.
\end{align*}
This sum is closely related to the one appearing in~\eqref{eq_SS2b}, the only difference being additional divisor functions. The same steps as there show that it is $1+O(e^{-cs_0})$ under the stronger condition
\begin{align}\label{eq_betacond2}
\beta>739
\end{align} 
that ensures $\Bigl(\frac{\beta+1}{\beta-1} \Bigr)^{2^{8}}<2$. The estimate~\eqref{eq_lemSS2_2a} and so~\eqref{eq_lemSS2_1b} follows. 

The proof of~\eqref{eq_lemSS2_2} is similar but considerably simpler as it does not include the sieve weights. By definition
\begin{align*}
\widetilde{\mathcal{G}}(\xi_1,\xi_2,m)&=\sum_{\substack{e_i l_i=2^{g_i(2)}\widetilde{g}_i\\ e_i|\widetilde{\mathcal{P}}}} \frac{1}{\varphi_2(e_1)\varphi_2(e_2)} \sum_{\substack{q|(2\widetilde{\mathcal{P}})^\infty\\  l_i|q \\ (q,e_1e_2)=1}}\frac{|F(\xi^{(l_1)}_{1}\chi^{(q)}_{0},\xi^{(l_2)}_{2}\chi^{(q)}_{0},1,1,m)|}{\varphi_2(q)^2}.
\end{align*}
By similar steps as for $\mathcal{G}^\dagger$ we get
\begin{align*}
\widetilde{\mathcal{G}}(\xi_1,\xi_2,m)&=\sum_{\substack{e_i l_i=2^{g_i(2)}\widetilde{g}_i\\ (e_1e_2,l_1l_2)=1\\ e_i|\widetilde{\mathcal{P}}}} \frac{1}{\varphi_2(e_1)\varphi_2(e_2)} \prod_{p^\alpha||[l_1,l_2]}\frac{F(\xi^{((l_1,p^\alpha))}_{1},\xi^{((l_2,p^\alpha))}_{2},1,1,m)}{\varphi_2(p)^2}\\
&\quad \times \prod_{\substack{p|\widetilde{2\mathcal{P}}\\ p\nmid e_1e_2l_1l_2}}\Bigl(1+\frac{|\sigma(p,m)|}{\varphi_2(p)^2}\Bigr)\\
&\leq \sum_{\substack{e_i l_i=2^{g_i(2)}\widetilde{g}_i\\ (e_1e_2,l_1l_2)=1\\ e_i|\widetilde{\mathcal{P}}}} \frac{1}{\varphi_2(e_1)\varphi_2(e_2)} \prod_{p^\alpha||[l_1,l_2]}\frac{F(\xi^{((l_1,p^\alpha))}_{1},\xi^{((l_2,p^\alpha))}_{2},1,1,m)}{\varphi_2(p)^2 \bigl(1+\frac{|\sigma(p,m)|}{\varphi_2(p)^2}\bigr)} \mathfrak{S}(m,\widetilde{2\mathcal{P}}).
\end{align*}
The required bound
\begin{align*}
\sum_{\substack{e_i l_i=2^{g_i(2)}\widetilde{g}_i\\ (e_1e_2,l_1l_2)=1\\ e_i|\widetilde{\mathcal{P}}}} \frac{1}{\varphi_2(e_1)\varphi_2(e_2)} \prod_{p^\alpha||[l_1,l_2]}\frac{F(\xi^{((l_1,p^\alpha))}_{1},\xi^{((l_2,p^\alpha))}_{2},1,1,m)}{\varphi_2(p)^2 \bigl(1+\frac{|\sigma(p,m)|}{\varphi_2(p)^2}\bigr)}\ll 1
\end{align*}
follows from~\eqref{eq_lemSS2_2a}, as it is the subsum $b_1=b_2=j_1=j_2=1$, and the estimate~\eqref{eq_lemFeststatementp=2} for the case that $[l_1,l_2]$ is even. This completes the proof of Lemma~\ref{lem_SS2}. 
\end{proof}

\bibliographystyle{plain}
\bibliography{chenrefs.bib}{}

\end{document}